\numberwithin{equation}{section}
\numberwithin{equation}{section}
\newtheorem{thm}{Theorem}[section]
\newtheorem{lem}{Lemma}[section]
\newtheorem{claim}{Claim}[section]
\newtheorem{cor}{Corollary}[section]
\newtheorem{prop}{Proposition}[section]
\theoremstyle{definition}
\newtheorem{defn}{Definition}[section]
\newtheorem{con1}{Conjecture}[section]
\theoremstyle{remark}
\newtheorem{rem}{Remark}[section]
\begin{document}
\title[Mass-critical inhomogeneous NLS ]{Global well-posedness and scattering for    mass-critical inhomogeneous NLS when  $d\ge3$}
 \author[Liu]{Xuan Liu}
\address{ School of Mathematics, Hangzhou Normal   University, Hangzhou 311121, China}
\email{liuxuan95@hznu.edu.cn }
 \author[Miao]{Changxing Miao}
\address{ Institute of Applied Physics and Computational Mathematics, Beijing 100088, China
\newline\indent
National Key Laboratory of Computational Physics, Beijing 100088, China}
\email{miao\_changxing@iapcm.ac.cn }
 \author[Zheng]{Jiqiang Zheng}
\address{Institute of Applied Physics and Computational Mathematics, Beijing 100088, China
\newline\indent
National Key Laboratory of Computational Physics, Beijing 100088, China}
\email{zhengjiqiang@gmail.com, zheng\_jiqiang@iapcm.ac.cn}
\date{}
\maketitle
\begin{abstract}
	We prove global well-posedness and scattering for solutions to the mass-critical inhomogeneous nonlinear Schr\"odinger equation  $i\partial_{t}u+\Delta u=\pm |x|^{-b}|u|^{\frac{4-2b}{d}}u$ for large  $L^2(\mathbb{R} ^d)$ initial data with  $d\ge3,0<b<\min \left\{ 2,\frac{d}{2} \right\}$; in the focusing case, we   require that the mass is strictly less than that of the ground state.
	
	Compared with the classical Schr\"odinger case ($b=0$, Dodson, J. Amer. Math. Soc. (2012),  Adv. Math. (2015)), the  main differences for the inhomogeneous case ($b>0$) are that the presence of the inhomogeneity  $|x|^{-b}$ creates a nontrivial singularity at the origin, and    breaks the translation symmetry  as well as the Galilean invariance of the equation, which makes the establishment of the   profile decomposition and long time Strichartz estimates more difficult.  To overcome these difficulties, we perform the concentration compactness/rigidity  methods of [Kenig and Merle, Invent. Math. (2006)] in the Lorentz space framework, and   reduces the problem to the exclusion of almost periodic solutions. The exclusion of these solutions will utilize fractional estimates  and  long time Strichartz estimates in Lorentz spaces.

	 In our study, we obseve that the decay of the inhomogeneity  $|x|^{-b}$ at infinity prevents the concentration of  the almost periodic solution   at infinity in either physical or frequency space.  Therefore, we can use classical Morawetz estimates, rather than interaction Morawetz estimates, to exclude the existence of the quasi-soliton.   Moreover, our  method can also be  applied  to the case \( b = 0 \), thereby recovering the scattering results in [Tao, Visan and Zhang, Duke Math. J. (2007), Killip, Visan and Zhang, Anal. PDE (2008)].  
	 	\\ \textbf{Keywords:} Inhomogeneous NLS, Mass-critical, Concentration compactness/rigidity, Global well-posedness,  Scattering. 
\end{abstract}
\section{Introduction}\label{s1}
We consider the following  mass-critical inhomogeneous  nonlinear Schr\"odinger equation 
\begin{equation}\label{nls}
	\begin{cases}
		i\partial_{t}u+\Delta u=\mu |x|^{-b}|u|^{ \frac{4-2b}{d}} u,\qquad (t,x)\in \mathbb{R} \times \mathbb{R} ^d\\
		u|_{t=0}=u_0\in   L^2(\mathbb{R}^d),
	\end{cases}
\end{equation}
where  $d\ge3,0<b<\min \left\{ 2,\frac{d}{2} \right\}$.         Here  $\mu=\pm1$, with  $\mu=+1$ known as the \emph{defocusing} case and  $\mu=-1$ as the \emph{focusing} case. 
This model arises in the setting of nonlinear optics, where the factor  $|x|^{-b}$  represents some inhomogeneity in the medium (see, e.g., \cite{Gill2000,LT1994}).  As pointed out by Genoud and Stuart \cite{GS2008},  the factor  $|x|^{-b}$  appears naturally as a limiting case of potentials that decay polynomially at infinity.

In this paper, we study the global well-posedness and scattering  of the Cauchy problem (\ref{nls}). Before stating our results, we give the definition of the solutions and the  corresponding notion of blowup.   
\begin{defn}[Strong solution]
	Let  $F(u) =\pm |x|^{-b}|u|^ \frac{4-2b}{d} u$ and 
	\begin{equation}
			\gamma=:\frac{2d+8-4b}{d},\qquad \rho=: \frac{2d+8-4b}{d+2-2b}. \label{E:gamma}
	\end{equation}
	A function $u : I\times \mathbb{R}^d \to \mathbb{C}$  is a solution to (\ref{nls}) if for any compact  $J\subset I$,  $u\in C_t^0L^2_x(J\times \mathbb{R} ^d)\cap L^\gamma _tL_x^{\rho,2}(J\times \mathbb{R} ^d)$,  and we have the Duhamel formula for all $t, t_0 \in I$:
\begin{equation}
		u(t) = e^{i(t - t_0)\Delta} u(t_0) - i \int_{t_0}^{t} e^{i(t-\tau)\Delta} F(u(\tau))  d\tau,\notag
\end{equation}
where  $L_x^{\rho,2}$ is the Lorentz space defined in Subsection \ref{S:2.2}  below.   We refer to the interval $I$ as the \textit{lifespan} of $u$. We say that $u$ is a \textit{maximal-lifespan solution} if the solution cannot be extended to any strictly larger interval. We say that $u$ is a \textit{global solution} if $I = \mathbb{R}$.
\end{defn}
\begin{defn}[Blow up]
We say that a solution   $u$ to (\ref{nls}) blows up forward in time if there exists a time   $t_0\in I$ such that
\begin{equation}
		\|u\|_{L_t^\gamma  L_x^{\rho,2}([t_0, \sup(I)) \times \mathbb{R}^d)} = \infty,\notag
\end{equation}
	and that   $u$ blows up backward in time if there exists a time   $t_0\in I$ such that
\begin{equation}
		\|u\|_{L_t^\gamma  L_x^{\rho,2}((\inf(I), t_0] \times \mathbb{R}^d)} = \infty.\notag
\end{equation}
\end{defn}
 
On the interval of existence, the solution preserves its mass  (see  Proposition \ref{T:CP})
\begin{equation}
	M(u) =: \int_{\mathbb{R}^d}  |u(t,x)|^2\,dx=M(u_0).\label{E:mass}
\end{equation}
Equation (\ref{nls}) is referred to as  mass-critical as the natural scaling of the equation  $u(t,x)\mapsto \lambda ^{\frac{d}{2}} u(\lambda^2 t,\lambda x)$  also keeps the  mass invariant.
 
By considering Strichartz estimates in Lorentz spaces, Aloui and Tayachi \cite{Aloui} proved the  $L^2(\mathbb{R} ^d)$ local well-posedness   for the mass-critical   inhomogeneous NLS (\ref{nls}) (see  Proposition \ref{T:CP}).  In this paper, we consider the questions of global well-posedness and scattering of the solutions.   Here scattering refers to the fact that there exists unique  $u_{\pm}\in L^2(\mathbb{R} ^d)$  such that  
\begin{equation}
  \lim_{t\to\pm\infty}\|u(t)-e^{it\Delta}u_\pm\|_{L^2} = 0.\label{7151}
\end{equation}
 Such results have been extensivlely studied   for the classical mass-critical Schr\"odinger equation (\cite{Dodson4,Dodson3,Dodson2,KTV,KVZ,TVZ}), and energy-critical   Schr\"odinger equation (\cite{Bourgain1999,Colliander2008,Dodson2019,KillipVisan2010,Ryckman2007,Tao2006,Visan2007}).

To be more precise,  let us first recall the global well-posedness and scattering results for the classical mass-critical  Schr\"odinger   equation ($b=0$):
\begin{equation}
\begin{cases}
		i\partial_{t}u+\Delta u=\mu |u|^{\frac{4}{d}}u,\qquad (t,x)\in \mathbb{R} \times \mathbb{R} ^d \\
		u(0,x)=u_0(x)\in L^2(\mathbb{R} ^d).
\end{cases}\label{E:cnls}
\end{equation}
It is well  known  that all small mass solutions are global and scatte; while in the focusing case this assertion can fail for solutions with large mass.  In fact, let $ \widetilde{Q}:\mathbb{R} ^d\rightarrow \mathbb{R} ^+ $ be the unique positive  radial  solution to the elliptic equation
\begin{equation}
	\Delta \widetilde{Q}+\widetilde{Q}^{1+\frac{4}{d}}=\widetilde{Q},\notag
\end{equation}
then  $u(t,x)=:e^{it}\widetilde{Q}(x)$, with mass  $M(\widetilde{Q})$,  is a  nonscattering solution to (\ref{E:cnls}).  However, it is widely believed that   $M(\widetilde{Q})$ is the minimal mass obstruction to global well-posedness and scattering in the focusing case, and that no such obstruction exists in the defocusing case:
\begin{con1}[Global existence and scattering]\label{CJ}
	\label{conjecture:global-existence-scattering}
	Let $d \geq 1$, $\mu \in \{-1,+1\}$ and  $u_0\in L^2(\mathbb{R} ^d)$.  In the focusing case, we further assume   that   $M(u_0)<M(\widetilde{Q})$. Then all maximal-lifespan solutions to  (\ref{E:cnls}) are globally well-posed and scatter.         
\end{con1}
Conjecture \ref{CJ} has now been completely resolved. 
 Killip, Tao and  Visan \cite{KTV} first solved Conjecture \ref{CJ} for    radial    initial data in dimension  $d=2$. 
  They introduced the concept of almost periodic solutions,  and under the assumption that Conjecture \ref{CJ} is not valid, they   employed the  concentration compactness/rigidity method of \cite{Kenig2006}  to construct an almost periodic solution.   Then they categorized the almost periodic solutions into three cases: soliton-like solution, double high-to-low frequency cascade, self-similar solution. Finally, they  completed the proof by excluding these three types of solutions. 
 Later,   Tao,   Visan and  Zhang \cite{TVZ} used frequency localized Morawetz estimates to exclude the existence of almost periodic solutions, thus solving Conjecture  \ref{CJ}  for  $\mu=1,d\ge3$, and radial    initial data. 
By applying fractional chain rules to overcome the  obstacle of the fractional nonlinearity and establishing a Gronwall-type inequality to deal with the frequency relations between the solution and the nonlinearity,   Killip,  Visan and  Zhang  \cite{KVZ} solved   Conjecture  \ref{CJ} for  $\mu=-1,d\ge3$.

Recently,  Dodson established long-time Strichartz estimates, combining    with interaction Morawetz estimates,  and through detailed  analysis on  the frequency scale function, completedy solved Conjecture  \ref{CJ}  for  $ d\ge1$,  and  general    initial data.    See \cite{Dodson1,Dodson3,Dodson2} for the defocusing case, and   \cite{Dodson4} for the focusing case. We list the above results in Table 1 and 2 as follows.

\begin{table}[h]
	\centering
	\caption{Defocusing case: $\mu=+1$}
	\begin{tabular}{|c|c|c|c|}
		\hline
		& $d=1$ & $d=2$ & $d \ge 3$ \\ \hline
		radial  & &Killip-Tao-Visan \cite{KTV}& Tao-Visan-Zhang  \cite{TVZ}\\ \hline
		 non radial & Dodson \cite{Dodson2} & Dodson \cite{Dodson3} & Dodson \cite{Dodson1}\\ \hline
	\end{tabular}
\end{table}
\begin{table}[h]
	\centering
	\caption{Focusing case: $\mu=-1$}
	\begin{tabular}{|c|c|c|c|}
		\hline
		& $d=1$ & $d=2$ & $d \ge 3$ \\ \hline
	 radial & &Killip-Tao-Visan \cite{KTV} & Killip-Visan-Zhang \cite{KVZ} \\ \hline
		non radial  & Dodson  \cite{Dodson4}& Dodson \cite{Dodson4}&Dodson \cite{Dodson4}\\ \hline
	\end{tabular}
\end{table}

Inspired by the work of \cite{Dodson4,Dodson3,Dodson2,KTV,KVZ,TVZ},   our aim of  this paper is to extend their results ($b=0$) to the inhomogeneous case ($b\neq0$).  
According to  the local well-posedness theory (see  Proposition  \ref{T:CP}),    all maximal-lifespan solutions to (\ref{nls}) with sufficiently small mass are  global and  scatter.  However, in the focusing case where  $\mu = -1$, it is well known that this assertion can fail for solutions with large mass. Let  $Q$ be the unique positive solution to 
\begin{equation}
	\Delta Q+|x|^{-b}Q^{\frac{4-2b}{d}+1}=Q.\label{E:ground state} 
\end{equation} 
The existence of  a  positive solution  to (\ref{E:ground state}) was proved in \cite{Genoud2008, GS2008}  for  dimension   $d\ge2$,  and for   $d=1$ in \cite{Genoud2010}. The issue of uniqueness  was addressed for dimension  $d\ge3$ by Yanagida \cite{Yanagida1991} (see also \cite{Genoud2008}),   for dimension  $d=2$ by   \cite{Genoud2011}, and for dimension   $d=1$ by  \cite{Toland1984}.
The unique positive solution  $Q$ is called the \emph{ground state}, and provides a stationary solution 
\begin{equation}
	u(t,x)=e^{it}Q(x)\notag
\end{equation} 
to (\ref{nls}) which blows up at infinity both forward and backward in time. 

Similar to   the case of the   mass-critical Schr\"odinger equation (\ref{E:cnls}),   it is  conjectured that  $M(Q)$ is the minimal mass obstruction to global well-posedness and scattering in the focusing case, and that no such obstruction exists in the defocusing case. In fact, there is compelling evidence supporting this conjecture when the initial data  $u_0$ possesses additional regularity.  For the defocusing equation, the global well-posedness for initial data in  $H^1_x(\mathbb{R} ^d)$ can be established by the contraction argument and the conservation of  mass and  energy, see e.g. \cite{Cazenave}.  Recall that the energy is given by
\begin{equation}
	E(u) =: \int_{\mathbb{R}^d}\left( \frac12|\nabla u(t,x)|^2 +\frac{d\mu }{2(d+2-b)} |x|^{-b} |u(t,x)|^{\frac{4-2b}{d} +2}\right)\,dx.\label{E:energy}
\end{equation}
The focusing equation with data in   $H^1_x$ was treated by Genoud \cite{Genoud2012}.   
By variational methods,  Genoud \cite{Genoud2012}  established the sharp Gagliardo-Nirenberg's inequality 
	\begin{equation}
	\int_{\mathbb{R} ^d} |x|^{-b}|f(x)|^{\frac{4-2b}{d} +2}dx\le \frac{d+2-b}{d}\left(\frac{ \|f\|_{L^2} }{ \|Q\|_{L^2} }\right)^{\frac{4-2b}{d}}\int _{\mathbb{R} ^d}|\nabla f(x)|^2dx. \label{E:GN1}
\end{equation}
 When  $u_0\in H^1$ and $\|u_0\|_{L^2} < \|Q\|_{L^2}$, this  inequality establishes an upper bound of  the 
 $H^1$ norm of the solution at all times during its existence.  Then  Genoud \cite{Genoud2012} proved global well-posedness of the focusing equation for initial data in  $H^1$ with mass less than that of the ground state. As    $e^{it}Q$ is a blow-up solution with mass   $ \|Q\|_{L^2}$, we can reasonably conjecture that $ \|Q\|_{L^2}$  is the scattering threshold  for the focusing equation.

 In this paper, we prove that this conjecture holds for $d\ge3$.  
 \begin{thm}\label{T:1}
 	Let $d\ge3$,  $0<b<\min \left\{2,\frac{d}{2}\right\} $ and  $\mu\in \left\{-1,+1 \right\}$.  In the focusing case,  we further assume   that  $M(u_0)<M(Q)$.  
 Then all maximal-lifespan solutions to (\ref{nls}) are global and scatter  in the sense (\ref{7151}).  
 \end{thm}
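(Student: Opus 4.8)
The plan is to follow the concentration--compactness/rigidity road map of Kenig--Merle, adapted to the Lorentz-space functional framework forced on us by the inhomogeneity $|x|^{-b}$. First I would set up the stability and local theory in the spaces $L_t^\gamma L_x^{\rho,2}$ (this is essentially Proposition~\ref{T:CP}), together with the sharp Gagliardo--Nirenberg inequality \eqref{E:GN1} to guarantee that, in the focusing case, the hypothesis $M(u_0)<M(Q)$ is preserved and keeps the nonlinear potential energy strictly subcritical. The heart of the argument is the contradiction hypothesis: if Theorem~\ref{T:1} fails, there is a critical mass $0<m_c\le M(Q)$ below which all solutions are global and scatter, and at which a minimal-mass blow-up solution exists. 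Using a \emph{linear profile decomposition} for bounded sequences in $L^2(\mathbb{R}^d)$ adapted to the Lorentz-space Strichartz inequality --- and here, crucially, the fact that $|x|^{-b}$ breaks translation and Galilean invariance means the only surviving non-compact symmetries are the scaling $\lambda^{d/2}u(\lambda^2 t,\lambda x)$ and time translation, so the decomposition is genuinely simpler than in the $b=0$ case --- one extracts this minimal-mass solution and shows its orbit $\{u(t)\}$ is precompact in $L^2$ modulo scaling. This yields an \emph{almost periodic} solution with a frequency scale function $N(t)$.

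The next step is to reduce the class of almost periodic solutions one must exclude. In the classical theory one splits into three scenarios (soliton, low-to-high frequency cascade, self-similar) according to the behaviour of $N(t)$; the key new observation exploited here is that the spatial decay of $|x|^{-b}$ at infinity, combined with the failure of Galilean invariance, \emph{forbids the almost periodic solution from escaping to spatial or frequency infinity}. Concretely, I would show that the nonlinear term $|x|^{-b}|u|^{(4-2b)/d}u$ cannot be supported near the origin or near infinity in a way consistent with being a genuine blow-up solution, so $N(t)$ stays in a compact range on any characteristic time interval; this collapses the trichotomy and in particular rules out the self-similar and cascade scenarios more cheaply than via additional regularity arguments. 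One still needs fractional chain/product rule estimates --- carried out in Lorentz spaces to handle the non-integer power $(4-2b)/d$ and the singular weight --- to propagate the needed negative-regularity/decay information, mirroring \cite{KVZ}.

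With the solution confined in frequency and space, the final step is a \emph{long-time Strichartz estimate} in Lorentz spaces, proved by a Littlewood--Paley decomposition and a bootstrap over the characteristic intervals $[0,N(t)^{-2}]$ of the almost periodic solution, yielding quantitative control of the high-frequency part of $u$ on long time intervals. Feeding this into a \emph{frequency-localized (classical) Morawetz estimate} --- one does \emph{not} need the interaction Morawetz identity precisely because the solution does not concentrate at spatial infinity, so the monotonicity formula for the weight $a(x)=|x|$ (or a smoothed truncation thereof) already produces a uniform-in-scale bound --- forces the left-hand side of the Morawetz inequality to vanish, which contradicts the fact that an almost periodic blow-up solution carries a fixed positive amount of mass at frequencies $\lesssim N(t)$. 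This contradiction shows $m_c=\infty$ (resp.\ $m_c\ge M(Q)$) and proves the theorem; the defocusing and focusing cases are handled uniformly since the sign of $\mu$ enters the Morawetz term with a favourable sign in the defocusing case and is controlled via \eqref{E:GN1} in the focusing case.

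The main obstacle I expect is the \emph{long-time Strichartz estimate in Lorentz spaces}: the usual bilinear-estimate and Littlewood--Paley machinery must be reworked because $|x|^{-b}$ is only in a Lorentz space $L^{d/b,\infty}$ rather than any Lebesgue space, so all the interpolation and fractional-calculus inputs --- H\"older, Sobolev embedding, the fractional product rule --- have to be recast with Lorentz exponents, and one must check that the weight's singularity at the origin does not destroy the gain in the high-frequency piece. The accompanying technical difficulty is making the profile decomposition compatible with a nonlinearity that is neither scaling-covariant in a translation-friendly way nor smooth; ensuring the nonlinear profiles approximate the true solution (the ``perturbation/stability'' step) requires the Lorentz-space stability theory to tolerate error terms measured in these nonstandard spaces.
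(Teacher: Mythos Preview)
Your overall roadmap---Lorentz-space local theory, profile decomposition, reduction to an almost periodic solution, then long-time Strichartz combined with a frequency-localized classical Morawetz estimate---matches the paper's strategy, and you correctly identify the key structural simplification: the inhomogeneity $|x|^{-b}$ forces the spatial and frequency centers of the almost periodic solution to vanish, $x(t)\equiv\xi(t)\equiv0$, which is exactly what allows the classical (rather than interaction) Morawetz estimate to close.

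However, there is a genuine gap in your reduction step. You write that the decay of $|x|^{-b}$ implies ``$N(t)$ stays in a compact range on any characteristic time interval; this collapses the trichotomy and in particular rules out the self-similar and cascade scenarios more cheaply than via additional regularity arguments.'' This is a confusion between two different parameters. What the inhomogeneity kills is the \emph{translation} parameters $x(t),\xi(t)$; it does \emph{not} constrain the \emph{scaling} parameter $N(t)$, because the equation retains full scaling invariance $u(t,x)\mapsto\lambda^{d/2}u(\lambda^2 t,\lambda x)$. Local constancy of $N(t)$ on characteristic intervals of length $\sim N(t)^{-2}$ is automatic for any almost periodic solution (this is just Lemma~\ref{L:Ndj}) and holds equally when $b=0$; it says nothing about the long-time behavior of $N(t)$ and in particular does not exclude $N(t)\to 0$ or $N(t)=t^{-1/2}$.

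Consequently the trichotomy does \emph{not} collapse, and the paper proceeds exactly as in the $b=0$ theory: after normalizing $N(t)\le 1$ one splits into the rapid frequency cascade $\int_0^\infty N(t)^3\,dt<\infty$ and the quasi-soliton $\int_0^\infty N(t)^3\,dt=\infty$. Your Morawetz argument is the right tool for the quasi-soliton, but the rapid cascade requires a separate argument: one uses the long-time Strichartz estimate (in the form \eqref{long time 2}) to bootstrap additional regularity $\|u\|_{L^\infty_t\dot H^s_x}\lesssim K^s$ for a range of $s$ up to and past $1$, then combines $N(t)\to 0$ with this regularity and the sharp Gagliardo--Nirenberg inequality to force $E(u(t))\to 0$, hence $u\equiv 0$. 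This is precisely the ``additional regularity'' step you hoped to avoid, and it cannot be skipped: without it the cascade scenario is not excluded.
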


In the proof of  Theorem \ref{T:1},  we perform    the  concentration compactness/rigidity  arguments in \cite{Kenig2006},  and   apply the the long time Strichartz estimates and frequency localized  Morawetz estimates techniques  of \cite{Dodson1,Dodson4}) within the framework of Lorentz spaces. 
 Before we state our argument, we need some definitions. 
\begin{defn}
	\text{(Symmetry group)}. For any phase $\theta \in \mathbb{R}/2\pi \mathbb{Z}$ and scaling parameter $\lambda > 0$, we define the unitary transformation $g_{\theta,   \lambda} : L_x^2(\mathbb{R}^d) \to L_x^2(\mathbb{R}^d)$ by the formula:
	\[
	[g_{\theta,  \lambda} f](x) := \frac{1}{\lambda^{d/2}} e^{i \theta}  f\left( \frac{x  }{\lambda} \right).
	\]
	We let $G$ be the collection of such transformations.  
\end{defn}
\begin{defn}\label{D:1}
	\text{(Almost periodicity modulo symmetries)}. A solution $u$ with lifespan $I$ is said to be \emph{almost periodic modulo $G$} if there exist  functions $N : I \to \mathbb{R}^+$  and a function $C : \mathbb{R}^+ \to \mathbb{R}^+$ such that
\begin{equation}
	\int _{|x|\ge C(\eta)/N(t)}|u(t,x)|^2dx\le \eta,\label{E:compact1}
\end{equation}   
and   
\begin{equation}
	\int_{|\xi|\ge C(\eta)N(t)}|\hat u(t,\xi)|^2d\xi\le \eta,\label{E:compact2}
\end{equation}
	for all $t \in I$ and $\eta > 0$. We refer to the function $N(t)$ as the \textit{frequency scale function} for the solution $u$,  and $C(\eta)$ as the \textit{compactness modulus function}.  
\end{defn}
\begin{rem}\label{REM}
	Compared with the definition of almost periodic solutions of the classical Schr\"odinger equation (see e.g. [Definition 1.14, \cite{KTV}]), there is no spatial center function   $x(t)$ and frequency center function  $\xi(t)$  in  (\ref{E:compact1}) and (\ref{E:compact2}).  
This comes from the fact that the inhomogeneity  $|x|^{-b}$  decays at infinity, which  prevents the concentration of  the almost periodic solution   at infinity in either physical or frequency space (see  Proposition \ref{P:embed} below).  	Therefore, the spatial and frequency translation functions  must be bounded, and by a    translation we can let $x(t)\equiv\xi(t)\equiv0$.   For the details, we refer to    Section \ref{S:3}.   
\end{rem}
Assuming for contradiction that Theorem \ref{T:1} is false,  we can identify an almost periodic solution. This solution must take one of the following three scenario:
\begin{thm}[Three special scenarios for blowup]\label{T:reduction} Suppose  Theorem~\ref{T:1} fails,  then there exists a maximal-lifespan solution   to (\ref{nls}), which is also almost periodic modulo  $G$.  Moreover, it blows up both forward and backward in time,  	 and in the focusing case also obeys  $M(u)<M(Q)$.

Furthermore, we can also ensure that the lifespan $I$ and the frequency scale function $N : I \to \mathbb{R}^+$ match one of the following three scenarios:
	\begin{enumerate}
		\item \text{(Soliton-like solution)} We have $I = \mathbb{R}$ and
		\[
		N(t) = 1 \quad \text{for all } t \in \mathbb{R}.
		\]		
		\item \text{(Double high-to-low frequency cascade)} We have $I = \mathbb{R}$,
		\[
		\liminf_{t \to -\infty} N(t) = \liminf_{t \to +\infty} N(t) = 0,
		\]
		and
		\[
		\sup_{t \in \mathbb{R}} N(t)\le1\quad \text{for all } t \in I.
		\]
		
		\item \text{(Self-similar solution)} We have $I = (0, +\infty)$ and
		\[
		N(t) = t^{-1/2} \quad \text{for all } t \in I.
		\]
	\end{enumerate}
\end{thm}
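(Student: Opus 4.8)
\noindent\emph{Proof strategy.} The plan is the standard concentration--compactness/rigidity reduction of \cite{Kenig2006,KTV}, carried out in the Lorentz-space functional setting of Definition~1.1. First I would introduce the critical mass
\[
  m_c := \sup\Bigl\{\,m>0 : \text{every maximal-lifespan solution } u \text{ with } M(u)\le m \text{ is global with } \|u\|_{L^\gamma_t L^{\rho,2}_x(\mathbb{R}\times\mathbb{R}^d)}<\infty \,\Bigr\}.
\]
By the small-data part of the local theory (Proposition~\ref{T:CP}) one has $m_c>0$. In the focusing case the stationary solution $e^{it}Q$ is a non-scattering solution of mass $M(Q)$, so $m_c\le M(Q)$ automatically; if Theorem~\ref{T:1} failed there would be a non-scattering solution of mass strictly below $M(Q)$, forcing $m_c<M(Q)$ (and $m_c<\infty$ in the defocusing case). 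The goal is then to produce a solution $u_c$ of mass exactly $m_c$ that is almost periodic modulo $G$, blows up in both time directions, and (in the focusing case) has $M(u_c)=m_c<M(Q)$; the three-scenario dichotomy is afterwards read off from its frequency scale function $N(t)$.

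The core step is the Palais--Smale condition. Take solutions $u_n$, on intervals containing $0$, with $M(u_n)\to m_c$ and $\|u_n\|_{L^\gamma_t L^{\rho,2}_x}\to\infty$ on both halves of their intervals (after a time translation one may symmetrise the two halves, and by an element of $G$ one may normalise the scale). Applying the linear profile decomposition for $\{e^{it\Delta}u_n(0)\}$ in the Lorentz framework, one writes $u_n(0)$ as a sum of rescaled, modulated, time-translated profiles plus an error small in the relevant Strichartz space. The crucial structural point — and the reason the symmetry group reduces to $G$ (phase and scaling only) — is Remark~\ref{REM} together with Proposition~\ref{P:embed}: because $|x|^{-b}$ decays, a bounded-mass family that concentrates at spatial infinity, or that is Galilean-boosted to frequency infinity, contributes nothing to the nonlinear dynamics, so no spatial-translation or frequency-translation parameters can occur. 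Feeding the profiles through the stability/perturbation theory and the nonlinear-profile evolutions — all of which exist globally and scatter because their masses are strictly below $m_c$ (here the focusing restriction $m_c<M(Q)$ is used) — the usual mass-decoupling/bootstrap argument forces exactly one profile to survive and the remainder to vanish. Its data, after applying elements of $G$, converges in $L^2$ along a subsequence to the initial data of a minimal-mass solution $u_c$ whose orbit $\{\,g\,u_c(t): g\in G,\ t\in I\,\}$ is precompact in $L^2$, i.e. $u_c$ is almost periodic modulo $G$. A further argument, using the minimality of $m_c$ and the local theory (if $u_c$ failed to blow up forward it would be global forward and scatter there, and restricting/reflecting then contradicts the blowup hypothesis), shows $u_c$ may be taken to blow up both forward and backward in time.

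The last step is the analysis of $N(t)$. One first establishes local constancy: there is $c_0>0$ with $N(t)\sim N(s)$ whenever $|t-s|\le c_0 N(t)^{-2}$ — otherwise, rescaling $u_c$ around a bad pair of times and passing to a limit as in the Palais--Smale step produces a solution violating almost periodicity at time $0$. One then splits into cases according to the lifespan and the global behaviour of $N$. If $u_c$ blows up in finite time in one time direction, then $N(t)\gtrsim(\sup I-t)^{-1/2}$ near that time (again by compactness), and rescaling $u_c$ around a sequence of times approaching the blowup time and extracting a limit yields a self-similar solution with $I=(0,\infty)$ and $N(t)=t^{-1/2}$, scenario~(3). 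If $u_c$ is global with $N$ bounded above and below, one modifies $N$ within a bounded factor (almost periodicity is insensitive to this) and rescales so that $N\equiv1$, scenario~(1). In the remaining case $u_c$ is global with $\liminf N=0$ at some end; rescaling around a sequence of times along which $N\to0$ and extracting a limit produces a global minimal-mass almost periodic solution with $\sup_t N(t)\le1$ and $\liminf_{t\to\pm\infty}N(t)=0$, scenario~(2). The remaining normalisations in (1)--(3) are routine.

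The main obstacle is the Palais--Smale step: constructing the linear profile decomposition and the accompanying stability theory in the Lorentz-space setting, and in particular showing that the singular weight $|x|^{-b}$ interacts harmlessly with concentrating and boosted profiles, so that neither translation nor Galilean parameters appear and the orbit of the critical element is precompact under $G$ alone. Once this is in place, the reduction to the three scenarios follows the scheme of \cite{KTV,TVZ}, and is in fact simpler here than in the classical non-radial problem precisely because $G$ contains no spatial or frequency translations.
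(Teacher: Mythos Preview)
Your proposal is correct and follows essentially the same route as the paper: define the critical mass, run a Palais--Smale argument via the linear profile decomposition in the Lorentz setting, invoke Proposition~\ref{P:embed} to kill profiles with nontrivial spatial or frequency translation parameters, use the stability result (Proposition~\ref{P:stab}) and mass decoupling to force a single profile with vanishing remainder, and then defer the three-enemies reduction to the argument of \cite{KTV}. The only organisational difference is that you invoke Proposition~\ref{P:embed} already at the multi-profile stage (to show translated/boosted profiles yield scattering nonlinear profiles), whereas the paper first reduces to a single profile using mass decoupling alone and only afterwards shows $(t_n,x_n,\xi_n)\equiv(0,0,0)$; both orderings are standard and equivalent, and your sketch of the $N(t)$ trichotomy is more explicit than the paper's bare citation of \cite{KTV}.
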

  Theorem \ref{T:reduction}  will be  proved  in Section \ref{S:3}.    Therefore, to prove Theorem \ref{T:1}, it is sufficient to demonstrate that an almost periodic solution to (\ref{nls}), which  satisfies  the properties outlined in Theorem \ref{T:reduction}, cannot exist.  By a time translation and possibly modifying the  $C(\eta)$  in (\ref{E:compact1})--(\ref{E:compact2}), we can assume that  $N(0)=1$ and  $N(t)\le1 $ for all  $t>0$.   
 Following the arguments in \cite{Dodson1,Dodson4,Dodson3,Dodson2}, it is sufficient to rule out 
 the rapid frequency cascade
 \begin{equation}
\int_0^\infty N(t)^3dt<\infty ,\label{E:rapid}
 \end{equation}
 and the quasi-soliton 
 \begin{equation}
   \int_0^\infty N(t)^3dt=\infty. \label{E:Quasi}
 \end{equation}

The main  ingredient used to exclude scenariors (\ref{E:rapid}) and (\ref{E:Quasi}) is the following long time Strichartz estimate, which is proved by  combining an induction on frequency estimate with the bilinear Strichartz estimate.
\begin{prop}\label{T:long time SZ}
	Suppose  $J\subset [0,\infty )$ is compact,  $d\ge3$, $0<b<\min \left\{2,\frac{d}{2}\right\} $,  $u$ is a minimal mass blow up solutions to (\ref{nls}) in the form of Theorem \ref{T:reduction},  $\int _JN(t)^3dt=K$. 
	Then 
		\begin{equation}
		\|P_{>N}u\|_{L^2_tL_x^{\frac{2d}{d-2},2}(J\times \mathbb{R} ^d)}\lesssim   \big(\tfrac{K}{N}\big)^{1/2}+\sigma _{J}\big(\tfrac{N}{2}\big),\label{long time 1}
	\end{equation} 
	where for  $N=2^k, k\in \mathbb {Z}$, the frequency envelopes 
	\begin{equation}
		\sigma _J(N)=:  \sum_{j=-\infty}^{\infty} 2^{-|j-k|/3} \inf_{t \in J} \left\| P_{>2^j} u(t) \right\|_{L_x^2(\mathbb{R}^d)}.\label{691}
	\end{equation}    	
	More generally, suppose  $\frac{1}{2}\le s< \min \left\{ \frac{d-2b}{2}+1,\frac{4-2b}{d}+1 \right\}$, then 
	\begin{equation}
			\|P_{>N}u \|_{L^2_tL_x^{\frac{2d}{d-2},2}(J\times \mathbb{R} ^d)}\lesssim_s \frac{K^{\frac{1}{2}}}{N^{s}} \|u\|_{L^\infty _t \dot H_x^{s-\frac{1}{2}}(J\times \mathbb{R} ^d)} +\sigma _J\big(\tfrac{N}{2}\big).\label{long time 2}
	\end{equation} 
\end{prop}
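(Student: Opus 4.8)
The plan is to prove the long time Strichartz estimate \eqref{long time 1} by an induction-on-frequency argument in Lorentz spaces, mirroring Dodson's scheme but adapted to the absence of Galilean symmetry (so no frequency center $\xi(t)$ appears) and to the fractional, inhomogeneous nonlinearity $|x|^{-b}|u|^{\frac{4-2b}{d}}u$. First I would fix a compact $J\subset[0,\infty)$ with $\int_J N(t)^3\,dt=K$ and, for $N=2^k$, decompose $J$ into a disjoint union of subintervals $J_\ell$ on each of which $\int_{J_\ell}N(t)^3\,dt\sim K/N\cdot N = \text{(small)}$; more precisely one chooses the partition so that $\int_{J_\ell}N(t)^3\,dt\le \varepsilon/N$ for a small absolute $\varepsilon$, giving roughly $KN/\varepsilon$ pieces of this type plus a bounded number where $N(t)$ is comparable to $N$. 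On each $J_\ell$ I apply the Duhamel formula at an interior time, use Strichartz estimates in the Lorentz-space framework (the admissible pair $L^2_t L^{\frac{2d}{d-2},2}_x$ dual to $L^2_t L^{\frac{2d}{d+2},2}_x$), and split the nonlinearity $P_{>N}F(u)$ according to whether all inputs are high frequency ($>N/8$, say) or at least one input is low frequency ($\le N/8$). The low-low-high interactions are handled by the bilinear Strichartz estimate, which gains a power of the ratio of frequencies and produces the $(K/N)^{1/2}$ gain after summing over $J_\ell$; the genuinely high-frequency part is reabsorbed into the left side for $N$ large, closing the induction, while the residual piece is exactly what is collected into the frequency envelope $\sigma_J(N/2)$ via \eqref{691}.

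The two structural inputs I would lean on are: (i) the inhomogeneous factor $|x|^{-b}$ must be controlled in a Lorentz space — since $|x|^{-b}\in L^{d/b,\infty}(\mathbb{R}^d)$ — so every Hölder step splitting off $|x|^{-b}$ is performed in Lorentz spaces using the Lorentz-Hölder inequality $L^{p_1,q_1}\cdot L^{p_2,q_2}\hookrightarrow L^{p,q}$ with $\frac1p=\frac1{p_1}+\frac1{p_2}$, $\frac1q\le\frac1{q_1}+\frac1{q_2}$; and (ii) the fractional power $\frac{4-2b}{d}$ of the nonlinearity forces the use of fractional chain/product rules (as in \cite{KVZ}) when one needs to move a Littlewood–Paley projection or a derivative past $F(u)$, which is why the range of $s$ in \eqref{long time 2} is capped by $\frac{4-2b}{d}+1$ (the nonlinearity is only that smooth) and by $\frac{d-2b}{2}+1$ (so that $|x|^{-b}$ times the relevant power still lies in a usable space). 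The bilinear estimate itself — for $e^{it\Delta}P_{>N}f$ against $e^{it\Delta}P_{\le M}g$ with $M\ll N$ — is the standard one on $\mathbb{R}^d$ and does not see $|x|^{-b}$, since the inhomogeneity enters only through the Duhamel term, not the linear flow.

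For the general estimate \eqref{long time 2} I would run the same induction but at regularity $s-\tfrac12$: instead of bounding $\|P_{>N}u\|_{L^\infty_t L^2_x}$ trivially by the mass, one keeps the factor $\|u\|_{L^\infty_t\dot H^{s-1/2}_x(J)}$ explicit, so each frequency-localized Duhamel estimate carries a weight $N^{-(s-1/2)}$, and the partition of $J$ into $\sim K N$ pieces combines with the Strichartz $L^2_t$ integrability to yield the factor $K^{1/2}/N^{s}$ after the geometric sum over dyadic frequencies $>N$. The frequency envelope term $\sigma_J(N/2)$ is unchanged. I would first establish \eqref{long time 1}, then obtain \eqref{long time 2} by the same template, noting that the constraint $s<\min\{\frac{d-2b}{2}+1,\frac{4-2b}{d}+1\}$ is exactly what is needed for the fractional-Leibniz step and the Lorentz-Hölder placement of $|x|^{-b}$ to be admissible.

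The main obstacle I expect is controlling the interaction between the Littlewood–Paley projection $P_{>N}$ and the nonlinearity $F(u)=|x|^{-b}|u|^{\frac{4-2b}{d}}u$: unlike the homogeneous case, one cannot cleanly Leibniz the weight $|x|^{-b}$ through $P_{>N}$, and the nonlinearity is not smooth, so the Gronwall-type argument of \cite{KVZ} relating the frequency support of $u$ to that of $F(u)$ must be redone with the $|x|^{-b}$ weight carried in a Lorentz norm throughout. Concretely, the delicate point is showing that the high-frequency part of $F(u)$ is controlled by (a small multiple of) $\|P_{>N/8}u\|$ in the right Lorentz–Strichartz norm plus lower-order tails — this requires a careful paraproduct/fractional-chain-rule decomposition in which $|x|^{-b}$ is always paired, via Lorentz–Hölder, with a factor whose Lorentz exponent leaves room, and it is here that the precise bookkeeping (rather than any single hard inequality) is the crux of the argument.
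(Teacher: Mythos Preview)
Your high-level strategy is right --- induction on frequency, bilinear Strichartz for the low-high interaction, Lorentz-space H\"older to place $|x|^{-b}\in L^{d/b,\infty}$, and fractional chain rules for the nonsmooth power --- and you correctly explain why the range of $s$ in \eqref{long time 2} is capped. But two concrete points in your execution do not work as written.

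First, the partition. If you cut $J$ into $\sim KN/\varepsilon$ pieces $J_\ell$ with $\int_{J_\ell}N(t)^3\,dt\le\varepsilon/N$ and apply Duhamel/Strichartz on each, the \emph{linear} contributions $\|P_{>N}u(t_\ell)\|_{L^2}$ cannot be summed: after the $\ell^2$-sum you get $(KN/\varepsilon)^{1/2}\|P_{>N}u\|_{L^\infty_tL^2_x}$, which blows up with $N$ and is not $\sigma_J(N/2)$. The paper avoids this by applying Strichartz once on the full $J$ at a single time $t_0$ chosen to minimize $\|P_{>N}u(t_0)\|_{L^2}$; that single term is what feeds into $\sigma_J$. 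A partition into local-constancy intervals $J_k$ (with $\|u\|_{L^\gamma_tL^{\rho,2}_x(J_k)}=1$, \emph{independent} of $N$) is used only inside the nonlinear term so that $\|u\|_{S^0_*(J_k)}\lesssim1$ is available for the bilinear estimate; the $\ell^2$-sum then uses $\sum_k N(J_k)\approx K$. The recursion is closed not by ``reabsorbing for $N$ large'' but by a supremum argument: one sets $c_\varepsilon=\sup_N f(N)/[(K/N)^{1/2}+\sigma_J(N/2)+\varepsilon]$, shows the recursive inequality forces $c_\varepsilon\lesssim\eta^{-1/2}+\eta^{1/2}c_\varepsilon$, and takes $\eta$ small.

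Second, the bilinear estimate gives $M^{(d-1)/2}/N^{1/2}$ with $M\sim N(J_k)$, not $(M/N)^{1/2}$, so there is an excess $N(J_k)^{(d-2)/2}$ to absorb before summing. The paper's device is a physical-space cutoff $\chi(t,x)$ supported on $|x|\lesssim C_0/N(t)$: one writes $u_{\le C_0N(t)}=\chi u_{\le C_0N(t)}+(1-\chi)u_{\le C_0N(t)}$, discards the second piece as small by the spatial compactness \eqref{E:compact1}, and in the first piece places $\chi$ in $L^{\frac{2d}{d-2},2}_x$ (or the analogous Lorentz exponent when $\tfrac{4-2b}{d}\le1$) to gain exactly $N(J_k)^{-(d-2)/2}$. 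Without this, your Lorentz--H\"older bookkeeping cannot close: in the case $\tfrac{4-2b}{d}>1$ (i.e.\ $d=3$, $b<\tfrac12$), after peeling off one bilinear pair the remaining $\tfrac{4-2b}{d}-1$ low-frequency factors must sit in $L^\infty_tL^p_x$ with $\tfrac1p=\tfrac{1-b}{4-2b-d}>1$, which is inadmissible. So the ``precise bookkeeping'' you correctly flag as the crux is resolved by this physical cutoff, which is the main technical point of Lemma~\ref{L:long time SZ} and is absent from your outline.
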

The above long time Strichartz estimate will be used to prove that if  $u(t,x)$ is an almost periodic solution to  (\ref{nls}) and  $\int_0^\infty N(t)^3dt<\infty $, then  $u(t,x)$ possesses additional regularity. Combining this regularity with the conservation of the energy, we can  preclude the possibility of the rapid frequency cascade (\ref{E:rapid}).  For the details, we refer to Section \ref{S:5}.

To preclude  the quasi-soliton (\ref{E:Quasi}), we utilize a frequency truncated   Morawetz estimate as in \cite{Dodson4}. These estimates scale like  $\int_0^TN(t)^3dt$ and are actually bounded below by a constant times   $\int_0^TN(t)^3dt$.  On the other hand, by the compactness (\ref{E:compact2}), these estimates can also be bounded above by a small constant times   $\int_0^TN(t)^3dt$. Therefore, we obtain a contradiction by  letting  $T$ sufficiently large. In this process, we need to  truncate to the low frequencies since the Morawetz action scales like  $ \|u(t)\|_{\dot H^{1/2}_x}^2 $. Then the long time Strichartz estimate (\ref{long time 1}) allows us to control the errors that arise from  frequency truncation.   Therefore,   the combinition of  the frequency truncated   Morawetz estimate and the long time Strichartz estimate allows us to exclude the existence of  quasi-soliton (\ref{E:Quasi}), thereby completing the proof of Theorem \ref{T:1}.   For the details, we refer to Section \ref{S:6}.

 Compared the inhomogeneous Schr\"odinger equation (\ref{nls}) with the classical mass-critical Schr\"odinger equation  (\ref{E:cnls}), the  main differences for the inhomogeneous case ($b>0$) are that the presence of the inhomogeneity  $|x|^{-b}$ creates a nontrivial singularity at the origin, and    breaks the translation symmetry  as well as the Galilean invariance of the equation, which makes the establishment of the   profile decomposition and long time Strichartz estimates more difficult.   To address these difficulties, we adopt the approach in \cite{Aloui} and  perform the arguments of \cite{Dodson1,Dodson4,Kenig2006}) within the framework of Lorentz spaces. 
 On the other hand, the presence of \(|x|^{-b}\) also brings some advantages. In fact, as noted in  Remark \ref{REM}, the decay of \(|x|^{-b}\) at infinity prevents the concentration of  the almost periodic solution   at infinity in either physical or frequency space.   Therefore, the spatial and frequency translation functions  must be bounded, and by a    translation we can let $x(t)\equiv\xi(t)\equiv0$. 
 This  effectively places us in the same scenario as in the radial case but without the need for a radial assumption.
 As a benefit of this, we can use classical Morawetz estimates, rather than interaction Morawetz estimates, to exclude the existence of the quasi-soliton.

Finally, we would like to point out that,  with only trivial modifications,  the proof presented in our paper can also be  applied  to the case \( b = 0 \), thereby recovering the scattering results in  \cite{TVZ,KVZ}.

The rest of the paper is  organized as follows:
In Section \ref{S:2}, we  record some harmonic analysis tools in Lorentz spaces,  recall well-posedness and the stability results of (\ref{nls}),   and construct scattering solutions away from the origin. 
In Section \ref{S:3}, we demonstrate that if Theorem \ref{T:1} fails, we can  find the almost periodic solutions as  described in Theorem \ref{T:reduction}.  In Section \ref{S:4}, we  prove the long time Strichartz estimates.  
In Section \ref{S:5}, we use the  long time Strichartz estimates to  rule out the rapid cascade solutions.
In Section \ref{S:6}, we preclude the quasi-soliton solutions, thereby completing the proof of Theorem \ref{T:1}. 
Finally, in the Appendix,   we provide the construction of  $N_m(t)$ and  present a detailed proof of the frequency error estimates  in Section    \ref{S:6}. 
\section{Preliminaries}\label{S:2}
\subsection{Some notations}
We use the standard notation for mixed Lebesgue space-time norms and Sobolev spaces. 
We write  $A\lesssim  B$ to denote  $A\le CB$ for some  $C>0$. If  $A\lesssim B$ and  $B\lesssim A$, then we write  $A\approx B$.  We write  $A\ll B$   to denote  $A\le cB$ for some  small $c>0$.  If  $C$ depends upon some additional parameters, we will indicate this with subscripts; for example,  $X\lesssim _u Y$ denotes that  $X\le C_u Y$ for some  $C_u$ depending on  $u$.    We use  $O(Y)$ to denote any quantity  $X$  such that  $|X|\lesssim  Y$. We use     $\langle x \rangle $ to denote  $(1+|x|^2)^{\frac{1}{2}}$.        We write $L_t^q L_x^r$ to denote the Banach space with norm
\[
\|u\|_{L_t^q L_x^r (\mathbb{R} \times \mathbb{R}^d)} := \left( \int_{\mathbb{R}} \left( \int_{\mathbb{R}^d} |u(t, x)|^r \, dx \right)^{q/r} \, dt \right)^{1/q},
\]
with the usual modifications when $q$ or $r$ are equal to infinity, or when the domain $\mathbb{R} \times \mathbb{R}^d$ is replaced by spacetime slab such as $I \times \mathbb{R}^d$. When $q = r$ we abbreviate $L_t^q L_x^q$ as $L_{t,x}^q$. 

Let $\phi(\xi)$ be a radial bump function supported in the ball $\{\xi \in \mathbb{R}^d : |\xi| \leq 2\}$ and equal to  $1$ on the ball $\{\xi \in \mathbb{R}^d : |\xi| \leq 1\}$. For each number $N > 0$, we define the Fourier multipliers
\begin{equation}
	\widehat{P_{\leq N} f}(\xi) =: \phi(\xi/N) \widehat{f}(\xi), \quad \widehat{P_{> N} f}(\xi) =: (1 - \phi(\xi/N)) \widehat{f}(\xi),\notag
\end{equation}
\begin{equation}
	\widehat{P_N f}(\xi) = :\psi(\xi/N) \widehat{f}(\xi) =: (\phi(\xi/N) - \phi(2\xi/N)) \widehat{f}(\xi).\notag
\end{equation}
We similarly define $P_{< N}$ and $P_{\geq N}$.  For convenience of notation, let $u_N =: P_{N}u$, $u_{\leq N} =: P_{\leq N}u$, and $u_{>N} =: P_{>N}u$.
We will normally use these multipliers when $M$ and $N$ are dyadic numbers (that is, of the form $2^n$ for some integer $n$); in particular, all summations over $N$ or $M$ are understood to be over dyadic numbers.  
 
\subsection{Lorentz spaces and  nonlinear estimates}\label{S:2.2}
Let $f$ be a measurable function on $\mathbb{R}^d$. The distribution function of $f$ is defined by
\begin{equation}
	d_f(\lambda)=: |\{x\in \mathbb{R}^d : |f(x)|>\lambda\}|, \quad \lambda>0,\notag
\end{equation}
where $|A|$ is the Lebesgue measure of a set $A$ in $\mathbb{R}^d$. The decreasing rearrangement of $f$ is defined by
\begin{equation}
	f^*(s)=: \inf \left\{ \lambda>0 : d_f(\lambda)\leq s\right\}, \quad s>0.\notag
\end{equation}

\begin{defn}[Lorentz spaces] ~\\
	Let $0<r<\infty$ and $0<\rho\leq \infty$. The Lorentz space $L^{r,\rho}(\mathbb{R}^d)$ is defined by
	\begin{equation}
		L^{r,\rho}(\mathbb{R}^d)=: \left\{ f \text{ is measurable on } \mathbb{R}^d : \|f\|_{L^{r,\rho}}<\infty\right\}, \notag
	\end{equation}
	where
	\[
	\|f\|_{L^{r,\rho}}=: \left\{
	\begin{array}{cl}
		( \frac{\rho}{r} \int_0^\infty (s^{1/r} f^*(s))^\rho \frac{1}{s}ds)^{1/\rho} &\text{ if } \rho <\infty, \\
		\sup_{s>0} s^{1/r} f^*(s) &\text{ if } \rho=\infty.
	\end{array}
	\right.
	\]
\end{defn}

We collect the following basic properties of $L^{r,\rho}(\mathbb{R}^d)$ in the following lemmas.
\begin{lem}[Properties of Lorentz spaces \cite{ONeil}] \ 
	\begin{itemize}
		\item For $1<r<\infty$, $L^{r,r}(\mathbb{R}^d) \equiv L^r(\mathbb{R}^d)$ and by convention, $L^{\infty,\infty}(\mathbb{R}^d)= L^\infty(\mathbb{R}^d)$.
		\item For $1<r<\infty$ and $0<\rho_1<\rho_2\leq \infty$, $L^{r,\rho_1}(\mathbb{R}^d)\subset L^{r,\rho_2}(\mathbb{R}^d)$. 
		\item For $1<r<\infty$, $0<\rho \leq \infty$, and $\theta>0$, $\||f|^\theta\|_{L^{r,\rho}} = \|f\|^\theta_{L^{\theta r, \theta \rho}}$.
		\item For $b>0$, $|x|^{-b} \in L^{\frac{d}{b},\infty}(\mathbb{R}^d)$ and $\||x|^{-b}\|_{L^{\frac{d}{b},\infty}} = |B(0,1)|^{\frac{b}{d}}$, where $B(0,1)$ is the unit ball of $\mathbb{R}^d$.
	\end{itemize}
\end{lem}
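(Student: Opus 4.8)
The four items are standard facts about the decreasing rearrangement, and my plan is to derive all of them from two elementary observations: the equimeasurability of $f$ and $f^\ast$, and the transformation rules $d_{|f|^\theta}(\lambda)=d_f(\lambda^{1/\theta})$ and $(|f|^\theta)^\ast(s)=(f^\ast(s))^\theta$. (If one prefers, all four may simply be cited from \cite{ONeil}; below I sketch direct arguments.)

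For the first item I would take $\rho=r$ in the definition, note that the normalizing factor $\rho/r$ becomes $1$ and that raising $s^{1/r}$ to the $r$-th power cancels the weight $1/s$, so that $\|f\|_{L^{r,r}}^r=\int_0^\infty (f^\ast(s))^r\,ds$; the layer-cake identity $\int_0^\infty (f^\ast(s))^r\,ds=r\int_0^\infty\lambda^{r-1}d_f(\lambda)\,d\lambda=\|f\|_{L^r}^r$ (valid because $f$ and $f^\ast$ are equimeasurable) then finishes it, while $L^{\infty,\infty}=L^\infty$ is taken by convention, consistent with $\sup_{s>0}f^\ast(s)=f^\ast(0^+)=\|f\|_{L^\infty}$. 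For the second item (nesting in the fine index) I would first prove $\|f\|_{L^{r,\infty}}\le\|f\|_{L^{r,\rho_1}}$ by using that $f^\ast$ is non-increasing: restricting the integral defining $\|f\|_{L^{r,\rho_1}}^{\rho_1}$ to $(0,s)$ and bounding $f^\ast$ below by $f^\ast(s)$ there produces exactly $(s^{1/r}f^\ast(s))^{\rho_1}$ once one evaluates $\int_0^s t^{\rho_1/r}\,dt/t$. When $\rho_2<\infty$ I would then split $(s^{1/r}f^\ast(s))^{\rho_2}=(s^{1/r}f^\ast(s))^{\rho_2-\rho_1}(s^{1/r}f^\ast(s))^{\rho_1}$, estimate the first factor by $\|f\|_{L^{r,\infty}}^{\rho_2-\rho_1}$, and integrate, obtaining $\|f\|_{L^{r,\rho_2}}\le(\rho_2/\rho_1)^{1/\rho_2}\|f\|_{L^{r,\rho_1}}$, which in particular yields the claimed inclusion.

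For the third item the key point is the identity $(|f|^\theta)^\ast(s)=(f^\ast(s))^\theta$, which follows from $d_{|f|^\theta}(\lambda)=d_f(\lambda^{1/\theta})$ together with the definition of the decreasing rearrangement; substituting it into the definition of $\|\cdot\|_{L^{r,\rho}}$, rewriting $s^{1/r}(f^\ast(s))^\theta=\bigl(s^{1/(\theta r)}f^\ast(s)\bigr)^\theta$, and using $\tfrac{\theta\rho}{\theta r}=\tfrac{\rho}{r}$, one reads off $\||f|^\theta\|_{L^{r,\rho}}^{\rho}=\|f\|_{L^{\theta r,\theta\rho}}^{\theta\rho}$ (and the analogous identity of suprema when $\rho=\infty$). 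For the fourth item I would compute directly: for $f(x)=|x|^{-b}$ one has $d_f(\lambda)=\bigl|\{\,|x|<\lambda^{-1/b}\,\}\bigr|=|B(0,1)|\,\lambda^{-d/b}$, hence $f^\ast(s)=|B(0,1)|^{b/d}\,s^{-b/d}$, and therefore $\|f\|_{L^{d/b,\infty}}=\sup_{s>0}s^{b/d}f^\ast(s)=|B(0,1)|^{b/d}$; in particular $|x|^{-b}\in L^{d/b,\infty}(\mathbb{R}^d)$. The only point requiring any care is the bookkeeping of the normalization constant $\rho/r$ in the definition of the Lorentz quasi-norm, together with the equimeasurability step in the first item; there is no genuine obstacle here.
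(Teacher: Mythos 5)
Your four arguments are all correct, and they are the standard ones: layer-cake plus equimeasurability for $L^{r,r}=L^r$, the decrease of $f^\ast$ to pass from $L^{r,\rho_1}$ to $L^{r,\infty}$ and then interpolation of exponents for the nesting, the rearrangement identity $(|f|^\theta)^\ast=(f^\ast)^\theta$ for the power rule, and an explicit computation of $d_f$ and $f^\ast$ for $|x|^{-b}$. The paper itself offers no proof here and simply cites O'Neil, so there is nothing to compare against; your proposal supplies a complete and correct elementary verification of facts the paper takes as known.
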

\begin{lem}[H\"older's inequality \cite{ONeil}]  \ 
	\begin{itemize}
		\item  Let $1<r, r_1, r_2<\infty$ and $1\leq \rho, \rho_1, \rho_2 \leq \infty$ be such that
		\[
		\frac{1}{r}=\frac{1}{r_1}+\frac{1}{r_2}, \quad \frac{1}{\rho} \leq \frac{1}{\rho_1}+\frac{1}{\rho_2}.
		\]
		Then for any $f \in L^{r_1, \rho_1}(\mathbb{R}^d)$ and $g\in L^{r_2, \rho_2}(\mathbb{R}^d)$
		\[
		\|fg\|_{L^{r,\rho}} \lesssim \|f\|_{L^{r_1, \rho_1}} \|g\|_{L^{r_2,\rho_2}}.
		\]
		\item Let  $1<r_1,r_2<\infty $ and  $1\le \rho_1,\rho_2\le \infty $  be such that 
		\begin{equation}
			1=\frac{1}{r_1}+\frac{1}{r_2},\quad 1\le \frac{1}{\rho_1}+\frac{1}{\rho_2}.\notag
		\end{equation}
		Then  for any $f \in L^{r_1, \rho_1}(\mathbb{R}^d)$ and $g\in L^{r_2, \rho_2}(\mathbb{R}^d)$
		\begin{equation}
			\|fg\|_{L^1}\lesssim  \|f\|_{L^{r_1, \rho_1}} \|g\|_{L^{r_2,\rho_2}}.\notag
		\end{equation}
	\end{itemize}
\end{lem}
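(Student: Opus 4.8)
The plan is to reduce the claim to a one–dimensional weighted Hölder inequality for decreasing rearrangements, using that the Lorentz quasi–norm of a function depends only on its rearrangement. (This is the classical product inequality of O'Neil; the sketch below is self–contained given the properties of Lorentz spaces already recorded above.)

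\emph{Step 1: a rearrangement inequality for products.} I would first prove that for measurable $f,g$ and every $t>0$,
\[
(fg)^*(t)\ \le\ f^*(t/2)\,g^*(t/2).
\]
Indeed, if $|f(x)g(x)|>f^*(t/2)\,g^*(t/2)$ then $|f(x)|>f^*(t/2)$ or $|g(x)|>g^*(t/2)$, so $\{|fg|>f^*(t/2)g^*(t/2)\}\subset\{|f|>f^*(t/2)\}\cup\{|g|>g^*(t/2)\}$; since $d_f$ is nonincreasing and right–continuous one has $d_f(f^*(t/2))\le t/2$, and likewise for $g$, hence $d_{fg}(f^*(t/2)g^*(t/2))\le t$, which is the claim. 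The degenerate case $f^*(t/2)g^*(t/2)=0$ is handled by approximation, decomposing $\{|fg|>\mu\}$ for $\mu>0$ into the countable union of the sets $\{2^k<|f|\le 2^{k+1}\}\cap\{|g|>\mu 2^{-k-1}\}$, $k\in\mathbb Z$.

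\emph{Step 2: weighted Hölder and the index bookkeeping.} By definition $\|h\|_{L^{r,q}}\approx\|s^{1/r}h^*(s)\|_{L^q((0,\infty),\,ds/s)}$ for every second index $q$, with constant depending only on $r,q$; applying this to $h=fg$, then Step 1 and the substitution $s\mapsto s/2$, I get $\|fg\|_{L^{r,q}}\lesssim\|s^{1/r}f^*(s)g^*(s)\|_{L^q(ds/s)}$. Now set $\tilde\rho$ by $\tfrac1{\tilde\rho}=\tfrac1{\rho_1}+\tfrac1{\rho_2}$; since $\tfrac1{\tilde\rho}\ge\tfrac1\rho$ we have $\tilde\rho\le\rho$, so by the inclusion $L^{r,\tilde\rho}\subset L^{r,\rho}$ recorded above it suffices to bound $\|fg\|_{L^{r,\tilde\rho}}$. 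Writing $s^{1/r}=s^{1/r_1}s^{1/r_2}$ (using $\tfrac1r=\tfrac1{r_1}+\tfrac1{r_2}$) and applying the usual Hölder inequality on $((0,\infty),ds/s)$ with exponents $\rho_1,\rho_2,\tilde\rho$ gives
\[
\big\|s^{1/r}f^*g^*\big\|_{L^{\tilde\rho}(ds/s)}\ \le\ \big\|s^{1/r_1}f^*\big\|_{L^{\rho_1}(ds/s)}\big\|s^{1/r_2}g^*\big\|_{L^{\rho_2}(ds/s)}\ \approx\ \|f\|_{L^{r_1,\rho_1}}\|g\|_{L^{r_2,\rho_2}},
\]
which together with the previous display proves the first bullet. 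For the second ($L^1$) bullet I would run the same computation starting from $\|fg\|_{L^1}=\int_0^\infty(fg)^*(t)\,dt=2\int_0^\infty(fg)^*(2s)\,ds\le 2\int_0^\infty\big(s^{1/r_1}f^*(s)\big)\big(s^{1/r_2}g^*(s)\big)\,\tfrac{ds}{s}$ (now $\tfrac1{r_1}+\tfrac1{r_2}=1$), then Hölder on $((0,\infty),ds/s)$ with a pair $\tilde\rho_1\ge\rho_1,\ \tilde\rho_2\ge\rho_2$ satisfying $\tfrac1{\tilde\rho_1}+\tfrac1{\tilde\rho_2}=1$ (available since $\rho_i\ge1$ and $\tfrac1{\rho_1}+\tfrac1{\rho_2}\ge1$), followed by the inclusions $L^{r_i,\rho_i}\subset L^{r_i,\tilde\rho_i}$.

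\emph{Main obstacle.} There is no deep difficulty here; essentially all the content sits in the product rearrangement inequality of Step 1 and in the elementary bookkeeping that upgrades the hypothesis $\tfrac1\rho\le\tfrac1{\rho_1}+\tfrac1{\rho_2}$ to an equality via the Lorentz inclusion. The only places requiring mild care are the degenerate cases in Step 1 (when $f^*(t/2)g^*(t/2)=0$) and the endpoint situations where some $\rho_i=\infty$, in which the $L^\infty(ds/s)$ norm is a supremum and the one–dimensional Hölder inequality degenerates accordingly.
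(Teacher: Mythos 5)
Your proof is correct, and since the paper cites O'Neil's theorem without reproducing a proof, there is no in-paper argument to compare against; your write-up is essentially the standard O'Neil argument that the citation points to. Step 1 (the pointwise rearrangement bound $(fg)^*(t)\le f^*(t/2)g^*(t/2)$ via subadditivity of the distribution function and $d_f(f^*(s))\le s$) and Step 2 (one-dimensional Hölder on $((0,\infty),ds/s)$ after writing $s^{1/r}=s^{1/r_1}s^{1/r_2}$, combined with the Lorentz nesting $L^{r,\tilde\rho}\subset L^{r,\rho}$ to absorb the slack in the second indices) are exactly the classical route. Two minor simplifications you could make: in the degenerate case $f^*(t/2)=0$ one has directly $|\{fg\ne0\}|\le|\{f\ne0\}|\le t/2<t$, hence $(fg)^*(t)=0$, so the countable-union decomposition is unnecessary; and for the second bullet you can take $\tilde\rho_1=\rho_1$ and $\tilde\rho_2=(1-1/\rho_1)^{-1}$ explicitly rather than invoking existence. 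Otherwise the argument is complete and accurate, including the $\rho_i=\infty$ endpoints where the one-variable Hölder step degenerates to pulling out a supremum.
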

\begin{lem}[{Interpolation in Lorentz spaces} \cite{Dao}]\label{CZ}
	Let  $1<p_0<p<p_1<\infty $,  $r>0$ and $0<\theta<1$ satisfy  $\frac{1}{p}=\frac{1-\theta}{p_0}+\frac{\theta}{p_1}$. For every  $f\in L^{p_0,\infty }(\mathbb{R}^d)\cap L^{p_1,\infty }(\mathbb{R}^d)$, we have
	\begin{equation}
		\|f\|_{L^{p,r}(\mathbb{R} ^d)}\lesssim  \|f\|_{L^{p_0,\infty }(\mathbb{R} ^d)}^{1-\theta} \|f\|_{L^{p_1,\infty }(\mathbb{R} ^d)}^{\theta}.\notag
	\end{equation}
\end{lem}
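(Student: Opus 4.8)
The plan is to argue directly from the integral formula for the Lorentz (quasi-)norm in terms of the decreasing rearrangement, rather than quoting an abstract interpolation theorem. Write $A_0:=\|f\|_{L^{p_0,\infty}}$ and $A_1:=\|f\|_{L^{p_1,\infty}}$; if either vanishes then $f=0$ almost everywhere and the estimate is trivial, so assume $A_0,A_1\in(0,\infty)$. The definition of the weak-type norms gives the two pointwise bounds
\[
f^*(s)\le A_0\,s^{-1/p_0},\qquad f^*(s)\le A_1\,s^{-1/p_1}\qquad\text{for all }s>0 .
\]
Since $p_0<p_1$, the first bound is the better one for large $s$ and the second for small $s$, so the idea is to split the integral defining $\|f\|_{L^{p,r}}^r$ at the balance point and use each bound where it is sharpest.

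Set $\gamma:=\tfrac1{p_0}-\tfrac1{p_1}>0$. The hypothesis $\tfrac1p=\tfrac{1-\theta}{p_0}+\tfrac{\theta}{p_1}$ rearranges to $\tfrac1p-\tfrac1{p_1}=(1-\theta)\gamma$ and $\tfrac1{p_0}-\tfrac1p=\theta\gamma$, both strictly positive precisely because $p_0<p<p_1$. With the threshold $s_*:=(A_0/A_1)^{1/\gamma}$, I would write
\[
\|f\|_{L^{p,r}}^r=\frac rp\int_0^\infty\big(s^{1/p}f^*(s)\big)^r\,\frac{ds}{s}=\frac rp\Big(\int_0^{s_*}+\int_{s_*}^{\infty}\Big),
\]
use $f^*(s)\le A_1 s^{-1/p_1}$ on $(0,s_*)$, where the integrand is $A_1^r s^{r(1-\theta)\gamma-1}$ and hence integrable at the origin, and $f^*(s)\le A_0 s^{-1/p_0}$ on $(s_*,\infty)$, where the integrand is $A_0^r s^{-r\theta\gamma-1}$ and hence integrable at infinity. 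Carrying out the two elementary integrals yields
\[
\|f\|_{L^{p,r}}^r\le\frac rp\Big(\tfrac1{r(1-\theta)\gamma}\,A_1^r s_*^{r(1-\theta)\gamma}+\tfrac1{r\theta\gamma}\,A_0^r s_*^{-r\theta\gamma}\Big).
\]

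Finally, substituting $s_*=(A_0/A_1)^{1/\gamma}$ makes both terms equal to $A_0^{r(1-\theta)}A_1^{r\theta}$, so that $\|f\|_{L^{p,r}}^r\lesssim_{r,p,p_0,p_1}A_0^{r(1-\theta)}A_1^{r\theta}$; taking $r$-th roots gives the claimed bound $\|f\|_{L^{p,r}}\lesssim\|f\|_{L^{p_0,\infty}}^{1-\theta}\|f\|_{L^{p_1,\infty}}^{\theta}$. (The case $r=\infty$, if one wants it, follows even more directly by multiplying the two pointwise bounds raised to the powers $1-\theta$ and $\theta$.) There is no genuine obstacle in this argument: the only points that require attention are checking that the two truncated integrals converge — which is exactly where the strict inequalities $p_0<p<p_1$ enter — and the bookkeeping of exponents that makes the choice of $s_*$ balance the two contributions, a one-line optimization.
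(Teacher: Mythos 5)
Your argument is correct, and the computations check out: the exponent bookkeeping $\tfrac1p-\tfrac1{p_1}=(1-\theta)\gamma$, $\tfrac1{p_0}-\tfrac1p=\theta\gamma$ is right, both truncated integrals converge for exactly the reasons you state, and the choice $s_*=(A_0/A_1)^{1/\gamma}$ does make the two contributions coincide at the value $A_0^{r(1-\theta)}A_1^{r\theta}$. The paper itself does not prove this lemma; it simply cites it to Dao--D\'iaz--Nguyen, so there is no internal proof to compare against. What you have written is the standard elementary real-interpolation argument for passing from two weak-type bounds to a strong Lorentz bound at a strictly intermediate exponent (essentially a hands-on version of the $K$-functional computation for the couple $(L^{p_0,\infty},L^{p_1,\infty})$). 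It is self-contained, short, and arguably preferable to a bare citation in a paper that otherwise develops its Lorentz-space toolkit explicitly; the only thing the citation buys is brevity and the companion statement for $r=\infty$, which you also note follows immediately by multiplying the two weak-type bounds raised to the complementary powers.
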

\begin{lem}[Gagliardo-Nirenberg's  inequality  in Lorentz space]
	\label{L:GN}
	Let  $1 < q, r \le \infty$, $1 \le q_1, r_1 \le \infty$ and  $0 < \sigma < s < \infty$ be  such that 
	\begin{equation}
			\frac{1}{p} = \left( 1 - \frac{\sigma}{s} \right) \frac{1}{q} + \frac{\sigma}{sr}, \quad \frac{1}{p_1} = \left( 1 - \frac{\sigma}{s} \right) \frac{1}{q_1} + \frac{\sigma}{sr_1}.\notag
	\end{equation}
	Then  
\begin{equation}
	\| |\nabla |^\sigma f \|_{L^{p,p_1}(\mathbb{R}^d)} \lesssim  \| f\|_{L^{q,q_1}(\mathbb{R}^d)} ^{1 - \frac{\sigma}{s}} \||\nabla |^s f \|_{L^{r,r_1}(\mathbb{R}^d)}^{\frac{\sigma}{s}}.\notag
\end{equation}
\end{lem}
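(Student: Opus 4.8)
## Proof Proposal for Lemma \ref{L:GN}

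The plan is to reduce the vector-valued (Lorentz) statement to the classical scalar Gagliardo–Nirenberg inequality in Lebesgue spaces via a real-interpolation argument, exploiting that the Lorentz spaces $L^{p,\rho}$ are precisely the real-interpolation spaces between Lebesgue spaces. First I would recall the homogeneous Sobolev/Gagliardo–Nirenberg estimate in the Lebesgue scale: for $1<\tilde q,\tilde r<\infty$ and $0<\sigma<s$ with $\frac{1}{\tilde p}=(1-\tfrac{\sigma}{s})\frac{1}{\tilde q}+\frac{\sigma}{s\tilde r}$, one has $\||\nabla|^\sigma f\|_{L^{\tilde p}}\lesssim \|f\|_{L^{\tilde q}}^{1-\sigma/s}\||\nabla|^s f\|_{L^{\tilde r}}^{\sigma/s}$; this is standard (e.g. by writing $|\nabla|^\sigma$ as a composition of $|\nabla|^s$ raised to the power $\sigma/s$ in the sense of complex interpolation of the operator $|\nabla|^{-\sigma}$ acting between the relevant spaces, or directly via Littlewood–Paley and Bernstein). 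I would then upgrade this to Lorentz targets by the following device: fix $f$ and consider the sublinear operator $T_f : h \mapsto |\nabla|^\sigma\big(\text{something}\big)$ — but more cleanly, I would use the characterization of $L^{p,\rho}$ as a real-interpolation space and a bilinear interpolation theorem.

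Concretely, the key steps are as follows. Step 1: introduce the exponent $\theta:=\sigma/s\in(0,1)$, so the hypotheses read $\frac1p=(1-\theta)\frac1q+\theta\frac1r$ and $\frac1{p_1}=(1-\theta)\frac1{q_1}+\theta\frac1{r_1}$. Step 2: observe that the map $\Lambda: (g,F)\mapsto |\nabla|^\sigma G$, where $G$ is reconstructed from $g=f$ in the first slot and $F=|\nabla|^s f$ in the second slot (formally $G = |\nabla|^{-s}F$, consistent with $g$), is a well-defined bilinear-type object; the honest way is to fix a Littlewood–Paley decomposition $f=\sum_N f_N$ and estimate $|\nabla|^\sigma f_N \approx N^\sigma f_N$, then split at a frequency $N_0$ (to be optimized): $\||\nabla|^\sigma f\|_{L^{p,p_1}} \lesssim \sum_{N\le N_0} N^\sigma\|f_N\|_{L^{p,p_1}} + \sum_{N>N_0} N^{\sigma-s}\|\,|\nabla|^s f_N\|_{L^{p,p_1}}$. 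Step 3: use Bernstein's inequality in Lorentz spaces (which follows from the Lebesgue Bernstein inequality together with the real-interpolation description of $L^{p,\rho}$, or directly from Young's inequality for the convolution with the smooth Littlewood–Paley kernel and the Lorentz Young inequality) to pass from $L^{p,p_1}$ on a single frequency block to $L^{q,q_1}$ (for the low part) and to $L^{r,r_1}$ (for the high part), picking up powers $N^{d(\frac1q-\frac1p)}$ and $N^{d(\frac1r-\frac1p)}$ respectively. Step 4: sum the two geometric series, using that the net power of $N$ in the low sum is positive and in the high sum is negative precisely because of the two scaling relations imposed on $(p,p_1)$ — this is where both hypotheses are used — obtaining a bound $\lesssim N_0^{\alpha}\|f\|_{L^{q,q_1}} + N_0^{\alpha-s}\|\,|\nabla|^s f\|_{L^{r,r_1}}$ for the appropriate exponent $\alpha$ satisfying $\alpha=(\text{low scaling gap})$ and $\alpha-s=-(\text{high scaling gap})$; a short computation confirms $\alpha = d(\tfrac1q-\tfrac1p)$ and that $\alpha/s=1-\theta$-type identities hold. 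Step 5: optimize in $N_0$ (choose $N_0$ so the two terms balance), which yields exactly $\||\nabla|^\sigma f\|_{L^{p,p_1}}\lesssim \|f\|_{L^{q,q_1}}^{1-\theta}\||\nabla|^s f\|_{L^{r,r_1}}^{\theta}$.

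An alternative, perhaps slicker route I would keep in reserve: invoke Lemma \ref{CZ} (interpolation in Lorentz spaces) and the Lebesgue-scale Gagliardo–Nirenberg inequality together. Since $L^{q,q_1}$ sits between two Lebesgue spaces $L^{q_0}$ and $L^{q_2}$ and similarly for the other norms, and since the Gagliardo–Nirenberg inequality holds at all the Lebesgue endpoints, one can run an analytic-interpolation (Stein) argument on the analytic family of operators $z\mapsto e^{z^2}|\nabla|^{\sigma(z)}$ with $\sigma(\mathrm{Re}\,z=0)$ and $\sigma(\mathrm{Re}\,z=1)$ chosen so that the boundary estimates are the two Lebesgue Gagliardo–Nirenberg inequalities, and read off the Lorentz estimate on the vertical line via the real-interpolation description of $L^{p,\rho}$ as $(L^{p_0},L^{p_1})_{\theta,\rho}$.

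The main obstacle will be bookkeeping the four independent summability exponents $q_1,r_1,p_1$ (and the auxiliary $p_1$ on both blocks) consistently: Bernstein and Young in Lorentz spaces are only "lossless" up to the second index under the condition $\frac{1}{\rho}\le\frac{1}{\rho_1}+\frac{1}{\rho_2}$ of the Lorentz Hölder/Young inequalities quoted above, so I must check that the second-index relation $\frac1{p_1}=(1-\theta)\frac1{q_1}+\theta\frac1{r_1}$ is exactly what makes the geometric summation and the final multiplicative combination go through without loss. Verifying this compatibility — and handling the borderline cases where $q_1$, $r_1$, or $p_1$ equals $1$ or $\infty$ (so $L^{q,q_1}$ may be a weak-$L^q$ or genuine $L^q$ space) — is the only genuinely delicate point; everything else is the standard Littlewood–Paley split-and-optimize scheme transplanted verbatim into the Lorentz setting.
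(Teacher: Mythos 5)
The paper does not contain a proof of this lemma; the displayed ``proof'' is a one-line citation to \cite[Proposition 3.2]{Wei2023}, so there is no argument of the authors' own to compare against and your proof must stand on its own. Your main route (Littlewood--Paley split at a \emph{fixed} frequency $N_0$, Bernstein on each dyadic block, then optimize in $N_0$) has a genuine gap at Step~3. Bernstein's inequality on a frequency shell only \emph{raises} the Lebesgue index: $\|P_N g\|_{L^{p_2,\rho_2}}\lesssim N^{d(1/p_1-1/p_2)}\|P_N g\|_{L^{p_1,\rho_1}}$ requires $p_1\le p_2$. To run your scheme you need simultaneously $\|f_N\|_{L^{p,p_1}}\lesssim N^{d(1/q-1/p)}\|f_N\|_{L^{q,q_1}}$ (forces $q\le p$) and $\||\nabla|^s f_N\|_{L^{p,p_1}}\lesssim N^{d(1/r-1/p)}\||\nabla|^s f_N\|_{L^{r,r_1}}$ (forces $r\le p$). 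But the convexity relation $\frac1p=(1-\frac{\sigma}{s})\frac1q+\frac{\sigma}{s}\frac1r$ places $p$ strictly between $q$ and $r$ whenever $q\neq r$, so exactly one of these two Bernstein applications points the wrong way, and there is no ``reverse Bernstein'' for frequency-localized functions. (A smaller slip: your Step~4 misstates the high-frequency exponent; the two summation exponents are $\theta A$ and $-(1-\theta)A$ with $A=s+d(\frac1q-\frac1r)$, which differ by $A$, not by $s$ — the optimization happens to still produce the exponent $\theta=\sigma/s$, but the bookkeeping as written is incorrect.) The lemma is invoked in this paper only with $p=q=r=\rho$, for which your split-at-$N_0$ argument is actually fine, but the general statement you set out to prove is not reached this way.

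The standard repair is to make the split \emph{pointwise} rather than at a fixed $N_0$. Since $N^{-\sigma}|\nabla|^\sigma P_N$ and $N^{s-\sigma}|\nabla|^{\sigma-s}P_N$ have, up to $L^1$-normalized rescaling, fixed Schwartz kernels, one gets $||\nabla|^\sigma P_N f(x)|\lesssim \min\{N^\sigma Mf(x),\ N^{\sigma-s}M(|\nabla|^s f)(x)\}$ with $M$ the Hardy--Littlewood maximal operator. Summing over $N$ and optimizing the crossover $N_0=N_0(x)$ yields the Hedberg-type pointwise bound $||\nabla|^\sigma f(x)|\lesssim (Mf(x))^{1-\sigma/s}\,\bigl(M(|\nabla|^s f)(x)\bigr)^{\sigma/s}$. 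Only now take $L^{p,p_1}$ norms: the power identity $\||g|^\alpha\|_{L^{r,\rho}}=\|g\|_{L^{\alpha r,\alpha\rho}}^\alpha$ plus H\"older in Lorentz spaces (this is precisely where the second-index relation $\frac1{p_1}=(1-\frac{\sigma}{s})\frac1{q_1}+\frac{\sigma}{s}\frac1{r_1}$ enters) give $\||\nabla|^\sigma f\|_{L^{p,p_1}}\lesssim\|Mf\|_{L^{q,q_1}}^{1-\sigma/s}\|M(|\nabla|^s f)\|_{L^{r,r_1}}^{\sigma/s}$, and the maximal function is bounded on $L^{q,q_1}$ and $L^{r,r_1}$ for $q,r>1$. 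I would also flag that your ``slicker alternative'' via Stein interpolation of an analytic family $z\mapsto|\nabla|^{\sigma(z)}$ is problematic as phrased: Gagliardo--Nirenberg is not a bound for a fixed (multi)linear operator but a product bound involving two different norms of the \emph{same} input $f$, and such nonlinear inequalities do not interpolate directly by the three-lines lemma.
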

\begin{proof}
See \cite[Proposition 3.2]{Wei2023}.	
\end{proof}

\begin{lem}[Convolution inequality \cite{ONeil}] ~
	Let $1<r,r_1, r_2<\infty$ and $1\leq \rho, \rho_1, \rho_2 \leq \infty$ be such that
	\[
	1+\frac{1}{r} =\frac{1}{r_1}+\frac{1}{r_2}, \quad \frac{1}{\rho}\leq \frac{1}{\rho_1}+\frac{1}{\rho_2}.
	\]
	Then 	for any $f\in L^{r_1, \rho_1}(\mathbb{R}^d)$ and $g\in L^{r_2, \rho_2}(\mathbb{R}^d)$ 
	\[
	\|f\ast g\|_{L^{r,\rho}(\mathbb{R} ^d)} \lesssim \|f\|_{L^{r_1,\rho_1}(\mathbb{R} ^d)} \|g\|_{L^{r_2,\rho_2}(\mathbb{R} ^d)}. 
	\]
\end{lem}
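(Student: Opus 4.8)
\smallskip
\noindent\emph{Proposal for the convolution inequality.} This is O'Neil's theorem, and I would follow the argument of \cite{ONeil}; the route has three stages. First, a reduction: since $|f\ast g|\le|f|\ast|g|$ pointwise and the decreasing rearrangement is monotone under pointwise a.e.\ domination, $(f\ast g)^\ast\le(|f|\ast|g|)^\ast$, while replacing $f,g$ by $|f|,|g|$ leaves every Lorentz norm in the statement unchanged; so I may assume $f,g\ge0$. Second, the crux: O'Neil's pointwise rearrangement inequality
\[
(f\ast g)^{\ast\ast}(t)\ \le\ t\,f^{\ast\ast}(t)\,g^{\ast\ast}(t)\ +\ \int_t^\infty f^\ast(s)\,g^\ast(s)\,ds\qquad(t>0),\qquad h^{\ast\ast}(t):=\tfrac1t\!\int_0^t h^\ast(s)\,ds,
\]
which one proves by splitting $f$ and $g$ at the heights $f^\ast(t)$ and $g^\ast(t)$, expanding $f\ast g$ into the four resulting pieces, and estimating each by Young's inequality on Lebesgue spaces (after the routine reduction to simple functions and a monotone passage to the limit). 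If one wanted a self-contained account, this pointwise inequality is the only delicate point; here I would simply quote \cite{ONeil}, and I expect it to be the main obstacle.

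Third, one upgrades the pointwise bound to the Lorentz estimate. It is enough to prove it when $\tfrac1\rho=\tfrac1{\rho_1}+\tfrac1{\rho_2}=:\tfrac1{\tilde\rho}$: since then $\tilde\rho\le\rho$, the Lorentz inclusion $L^{r,\tilde\rho}(\mathbb{R}^d)\subset L^{r,\rho}(\mathbb{R}^d)$ recalled above recovers the general hypothesis $\tfrac1\rho\le\tfrac1{\rho_1}+\tfrac1{\rho_2}$. By the very definition of the norm, $\|h\|_{L^{r,\tilde\rho}}\approx\big\|t^{1/r}h^\ast(t)\big\|_{L^{\tilde\rho}(dt/t)}$, and $(f\ast g)^\ast\le(f\ast g)^{\ast\ast}$ together with O'Neil's inequality reduce everything to controlling
\[
\big\|t^{1/r}\cdot t\,f^{\ast\ast}(t)g^{\ast\ast}(t)\big\|_{L^{\tilde\rho}(dt/t)}\quad\text{and}\quad\Big\|t^{1/r}\!\int_t^\infty f^\ast(s)g^\ast(s)\,ds\Big\|_{L^{\tilde\rho}(dt/t)}.
\]
For the second I would invoke the dual Hardy inequality for $h\mapsto\int_t^\infty h(s)\,ds$ with power weight $t^{1/r}$, available exactly because $1/r>0$ (i.e.\ $r<\infty$); it bounds the quantity by $\big\|t^{1+1/r}f^\ast(t)g^\ast(t)\big\|_{L^{\tilde\rho}(dt/t)}=\big\|(t^{1/r_1}f^\ast)(t^{1/r_2}g^\ast)\big\|_{L^{\tilde\rho}(dt/t)}$, using $1+\tfrac1r=\tfrac1{r_1}+\tfrac1{r_2}$, and H\"older in $t$ with $\tfrac1{\tilde\rho}=\tfrac1{\rho_1}+\tfrac1{\rho_2}$ then finishes it with $\|f\|_{L^{r_1,\rho_1}}\|g\|_{L^{r_2,\rho_2}}$. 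For the first, the same exponent identity gives $t^{1/r}\cdot t\,f^{\ast\ast}(t)g^{\ast\ast}(t)=(t^{1/r_1}f^{\ast\ast})(t^{1/r_2}g^{\ast\ast})$, so H\"older reduces matters to the estimates $\big\|t^{1/r_i}f^{\ast\ast}\big\|_{L^{\rho_i}(dt/t)}\lesssim\big\|t^{1/r_i}f^\ast\big\|_{L^{\rho_i}(dt/t)}\approx\|f\|_{L^{r_i,\rho_i}}$ for $i=1,2$, which are exactly Hardy's inequality for the averaging operator $h\mapsto\tfrac1t\int_0^t h$ with weight $t^{1/r_i}$, valid because $r_i>1$. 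Summing the two contributions and invoking the embedding yields the claim, with a constant depending only on the exponents. Thus every hypothesis is used: $r_1,r_2>1$ for the averaging-Hardy step, $r<\infty$ for the dual-Hardy step, and $\tfrac1\rho\le\tfrac1{\rho_1}+\tfrac1{\rho_2}$ for H\"older together with the Lorentz embedding; the only genuinely non-routine ingredient is O'Neil's pointwise rearrangement inequality.
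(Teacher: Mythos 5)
The paper does not prove this lemma; it is quoted directly from O'Neil \cite{ONeil}, so there is no in-paper argument to compare against. Your sketch faithfully reproduces O'Neil's standard route — reduction to nonnegative $f,g$, the pointwise rearrangement inequality $(f\ast g)^{\ast\ast}(t)\le t\,f^{\ast\ast}(t)g^{\ast\ast}(t)+\int_t^\infty f^\ast g^\ast$, reduction to $\tfrac1\rho=\tfrac1{\rho_1}+\tfrac1{\rho_2}$ via nesting, and then Hardy/dual-Hardy plus H\"older on the two pieces, with $r_i>1$ and $r<\infty$ used exactly where you say — and is correct.
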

Using the above convolution inequality,  one can easily obtain    Bernstein's inequality in Lorentz spaces.
\begin{lem}[Bernstein's inequality]\label{L:Bernstein}
	Let  $N>0$,  $1<r_1<r_2<\infty $ and  $1\le \rho_1\le\rho_2\le \infty $. Then 
	\begin{equation}
		 \|P_{N}f\|_{L^{r_2,\rho_2}(\mathbb{R} ^d)}\lesssim  N^{d(\frac{1}{r_1}-\frac{1}{r_2})} \|f\|_{L^{r_1,\rho_1}(\mathbb{R} ^d)}.\notag  
	\end{equation}
\end{lem}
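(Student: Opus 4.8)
The plan is to realize the Littlewood--Paley piece $P_N f$ as a convolution with a rescaled Schwartz kernel and then apply O'Neil's convolution inequality in Lorentz spaces (the lemma stated just above).

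First, writing $\widehat{P_N f}(\xi)=\psi(\xi/N)\widehat f(\xi)$ on the Fourier side, on the physical side we have $P_N f = K_N\ast f$, where $K_N(x)=N^d\check\psi(Nx)$ and $\check\psi$ is the inverse Fourier transform of the annular bump $\psi(\eta)=\phi(\eta)-\phi(2\eta)$. Since $\psi\in C_c^\infty(\mathbb{R}^d)$, the kernel $\check\psi$ is a Schwartz function, so $\check\psi\in L^{q,\infty}(\mathbb{R}^d)$ for every $q\in(1,\infty)$. Define $q$ by
\[
\frac{1}{q}=1-\Big(\frac{1}{r_1}-\frac{1}{r_2}\Big).
\]
Because $1<r_1<r_2<\infty$ we have $0<\frac{1}{r_1}-\frac{1}{r_2}<1$, so $1<q<\infty$, and also $1-\tfrac1q=\tfrac1{r_1}-\tfrac1{r_2}$.

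Next, I would invoke the convolution inequality above with the kernel $K_N$ placed in $L^{q,\infty}$ and $f$ in $L^{r_1,\rho_1}$, targeting $L^{r_2,\rho_2}$: the first-index relation $1+\tfrac1{r_2}=\tfrac1q+\tfrac1{r_1}$ holds by the choice of $q$, while the second-index relation $\tfrac1{\rho_2}\le\tfrac1\infty+\tfrac1{\rho_1}=\tfrac1{\rho_1}$ holds exactly because $\rho_1\le\rho_2$. This gives
\[
\|P_N f\|_{L^{r_2,\rho_2}(\mathbb{R}^d)}=\|K_N\ast f\|_{L^{r_2,\rho_2}(\mathbb{R}^d)}\lesssim \|K_N\|_{L^{q,\infty}(\mathbb{R}^d)}\,\|f\|_{L^{r_1,\rho_1}(\mathbb{R}^d)}.
\]
It remains to evaluate $\|K_N\|_{L^{q,\infty}}$. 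A direct computation with the distribution function shows $d_{K_N}(\lambda)=N^{-d}d_{\check\psi}(\lambda N^{-d})$, hence $(K_N)^*(s)=N^d(\check\psi)^*(N^d s)$, and therefore $\|K_N\|_{L^{q,\infty}}=N^{d(1-1/q)}\|\check\psi\|_{L^{q,\infty}}=N^{d(1/r_1-1/r_2)}\|\check\psi\|_{L^{q,\infty}}$. Since $\|\check\psi\|_{L^{q,\infty}}\lesssim1$, combining the last two displays yields the claim.

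There is no genuine obstacle in this argument; the only point requiring care is the Lorentz (second) exponent bookkeeping, namely that one is allowed to place the kernel in the \emph{weak} space $L^{q,\infty}$ — this is precisely where the hypothesis $\rho_1\le\rho_2$ enters. One could instead put $K_N\in L^{q,1}$ (still valid as $\check\psi$ is Schwartz), but $L^{q,\infty}$ suffices and keeps the scaling identity $\|K_N\|_{L^{q,\infty}}=N^{d(1-1/q)}\|\check\psi\|_{L^{q,\infty}}$ completely transparent.
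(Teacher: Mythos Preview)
Your proof is correct and follows exactly the approach the paper indicates: write $P_Nf=K_N\ast f$ with $K_N(x)=N^d\check\psi(Nx)$, apply O'Neil's convolution inequality in Lorentz spaces (the lemma immediately preceding this one), and read off the factor $N^{d(1/r_1-1/r_2)}$ from the scaling of $\|K_N\|_{L^{q,\infty}}$. The paper gives no further details beyond the sentence ``Using the above convolution inequality, one can easily obtain Bernstein's inequality in Lorentz spaces,'' so your write-up is in fact more explicit than the original.
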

Next,  in Lemma \ref{L:sobolev}--Lemma \ref{L:6141},  we  recall the Sobolev embedding, product rule, and chain rule in Lorentz spaces. We start by introducing the following definition.
\begin{defn}
	Let  $s\ge0,1<r<\infty $ and  $1\le\rho\le \infty $. We define the Sobolev-Lorentz spaces 
	\begin{equation}
		W^sL^{r,\rho}(\mathbb{R} ^d)=:\left\{f\in \mathcal{S}'(\mathbb{R} ^d):(1-\Delta )^{s/2}f\in L^{r,\rho}(\mathbb{R} ^d) \right\},\notag
	\end{equation}  
	\begin{equation}
		\dot W^sL^{r,\rho}(\mathbb{R} ^d)=:\left\{f\in \mathcal{S}'(\mathbb{R} ^d):(-\Delta )^{s/2}f\in L^{r,\rho}(\mathbb{R} ^d) \right\},\notag
	\end{equation}
	where  $\mathcal{S}'(\mathbb{R} ^d)$ is the space of tempered distributions on  $\mathbb{R} ^d$ and 
	\begin{equation}
		(1-\Delta )^{s/2}f=\mathcal{F}^{-1}\left((1+|\xi|^2)^{s/2}\mathcal{F} (f)\right),\qquad (-\Delta )^{s/2}f=\mathcal{F} ^{-1}(|\xi|^s\mathcal{F} (f))\notag
	\end{equation}  
	with  $\mathcal{F} $ and  $\mathcal{F} ^{-1}$ the Fourier and its inverse Fourier transforms respectively. The spaces  $W^sL^{r,\rho}(\mathbb{R} ^d)$ and  $\dot W^sL^{r,\rho}(\mathbb{R} ^d)$ are endowed respectively with the norms
	\begin{equation}
		\|f\|_{W^sL^{r,\rho}} = \|f\|_{L^{r,\rho}}+ \|(-\Delta )^{s/2}f\|_{L^{r,\rho}},\qquad  \|f\|_{\dot WL^{r,\rho}} = \|(-\Delta )^{s/2}f\|_{L^{r,\rho}}.\notag   
	\end{equation}    
	For simplicity, when  $s=1$ we write  $WL^{r,\rho}:=W^1L^{r,\rho}(\mathbb{R} ^d)$  and  $\dot WL^{r,\rho}:=\dot W^1L^{r,\rho}(\mathbb{R} ^d)$.  
	\end{defn}

\begin{lem}[Sobolev embedding \cite{DinhKe}]\label{L:sobolev}
	Let  $1<r<\infty ,1\le \rho\le \infty $ and  $0<s<\frac{d}{r}$. Then 
	\begin{equation}
		\|f\|_{L^{\frac{dr}{d-sr},\rho}(\mathbb{R} ^d)}\lesssim   \|(-\Delta )^{s/2}f\|_{L^{r,\rho}(\mathbb{R} ^d)} \quad\text{for any}\quad   f\in \dot W^sL^{r,\rho}(\mathbb{R} ^d).\notag
	\end{equation}  
\end{lem}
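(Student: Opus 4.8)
The plan is to reduce the estimate to the Hardy--Littlewood--Sobolev inequality on Lorentz spaces, which will itself be a one-line consequence of O'Neil's convolution inequality recorded above. Set $g := (-\Delta)^{s/2} f \in L^{r,\rho}(\mathbb{R}^d)$. First I would use that, since $r > 1$ and $0 < s < d/r$ force $0 < s < d$, the operator $(-\Delta)^{-s/2}$ is, up to an explicit positive constant $c_{d,s}$, convolution with the Riesz kernel $|x|^{-(d-s)}$, so that $f = c_{d,s}\, |x|^{-(d-s)} * g$ in $\mathcal{S}'(\mathbb{R}^d)$. For Schwartz $f$ this is the classical Riesz potential formula; for general $f$ in the Sobolev--Lorentz space, the subcriticality $s < d/r$ guarantees that $f$ is recovered from $g$ with no polynomial ambiguity, so the identity persists.

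Next I would invoke the property $|x|^{-(d-s)} \in L^{\frac{d}{d-s},\infty}(\mathbb{R}^d)$ (valid since $0 < d-s < d$) and apply the convolution inequality with $r_1 = \tfrac{d}{d-s}$, $\rho_1 = \infty$, $r_2 = r$, $\rho_2 = \rho$, and output exponents $(q,\rho)$ determined by $1 + \tfrac1q = \tfrac{d-s}{d} + \tfrac1r$ and $\tfrac1\rho \le \tfrac1\infty + \tfrac1\rho$. A short computation gives $q = \tfrac{dr}{d-sr}$, and one verifies $1 < \tfrac{d}{d-s} < \infty$, $1 < r < \infty$, $1 < q < \infty$ (using $0 < sr < d$) and $1 \le \rho \le \infty$, so the hypotheses hold. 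This produces
\begin{equation}
	\|f\|_{L^{\frac{dr}{d-sr},\rho}} = c_{d,s} \big\| |x|^{-(d-s)} * g \big\|_{L^{q,\rho}} \lesssim \big\| |x|^{-(d-s)} \big\|_{L^{\frac{d}{d-s},\infty}} \|g\|_{L^{r,\rho}} \lesssim \|(-\Delta)^{s/2}f\|_{L^{r,\rho}},\notag
\end{equation}
which is the assertion.

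An alternative, perhaps tidier, route is real interpolation: pick $1 < p_0 < r < p_1 < d/s$, use the classical Sobolev embeddings $\dot W^s L^{p_i}(\mathbb{R}^d) \hookrightarrow L^{q_i}(\mathbb{R}^d)$ with $q_i = \tfrac{dp_i}{d-sp_i}$, and apply the functor $(\,\cdot\,,\,\cdot\,)_{\theta,\rho}$ with $\tfrac1r = \tfrac{1-\theta}{p_0} + \tfrac{\theta}{p_1}$, using that it commutes with $(-\Delta)^{s/2}$ on the Lorentz scale (each $\dot W^s L^{p,\rho}$ being a retract of $L^{p,\rho}$) and that $(L^{q_0},L^{q_1})_{\theta,\rho} = L^{\frac{dr}{d-sr},\rho}$. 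In either approach the one genuinely delicate point --- which I expect to be the main obstacle --- is the justification that the Riesz potential (resp. that real interpolation) interacts correctly with $\dot W^s L^{r,\rho}$ precisely when $\rho = \infty$, where $L^{r,\infty}$ fails to be separable and the usual density of $\mathcal{S}(\mathbb{R}^d)$ is unavailable; this is handled exactly as in the Lebesgue case, by working with tempered distributions and exploiting $s < d/r$ to remove the polynomial freedom.
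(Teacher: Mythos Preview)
Your proof is correct. The paper does not actually supply a proof of this lemma; it simply states the result with a citation to \cite{DinhKe}. Your first argument via the Riesz potential representation and O'Neil's convolution inequality is the standard proof and, since the convolution inequality is already recorded in the paper's preliminaries, it is the natural route here; the exponent check and the range conditions you verify are all correct. The real-interpolation alternative you sketch is also valid and is essentially how such Lorentz-space Sobolev embeddings are often derived in the literature. The caveat you raise about $\rho=\infty$ is real but minor: the convolution inequality itself holds for $\rho=\infty$, so once one accepts the Riesz potential identity at the level of tempered distributions (which the subcriticality $s<d/r$ guarantees, as you note), no density argument is needed.
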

\begin{lem}[Product rule \cite{Cruz}]\label{L:leibnitz}
	Let  $s\ge0,1<r,r_1,r_2,r_3,r_4,<\infty $, and   $1\le\rho,\rho_1,\rho_2,\rho_3,\rho_4\le\infty $ be such that 
	\begin{equation}
		\frac{1}{r}=\frac{1}{r_1}+\frac{1}{r_2}=\frac{1}{r_3}+\frac{1}{r_4},\qquad \frac{1}{\rho}=\frac{1}{\rho_1}+\frac{1}{\rho_2}=\frac{1}{\rho_3}+\frac{1}{\rho_4}.\notag
	\end{equation} 
	Then for any  $f\in \dot W^sL^{r_1,\rho_1}(\mathbb{R} ^d)\cap L^{r_3,\rho_3}(\mathbb{R} ^d)$  and  $g\in \dot W^sL^{r_4,\rho_4}(\mathbb{R} ^d)\cap L^{r_2,\rho_2}(\mathbb{R} ^d)$,  
	\begin{equation}
		\|(-\Delta )^{s/2}(fg)\|_{L^{r,\rho}} \lesssim   \|(-\Delta )^{s/2}f\|_{L^{r_1,\rho_1}} \|g\|_{L^{r_2,\rho_2}}+ \|f\|_{L^{r_3,\rho_3}} \|(-\Delta )^{s/2}g\|_{L^{r_4,\rho_4}}.\notag    
	\end{equation}
\end{lem}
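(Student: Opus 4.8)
Since the fractional Leibniz (Kato--Ponce) inequality in Lorentz spaces is already available in the literature, the most economical option is to simply invoke \cite{Cruz}; if a self-contained argument is wanted, the plan is to run the classical Coifman--Meyer paraproduct argument, replacing each Lebesgue-space input by its Lorentz analogue. Writing the Littlewood--Paley decomposition and splitting into the three paraproducts,
\[
fg=\sum_{N}P_{N}f\,P_{\le N/8}g+\sum_{N}P_{\le N/8}f\,P_{N}g+\sum_{N}\sum_{M\sim N}P_{N}f\,P_{M}g=:\Pi_{hl}+\Pi_{lh}+\Pi_{hh},
\]
one would estimate the three pieces separately and then add them.

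For $\Pi_{hl}$ the summand $P_{N}f\,P_{\le N/8}g$ has Fourier support in $\{|\xi|\sim N\}$, so by almost orthogonality and the square-function characterisation of $L^{r,\rho}(\mathbb{R}^{d})$ (valid for $1<r<\infty$, $1\le\rho\le\infty$),
\[
\|(-\Delta)^{s/2}\Pi_{hl}\|_{L^{r,\rho}}\lesssim\Big\|\Big(\sum_{N}N^{2s}\,|P_{N}f|^{2}\,|P_{\le N/8}g|^{2}\Big)^{1/2}\Big\|_{L^{r,\rho}}.
\]
Bounding $|P_{\le N/8}g|$ pointwise by the Hardy--Littlewood maximal function $Mg$, pulling it out of the sum, and applying H\"older's inequality in Lorentz spaces together with the equivalence $\|(\sum_{N}N^{2s}|P_{N}f|^{2})^{1/2}\|_{L^{r_{1},\rho_{1}}}\approx\|(-\Delta)^{s/2}f\|_{L^{r_{1},\rho_{1}}}$ and the boundedness of $M$ on $L^{r_{2},\rho_{2}}$, one arrives at the first term on the right-hand side. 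The piece $\Pi_{lh}$ is handled symmetrically, the derivatives now falling on the high-frequency factor $P_{N}g$; this is where the exponents $r_{3},r_{4},\rho_{3},\rho_{4}$ enter and it yields the second term.

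For the high--high piece the product $P_{N}f\,P_{M}g$ with $M\sim N$ has frequencies spread over $\{|\xi|\lesssim N\}$, so one decomposes further as $(-\Delta)^{s/2}\Pi_{hh}=\sum_{L}\sum_{N\gtrsim L}P_{L}\big((-\Delta)^{s/2}(P_{N}f\,P_{\sim N}g)\big)$, moves the $s$ derivatives onto the high-frequency factors via $N^{s}|P_{N}f|\,|P_{\sim N}g|\lesssim |(-\Delta)^{s/2}P_{N}f|\,Mg+Mf\,|(-\Delta)^{s/2}P_{\sim N}g|$, sums the index $L$ by a Schur/Young estimate against the geometric weights supplied by Bernstein's inequality (Lemma~\ref{L:Bernstein}), and absorbs the remaining $N$-sum into the square function as before; this contributes both terms on the right. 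Adding the three contributions gives the claimed bound. The main obstacle is not the bookkeeping but installing the harmonic-analytic toolkit in the Lorentz setting: one needs the vector-valued Littlewood--Paley equivalence and the Fefferman--Stein maximal inequality on $L^{r,\rho}(\mathbb{R}^{d})$ for all $1<r<\infty$, $1\le\rho\le\infty$, which follow from their $L^{p}$ counterparts by real interpolation in the second index (compare Lemma~\ref{CZ}), or more robustly from the Benedek--Calder\'on--Panzone theory of $\ell^{2}$-valued Calder\'on--Zygmund operators transplanted to Lorentz spaces. Alternatively, the whole inequality can be obtained from the classical Kato--Ponce estimate by a bilinear real-interpolation argument, which is essentially the route of \cite{Cruz}.
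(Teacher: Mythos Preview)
The paper does not give a proof of this lemma at all; it simply states the result and cites \cite{Cruz}. Your first sentence---invoke \cite{Cruz}---is therefore exactly what the paper does, and nothing more is required. The self-contained paraproduct sketch you append is additional content beyond what the paper provides; it is a reasonable outline of the standard argument, but it is not needed here and the paper makes no attempt at it.
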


\begin{lem}[Chain rule\cite{Aloui}]\label{L:6141}
	Let  $s\in [0,1],F\in C^1(\mathbb{C},\mathbb{C})$ and  $1<p,p_1,p_2<\infty $,  $1\le q,q_1,q_2<\infty $ be  such that 
	\begin{equation}
		\frac{1}{p}=\frac{1}{p_1}+\frac{1}{p_2},\qquad\frac{1}{q}=\frac{1}{q_1}+\frac{1}{q_2}.\notag
	\end{equation}   
	Then
	\begin{equation}
		 \|(-\Delta )^{s/2}F(f)\|_{L^{p,q}(\mathbb{R} ^d)}\lesssim  \|F'(f)\|_{L^{p_1,q_1}(\mathbb{R} ^d)} \|(-\Delta )^{s/2}f\|_{L^{p_2,q_2}(\mathbb{R} ^d)}.\notag   
	\end{equation}
\end{lem}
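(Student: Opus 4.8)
The plan is to reduce to the range $0<s<1$ — the case $s=1$ follows from the $L^{r,\rho}$-boundedness of the Riesz transforms together with the pointwise chain rule $\nabla(F\circ f)=(\partial_zF)(f)\,\nabla f+(\partial_{\bar z}F)(f)\,\overline{\nabla f}$ and H\"older's inequality in Lorentz spaces, while $s=0$ is elementary — and then to run the classical proof of the fractional chain rule, taking care that each harmonic-analytic tool it invokes survives the passage from $L^r$ to $L^{r,\rho}$. The key point is that both the Littlewood--Paley square function equivalence
\[
\|(-\Delta)^{s/2}g\|_{L^{r,\rho}}\approx\Big\|\Big(\sum_N N^{2s}|P_Ng|^2\Big)^{1/2}\Big\|_{L^{r,\rho}}
\]
and the boundedness of the Hardy--Littlewood maximal function $\mathcal M$ (as well as of $g\mapsto(\mathcal M|g|^{r_0})^{1/r_0}$ for $r_0>1$ close to $1$) hold on every $L^{r,\rho}$ with $1<r<\infty$, $1\le\rho\le\infty$; these transfer from their classical $L^r$ versions by real interpolation, using $L^{r,\rho}=(L^{r_0},L^{r_1})_{\theta,\rho}$ and the fact that they hold on an open interval of exponents. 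The $L^{r,\rho}$-boundedness of the Riesz transforms is obtained the same way.

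The next step is a pointwise decomposition of $P_N(F\circ f)$. Writing $K_N$ for the convolution kernel of $P_N$, the vanishing of $\widehat{K_N}=\psi(\cdot/N)$ at the origin gives $\int K_N=0$, hence $P_N(F\circ f)(x)=\int K_N(y)[F(f(x-y))-F(f(x))]\,dy$. The fundamental theorem of calculus yields
\[
F(f(x-y))-F(f(x))=(f(x-y)-f(x))\,a(x,y)+\overline{(f(x-y)-f(x))}\,\tilde a(x,y),
\]
with $a(x,y)=\int_0^1(\partial_zF)(f(x)+\theta(f(x-y)-f(x)))\,d\theta$ and $\tilde a$ analogous, so that $|a|+|\tilde a|\le\sup_{|w-f(x)|\le|f(x-y)-f(x)|}|F'(w)|$. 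Writing $a(x,y)=(\partial_zF)(f(x))+b(x,y)$, using $\int K_N(y)(f(x-y)-f(x))\,dy=P_Nf(x)$, and treating $\tilde a$ symmetrically, one arrives — schematically — at $P_N(F\circ f)=F'(f)\,P_Nf+E_N$, where $E_N(x)=\int K_N(y)(f(x-y)-f(x))\,b(x,y)\,dy+(\text{conjugate term})$ and $|b(x,y)|\le\omega_{f(x)}(|f(x-y)-f(x)|)$ with $\omega_a(t):=\sup_{|w-a|\le t}|F'(w)-F'(a)|$ the local modulus of continuity of $F'$.

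The main term is easy: since $\big(\sum_N N^{2s}|F'(f)P_Nf|^2\big)^{1/2}=|F'(f)|\big(\sum_N N^{2s}|P_Nf|^2\big)^{1/2}$, H\"older's inequality in Lorentz spaces and the square function equivalence bound its $L^{p,q}$-norm by $\|F'(f)\|_{L^{p_1,q_1}}\|(-\Delta)^{s/2}f\|_{L^{p_2,q_2}}$. All the work lies in the error term $E_N$. Using the decay $|K_N(y)|\lesssim_M N^d\langle N|y|\rangle^{-M}$, the vanishing moment, and the continuity of $F'$, I would prove a pointwise bound
\[
\Big(\sum_N N^{2s}|E_N(x)|^2\Big)^{1/2}\lesssim\mathcal A(F'\!\circ f)(x)\cdot\mathcal M\big((-\Delta)^{s/2}f\big)(x)
\]
for a maximal-type operator $\mathcal A$ bounded on the relevant $L^{r,\rho}$, first for $f$ in a dense subclass and then in general by approximation; combining this with the $L^{r,\rho}$-boundedness of $\mathcal A$ and $\mathcal M$, H\"older's inequality, and then summing with the main term and using the square function equivalence once more gives the claim.

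The hardest step is the error-term bound. The quantity $\omega_{f(x)}(|f(x-y)-f(x)|)$ does not factor into a function evaluated at $f(x)$ times a function of the difference, so to recover a product structure dominated by maximal functions one must split the $y$-integral according to the size of $|f(x-y)-f(x)|$ relative to $|f(x)|$ and run a Vitali-type covering argument — exactly the technical heart of the classical $C^1$ fractional chain rule. By contrast, everything that is specific to Lorentz spaces (the square function, maximal function, H\"older, and Riesz-transform bounds) is a routine consequence of real interpolation, so working in $L^{r,\rho}$ rather than $L^r$ introduces no genuinely new difficulty.
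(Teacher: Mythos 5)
The paper does not prove this lemma; it cites it directly from Aloui--Tayachi, so there is no in-paper proof to compare against. Your plan -- transfer the classical harmonic-analysis toolkit ($L^p$-square function, Hardy--Littlewood and Fefferman--Stein maximal functions, Riesz transforms, H\"older) to Lorentz spaces by real interpolation, reduce to $0<s<1$, and then re-run a Christ--Weinstein--type argument -- is the natural route, and the transfer step is indeed routine because each of these operators is bounded on $L^r$ for an open range of $r$. The reduction to $s\in(0,1)$ and the treatment of the main term $F'(f)P_Nf$ are correct.

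There are, however, two genuine gaps. First, the hypothesis $F\in C^1$ alone is not enough to run any version of this argument: you invoke the local modulus of continuity $\omega_{f(x)}(t)=\sup_{|w-f(x)|\le t}|F'(w)-F'(f(x))|$, but for a general $C^1$ function this quantity has no relation whatsoever to $|F'(f(x))|$, so there is no way to dominate the error term by anything built from $F'\!\circ f$. The statement that is actually provable (and what the cited reference uses) carries a structural hypothesis on $F$, typically $|F(a)-F(b)|\lesssim|a-b|\bigl(|F'(a)|+|F'(b)|\bigr)$ (equivalently, a quasi-subadditivity condition on $F'$), which is satisfied by $F(u)=|u|^{(4-2b)/d}u$. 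You need to acknowledge this hypothesis and use it; with it, the more efficient starting point is the direct bound $|P_NF(f)(x)|\lesssim\int|K_N(y)|\,|f(x-y)-f(x)|\bigl(|F'(f(x-y))|+|F'(f(x))|\bigr)\,dy$, rather than the main-term/error-term split you propose.

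Second, the pointwise bound you claim to prove, $\bigl(\sum_NN^{2s}|E_N(x)|^2\bigr)^{1/2}\lesssim\mathcal A(F'\!\circ f)(x)\,\mathcal M\bigl((-\Delta)^{s/2}f\bigr)(x)$, is too strong: a single application of $\mathcal M$ to $(-\Delta)^{s/2}f$ cannot absorb the square-function structure of the left-hand side, and the Christ--Weinstein proof does not produce such a clean pointwise product. What it produces is a bound of the schematic form $\bigl(\sum_NN^{2s}|P_NF(f)(x)|^2\bigr)^{1/2}\lesssim\mathcal M^*(F'(f))(x)\cdot\bigl(\sum_MM^{2s}\mathcal M(P_Mf)(x)^2\bigr)^{1/2}$, obtained by a genuine double-frequency (paraproduct) analysis; one then needs H\"older \emph{and} the Fefferman--Stein vector-valued maximal inequality to recover $\|D^sf\|$ from the second factor. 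That vector-valued maximal inequality is part of your ``routine interpolation'' list and should be named explicitly, since without it the argument does not close. As written, the hard step of the proof is flagged but not executed, and the form in which you flag it would not lead to a correct proof.
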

Next, we establish the Coifman-Meyer lemma in Lorentz spaces, which will be used to prove Lemma  \ref{L:error}  in the appendix.
\begin{lem}
	\label{L:CF}
	Let  $m(\xi,\eta)$ be a Coifman-Meyer multiplier on  $\mathbb{R} ^d\times \mathbb{R} ^d$, i.e.  $m(\xi,\eta)$ satisfies 
	\begin{equation}
		|\partial_{\xi}^\alpha \partial_{\eta}^\beta m(\xi,\eta)|\lesssim (1+|\xi|+|\eta|)^{-|\alpha |-|\beta|}\quad\text{for any}\quad \alpha ,\beta\in \mathbb{N} ^d.\notag
	\end{equation}
	Let 
	\begin{equation}
		T_m(f,g)(x)=:\int _{\mathbb{R} ^d}\int _{\mathbb{R} ^d}e^{ix\cdot (\xi+\eta)}m(\xi,\eta)\hat f(\xi)\hat g(\eta)d\xi d\eta .\notag
	\end{equation}
	Then for any  $1<p,p_0,p_1<\infty $ with  $\frac{1}{p}=\frac{1}{p_0}+\frac{1}{p_1}$,
	\begin{equation}
		 \|T_m(f,g)\|_{L^{p,2}(\mathbb{R} ^d)}\lesssim   \|f\|_{L^{p_0,2}(\mathbb{R} ^d)} \|g\|_{L^{p_1,\infty }(\mathbb{R} ^d)}.\label{6122}
	\end{equation}
\end{lem}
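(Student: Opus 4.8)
The plan is to reduce the statement of Lemma~\ref{L:CF} to the classical (Lebesgue-space) Coifman--Meyer multiplier theorem and then pass to the Lorentz scale by a single application of \emph{bilinear real interpolation} à la Lions--Peetre. Recall that the classical Coifman--Meyer theorem asserts that if $m$ satisfies the stated H\"ormander--Mihlin type symbol bounds, then $T_m:L^{q}(\mathbb{R}^d)\times L^{r}(\mathbb{R}^d)\to L^{s}(\mathbb{R}^d)$ is bounded whenever $1<q,r<\infty$ and $\tfrac1s=\tfrac1q+\tfrac1r$ (so automatically $\tfrac12<s<\infty$). Since the hypothesis $1<p<\infty$ forces $\tfrac1{p_0}+\tfrac1{p_1}=\tfrac1p<1$, any exponent obtained below by a sufficiently small perturbation of $p_0,p_1,p$ stays strictly inside $(1,\infty)$, so the classical theorem is applicable at all the Lebesgue endpoints we need.

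Next I would set up the interpolation. Fix $\theta_0=\theta_1=\tfrac14$ (any $\theta_i\in(0,1)$ with $\theta_0+\theta_1<1$ works) and a small $\delta\neq0$, and choose Lebesgue exponents by
\begin{equation}
\tfrac1{a_0}=\tfrac1{p_0}-\theta_0\delta,\quad \tfrac1{a_1}=\tfrac1{p_0}+(1-\theta_0)\delta,\quad \tfrac1{b_0}=\tfrac1{p_1}-\theta_1\delta,\quad \tfrac1{b_1}=\tfrac1{p_1}+(1-\theta_1)\delta,\notag
\end{equation}
together with $\tfrac1{c_0}=\tfrac1{a_0}+\tfrac1{b_0}$ and $\tfrac1{c_1}=\tfrac1{a_1}+\tfrac1{b_0}=\tfrac1{a_0}+\tfrac1{b_1}$ (the last two agree precisely because $\tfrac1{a_1}-\tfrac1{a_0}=\tfrac1{b_1}-\tfrac1{b_0}=\delta$); for $|\delta|$ small all $a_i,b_i\in(1,\infty)$ and $c_i\in(1,\infty)$. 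By the classical Coifman--Meyer theorem, $T_m$ is bounded $L^{a_0}\times L^{b_0}\to L^{c_0}$, $L^{a_1}\times L^{b_0}\to L^{c_1}$, and $L^{a_0}\times L^{b_1}\to L^{c_1}$. Applying the Lions--Peetre bilinear real interpolation theorem with interpolation parameters $(\theta_0,2)$ in the $f$-slot and $(\theta_1,\infty)$ in the $g$-slot (legitimate since $\theta_0+\theta_1=\tfrac12<1$, and the target second index is $r$ with $\tfrac1r=\tfrac12+\tfrac1\infty=\tfrac12$, i.e. $r=2$), one obtains boundedness of $T_m$ from $(L^{a_0},L^{a_1})_{\theta_0,2}\times(L^{b_0},L^{b_1})_{\theta_1,\infty}$ into $(L^{c_0},L^{c_1})_{\theta_0+\theta_1,2}$. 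The classical identification of real interpolation spaces between Lebesgue spaces gives $(L^{a_0},L^{a_1})_{\theta_0,2}=L^{p_0,2}$ and $(L^{b_0},L^{b_1})_{\theta_1,\infty}=L^{p_1,\infty}$, while a direct computation with the formulas above yields $\tfrac1{c_0}=\tfrac1p-(\theta_0+\theta_1)\delta$, $\tfrac1{c_1}=\tfrac1p+(1-\theta_0-\theta_1)\delta$, hence $(L^{c_0},L^{c_1})_{\theta_0+\theta_1,2}=L^{p,2}$ with $\tfrac1p=\tfrac1{p_0}+\tfrac1{p_1}$. This is exactly (\ref{6122}); a routine density argument extends the bound from Schwartz functions to all of $L^{p_0,2}\times L^{p_1,\infty}$.

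The only genuinely delicate point is the bookkeeping in the interpolation step: one must check that the three Lebesgue-space estimates feeding the bilinear theorem are mutually compatible (this is the role of the identity $\tfrac1{a_1}+\tfrac1{b_0}=\tfrac1{a_0}+\tfrac1{b_1}$), and that the two second Lorentz indices combine as $\tfrac12+\tfrac1\infty=\tfrac12$ so that the output is $L^{p,2}$ rather than merely $L^{p,\infty}$. Indeed, a naive iterated interpolation --- first in $f$, then in $g$ --- would only produce $T_m:L^{p_0,2}\times L^{p_1,\infty}\to L^{p,\infty}$; the improvement of the second exponent is exactly what the genuinely bilinear interpolation supplies. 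As an alternative route one may instead dualize, writing $\langle T_m(f,g),w\rangle$ as a trilinear multiplier form, observing that it is bounded on $L^{a_1}\times L^{a_2}\times L^{a_3}$ whenever $\sum\tfrac1{a_i}=1$ (by the multilinear Coifman--Meyer theorem), and then invoking multilinear Marcinkiewicz interpolation with Lorentz second indices $(2,\infty,2)$ --- admissible because $\tfrac12+\tfrac1\infty+\tfrac12=1$ --- together with $(L^{p,2})^{*}=L^{p',2}$; this leads to the same estimate.
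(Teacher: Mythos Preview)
Your argument is correct, but the paper takes a more elementary route that avoids bilinear interpolation entirely. The paper proceeds by two \emph{iterated} applications of Hunt's linear real interpolation theorem. First, freezing $f\in L^{r_0}$ (Lebesgue) and perturbing $r_1$ to $r_1^{\pm}$, the classical Coifman--Meyer estimate together with Hunt (second index $\infty$) yields $\|T_m(f,g)\|_{L^{r,\infty}}\lesssim\|f\|_{L^{r_0}}\|g\|_{L^{r_1,\infty}}$ for \emph{every} $r_0\in(1,\infty)$. Second, freezing $g\in L^{p_1,\infty}$, perturbing $p_0$ to $p_0^{\pm}$ (hence $p$ to $p^{\pm}$), and applying Hunt again---now with second index $2$---gives $T_m(\cdot,g):(L^{p_0^+},L^{p_0^-})_{1/2,2}\to(L^{p^+,\infty},L^{p^-,\infty})_{1/2,2}$; the left side is $L^{p_0,2}$ and, crucially, the right side is $L^{p,2}$ by the identification $(L^{p^+,\infty},L^{p^-,\infty})_{1/2,2}=L^{p,2}$, which delivers \eqref{6122}.

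Two remarks on the comparison. First, your assertion that ``naive iterated interpolation would only produce $L^{p,\infty}$'' is misleading: it is true for the order you describe (first $f$, then $g$), but the paper shows that reversing the order---interpolating in $g$ with index $\infty$ first, then in $f$ with index $2$---exploits the upgrade $(L^{p^+,\infty},L^{p^-,\infty})_{1/2,2}=L^{p,2}$ and lands in the correct space. Second, the theorem you invoke (three Lebesgue endpoints, different $\theta_0,\theta_1$, target $(C_0,C_1)_{\theta_0+\theta_1,r}$ with $\tfrac1r=\tfrac1{q_0}+\tfrac1{q_1}$) is true but is not the textbook diagonal Lions--Peetre statement; if you keep this route, either cite the off-diagonal version precisely or note that since you took $\theta_0=\theta_1$ one may instead use the fourth Coifman--Meyer endpoint $L^{a_1}\times L^{b_1}\to L^{c_2}$ and apply the standard diagonal bilinear theorem between $(a_0,b_0,c_0)$ and $(a_1,b_1,c_2)$. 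Your alternative via the trilinear form and multilinear Marcinkiewicz is also fine. The paper's approach buys simplicity (only linear Hunt and standard Lorentz identifications); yours buys a one-shot argument at the cost of a heavier interpolation tool.
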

\begin{proof}
	We first claim that for any  $1<r,r_0,r_1<\infty $ with  $\frac{1}{r}=\frac{1}{r_0}+\frac{1}{r_1}$, the following estimate holds: 
	\begin{equation}
		\label{C:6271}
				 \|T_m(f,g)\|_{L^{r,\infty }(\mathbb{R} ^d)}\lesssim   \|f\|_{L^{r_0}(\mathbb{R} ^d)} \|g\|_{L^{r_1,\infty }(\mathbb{R} ^d)}.
	\end{equation}
	In fact, fixing  $\varepsilon _0>0$ sufficiently small and letting 
	\begin{equation}
		\frac{1}{r^{\pm}}=:\frac{1}{r}\pm \varepsilon _0,\qquad \frac{1}{r_1 ^{\pm}}=:\frac{1}{r_1}\pm \varepsilon _0\quad\text{ such that }\quad  \frac{1}{r^{\pm}}=\frac{1}{r_0}+\frac{1}{r_1 ^{\pm}},\notag
	\end{equation}
	we deduce from Coifman-Meyer multiplier  theorem that 
	\begin{equation}
	\|T_m(f,g)\|_{L^{r^{\pm}}(\mathbb{R} ^d)}\lesssim   \|f\|_{L^{r_0}(\mathbb{R} ^d)}  \|g\|_{L^{r_1^\pm   }(\mathbb{R} ^d)}.\notag  
\end{equation}
Hence by  Hunt's interpolation theorem \cite[Theorem IX.19]{ReedSimon1975}, 
\begin{equation}
	T_m(f,\cdot):\ (L^{r_1^+}(\mathbb{R} ^d),L^{r_1^-}(\mathbb{R} ^d))_{\frac{1}{2},\infty }\rightarrow(L^{r^+}(\mathbb{R} ^d),L^{r^-}(\mathbb{R} ^d))_{\frac{1}{2},\infty }\notag
\end{equation}
is bounded with bound  $ \|f\|_{L^{r_0}(\mathbb{R} ^d)} $.
Further utilizing the real interpolation of Lebesgue spaces (\cite[Theorem 5.2.1]{Bergh1976}), we obtain (\ref{C:6271}).  
	
	We now prove (\ref{6122}).  Fix  $\varepsilon _0>0$  sufficiently small and let 
	\begin{equation}
		\frac{1}{p^{\pm}}=:\frac{1}{p}\pm \varepsilon _0,\qquad \frac{1}{p_0 ^{\pm}}=:\frac{1}{p_0}\pm \varepsilon _0\quad\text{ such that }\quad  \frac{1}{p^{\pm}}=\frac{1}{p_0^{\pm}}+\frac{1}{p_1  }.\notag
	\end{equation}
 By Claim (\ref{C:6271}), 
	\begin{equation}
				 \|T_m(f,g)\|_{L^{p^{\pm},\infty }(\mathbb{R} ^d)}\lesssim  \|f\|_{L^{p^{\pm}_0}(\mathbb{R} ^d)} \|g\|_{L^{p_1,\infty }(\mathbb{R} ^d)}.\notag   
	\end{equation}
Again by   Hunt's interpolation theorem \cite[Theorem IX.19]{ReedSimon1975}, 
\begin{equation}
	T_m(\cdot,g):\ \ (L^{p_0^+}(\mathbb{R} ^d),L^{p_0^-}(\mathbb{R} ^d))_{\frac{1}{2},2 }\rightarrow(L^{p^+,\infty }(\mathbb{R} ^d),L^{p^-,\infty }(\mathbb{R} ^d))_{\frac{1}{2},2 }\notag
\end{equation}
is bounded with bound  $ \|g\|_{L^{p_1,\infty }(\mathbb{R} ^d)} $. 
	By real interpolation of Lorentz spaces (\cite[Theorem 5.3.1]{Bergh1976}), we obtain  (\ref{6122}).
\end{proof}

At the end of this Subsection, we record two nonlinear estimates that will be used in the following sections.
\begin{lem}
	\label{L:nonlinear estimate}
	Let  $d\ge3$, $0<b<\min \left\{2,\frac{d}{2}\right\} $ and $0<s<\min \left\{ \frac{d-2b}{2}+1,\frac{4-2b}{d}+1 \right\}$. Then on any spacetime slab  $I\times \mathbb{R} ^d$, we have 
	\begin{equation}
		\||\nabla |^{s}(|x|^{-b}|u|^{\frac{4-2b}{d}}u)\|_{L^2_tL_x^{\frac{2d}{d+2},2}}\lesssim   \|u\|_{L^\infty _tL_x^2}^{\frac{4-2b}{d}} \||\nabla |^{s}u\| _{L^2_tL_x^{\frac{2d}{d-2},2}} + \||\nabla |^{\frac{2s}{\gamma }}u\|^{\frac{\gamma }{2}}_{L_t^\gamma L_x^{\rho,2}},\notag 
	\end{equation}
	and 
	\begin{equation}
		\||\nabla |^{s}(|x|^{-b}|u|^{\frac{4-2b}{d}}u)\|_{L^2_tL_x^{\frac{2d}{d+2},2}}\lesssim   \|u\|_{L^\gamma  _tL_x^{\rho,2}}^{\frac{4-2b}{d}} \||\nabla |^{s}u\| _{L^\gamma _tL_x^{\rho,2}} ,\notag
	\end{equation}
where 	$(\gamma ,\rho)$ was defined by (\ref{E:gamma}). 
\end{lem}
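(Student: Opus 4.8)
Throughout write $p:=\frac{4-2b}{d}$ and $G(z):=|z|^{p}z$, so that $F(u)=|x|^{-b}G(u)$; note $p+1=\frac{d+4-2b}{d}=\frac{\gamma}{2}$ and $\frac{2s}{\gamma}=\frac{s}{p+1}$, while $(\gamma,\rho)$ from (\ref{E:gamma}) is the mass-critical pair which is, in time, dual to $(2,\tfrac{2d}{d-2})$. The plan is to peel the weight $|x|^{-b}$ off $G(u)$ using the Leibniz rule in Lorentz spaces (Lemma~\ref{L:leibnitz}). Since $0<s<\tfrac{d-2b}{2}+1<d-b$ (the last inequality being just $d>2$) we have $0<b+s<d$, hence $|\nabla|^{s}(|x|^{-b})=c_{d,b,s}\,|x|^{-b-s}\in L^{\frac{d}{b+s},\infty}$, and of course $|x|^{-b}\in L^{\frac{d}{b},\infty}$; the product rule then gives, pointwise in $t$ and with $\tfrac1{r_2}=\tfrac{d+2-2b-2s}{2d}$, $\tfrac1{r_4}=\tfrac{d+2-2b}{2d}$,
\[
\||\nabla|^{s}F(u)\|_{L_x^{\frac{2d}{d+2},2}}\ \lesssim\ \|G(u)\|_{L_x^{r_2,2}}+\||\nabla|^{s}G(u)\|_{L_x^{r_4,2}},
\]
the constant Lorentz norms of $|x|^{-b-s}$ and $|x|^{-b}$ being harmless and the remaining H\"older exponents being forced by the two weights. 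It then remains to bound the two pieces in $L^2_t(I)$.

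For the first piece, write $\|G(u)\|_{L^{r_2,2}_x}=\|\,|u|^{p+1}\|_{L^{r_2,2}_x}=\|u\|_{L^{(p+1)r_2,\,2(p+1)}_x}^{p+1}$ and check the scaling relation $\tfrac1{(p+1)r_2}=\tfrac1\rho-\tfrac{s/(p+1)}{d}$; the Sobolev embedding in Lorentz spaces (Lemma~\ref{L:sobolev}), together with $L^{\rho,2}\subset L^{\rho,2(p+1)}$, yields $\|G(u)\|_{L^{r_2,2}_x}\lesssim\||\nabla|^{\frac{s}{p+1}}u\|_{L^{\rho,2}_x}^{p+1}$, and taking the $L^2_t$ norm (using $2(p+1)=\gamma$) produces exactly the term $\||\nabla|^{\frac{2s}{\gamma}}u\|_{L^\gamma_tL^{\rho,2}_x}^{\gamma/2}$ of the first inequality. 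For the second inequality one instead interpolates this quantity by Gagliardo-Nirenberg (Lemma~\ref{L:GN}), $\||\nabla|^{\frac{s}{p+1}}u\|_{L^{\rho,2}_x}\lesssim\|u\|_{L^{\rho,2}_x}^{\frac{p}{p+1}}\||\nabla|^{s}u\|_{L^{\rho,2}_x}^{\frac1{p+1}}$, followed by H\"older in $t$ with exponent $\gamma$, arriving at $\|u\|_{L^\gamma_tL^{\rho,2}_x}^{p}\||\nabla|^{s}u\|_{L^\gamma_tL^{\rho,2}_x}$, which is absorbed into the other term.

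For the second piece one bounds $\||\nabla|^{s}G(u)\|_{L^{r_4,2}_x}$ by the fractional chain rule. If $s\le1$, Lemma~\ref{L:6141} gives directly $\||\nabla|^{s}G(u)\|_{L^{r_4,2}_x}\lesssim\|G'(u)\|_{L^{2/p,\infty}_x}\||\nabla|^{s}u\|_{L^{\frac{2d}{d-2},2}_x}\lesssim\|u\|_{L^{2}_x}^{p}\||\nabla|^{s}u\|_{L^{\frac{2d}{d-2},2}_x}$, the exponents matching since $\tfrac p2+\tfrac{d-2}{2d}=\tfrac1{r_4}$. When $s>1$ (which can happen only for $b$ small) one first lowers the order below $1$ by iterating the identity $\nabla G(u)=G'(u)\nabla u$ finitely many times; this is legitimate precisely because $s<\tfrac{4-2b}{d}+1=p+1$ keeps the order of each surviving derivative strictly below the H\"older-regularity exponent of the factor it lands on ($|u|^{p}$, then $|u|^{p-1}$, and so on), so that Lemmas~\ref{L:leibnitz} and \ref{L:6141} apply at every stage. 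Redistributing the scattered derivatives back onto a single $|\nabla|^{s}u$ by Gagliardo-Nirenberg once more yields $\|u\|_{L^2_x}^{p}\||\nabla|^{s}u\|_{L^{\frac{2d}{d-2},2}_x}$ (resp.\ $\|u\|_{L^{\rho,2}_x}^p\||\nabla|^s u\|_{L^{\rho,2}_x}$), and a final H\"older in $t$ ($\tfrac12=\tfrac1\infty+\tfrac12$, resp.\ $\tfrac12=\tfrac p\gamma+\tfrac1\gamma$) closes both inequalities.

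The hypotheses enter exactly at the two places where something could fail: $0<s<\tfrac{d-2b}{2}+1$ is what makes $b+s<d$ and keeps $r_2$, $(p+1)r_2$ finite and $\tfrac{s}{p+1}<\tfrac d\rho$ (Sobolev-admissible), while $0<s<\tfrac{4-2b}{d}+1$ is the H\"older-regularity ceiling of the nonlinearity required for the iterated chain rule; the side conditions $0<b<\min\{2,\tfrac d2\}$ guarantee $\gamma>2$ and place every H\"older triple produced above inside $(1,\infty)$. The main difficulty---and the reason the whole argument is carried out in Lorentz rather than Lebesgue spaces---is the interaction of the origin singularity with $|\nabla|^s$: the weight $|\nabla|^s(|x|^{-b})=c\,|x|^{-b-s}$ lies only in the \emph{weak} space $L^{\frac{d}{b+s},\infty}$, so it must be paired against the sharp Lorentz index $2$ on the companion factor, which in turn forces H\"older, Sobolev and Gagliardo-Nirenberg to be used in their Lorentz-refined forms (Lemmas~\ref{L:sobolev},~\ref{L:GN},~\ref{L:leibnitz},~\ref{L:6141}); the low power $\tfrac{4-2b}{d}$, which is $<1$ once $d+2b>4$, compounds this by making the chain rule for $s>1$ genuinely iterative. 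The second inequality is the easier one: it never leaves the scaling-critical space $L^\gamma_tL^{\rho,2}_x$, so no Sobolev embedding is invoked and it falls out of the same computation.
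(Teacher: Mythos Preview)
Your proof is correct and follows essentially the same route as the paper: split off the weight $|x|^{-b}$ via the Lorentz product rule (Lemma~\ref{L:leibnitz}), handle the piece where $|\nabla|^s$ hits the weight by H\"older and Sobolev embedding (Lemma~\ref{L:sobolev}), handle the piece $|\nabla|^sG(u)$ by the chain rule (Lemma~\ref{L:6141}), and close the second inequality with Gagliardo--Nirenberg (Lemma~\ref{L:GN}); the exponent bookkeeping you record matches the paper's relations~(\ref{zb}) exactly.

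The one point on which you are actually more careful than the paper is the case $s>1$: the paper applies Lemma~\ref{L:6141} directly, but that lemma is stated only for $s\in[0,1]$, so strictly speaking the range $1<s<p+1$ is not covered there. Your remedy---iterate $\nabla G(u)=G'(u)\nabla u$ to drop the order below $1$ before invoking the chain rule, then reassemble via Gagliardo--Nirenberg---is the standard fix and uses precisely the hypothesis $s<p+1$. Two small comments: first, your parenthetical ``which can happen only for $b$ small'' is misleading, since $\min\{\tfrac{d-2b}{2}+1,\tfrac{4-2b}{d}+1\}>1$ for the entire range of $b$; second, after one iteration the factor $|u|^{p}$ (or $|u|^{p-1}$) is only H\"older continuous when $p<1$, so at that stage you should invoke a fractional chain rule for H\"older nonlinearities (e.g.\ Lemma~\ref{L:Visan}) rather than Lemma~\ref{L:6141}. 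These are minor points; the argument is sound.
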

\begin{proof}
	By Lemma \ref{L:leibnitz},  
	\begin{align}
		\||\nabla |^{s}(|x|^{-b}|u|^{\frac{4-2b}{d}}u)\|_{L^2_tL_x^{\frac{2d}{d+2},2}} 
		&\lesssim  \||x|^{-b}\|_{L_x^{\frac{d}{b},\infty }} \||\nabla |^s (|u|^{\frac{4-2b}{d}}u)\|  _{L^2_tL_x^{\frac{2d}{d+2-2b},2}} \notag\\
		&+
		\||x|^{-(b+s)}\|_{L_x^{\frac{d}{b+s},\infty }} \||u|^{\frac{4-2b}{d}}u\|_{L^2_tL_x^{\frac{2d}{d+2-2(b+s)},2}} .\notag 
	\end{align}
Noting that 
	\begin{equation}
		\frac{d+2-2b}{2d}=\frac{4-2b}{d}\cdot\frac{1}{2}+\frac{d-2}{2d}=\frac{1}{\rho}\cdot(\frac{4-2b}{d}+1),\   \frac{1}{2}=\frac{1}{\gamma }\cdot (\frac{4-2b}{d}+1). \label{zb}
	\end{equation}
	it follows from Lemma \ref{L:6141} that 
	\begin{align}
		&\||\nabla |^s(|u|^{\frac{4-2b}{d}}u)\|_{L^2_tL_x^{\frac{2d}{d+2-2b},2}} \notag\\
		&\lesssim \min \left\{  \|u\|_{L^\infty _tL_x^{ 2}}^{\frac{4-2b}{d}} \||\nabla |^{s}u\| _{L^2_tL_x^{\frac{2d}{d-2},2}}, \|u\|^{\frac{4-2b}{d}}_{L^\gamma _tL_x^{\rho,2}} \||\nabla |^{s}u\|_{L^\gamma _tL_x^{\rho,2}}   \right\}.\notag
	\end{align}
	On the other hand, since $s<\frac{d-2b}{2}+1$, we have that  $\frac{1}{\rho}>\frac{2s}{d\gamma }$ and 
	\begin{equation}
		\frac{d+2-2(b+s)}{2d}= (\frac{4-2b}{d}+1)\cdot (\frac{1}{\rho}-\frac{2s}{d\gamma })=\frac{\gamma }{2}\cdot (\frac{1}{\rho}-\frac{2s}{d\gamma }).\notag
	\end{equation}
	By H\"older and Sobolev's embedding in Lorentz space (Lemma \ref{L:sobolev}), 
	\begin{equation}
		\||u|^{\frac{4-2b}{d}}u\|_{L^2_tL_x^{\frac{2d}{d+2-2(b+s)},2}} \lesssim   \||\nabla |^{\frac{2s}{\gamma }}u\|^{\frac{\gamma }{2}}_{L_t^\gamma L_x^{\rho,2}}\lesssim \|u\| _{L_t^\gamma L_x^{\rho,2}}^{\frac{\gamma }{2}-1} \||\nabla |^{s}u\|_{L_t^\gamma L_x^{\rho,2}},\notag
	\end{equation}
	where we used the Gagliardo-Nirenberg's inequality (Lemma \ref{L:GN}) in the second inequality.  
	Combining the above estimates, we obtain the desired estimate in Lemma \ref{L:nonlinear estimate}.  
\end{proof}
\begin{lem}[\cite{Visan2007}]\label{L:Visan}
	Let $H$ be a Hölder continuous function of order $0 <\alpha  < 1$. Then, for every $0 < \sigma < \alpha $, $1 < p < \infty$, and $\sigma /  \alpha < s < 1$, we have
	\[
	\| |\nabla|^\sigma H(u) \|_{L^p(\mathbb{R} ^d)} \lesssim \| u \|_{L^{p_1}(\mathbb{R} ^d)}^{\alpha - \sigma / s} \|| \nabla|^s u \| ^{\sigma /s}_{L^{\sigma /sp_1}(\mathbb{R} ^d)}
	\]
	provided $\frac{1}{p} = \frac{1}{p_1} + \frac{1}{p_2}$ and $(1 - \sigma / ( \alpha  s))p_1 > 1$.
\end{lem}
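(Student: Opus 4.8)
Following \cite{Visan2007}, the plan is to run the Littlewood--Paley proof of the fractional chain rule, exploiting the strict inequalities $\sigma<\alpha$ and $\sigma/\alpha<s$ to compensate for the fact that $H$ is only H\"older (rather than $C^1$) regular.

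First I would make the harmless reduction $H(0)=0$: replacing $H$ by $H-H(0)$ does not change $|\nabla|^\sigma H(u)$, and then H\"older continuity gives the two pointwise bounds $|H(z)|\lesssim|z|^\alpha$ and $|H(z)-H(w)|\lesssim|z-w|^\alpha$. By the Littlewood--Paley square function characterization of $\dot W^{\sigma,p}$ (valid for $1<p<\infty$) it suffices to control $\big\|\big(\sum_N N^{2\sigma}|P_NH(u)|^2\big)^{1/2}\big\|_{L^p}$, the sum running over dyadic $N$. Since the (physical-space) kernel of $P_N$ has mean zero, one may write $P_NH(u)(x)=\int \kappa_N(z)\,[H(u(x-z))-H(u(x))]\,dz$ with $\int\kappa_N=0$, whence $|P_NH(u)(x)|\lesssim\int|\kappa_N(z)|\,|u(x-z)-u(x)|^\alpha\,dz$.

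The heart of the argument is to bound this convolution in two complementary ways. Using $|u(x-z)-u(x)|^\alpha\lesssim|u(x-z)|^\alpha+|u(x)|^\alpha$ together with $\int|\kappa_N|\lesssim 1$ gives $|P_NH(u)(x)|\lesssim \mathcal M(|u|^\alpha)(x)=:B(x)$, uniformly in $N$. Using instead the elementary difference estimate $|u(x-z)-u(x)|\lesssim|z|^{s}\big((\mathcal M|\nabla|^su)(x)+(\mathcal M|\nabla|^su)(x-z)\big)$, valid for $0<s<1$ (it follows by writing $u=|\nabla|^{-s}(|\nabla|^su)$ and dyadically decomposing the Riesz potential), together with $\int|\kappa_N(z)|\,|z|^{\alpha s}\,dz\lesssim N^{-\alpha s}$, gives $|P_NH(u)(x)|\lesssim N^{-\alpha s}(\mathcal M|\nabla|^su)(x)^\alpha=:N^{-\alpha s}A(x)$ (the translated maximal function being absorbed by one more application of $\mathcal M$). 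Hence $N^\sigma|P_NH(u)(x)|\lesssim\min\{N^\sigma B(x),\,N^{\sigma-\alpha s}A(x)\}$, and since $0<\sigma<\alpha s$ the two geometric tails of the $N$-sum (each dominated by the single scale $N_\ast=(A(x)/B(x))^{1/\alpha s}$) combine to the pointwise bound
\[
\Big(\sum_N N^{2\sigma}|P_NH(u)(x)|^2\Big)^{1/2}\lesssim A(x)^{\sigma/(\alpha s)}B(x)^{1-\sigma/(\alpha s)}=(\mathcal M|\nabla|^su)(x)^{\sigma/s}\,\mathcal M(|u|^\alpha)(x)^{1-\sigma/(\alpha s)}.
\]
Taking $L^p$ norms and applying H\"older's inequality with the exponent pair $\big(p_1,\tfrac{p_1}{\alpha-\sigma/s}\big)$ (which adds correctly to $\tfrac1p$ by the exponent relations in the statement), followed by the Hardy--Littlewood maximal inequality and the identity $\||u|^\alpha\|_{L^{p_1/\alpha}}=\|u\|_{L^{p_1}}^\alpha$, then yields exactly $\|u\|_{L^{p_1}}^{\alpha-\sigma/s}\||\nabla|^su\|_{L^{\sigma p_1/s}}^{\sigma/s}$.

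I expect the main obstacle to be the bookkeeping in this last paragraph: one has to verify that every Lebesgue index arising there (in particular $\sigma p_1/s$ and $p_1/\alpha$) lies in a range where the Hardy--Littlewood maximal operator is bounded, and that the $N$-series converges. It is precisely the hypotheses $1<p<\infty$, $0<\sigma<\alpha$, $\sigma/\alpha<s<1$, and $(1-\sigma/(\alpha s))p_1>1$ that are tailored to make this go through; in the borderline regime where an index drops to $1$ one replaces the scalar maximal inequality by the corresponding estimate in Hardy / Triebel--Lizorkin spaces, exactly as in \cite{Visan2007}.
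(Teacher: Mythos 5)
The paper does not prove this lemma: it is cited verbatim from \cite{Visan2007} (Appendix A of Visan's Duke paper), so there is no ``paper proof'' to compare against. Judged on its own, your reconstruction is the standard Littlewood--Paley argument and the structure is sound: reduce to $H(0)=0$; use the mean-zero cancellation of the $P_N$ kernel to write $P_NH(u)$ as a convolution against differences; bound these differences by H\"older continuity; prove the two complementary pointwise bounds (one uniform in $N$ via $\mathcal M(|u|^\alpha)$, one gaining $N^{-\alpha s}$ via the difference estimate $|u(x-z)-u(x)|\lesssim|z|^s\bigl(\mathcal M(|\nabla|^su)(x)+\mathcal M(|\nabla|^su)(x-z)\bigr)$, which is indeed correct for $0<s<1$ and proved by the Riesz-potential decomposition you indicate); sum the geometric series using $0<\sigma<\alpha s$; and close with H\"older plus Hardy--Littlewood. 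This is essentially the same route as in Visan's appendix. Two remarks are worth making explicit. First, the term where the maximal function lands at $x-z$ forces a second application of $\mathcal M$, so one should really set $A(x)=\mathcal M\bigl((\mathcal M|\nabla|^su)^\alpha\bigr)(x)$; this is harmless but needs saying. Second, the exponent bookkeeping in the last step does not close against the lemma \emph{as transcribed in the present paper}: your H\"older step with pair $(p_a,p_b)$, $1/p=1/p_a+1/p_b$, followed by Hardy--Littlewood, naturally produces
\[
\||\nabla|^s u\|_{L^{(\sigma/s)p_a}}^{\sigma/s}\,\|u\|_{L^{(\alpha-\sigma/s)p_b}}^{\alpha-\sigma/s},
\]
which, with $p_a=p_2$ and $p_b=p_1$, is Visan's original form $\||u|^{\alpha-\sigma/s}\|_{L^{p_1}}\||\nabla|^su\|_{L^{(\sigma/s)p_2}}^{\sigma/s}$ together with the condition $(1-\sigma/(\alpha s))p_1>1$ for the maximal inequality on the $\mathcal M(|u|^\alpha)$ factor. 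The paper's version replaces $\||u|^{\alpha-\sigma/s}\|_{L^{p_1}}$ by $\|u\|_{L^{p_1}}^{\alpha-\sigma/s}$ and writes $p_1$ in place of $p_2$ in the gradient norm, a transcription slip that is exactly what prevents the pair $(p_1,\,p_1/(\alpha-\sigma/s))$ you propose from matching the stated relation $1/p=1/p_1+1/p_2$. So your instinct that ``the main obstacle is the bookkeeping'' is correct; the bookkeeping closes cleanly once you run it against Visan's original statement rather than the paper's copy of it.
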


\subsection{Strichartz estimate, local well-posedness and stability.}
In this Subsection, we recall Strichartz estimate in the Lorentz space, the  local well-posedness and stability results of the Cauchy probelm \ref{nls}.  
\begin{defn}[Admissibility] 
	A pair $(q,r)$ is said to be Schr\"odinger admissible, for short $(q,r)\in \Lambda $, where
	\begin{equation}
		\Lambda=\left\{(q,r): 2\le q,r\le \infty,\  	\frac{2}{q}+\frac{d}{r}=\frac{d}{2},\ (q,r,d)\neq (2,\infty ,2)\right\}.\notag
	\end{equation}
\end{defn}

The following result is a key tool for our work. It is
established in \cite[Theorem 10.1]{Keel-Tao}.

\begin{prop}[Strichartz estimates]\label{P:SZ}\ \\
	\begin{itemize}
		\item Let $(q,r)\in \Lambda $ with $r<\infty$. Then for any $f\in L^2(\mathbb{R}^d)$
		\begin{align}  
			\|e^{it\Delta }f\|_{L^q_t L^{r,2}_x(\mathbb{R}\times \mathbb{R}^d)} \lesssim \|f\|_{L^2_x(\mathbb{R}^d)}.\notag
		\end{align}
		
		\item Let $(q_1, r_1), (q_2,r_2)\in \Lambda $ with $r_1, r_2<\infty$, $t_0\in \mathbb{R}$ and $I\subset \mathbb{R}$ be an interval containing $t_0$. Then 		for any $F\in L_t^{q_2'}L^{r_2',2}_x(I\times \mathbb{R}^d)$ 
		\begin{align} 
			\left\|\int_{t_0}^t e^{i(t-\tau)\Delta } F(\tau) d\tau\right\|_{L^{q_1}_tL^{r_1,2}_x(I\times \mathbb{R}^d)} \lesssim \|F\|_{L^{q_2'}_tL^{r_2',2}_x(I\times \mathbb{R}^d)}.\notag
		\end{align}
	\end{itemize}
\end{prop}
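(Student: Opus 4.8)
The plan is to deduce Proposition~\ref{P:SZ} from the abstract endpoint Strichartz machinery of Keel and Tao, the only genuinely new ingredient being a Lorentz‑space refinement of the dispersive estimate; this is the route taken in \cite{Keel-Tao,Aloui}, and I would follow it step by step. \emph{Step 1 (a Lorentz dispersive estimate).} The free propagator obeys the energy bound $\|e^{it\Delta}\|_{L^2_x\to L^2_x}=1$ and, from its explicit convolution kernel, the dispersive bound $\|e^{it\Delta}\|_{L^1_x\to L^\infty_x}\lesssim |t|^{-d/2}$. Interpolating these two endpoints by the real method (exactly as in the proof of Lemma~\ref{L:CF}), and using $(L^1_x,L^2_x)_{\theta,2}=L^{r',2}_x$, $(L^\infty_x,L^2_x)_{\theta,2}=L^{r,2}_x$ with $\tfrac1r=\tfrac\theta2$, one obtains for every $2\le r<\infty$
\[
\|e^{it\Delta}\|_{L^{r',2}_x\to L^{r,2}_x}\lesssim |t|^{-d(1/2-1/r)}.
\]
Crucially the fine index $2$ is the same on source and target, so the family $\{L^{r,2}_x\}$ is a bona fide interpolation scale compatible with what follows.

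\emph{Step 2 (the homogeneous estimate).} I would run the Keel--Tao argument verbatim with $U(t)=e^{it\Delta}$, Hilbert space $L^2_x$, and the scale $L^{r,2}_x$ in place of $L^r_x$. Its two hypotheses are precisely the energy bound and the Step~1 decay bound (applied with $t\mapsto t-\tau$, since $e^{it\Delta}(e^{i\tau\Delta})^*=e^{i(t-\tau)\Delta}$), and the only places the spatial structure enters the proof are through this dispersive bound and through H\"older's inequality $\|fg\|_{L^1_x}\lesssim\|f\|_{L^{r,2}_x}\|g\|_{L^{r',2}_x}$, which holds in Lorentz spaces. Dyadically splitting the time separation $|t-\tau|$ handles all non‑endpoint admissible pairs by a Schur/Young argument in time, while the endpoint $(q,r)=(2,\tfrac{2d}{d-2})$ (available since $d\ge3$) is produced by the Keel--Tao bilinear real‑interpolation lemma, which never distinguishes $L^r_x$ from $L^{r,2}_x$. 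This yields $\|e^{it\Delta}f\|_{L^q_tL^{r,2}_x(\mathbb{R}\times\mathbb{R}^d)}\lesssim\|f\|_{L^2_x}$ for all $(q,r)\in\Lambda$ with $r<\infty$.

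\emph{Step 3 (the inhomogeneous estimate).} Dualizing Step~2 gives $\big\|\int_{\mathbb{R}}e^{-i\tau\Delta}F(\tau)\,d\tau\big\|_{L^2_x}\lesssim\|F\|_{L^{q'}_tL^{r',2}_x}$ for $(q,r)\in\Lambda$, $r<\infty$; composing with Step~2 yields the untruncated double‑admissible bound $\big\|\int_{\mathbb{R}}e^{i(t-\tau)\Delta}F(\tau)\,d\tau\big\|_{L^{q_1}_tL^{r_1,2}_x}\lesssim\|F\|_{L^{q_2'}_tL^{r_2',2}_x}$. To replace $\int_{\mathbb{R}}$ by the retarded integral $\int_{t_0}^t$ one invokes the Christ--Kiselev lemma whenever $q_1>q_2'$ (which holds for all admissible pairs except the double endpoint $q_1=q_2'=2$); the remaining double‑endpoint case is obtained directly inside the Keel--Tao scheme, which already controls the retarded kernel at the endpoint. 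All constants are uniform in $I$ and in $t_0\in I$ because the kernels depend only on $t-\tau$.

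\emph{Main obstacle.} Everything off the endpoint is routine real interpolation together with Young's inequality in the time variable; the one delicate point is that the Lorentz refinement must survive the endpoint interpolation lemma of Keel--Tao. Keeping the second Lorentz index fixed equal to $2$ throughout — rather than trying to upgrade an $L^r_x$‑valued Strichartz estimate a posteriori, which is impossible since $L^{r,2}_x\subsetneq L^r_x$ for $r>2$ — is exactly what renders the abstract machine blind to the difference between $L^r_x$ and $L^{r,2}_x$, and is the step I would be most careful about.
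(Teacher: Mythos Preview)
Your proposal is correct and follows precisely the Keel--Tao route; the paper itself does not give an independent proof but simply cites \cite[Theorem~10.1]{Keel-Tao}, so your sketch is in fact more detailed than what appears in the paper while being fully aligned with the cited source.
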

\begin{prop}[Local well-posedness]\label{T:CP}
	For any  $u_0\in L^2(\mathbb{R} ^d)$ and  $t_0\in \mathbb{R} $, there exists  a unique  maximal solution  $u:(-T_-(u_0),T_+(u_0))\times \mathbb{R} ^d\rightarrow \mathbb{C}$  to (\ref{nls}) with  $u(t_0)=u_0$.     This solution conserves its mass and  also has the following properties: \\
	(a) If  $T_+=T_+(u_0)<\infty $, then  $ \|u\|_{L^\gamma _tL_x^{\rho,2}((0,T_+)\times \mathbb{R} ^d)} =+\infty $. An analogous result holds for  $T_-(u_0)$.  \\
	(b) If  $\|u\|_{L^\gamma _tL_x^{\rho,2}((0,T_+)\times \mathbb{R} ^d)}<+\infty  $, then  $T_+=\infty $ and  $u$ scatters as  $t\rightarrow+\infty $.      An analogous result holds for  $T_-(u_0)$. \\
	(c) If $u_0^{(n)}$ is a sequence converging to $u_0$ in $L_x^2(\mathbb{R}^d)$ and $u^{(n)} : I^{(n)} \times \mathbb{R}^d \to \mathbb{C}$ are the associated maximal-lifespan solutions, then $u^{(n)}$ converges locally uniformly to $u$.\\
	(d)  For any $\psi\in L^2(\mathbb{R}^d)$, there exist $T>0$ and a unique solution $u:(T,\infty)\times\mathbb{R}^d\to\mathbb{C}$ to \eqref{nls} obeying $e^{-it\Delta}u(t)\to \psi$ in $L^2$ as $t\to\infty$.  The analogous statement holds backward in time.\\
	(e)    If   $M(u_0)$   is sufficiently small  depending on  $d$,  then 	 $u$
	is a global solution and scatter.  Moreover, 
	\begin{equation}
		\|u\|_{L^{\gamma }_t L_x^{\rho,2}(\mathbb{R} \times \mathbb{R} ^d)}\lesssim M(u).\notag 
	\end{equation}
\end{prop}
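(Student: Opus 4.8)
\textbf{Proof proposal for Proposition \ref{T:CP} (Local well-posedness).}

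The plan is to run the standard contraction-mapping/Banach fixed point argument for the Duhamel operator, but in the Lorentz-space Strichartz framework rather than the usual Lebesgue framework. First I would fix the scaling-critical spaces: the solution space will be $X(I) = C^0_tL^2_x(I\times\mathbb{R}^d)\cap L^\gamma_tL^{\rho,2}_x(I\times\mathbb{R}^d)$ with $(\gamma,\rho)$ as in \eqref{E:gamma}, and I would first check that $(\gamma,\rho)\in\Lambda$ (the computation $\tfrac{2}{\gamma}+\tfrac{d}{\rho}=\tfrac{d}{2}$ follows from \eqref{E:gamma}) and that the dual exponent $(\gamma',\rho')$ is also admissible, so that both halves of the Strichartz estimate in Proposition \ref{P:SZ} apply. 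The key nonlinear step is to estimate $F(u)=\mu|x|^{-b}|u|^{(4-2b)/d}u$ in $L^{\gamma'}_tL^{\rho',2}_x$: using the first Hölder inequality in Lorentz spaces together with $|x|^{-b}\in L^{d/b,\infty}$ one peels off the singular weight, and then one closes the exponents using $\frac{1}{\rho'} = \frac{b}{d} + (\tfrac{4-2b}{d}+1)\cdot\frac{1}{\rho}$ and the matching identity for the time exponent, obtaining
\begin{equation}
\|F(u)\|_{L^{\gamma'}_tL^{\rho',2}_x(I\times\mathbb{R}^d)} \lesssim \|u\|_{L^\gamma_tL^{\rho,2}_x(I\times\mathbb{R}^d)}^{\frac{4-2b}{d}+1},\notag
\end{equation}
and similarly the difference estimate $\|F(u)-F(v)\|_{L^{\gamma'}_tL^{\rho',2}_x}\lesssim (\|u\|_{L^\gamma_tL^{\rho,2}_x}^{(4-2b)/d}+\|v\|_{L^\gamma_tL^{\rho,2}_x}^{(4-2b)/d})\|u-v\|_{L^\gamma_tL^{\rho,2}_x}$. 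This is essentially the content obtained in \cite{Aloui}; the Lorentz refinement $L^{\rho,2}$ is exactly what makes the weight $|x|^{-b}\in L^{d/b,\infty}$ usable via Hölder, which a pure Lebesgue estimate cannot accommodate.

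With these bilinear bounds in hand, parts (a)--(e) follow by now-routine arguments. For local existence near $t_0$, combine the Strichartz estimates with the nonlinear estimate to see that the map $\Phi(u)(t) = e^{i(t-t_0)\Delta}u_0 - i\int_{t_0}^t e^{i(t-\tau)\Delta}F(u(\tau))\,d\tau$ is a contraction on a small ball of $X(I)$ provided $\|e^{i(t-t_0)\Delta}u_0\|_{L^\gamma_tL^{\rho,2}_x(I\times\mathbb{R}^d)}$ is small, which holds for $|I|$ small by the monotone convergence / dominated convergence in the Strichartz norm, or for all of $\mathbb{R}$ when $M(u_0)$ is small (giving (e)); mass conservation is obtained by the usual multiplication of the equation by $\bar u$, integration, and taking real parts. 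Blowup criterion (a) is the contrapositive of the standard continuation argument: if $\|u\|_{L^\gamma_tL^{\rho,2}_x}$ stays finite up to $T_+$ one can reopen the fixed point past $T_+$. For the scattering statement (b) and (d), finiteness of the global Strichartz norm makes $e^{-it\Delta}u(t)$ Cauchy in $L^2$ as $t\to\pm\infty$ via the dual Strichartz estimate applied to the Duhamel tail, and conversely (d) constructs the wave operator by solving the integral equation with prescribed data at $t=\infty$ on a half-line $(T,\infty)$ where the free evolution has small Strichartz norm. Continuous dependence (c) follows from the same difference estimate plus a standard bootstrap/persistence argument.

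The main obstacle — and the only place the inhomogeneity genuinely complicates matters — is verifying the nonlinear estimate in the Lorentz framework with the right endpoint exponents, i.e.\ confirming that when one distributes $\tfrac{1}{\rho'}$ across $|x|^{-b}$ (in weak-$L^{d/b}$) and the $(\tfrac{4-2b}{d}+1)$ copies of $u$ (each measured in $L^{\rho,2}$), the second Lorentz index behaves: one needs $|u|^{(4-2b)/d}u$ to land in $L^{\rho/(\frac{4-2b}{d}+1)\,,\,2/(\frac{4-2b}{d}+1)}$-type spaces and then Hölder's inequality in Lorentz spaces (with the inequality $\tfrac1\rho'\le$ sum of reciprocals for the secondary index) must be checked to be admissible, using $1<b/d,\ \rho,\ \rho'<\infty$ which is where the restriction $0<b<\min\{2,d/2\}$ and $d\ge 3$ enter. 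I expect this to be a short computation given Lemma \ref{CZ} (interpolation) and the Hölder lemma for Lorentz spaces, since it is already carried out in \cite{Aloui}; the remaining fixed-point machinery is entirely standard and I would cite \cite{Cazenave,Aloui} for the omitted details.
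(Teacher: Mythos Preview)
Your overall strategy is correct and matches the paper's approach: the paper simply cites Aloui--Tayachi \cite{Aloui} for (a) and (c), and says (b), (d), (e) follow from the nonlinear estimate (Lemma~\ref{L:nonlinear estimate}) together with the standard arguments in \cite[Chapter 7]{Cazenave}. Your sketch of the contraction mapping in Lorentz--Strichartz spaces is exactly this.

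However, there is a concrete exponent error in your nonlinear estimate. You propose to place $F(u)$ in $L^{\gamma'}_tL^{\rho',2}_x$ and claim the identity $\tfrac{1}{\rho'}=\tfrac{b}{d}+(\tfrac{4-2b}{d}+1)\tfrac{1}{\rho}$. This is false for $b\neq 2$: a direct computation gives
\[
\tfrac{b}{d}+\bigl(\tfrac{4-2b}{d}+1\bigr)\tfrac{1}{\rho}=\tfrac{d+2}{2d},
\]
which is $\tfrac{1}{(2d/(d+2))}$, \emph{not} $\tfrac{1}{\rho'}$. (Relatedly, your parenthetical that ``the dual exponent $(\gamma',\rho')$ is also admissible'' conflates H\"older duals with Schr\"odinger-admissible pairs.) The exponents close only if you put the nonlinearity in the \emph{endpoint} dual space $L^2_tL^{\frac{2d}{d+2},2}_x$, as in the paper's Lemma~\ref{L:nonlinear estimate} and identity \eqref{zb}: one uses $|x|^{-b}\in L^{d/b,\infty}_x$ together with $\tfrac{d+2-2b}{2d}=(\tfrac{4-2b}{d}+1)\cdot\tfrac{1}{\rho}$ and $\tfrac{1}{2}=(\tfrac{4-2b}{d}+1)\cdot\tfrac{1}{\gamma}$ to obtain
\[
\|F(u)\|_{L^2_tL^{\frac{2d}{d+2},2}_x}\lesssim \|u\|_{L^\gamma_tL^{\rho,2}_x}^{\frac{4-2b}{d}+1},
\]
and similarly for differences. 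With this correction the rest of your argument (contraction, continuation, scattering via Cauchy-in-$L^2$ of $e^{-it\Delta}u(t)$, wave operators, continuous dependence) goes through exactly as you outlined.
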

\begin{proof}
	By considering Strichartz estimates in Lorentz spaces, Aloui and Tayachi \cite{Aloui} proved   (a) and (c).  The proofs of (b), (d), and (e) are based on Lemma \ref{L:nonlinear estimate} and the standard arguments outlined in \cite[Chapter 7]{Cazenave}.
\end{proof}
\begin{prop} [Stability]\label{P:stab} Suppose $\tilde u:I\times\mathbb{R}^d\to\mathbb{C}$ obeys
	\begin{equation}
		\|\tilde u\|_{L_t^\infty  L_x^2(I\times\mathbb{R}^d)} + \|\tilde u\| _{L_t^\gamma L_x^{\rho,2}(I\times \mathbb{R} ^d)}\leq E<\infty. \notag
	\end{equation}
	There exists $\varepsilon _1 = \varepsilon_1(E)>0$ such that if
	\begin{align*}
		\|  (i\partial_t+\Delta)\tilde u -\mu  |x|^{-b}|\tilde u|^\frac{4-2b}{d} \tilde u \|_{L_t^2 L_x^{\frac{2d}{d+2},2}(I\times \mathbb{R} ^d)}& \leq \varepsilon<\varepsilon_1,\\
		\|e^{i(t-t_0)\Delta}[u_0-\tilde u|_{t=t_0}]\|_{L_t^\gamma L_x^{\rho,2}(I\times \mathbb{R} ^d)}& \leq \varepsilon<\varepsilon_1,
	\end{align*}
	for some $t_0\in I$ and $u_0\in L^2(\mathbb{R}^d)$ with $\|u_0-\tilde u|_{t={t_0}}\|_{L^2}\lesssim_E 1$, then there exists a unique solution $u:I\times\mathbb{R}^d\to\mathbb{C}$ to (\ref{nls}) with $u|_{t=t_0}=u_0$, which satisfies
	\begin{equation}
		\|u-\tilde u\|_{L_t^\gamma L_x^{\rho,2}(I\times \mathbb{R} ^d)}\lesssim \varepsilon \quad\text{and}\quad \|u\|_{L_t^\infty  L_x^2(I\times\mathbb{R}^d)}+\|u\|_{L_t^\gamma L_x^{\rho,2}(I\times \mathbb{R} ^d)}\lesssim_E 1.\notag
	\end{equation}
\end{prop}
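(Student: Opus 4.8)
\textbf{Proof strategy for Proposition \ref{P:stab} (Stability).}
The plan is to run the standard perturbation/bootstrap argument of Tao--Visan--Zhang type, but carried out entirely in the Lorentz-space Strichartz framework of the paper, with the nonlinear estimates of Lemma \ref{L:nonlinear estimate} (used at the level $s=0$, or rather with the non-differentiated version of the chain rule Lemma \ref{L:6141}) playing the role of the classical fractional nonlinear estimate. First I would set $w := u - \tilde u$, which solves $i\partial_t w + \Delta w = \mu(|x|^{-b}|u|^{\frac{4-2b}{d}}u - |x|^{-b}|\tilde u|^{\frac{4-2b}{d}}\tilde u) - e$, where $e := (i\partial_t+\Delta)\tilde u - \mu|x|^{-b}|\tilde u|^{\frac{4-2b}{d}}\tilde u$ is the error term, with $\|e\|_{L^2_tL_x^{\frac{2d}{d+2},2}(I)} \le \varepsilon$. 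By the Duhamel formula for $w$ and the Strichartz estimates of Proposition \ref{P:SZ} (with both endpoints the admissible pair $(\gamma,\rho)$ — note $(\gamma,\rho)$ is admissible since $\frac{2}{\gamma}+\frac{d}{\rho}=\frac{d}{2}$ follows from \eqref{E:gamma}), one gets
\[
\|w\|_{L^\gamma_t L_x^{\rho,2}(I')} \lesssim \|e^{i(t-t_0)\Delta}w(t_0)\|_{L^\gamma_t L_x^{\rho,2}(I')} + \bigl\| |x|^{-b}(|u|^{\frac{4-2b}{d}}u - |\tilde u|^{\frac{4-2b}{d}}\tilde u)\bigr\|_{L^2_t L_x^{\frac{2d}{d+2},2}(I')} + \varepsilon
\]
on any subinterval $I' \subset I$.

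\textbf{The key nonlinear difference estimate.} The heart of the matter is to bound the difference of nonlinearities. Writing $|u|^{\frac{4-2b}{d}}u - |\tilde u|^{\frac{4-2b}{d}}\tilde u = O\bigl((|u|^{\frac{4-2b}{d}} + |\tilde u|^{\frac{4-2b}{d}})|w|\bigr)$ pointwise, one factors $|x|^{-b} \in L_x^{d/b,\infty}$ out in Lorentz Hölder (Lemma on Hölder's inequality), then distributes the remaining $L_x^{\frac{2d}{d+2-2b},2}$ norm via Lorentz Hölder among the $\frac{4-2b}{d}$ factors of $L_x^{\rho,2}$-type norms of $u,\tilde u$ and one factor $\|w\|_{L_x^{\rho,2}}$; the exponent bookkeeping is exactly the identity \eqref{zb} used in the proof of Lemma \ref{L:nonlinear estimate}. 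This yields
\[
\bigl\| |x|^{-b}(|u|^{\frac{4-2b}{d}}u - |\tilde u|^{\frac{4-2b}{d}}\tilde u)\bigr\|_{L^2_t L_x^{\frac{2d}{d+2},2}(I')} \lesssim \bigl(\|u\|_{L^\gamma_t L_x^{\rho,2}(I')}^{\frac{4-2b}{d}} + \|\tilde u\|_{L^\gamma_t L_x^{\rho,2}(I')}^{\frac{4-2b}{d}}\bigr)\|w\|_{L^\gamma_t L_x^{\rho,2}(I')}.
\]
Then I would use the hypothesis $\|\tilde u\|_{L^\gamma_t L_x^{\rho,2}(I)} \le E$ to subdivide $I$ into $J = J(E)$ subintervals $I_j$ on each of which $\|\tilde u\|_{L^\gamma_t L_x^{\rho,2}(I_j)} \le \delta$ for a small $\delta = \delta(E)$ to be chosen; on the first such interval the smallness of $\|\tilde u\|$ together with the hypothesis $\|e^{i(t-t_0)\Delta}w(t_0)\|_{L^\gamma_t L_x^{\rho,2}} \le \varepsilon$ and a continuity/bootstrap argument closes to give $\|w\|_{L^\gamma_t L_x^{\rho,2}(I_1)} \lesssim_E \varepsilon$ and $\|u\|_{L^\gamma_t L_x^{\rho,2}(I_1)} \lesssim_E 1$, provided $\varepsilon_1$ is small enough. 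One then iterates over the $J$ intervals, at each stage controlling the new initial data $e^{i(t-t_j)\Delta}w(t_j)$ via Strichartz applied to the Duhamel integral of $w$ up to time $t_j$, so that it remains $\lesssim_E \varepsilon$ (with constants growing at most like $C(E)^j$, hence still $\lesssim_E \varepsilon$ after finitely many steps since $\varepsilon_1$ is taken small depending on $J(E)$). Summing over all $I_j$ gives the global-on-$I$ bounds $\|w\|_{L^\gamma_t L_x^{\rho,2}(I)} \lesssim_E \varepsilon$ and $\|u\|_{L^\infty_t L_x^2 \cap L^\gamma_t L_x^{\rho,2}(I)} \lesssim_E 1$; existence and uniqueness of $u$ itself on $I$ follow from Proposition \ref{T:CP} once these a priori bounds are in hand (the local theory bootstraps the $L^\gamma_t L_x^{\rho,2}$ bound into a genuine solution), and the $L^\infty_t L^2_x$ bound for $w$ comes from the same Strichartz inequality with the $(\infty,2)$ endpoint on the left.

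\textbf{Main obstacle.} I expect the principal technical difficulty to be the Lorentz-space nonlinear difference estimate together with keeping the induction constants under control: unlike the homogeneous case one cannot appeal to translation/Galilean invariance to simplify, and the singular weight $|x|^{-b}$ must be absorbed by the $L^{d/b,\infty}_x$ placement while the remaining Lorentz exponents are forced by \eqref{zb} — one has to check that the second index $2$ (as opposed to the Lebesgue index) in $L_x^{\rho,2}$ is consistently available on the right-hand side after Hölder, which is where the Lorentz Hölder inequality with summing second indices is used. A secondary point is that the difference of nonlinearities, because $\frac{4-2b}{d}$ may be less than $1$ (when $d$ is large), is only Hölder- rather than Lipschitz-continuous, so the pointwise bound $O((|u|^{\frac{4-2b}{d}}+|\tilde u|^{\frac{4-2b}{d}})|w|)$ must be justified directly from $||a|^p a - |b|^p b| \lesssim (|a|^p + |b|^p)|a-b|$, valid for all $p>0$; with this in hand the rest is the routine iteration sketched above.
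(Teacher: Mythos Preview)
Your proposal is correct and matches the paper's approach: the paper itself omits all details, stating only that ``the proof follows from Lemma \ref{L:nonlinear estimate} and the standard arguments (see e.g.\ \cite{KV}),'' and your sketch is precisely a Lorentz-space implementation of that standard subdivision-plus-bootstrap perturbation argument, with the nonlinear difference handled via the H\"older/Lorentz bookkeeping \eqref{zb} underlying Lemma \ref{L:nonlinear estimate}.
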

\begin{proof}
	The proof  follows  from   Lemma \ref{L:nonlinear estimate}  and the standard arguments  (see e.g. \cite{KV}),  so we omit the details. 
\end{proof}

The following  bilinear Strichartz estimates  will be used to derive the long time Strichartz estimate in Section \ref{S:5}. 
\begin{lem}[Bilinear Strichartz estimates]\label{L:bilinear SZ}
	Suppose  $\hat v(t,\xi)$ is supported on  $|\xi |\le M$ and  $\hat u(t,\xi)$ is  supported on  $|\xi |>N$,   $M\le N$. Let  $I=[a,b], d\ge1$ and  
	\begin{equation}
		\|u\|_{S^0_*(I\times \mathbb{R} ^d)}  := \|u(a)\|_{L^2}+ \|(i\partial_{t}+\Delta )u\|_{L_t^ 2 L_x^{\frac{2d}{d-2},2}(I\times \mathbb{R} ^d)}.  \label{523x1}
	\end{equation}
	Then 
	\begin{equation}
		\|uv\|_{L^2_{t,x}(I\times \mathbb{R} ^d)}\lesssim  \frac{M^{(d-1)/2}}{N^{1/2}}  \|u\|_{S^0_*(I\times \mathbb{R} ^d)}   \|v\|_{S^0_*(I\times \mathbb{R} ^d)} , \label{E:bilinear SZ}
	\end{equation}        
	where the implicit constant independent of  $I$.  
\end{lem}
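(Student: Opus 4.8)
The plan is to reduce \eqref{E:bilinear SZ} to the corresponding bilinear estimate for two \emph{free} Schr\"odinger evolutions, and then to prove that estimate by a space-time Fourier computation on the paraboloid. The inhomogeneity $|x|^{-b}$ plays no role: \eqref{E:bilinear SZ} concerns only the linear propagator $e^{it\Delta}$, so the argument is the classical one of Dodson \cite{Dodson1} and the references therein, the only adjustment being the bookkeeping of Lorentz norms in the Duhamel reduction.

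\emph{Step 1: reduction to free solutions.} Set $F:=(i\partial_t+\Delta)u$ and $G:=(i\partial_t+\Delta)v$, and write Duhamel's formula on $I=[a,b]$ as $u=e^{i(t-a)\Delta}u(a)-i\int_a^t e^{i(t-s)\Delta}F(s)\,ds=:u_{\mathrm{lin}}+u_{\mathrm{Duh}}$, and similarly $v=v_{\mathrm{lin}}+v_{\mathrm{Duh}}$. Frequency projections commute with $e^{it\Delta}$ and with the Duhamel integral, so $F$ is Fourier-supported in $\{|\xi|>N\}$ and $G$ in $\{|\xi|\le M\}$. Expanding $uv$ into four bilinear terms, it suffices to bound $u_{\mathrm{lin}}v_{\mathrm{lin}}$, $u_{\mathrm{lin}}v_{\mathrm{Duh}}$, $u_{\mathrm{Duh}}v_{\mathrm{lin}}$ and $u_{\mathrm{Duh}}v_{\mathrm{Duh}}$ in $L^2_{t,x}(I\times\mathbb R^d)$. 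For a Duhamel factor we write $\int_a^t e^{i(t-s)\Delta}F(s)\,ds=e^{it\Delta}\int_a^t e^{-is\Delta}F(s)\,ds$ and invoke the Christ-Kiselev lemma to pass from this retarded integral to the full one $\int_{\mathbb R}e^{-is\Delta}F(s)\,ds=:w_F$, which turns the Duhamel factor into the genuine free evolution $e^{it\Delta}w_F$; moreover, by duality with the endpoint Strichartz estimate of Proposition \ref{P:SZ} in its Lorentz-refined form,
\[
\|w_F\|_{L^2_x}=\sup_{\|h\|_{L^2_x}\le1}\Big|\int_{\mathbb R}\langle F(s),\,e^{is\Delta}h\rangle\,ds\Big|\lesssim \|F\|_{L^2_tL^{2d/(d+2),2}_x(I\times\mathbb R^d)},
\]
which is the forcing term of $\|u\|_{S^0_*}$ recorded in \eqref{523x1}; likewise for $G$. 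At the time-endpoint $p=2$ the Christ-Kiselev lemma is borderline, and this instance is accommodated — exactly as in the proof of the endpoint Strichartz estimate — by a dyadic decomposition in $|t-s|$ in the manner of Keel-Tao \cite{Keel-Tao}. Each of the four terms is thus reduced to a product of two free evolutions, one Fourier-supported in $\{|\xi|>N\}$ and one in $\{|\xi|\le M\}$.

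\emph{Step 2: the free-solution bilinear estimate.} It remains to prove that, for $\widehat f$ supported in $\{|\xi|>N\}$, $\widehat g$ in $\{|\xi|\le M\}$, and $M\le N$,
\[
\big\|(e^{it\Delta}f)\,(e^{it\Delta}g)\big\|_{L^2_{t,x}(\mathbb R\times\mathbb R^d)}\lesssim \frac{M^{(d-1)/2}}{N^{1/2}}\,\|f\|_{L^2_x}\,\|g\|_{L^2_x},
\]
from which \eqref{E:bilinear SZ} follows by restricting the left side to $I$. Decompose $f=\sum_{N_1>N}P_{N_1}f$ and $g=\sum_{N_2\le M}P_{N_2}g$. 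Since $\widehat{e^{it\Delta}h}(\tau,\xi)=\widehat h(\xi)\,\delta(\tau+|\xi|^2)$, Plancherel in $(t,x)$ gives
\[
\big\|(e^{it\Delta}P_{N_1}f)(e^{it\Delta}P_{N_2}g)\big\|_{L^2_{t,x}}^2=\int\!\!\int\Big|\int \widehat{P_{N_1}f}(\xi_1)\,\widehat{P_{N_2}g}(\xi-\xi_1)\,\delta\big(\tau+|\xi_1|^2+|\xi-\xi_1|^2\big)\,d\xi_1\Big|^2\,d\tau\,d\xi.
\]
For fixed $(\tau,\xi)$ the measure $\delta(\tau+|\xi_1|^2+|\xi-\xi_1|^2)\,d\xi_1$ is the surface measure on the sphere $\{\xi_1:|\xi_1-\tfrac{\xi}{2}|^2=-\tfrac{\tau}{2}-\tfrac{|\xi|^2}{4}\}$ carrying the weight $|\nabla_{\xi_1}(|\xi_1|^2+|\xi-\xi_1|^2)|^{-1}=(2|\xi_1-\xi_2|)^{-1}$, where $\xi_2:=\xi-\xi_1$. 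On the support of the integrand $|\xi_1|\sim N_1$ and $|\xi_2|\sim N_2\le M\le N<N_1$, so $|\xi_1-\xi_2|\sim N_1$, while the portion of the sphere on which $|\xi_2|\sim N_2$ has surface measure $\lesssim N_2^{d-1}$. Cauchy-Schwarz in $\xi_1$ against this measure, followed by the co-area change of variables $(\tau,\xi,\xi_1)\mapsto(\xi_1,\xi_2)$, yields
\[
\big\|(e^{it\Delta}P_{N_1}f)(e^{it\Delta}P_{N_2}g)\big\|_{L^2_{t,x}}^2\lesssim \frac{N_2^{d-1}}{N_1}\,\|P_{N_1}f\|_{L^2_x}^2\,\|P_{N_2}g\|_{L^2_x}^2.
\]
Summing the square roots over the dyadic pieces via Cauchy-Schwarz — using $\sum_{N_1>N}N_1^{-1/2}\|P_{N_1}f\|_{L^2_x}\lesssim N^{-1/2}\|f\|_{L^2_x}$ and $\sum_{N_2\le M}N_2^{(d-1)/2}\|P_{N_2}g\|_{L^2_x}\lesssim M^{(d-1)/2}\|g\|_{L^2_x}$, the latter relying on $d\ge3$ so that $\{N_2^{d-1}\}_{N_2\le M}$ sums to $\lesssim M^{d-1}$ — gives the claimed bound.

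\emph{The main obstacle.} Because \eqref{E:bilinear SZ} involves only the free flow, nothing genuinely new arises from the inhomogeneity; the heart of the matter is the paraboloid geometry of Step 2, which is classical, and the one delicate point is the time-endpoint $p=2$ in the Christ-Kiselev step, handled by the dyadic-in-$|t-s|$ device already used for Proposition \ref{P:SZ}. One should also observe that the transversality $|\xi_1-\xi_2|\sim N_1$ degenerates when $M\sim N$; in that (non-gaining) regime one instead estimates $\|uv\|_{L^2_{t,x}}$ by H\"older's and Bernstein's inequalities (Lemma \ref{L:Bernstein}) together with Proposition \ref{P:SZ}, which already produce the factor $M^{(d-2)/2}\approx M^{(d-1)/2}N^{-1/2}$.
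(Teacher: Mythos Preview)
Your proof is correct and is precisely the approach the paper intends: the paper's one-line proof defers to \cite[Lemma 2.5]{Visan2007}, which proceeds by exactly the Duhamel/Christ--Kiselev reduction and paraboloid computation you carry out in Steps~1 and~2, the only novelty being the Lorentz-space bookkeeping you note. One caveat worth recording: the Keel--Tao dyadic-in-$|t-s|$ device you invoke at the endpoint does not by itself recover the sharp bilinear gain $M^{(d-1)/2}N^{-1/2}$ when the forcing sits in $L^2_tL^{2d/(d+2),2}_x$ (it yields only the H\"older--Bernstein bound $M^{(d-2)/2}$, since the dispersive decay sees no high/low separation); Visan's original lemma in fact uses a non-endpoint dual Strichartz norm for which Christ--Kiselev applies directly, and either that choice or an appeal to the $U^2_\Delta$ transference principle closes the argument here as well.
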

\begin{proof}
	The proof   is almost identical to that of \cite[Lemma 2.5]{Visan2007}, with the only difference being that we need to replace the  Strichartz   estimates with those estimates in Lorentz spaces.  
\end{proof}
\begin{lem}[Regularity persistence]
	\label{L:S_0*}
	Let  $u(t,x)$ be a  solution to  (\ref{nls})  such that   
	\begin{equation}
		\|u\|_{L^\gamma _tL_x^{\rho,2}(I\times \mathbb{R} ^d)}\lesssim1,  \notag
	\end{equation}
where  $(\gamma ,\rho)$ was defined in (\ref{E:gamma}).  	 Then  
	\begin{equation}
		\|u\| _{S^0_*(I\times \mathbb{R} ^d)} \lesssim 1.\label{614ww1}
	\end{equation}
	More generally, suppose  $\frac{1}{2}\le s<\min \left\{ \frac{d-2b}{2}+1,\frac{4-2b}{d}+1 \right\}$, then 
	\begin{equation}
		\||\nabla |^s u\|_{S^0_*(I\times \mathbb{R} ^d)}\lesssim   \|u\|_{L^\infty _t\dot H^s_x(I\times \mathbb{R} ^d)}. \label{614ww2}
	\end{equation}
\end{lem}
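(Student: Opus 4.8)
**Proof plan for Lemma \ref{L:S_0*}.**

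The plan is to unfold the definition of the $S^0_*$ norm and reduce everything to the nonlinear estimates of Lemma \ref{L:nonlinear estimate} together with a continuity (bootstrap) argument. Recall from \eqref{523x1} that, on a subinterval $I_0=[a_0,b_0]\subset I$,
\begin{equation}
\|u\|_{S^0_*(I_0\times\mathbb R^d)}=\|u(a_0)\|_{L^2}+\big\|(i\partial_t+\Delta)u\big\|_{L^2_tL_x^{\frac{2d}{d-2},2}(I_0\times\mathbb R^d)}
=\|u(a_0)\|_{L^2}+\big\||x|^{-b}|u|^{\frac{4-2b}{d}}u\big\|_{L^2_tL_x^{\frac{2d}{d+2},2}(I_0\times\mathbb R^d)},
\end{equation}
using that $u$ solves \eqref{nls}, and noting $(\tfrac{2d}{d+2})'=\tfrac{2d}{d-2}$. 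For \eqref{614ww1} I would take $s=\tfrac12$ in the first estimate of Lemma \ref{L:nonlinear estimate} — or rather work directly at regularity $s$ below — but the cleanest route is: first establish \eqref{614ww1} by itself, then deduce \eqref{614ww2} by differentiating. To get \eqref{614ww1}, partition $I$ into finitely many consecutive intervals $I_j$ on each of which $\|u\|_{L^\gamma_tL_x^{\rho,2}(I_j\times\mathbb R^d)}\le\eta$ for a small absolute constant $\eta$; this is possible because $\|u\|_{L^\gamma_tL_x^{\rho,2}(I\times\mathbb R^d)}\lesssim1$ and $\gamma<\infty$. On each $I_j$, the second inequality of Lemma \ref{L:nonlinear estimate} (applied with an appropriate fractional substitute, or more simply Hölder plus Sobolev embedding in Lorentz spaces) gives
\begin{equation}
\big\||x|^{-b}|u|^{\frac{4-2b}{d}}u\big\|_{L^2_tL_x^{\frac{2d}{d+2},2}(I_j\times\mathbb R^d)}\lesssim\|u\|_{L^\gamma_tL_x^{\rho,2}(I_j\times\mathbb R^d)}^{\frac{4-2b}{d}+1}\lesssim\eta^{\frac{4-2b}{d}+1},
\end{equation}
so $\|u\|_{S^0_*(I_j\times\mathbb R^d)}\lesssim\|u(a_j)\|_{L^2}+\eta^{\frac{4-2b}{d}+1}$; since mass is conserved, $\|u(a_j)\|_{L^2}=M(u)^{1/2}\lesssim1$, hence $\|u\|_{S^0_*(I_j\times\mathbb R^d)}\lesssim1$. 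Summing the finitely many $I_j$ (the number of which is controlled by $\|u\|_{L^\gamma_tL_x^{\rho,2}(I\times\mathbb R^d)}$ and $\eta$, both bounded) yields \eqref{614ww1}.

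For the higher-regularity statement \eqref{614ww2}, I would apply $|\nabla|^s$ to the Duhamel formula, so that $(i\partial_t+\Delta)|\nabla|^su=|\nabla|^s\big(\mu|x|^{-b}|u|^{\frac{4-2b}{d}}u\big)$, and then by definition of $S^0_*$,
\begin{equation}
\big\||\nabla|^su\big\|_{S^0_*(I_j\times\mathbb R^d)}\lesssim\big\||\nabla|^su(a_j)\big\|_{L^2}+\big\||\nabla|^s\big(|x|^{-b}|u|^{\frac{4-2b}{d}}u\big)\big\|_{L^2_tL_x^{\frac{2d}{d+2},2}(I_j\times\mathbb R^d)}.
\end{equation}
The first term is $\lesssim\|u\|_{L^\infty_t\dot H^s_x(I\times\mathbb R^d)}$; for the second I invoke the first estimate of Lemma \ref{L:nonlinear estimate}, whose hypothesis $\tfrac12\le s<\min\{\tfrac{d-2b}{2}+1,\tfrac{4-2b}{d}+1\}$ is exactly what is assumed here, giving a bound by $\|u\|_{L^\infty_tL^2_x}^{\frac{4-2b}{d}}\||\nabla|^su\|_{L^2_tL_x^{\frac{2d}{d-2},2}(I_j\times\mathbb R^d)}+\||\nabla|^{2s/\gamma}u\|_{L^\gamma_tL_x^{\rho,2}(I_j\times\mathbb R^d)}^{\gamma/2}$. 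On $I_j$ with $\|u\|_{L^\gamma_tL_x^{\rho,2}}\le\eta$ small, the Gagliardo–Nirenberg interpolation (Lemma \ref{L:GN}) bounds $\||\nabla|^{2s/\gamma}u\|_{L^\gamma_tL_x^{\rho,2}}^{\gamma/2}$ by $\eta^{\gamma/2-1}\||\nabla|^su\|_{L^\gamma_tL_x^{\rho,2}(I_j\times\mathbb R^d)}$, and the Strichartz inequality (Proposition \ref{P:SZ}) relates $\||\nabla|^su\|_{L^\gamma_tL_x^{\rho,2}}$ and $\||\nabla|^su\|_{L^2_tL_x^{\frac{2d}{d-2},2}}$ to $\||\nabla|^su\|_{S^0_*}$. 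Choosing $\eta$ small absorbs the nonlinear contributions into the left side, so $\||\nabla|^su\|_{S^0_*(I_j\times\mathbb R^d)}\lesssim\||\nabla|^su(a_j)\|_{L^2}\lesssim\|u\|_{L^\infty_t\dot H^s_x(I\times\mathbb R^d)}$. Summing over the finitely many $I_j$ and using that $\||\nabla|^su(a_{j+1})\|_{L^2}\le\||\nabla|^su\|_{S^0_*(I_j\times\mathbb R^d)}$ to chain the estimates gives \eqref{614ww2}.

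The main technical obstacle is the subcriticality bookkeeping in the fractional case: one must verify that the Lorentz exponents produced by the product rule (Lemma \ref{L:leibnitz}) applied to $|x|^{-b}\cdot(|u|^{\frac{4-2b}{d}}u)$ stay in the admissible ranges $1<r<\infty$, $1\le\rho\le\infty$, that the auxiliary exponent $|x|^{-(b+s)}\in L^{d/(b+s),\infty}$ requires $b+s<d$ (which follows from $s<\tfrac{d-2b}{2}+1$ and $b<\tfrac d2$), and that the interpolation identities such as \eqref{zb} hold — but all of this is already packaged inside Lemma \ref{L:nonlinear estimate}, so in the present proof it reduces to checking that the hypothesis on $s$ matches. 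A secondary point is making the absorption argument rigorous: since $\||\nabla|^su\|_{L^\infty_t\dot H^s_x}$ is only assumed finite (not a priori small), one runs the continuity argument on each small-$L^\gamma_tL_x^{\rho,2}$ piece $I_j$ where the $S^0_*$ norm is finite by local theory, and then sums; no global bootstrap on $I$ is needed.
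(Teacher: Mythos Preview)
Your overall strategy---partition $I$ into subintervals on which $\|u\|_{L^\gamma_tL_x^{\rho,2}}$ is small, then run a bootstrap on each piece---is exactly the route the paper takes (the paper just says ``by the continuity method''). The proof of \eqref{614ww1} is fine, though the partition is in fact unnecessary there: since the hypothesis already gives $\|u\|_{L^\gamma_tL_x^{\rho,2}(I\times\mathbb R^d)}\lesssim1$, a single application of H\"older (via \eqref{zb}) bounds the nonlinearity on all of $I$ at once.

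There is, however, a concrete gap in your argument for \eqref{614ww2}. You invoke the \emph{first} estimate of Lemma~\ref{L:nonlinear estimate}, which produces the term
\[
\|u\|_{L^\infty_tL^2_x}^{\frac{4-2b}{d}}\,\||\nabla|^su\|_{L^2_tL_x^{\frac{2d}{d-2},2}(I_j\times\mathbb R^d)}.
\]
The prefactor here is (a power of) the conserved mass, which is \emph{not} small---it does not shrink with $\eta$---so this term cannot be absorbed into the left-hand side no matter how finely you partition. Your absorption step ``choosing $\eta$ small absorbs the nonlinear contributions'' therefore fails for this term.

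The fix is immediate: use the \emph{second} estimate of Lemma~\ref{L:nonlinear estimate} instead, which gives
\[
\||\nabla|^s(|x|^{-b}|u|^{\frac{4-2b}{d}}u)\|_{L^2_tL_x^{\frac{2d}{d+2},2}(I_j\times\mathbb R^d)}\lesssim\|u\|_{L^\gamma_tL_x^{\rho,2}(I_j\times\mathbb R^d)}^{\frac{4-2b}{d}}\,\||\nabla|^su\|_{L^\gamma_tL_x^{\rho,2}(I_j\times\mathbb R^d)}.
\]
Now the prefactor is $\le\eta^{\frac{4-2b}{d}}$, genuinely small on each $I_j$, and the bootstrap closes exactly as you describe. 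This is precisely what the paper does: it first runs this bootstrap to get $\||\nabla|^su\|_{L^\gamma_tL_x^{\rho,2}(I\times\mathbb R^d)}\lesssim\|u\|_{L^\infty_t\dot H^s_x}$, and then feeds that bound back into the second estimate once more to control the $S^0_*$ norm.
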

\begin{proof}
  (\ref{614ww1}) follows immediately from  H\"older's inequality and  (\ref{zb}): 
	\begin{equation}
		\|u\| _{S^0_*(I\times \mathbb{R} ^d)} 	\lesssim   \|u_0\|_{L^2(\mathbb{R} ^d)}+ \||x|^{-b}\|_{L^{\frac{d}{b},\infty }(\mathbb{R} ^d)} \|u\|_{L^\gamma _tL^{\rho,2}_x (I\times \mathbb{R} ^d)}^{\frac{4-2b}{d} +1}\lesssim 1. \notag
	\end{equation}
	To prove (\ref{614ww2}), we use Duhamel's formula and Lemma \ref{L:nonlinear estimate} to obtain 
	\begin{align}
		\||\nabla |^s u\| _{L^\gamma _tL_x^{\rho,2}(I\times \mathbb{R} ^d)}&\lesssim   \|u_0\|_{\dot H^s(\mathbb{R} ^d)}+ \||\nabla |^s(|x|^{-b}|u|^{\frac{4-2b}{d} }u)\|_{L^2_tL_x^{\frac{2d}{d+2},2}(I\times \mathbb{R} ^d)}\notag\\
		&\lesssim   \|u_0\|_{\dot H^s(\mathbb{R} ^d)}+ \|u\|^\frac{4-2b}{d} _{L^\gamma _t L_x^{\rho,2}(I\times \mathbb{R} ^d)}  \||\nabla |^s u\|_{L^\gamma _tL_x^{\rho,2}(I\times \mathbb{R} ^d)}.\notag     
	\end{align}
	By the continuity method, 
	\begin{equation}
		\||\nabla |^s u\| _{L^\gamma _tL_x^{\rho,2}(I\times \mathbb{R} ^d)}\lesssim   \|u\|_{L_t^\infty \dot H^s_x(I\times \mathbb{R} ^d)}.\notag 
	\end{equation}
	Therefore, by Duhamel's formula and Strichartz estimate, 
	\begin{align}
		&	\||\nabla |^s u\|_{S^0_*(I\times \mathbb{R} ^d)} 
		\lesssim  \|u_0\|_{\dot H^s(\mathbb{R} ^d)}+ \||\nabla |^s(|x|^{-b}|u|^{\frac{4-2b}{d} }u)\|_{L^2_tL_x^{\frac{2d}{d+2},2}(I\times \mathbb{R} ^d)}\notag\\
		&\lesssim   \|u_0\|_{\dot H^s(\mathbb{R} ^d)}+ \|u\|^\frac{4-2b}{d} _{L^\gamma _t L_x^{\rho,2}(I\times \mathbb{R} ^d)}   \||\nabla |^s u\|_{L^\gamma _tL_x^{\rho,2}(I\times \mathbb{R} ^d)}\lesssim   \|u\|_{L^\infty _t\dot H^s_x(I\times \mathbb{R} ^d)}.\notag 
	\end{align}
\end{proof}
\subsection{The construction of  scattering solutions away from the origin.}\label{s:2.4}
The following Proposition establishes the existence of scattering solutions to \eqref{nls} associated with initial data living sufficiently far from the origin.  As a benefit of this, the spatial center and the frequency center  for the almost periodic solutions in Section \ref{S:3} can be chosen as  $x(t)\equiv\xi (t)\equiv0$.

\begin{prop}\label{P:embed} Let   $x_n\in \mathbb{R}^d$ and $\xi_n\in\mathbb{R}^d$ satisfy
 \begin{equation}
 	\lim_{n\rightarrow \infty }\left(|x_n|+|\xi_n|\right)=\infty .\notag
 \end{equation}
	Let $\phi\in L^2(\mathbb{R}^d)$ and define
	\begin{equation}
		\phi_n(x) =: e^{ix\cdot \xi_n}\phi (x-x_n). \notag
	\end{equation}
	Then there exists a subsequence $n_k$ and a global scattering solution $v_{n_k}$ to \eqref{nls} satisfying
	\[
	v_{n_k}(0) = \phi_{n_k}  \quad\text{and}\quad \| v_{n_k}\|_{L^\gamma_tL^{\rho,2}_x (\mathbb{R}\times \mathbb{R}^d)} \lesssim  \|\phi\|_{L^2},
	\] 	
		where the implicit constant independent of  $k$.  
\end{prop}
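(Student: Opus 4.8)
The plan is to exploit the fact that, after the Galilean transform that removes the frequency shift $\xi_n$, the initial data $\phi_n$ concentrates spatially at $x_n$ with $|x_n|\to\infty$, so the inhomogeneity $|x|^{-b}$ is uniformly small on the bulk of the mass. Concretely, I would first reduce to the case $\phi\in C_c^\infty(\mathbb R^d)$ by density, since Proposition~\ref{P:stab} (stability) propagates approximate solutions and the scattering norm depends continuously on the $L^2$ data; the general $\phi\in L^2$ then follows by approximating and patching with a small error. Second, I would pass to a subsequence so that exactly one of two regimes holds: either $|\xi_n|\to\infty$, or $\xi_n$ stays bounded (hence, up to a further subsequence, $\xi_n\to\xi_\infty$) and $|x_n|\to\infty$. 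In the second regime the frequency shift is harmless — absorb $\xi_\infty$ into $\phi$ — so the essential mechanism is the spatial translation to infinity; in the first regime the Galilean boost will additionally disperse the solution, which only helps.

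The core construction is as follows. Let $w_n(t,x) := e^{it\Delta}\phi_n$ be the free evolution, which by the Strichartz estimate (Proposition~\ref{P:SZ}) satisfies $\|w_n\|_{L^\gamma_t L^{\rho,2}_x(\mathbb R\times\mathbb R^d)}\lesssim\|\phi\|_{L^2}$ uniformly in $n$. I claim $w_n$ is an \emph{approximate} solution to \eqref{nls} in the sense of Proposition~\ref{P:stab}, i.e.
\[
\big\| |x|^{-b}|w_n|^{\frac{4-2b}{d}}w_n \big\|_{L_t^2 L_x^{\frac{2d}{d+2},2}(\mathbb R\times\mathbb R^d)} \longrightarrow 0 \quad\text{as } n\to\infty.
\]
Granting this, an application of Proposition~\ref{P:stab} with $\tilde u = w_n$ and $u_0 = \phi_n$ (the second hypothesis is vacuous since $u_0=\tilde u|_{t=0}$) produces, for $n$ large, a genuine solution $v_n$ to \eqref{nls} with $v_n(0)=\phi_n$ and $\|v_n\|_{L^\gamma_tL^{\rho,2}_x}\lesssim\|\phi\|_{L^2}$; by Proposition~\ref{T:CP}(b) this $v_n$ is global and scatters. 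Relabeling, this is the desired subsequence $v_{n_k}$.

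To verify the smallness claim I would split spacetime. Fix $R$ large. On the region $|x|\le R$ (a fixed ball near the origin) use Hölder in Lorentz spaces with $\||x|^{-b}\|_{L^{d/b,\infty}(|x|\le R)}\lesssim R^{(d-2b)/d\cdot\text{(exponent)}}$ finite and the fact that, by the dispersive/Strichartz decay, $\|w_n\|_{L^\gamma_tL^{\rho,2}_x(|x|\le R)}\to 0$ — this uses that $\phi_n$ is concentrated far from the origin (translation for bounded $\xi_n$) or strongly dispersed (large $\xi_n$), so the free solution deposits asymptotically no mass in any fixed ball, and the Lorentz-space Strichartz/local smoothing bounds let one quantify this after an $\varepsilon$-of-room argument using Lemma~\ref{CZ}. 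On the complementary region $|x|>R$ use $\||x|^{-b}\|_{L^{d/b,\infty}(|x|>R)}\lesssim R^{-b}$ (sharp off-support bound of the weight), so by the mixed-norm Hölder inequality of Subsection~\ref{S:2.2} and the uniform Strichartz bound, the contribution is $\lesssim R^{-b}\|\phi\|_{L^2}^{\frac{4-2b}{d}+1}$, which is small for $R$ large, uniformly in $n$. Sending $n\to\infty$ first and then $R\to\infty$ gives the claim. The main obstacle is this bookkeeping: making the ``no mass in a fixed ball'' statement quantitative for the free Schrödinger flow of $\phi_n$ in the Lorentz norm $L^\gamma_t L^{\rho,2}_x$ uniformly in $n$, covering both the $|x_n|\to\infty$ and the $|\xi_n|\to\infty$ regimes simultaneously; this is where one must be careful with the Galilean conjugation $e^{ix\cdot\xi_n}$ (which commutes with $e^{it\Delta}$ only up to a spatial translation by $2t\xi_n$ and a phase) and invoke a density/compactness reduction to $\phi\in C_c^\infty$ together with stationary-phase–type decay of $e^{it\Delta}\phi$.
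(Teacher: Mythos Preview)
Your overall strategy---take the free evolution $w_n=e^{it\Delta}\phi_n$ as an approximate solution, show the source term $|x|^{-b}|w_n|^{\frac{4-2b}{d}}w_n$ is small in $L^2_tL_x^{\frac{2d}{d+2},2}$, then invoke Proposition~\ref{P:stab}---is exactly what the paper does. The gap is in your smallness argument on the region $|x|>R$: the claim
\[
\||x|^{-b}\|_{L^{d/b,\infty}(|x|>R)}\lesssim R^{-b}
\]
is false. The weak-$L^{d/b}$ norm of $|x|^{-b}$ is scale-invariant, and a direct computation of the distribution function shows that $\||x|^{-b}\chi_{\{|x|>R\}}\|_{L^{d/b,\infty}}$ equals a fixed dimensional constant for every $R>0$. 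Worse, this is not repairable by perturbing the H\"older exponents: if you try to place $|x|^{-b}$ in $L^{p,\infty}$ for some $p\neq d/b$ and compensate with a different Strichartz pair $(q,r)$ on $w_n$ (and any dual admissible pair on the left), the scaling relation $\frac{2}{q}+\frac{d}{r}=\frac{d}{2}$ together with the H\"older constraints forces $p=d/b$ exactly. The problem is mass-critical, so there is no room to extract decay from the weight at spatial infinity in this way.

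The paper circumvents this by splitting in \emph{time} rather than in space. First, on $\{|t|>T\}$ the contribution is $\lesssim\|e^{it\Delta}\phi\|_{L^\gamma_tL^{\rho,2}_x(|t|>T)}^{\frac{4-2b}{d}+1}\to 0$ as $T\to\infty$ by Strichartz and monotone convergence. On $\{|t|\le T\}$ one approximates $e^{it\Delta}\phi$ by a fixed $\psi\in C_c^\infty(\mathbb{R}\times\mathbb{R}^d)$ in $L^\gamma_tL^{\rho,2}_x$; then, since $w_n(t,x)$ agrees (up to a unimodular phase) with $e^{it\Delta}\phi$ evaluated at $x-2t\xi_n-x_n$, the source term is essentially $|x|^{-b}|\psi(t,x-2t\xi_n-x_n)|^{\frac{4-2b}{d}}\psi(t,x-2t\xi_n-x_n)$. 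The compact spatial support of $\psi$ now allows a \emph{pointwise} bound $|x|^{-b}\lesssim |2t\xi_n+x_n|^{-b}$ on that support. A short case analysis on whether $|x_n|/|\xi_n|$ stays bounded away from $0$ and $\infty$ (pass to a subsequence) shows that $|2t\xi_n+x_n|\to\infty$ for all $t\in[-T,T]$ except possibly $t$ in a set of measure $O(1/T)$, whose contribution is handled by H\"older in $t$. Your instinct that the $|\xi_n|\to\infty$ regime requires care about the wave packet briefly crossing the origin is correct, and this is precisely where the $O(1/T)$ exceptional time set and the subsequence extraction enter.
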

\begin{proof}
	The proof is inspired by \cite{MMZ}. Let 
	\begin{equation}
		v_n(t,x)=:e^{it\Delta }\left(e^{ix\cdot \xi_n}\phi (x-x_n)\right)=e^{-it|\xi_n|^2}e^{ix\cdot \xi_n}e^{it\Delta }\phi (x-2t\xi_n-x_n),\notag
	\end{equation}
	and 
		\begin{equation}
		e_{n}(t,x)=:  -\mu |x|^{-b}|  v_{n}(t,x)|^\frac{4-2b}{d}   v_{n}(t,x). \notag
	\end{equation}
	Assume that we can find a subsequence  $n_{k}$  such that  
	\begin{equation}
		\lim_{k\rightarrow\infty } \|e_{n_k}(t,x)\| _{L^2_tL_x^{\frac{2d}{d+2},2}(\mathbb{R} \times \mathbb{R} ^d)}=0.\label{Limit}
	\end{equation}
	Then  Proposition \ref{P:embed} follows immediately from (\ref{Limit}) and the  stability results in Proposition \ref{P:stab}.  
	
	Therefore, it sufficies to prove (\ref{Limit}).   For any \(\varepsilon >0\), we take a \(\psi (t,x)\in C_c^\infty (\mathbb{R} \times \mathbb{R} ^d)\) such that
	\begin{equation}
		\|\psi (t,x)-e^{it\Delta }\phi(x)\|_{L^ \gamma  _tL_x^{\rho,2}(\mathbb{R} \times \mathbb{R} ^d)} <\varepsilon .
	\end{equation}
	
	Fix \(T>0\).  	We  first consider the   estimate  of \(e_{n}\) on \([-T,T]\).  
	Let 
	\begin{equation}
		\Phi_n(t,x)=:-\mu |x|^{-b}|  \psi(t,x-2t\xi_n-x_n)|^\frac{4-2b}{d}   \psi (t,x-2t\xi_n-x_n).\notag
	\end{equation}
	Then by H\"older's inequality, we have, on  $[-T,T]\times \mathbb{R} ^d$,  
\begin{align}
		&\|e_{n}(t,x) -\Phi_n(t,x)\| _{L_t^2L_x^{\frac{2d}{d+2},2}([-T,T]\times \mathbb{R} ^d)} \notag\\
		&\lesssim  \left( \|e^{it\Delta }\phi\|_{L_t^\gamma L_x^{\rho,2}}^{\frac{4-2b}{d}}+ \|\psi (t,x)\|_{L_t^\gamma L_x^{\rho,2}}^{\frac{4-2b}{d}}  \right) \|\psi (t,x)-e^{it\Delta }\phi (x)\|_{L_t^\gamma L_x^{\rho,2}}    \lesssim   \varepsilon .\label{716w7}
\end{align}
Next, we claim that there exists a subsequence  $n_k$  such that 
\begin{equation}
	\limsup _{T\rightarrow \infty }\limsup _{k\rightarrow\infty }  \|\Phi_{n_k}\|_{L^2_t L_x^{\frac{2d}{d+2},2}([-T,T]\times \mathbb{R} ^d)}=0.\label{C716} 
\end{equation}
	To  prove (\ref{C716}), 	we consider two cases. \\
	\noindent\textbf{Case 1:} \(\frac{|\xi_n|}{|x_n|}+\frac{|x_n|}{|\xi_n|}\) is bounded. Without loss of generality, we assume there exists \(n_k\) and \(\alpha _0\in (0,\infty )\) such that
	\begin{equation}
		\lim_{k\rightarrow\infty }\frac{|x_ {n_k}|}{|\xi_{n_k}|}=\alpha _0.
	\end{equation}
	In particular, we have  $\lim_{k\rightarrow\infty }|x_{n_k}|=|\xi_ {n_k}|=\infty $.   
	When \(|t|<\frac{\alpha _0}{2}-\frac{1}{T}\) and  $x$ is bounded, we have, for  $k$  sufficiently large, 
	\begin{equation}
		|x+2t\xi_{n_k}+x_{n_k}|\ge |x_{n_k}|-2|t||\xi _{n_k}|-|x|\ge \frac{1}{T}|\xi_ {n_k}|-|x|\ge  \frac{1}{2T}|\xi_ {n_k}|.\notag
	\end{equation}
	Similarly, when \(|t|>\frac{\alpha _0}{2}+\frac{1}{T}\) and  $x$ is bounded, we have, for  $k$  sufficiently large, 
	\begin{equation}
		|x+2t\xi_{n_k}+x_{n_k}|\ge 2|t||\xi _{n_k}|- |x_{n_k}|-|x|\ge \frac{1}{T}|\xi_ {n_k}|-|x|\ge  \frac{1}{2T}|\xi_ {n_k}|.\notag
	\end{equation}
	Therefore, letting  $I_T=:[-T,T]\cap \left\{t:|t|<\frac{\alpha _0}{2}-\frac{1}{T} \right\}\cup \left\{t:|t|>\frac{\alpha _0}{2}+\frac{1}{T} \right\}$, we have 
	\begin{equation}
			  \|\Phi_{n_k}(t,x)\|_{L^2_tL_x^{\frac{2d}{d+2},2}(I_T\times \mathbb{R} ^d)}\lesssim  T^{b}|\xi _{n_k}|^{-b} \||\psi|^{\frac{4-2b}{d}}\psi\|_{L^2_tL_x^{\frac{2d}{d+2},2}(I_T\times \mathbb{R} ^d)} .\label{716w8}
	\end{equation}
	On the other hand,  by H\"older's inequality, 
	\begin{equation}
		 \|\Phi_{n_k}(t,x)\|_{L^2_tL_x^{\frac{2d}{d+2},2}(\{\frac{\alpha _0}{2}-\frac{1}{T}<|t|<\frac{\alpha _0}{2}+\frac{1}{T}\}\times \mathbb{R} ^d)} \lesssim   \|\psi(t,x)\|_{L^\gamma _tL_x^{ \rho,2}(\{\frac{\alpha _0}{2}-\frac{1}{T}<|t|<\frac{\alpha _0}{2}+\frac{1}{T}\}\times \mathbb{R} ^d)} ^{\frac{4-2b}{d}+1}.\notag
	\end{equation}
	Combining the above estimate with (\ref{716w8}), and using the monotone convergence theorem, we obtain (\ref{C716}).

		\noindent\textbf{Case 2:} \(\frac{|\xi_n|}{|x_n|}+\frac{|x_n|}{|\xi_n|}\) is unbounded. Without loss of generality, we assume there exists \(n_k\) such that
	\begin{equation}
		\lim_{k\rightarrow\infty }\frac{|x_ {n_k}|}{|\xi_{n_k}|}=\infty .\label{716x1}
	\end{equation}
	Hence, when  $x$ is bounded, we have, for  $k$  sufficiently large, 
	\begin{equation}
		|x+2t\xi_ {n_k}+x_{n_k}|\ge |x_{n_k}|-2|T||\xi _{n_k}|-|x|\ge \frac{1}{2}|x_{n_k}|.\notag
	\end{equation}
	Proceeding as in the previous case, we obtain (\ref{C716}).  
	
	Finally, we consider the estimate of  $e_n(t,x)$ on the interval  $\left\{t:|t|>T \right\}$. By H\"older's inequality,
	\begin{equation}
		\|e_{n}\|_{L_{t}^{2}L_x^{\frac{2d}{d+2},2}(\{|t|>T\}\times \mathbb{R} ^d)} \lesssim   \|e^{it\Delta }\phi\|_{L_{t}^{\gamma}L_x^{\rho,2}(\{|t|>T\}\times \mathbb{R} ^d)}^{\frac{4-2b}{d}+1},\notag
	\end{equation}
	which tends to  $0$ as  $T\rightarrow\infty $ by Strichartz and the monotone convergence theorem. This combined with (\ref{716w7}) and (\ref{C716}) yields (\ref{Limit}), and the proof of  Proposition \ref{P:stab} is completed.  
\end{proof}

\section{Reduction to almost periodic solutions}\label{S:3}
The aim of this section is to demonstrate that if   Theorem \ref{T:1} fails, then one could construct minimal counterexamples. As a consequence of minimality,  the  minimal counterexamples will satisfy   the  properties listed in Subsection \ref{S:3.2}.
 
 \subsection{Reduction to almost periodic solutions}\label{S:3.1}
   The main ingredient we need to prove Theorem \ref{T:reduction}  is the following linear profile decomposition of \cite[Theorem 5.4]{BV2007}.  
\begin{prop}[Linear profile decomposition]\label{P:LPD}
	Let $u_n$ be a bounded sequence in $L^2(\mathbb{R}^d)$.  Then after passing to a subsequence if necessary,		
	there exist $J^*\in\mathbb{N}\cup\{\infty\}$; profiles $\phi^j\in  L^2 \setminus\{0\}$; scales $\lambda_n^j\in(0,\infty)$; space translation parameters $x_n^j\in\mathbb{R}^d$;  frequency translation parameters $\xi_n^j\in\mathbb{R}^d$; time translation parameters $t_n^j$; and remainders $w_n^J$ such that the following  decomposition holds for $1\leq J\leq J^*$:
\begin{equation}
		u_n = \sum_{j=1}^J (\lambda ^j_n)^{-\frac{d}{2}}e^{ix\cdot \xi^j_n}[e^{it_n^j\Delta}\phi^j] (\frac{x-x_n^j}{\lambda _n^j}) + w_n^J;\label{21w3}
\end{equation}
	here  $w_n^J\in L^2_x(\mathbb{R}^d)$ obey
	\begin{equation}\label{vanishing0}
		\limsup_{J\to J^*}\limsup_{n\to\infty} \|e^{it\Delta}w_n^J\|_{L_{t,x}^{\frac{2(d+2)}{d}} (\mathbb{R}\times\mathbb{R}^d)} = 0.
	\end{equation}
	Moreover, for any  $j\neq k$,
	\begin{equation}\label{orthogonality}
		\lim_{n\to\infty} \biggl\{\log\bigl[\tfrac{\lambda_n^j}{\lambda_n^k}\bigr] + \tfrac{|x_n^j-x_n^k|^2}{\lambda_n^j\lambda_n^k} +\lambda_n^j\lambda_n^k|\xi_n^j-\xi_n^k|^2+ \tfrac{|t_n^j(\lambda_n^j)^2-t_n^k(\lambda_n^k)^2|}{\lambda_n^j\lambda_n^k}\biggr\} = \infty.
	\end{equation}
	In addition, we may assume that  either  $ t_n^j\equiv 0$ or  $t_n^j\to\pm\infty$, either  $  x_n^j\equiv 0$ or  $\frac{|x_n^j|}{\lambda _n^j}\to\infty$, and that either  $\xi_n^j\equiv0$ or  $\lambda_n^j\xi_n^j\to \infty $.   
		Furthermore, for any  $J\ge1$ we have the  mass decoupling property
	\begin{equation}\label{mass-decoupling}
		\lim_{n\rightarrow\infty}  [M(u_n)-\sum_{j=1}^{J} M(\phi^j)-M(w_n^J)]=0.
	\end{equation}
\end{prop}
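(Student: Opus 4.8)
The plan is to prove Proposition~\ref{P:LPD} by the now-standard inductive bubble-extraction scheme (Bourgain, Carles--Keraani, B\'egout--Vargas); since the relevant evolution is exactly the free Schr\"odinger group $e^{it\Delta}$ on $L^2(\mathbb{R}^d)$, the only genuinely analytic input is a \emph{refined Strichartz inequality} in the $L^2$-critical regime. The first step is the corresponding \emph{inverse Strichartz inequality}: if $(f_n)$ is bounded in $L^2(\mathbb{R}^d)$, $A:=\limsup_n\|f_n\|_{L^2}$, and
\[
\limsup_{n\to\infty}\big\|e^{it\Delta}f_n\big\|_{L^{2(d+2)/d}_{t,x}(\mathbb{R}\times\mathbb{R}^d)}=\varepsilon>0,
\]
then, after passing to a subsequence, there exist $\lambda_n\in(0,\infty)$, $x_n,\xi_n\in\mathbb{R}^d$, $t_n\in\mathbb{R}$ and a nonzero $\phi\in L^2$ such that, with $\phi_n:=(\lambda_n)^{-d/2}e^{ix\cdot\xi_n}[e^{it_n\Delta}\phi]((x-x_n)/\lambda_n)$ the associated concentrating bubble, the suitably adjoint-rescaled and demodulated version of $e^{-it_n\Delta}f_n$ converges weakly in $L^2$ to $\phi$, one has $\|\phi\|_{L^2}\gtrsim_{d,A}\varepsilon^{\beta}$ for some $\beta=\beta(d)>0$, and the mass decouples:
\[
\|f_n\|_{L^2}^2=\|\phi\|_{L^2}^2+\|f_n-\phi_n\|_{L^2}^2+o(1).
\]
This is obtained from the bilinear/square-function refined Strichartz estimate (Moyua--Vargas--Vega, Tao--Vargas--Vega, B\'egout--Vargas~\cite{BV2007}): that estimate produces a frequency cube $Q_n$ (of side $\sim\lambda_n^{-1}$, centered at $\xi_n$) carrying a nontrivial share of the $L^{2(d+2)/d}$ norm of $e^{it\Delta}f_n$, then a spacetime point $(t_n\lambda_n^2,x_n)$ at which the frequency-localized piece is $\gtrsim\varepsilon$-large; testing against the bubble based at these parameters and using weak-$*$ sequential compactness of bounded sets in $L^2$ yields $\phi$, and the decoupling identity is the Rellich-type observation that $f_n-\phi_n$ is weakly null after the same rescaling.

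The second step is the iteration. Set $w_n^0:=u_n$; given $w_n^{J-1}$, if $\varepsilon_J:=\limsup_n\|e^{it\Delta}w_n^{J-1}\|_{L^{2(d+2)/d}_{t,x}}>0$ we apply the inverse Strichartz inequality to $w_n^{J-1}$ to produce the $J$-th profile $\phi^J$, parameters $(\lambda_n^J,x_n^J,\xi_n^J,t_n^J)$, and the new remainder $w_n^J:=w_n^{J-1}-\phi_n^J$, where $\phi_n^J:=(\lambda_n^J)^{-d/2}e^{ix\cdot\xi_n^J}[e^{it_n^J\Delta}\phi^J]((x-x_n^J)/\lambda_n^J)$; this gives the decomposition~\eqref{21w3}. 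Applying the one-step mass decoupling along the tower yields $\sum_{j=1}^{J}M(\phi^j)+\limsup_n M(w_n^J)\le\limsup_n M(u_n)<\infty$ for every $J$; hence $M(\phi^j)\to0$ as $j\to J^*$, and since $M(\phi^j)\gtrsim_{d,A}\varepsilon_j^{\beta}$ we get $\varepsilon_J\to0$, which is precisely~\eqref{vanishing0}. (If the extraction stops after finitely many steps we set $J^*<\infty$ and the current remainder already satisfies~\eqref{vanishing0}.)

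The third step is the bookkeeping on the parameters. Asymptotic orthogonality~\eqref{orthogonality} is proved by induction: if for some $k<J$ the bracketed quantity in~\eqref{orthogonality} stayed bounded along a subsequence, then, conjugating by the $k$-th symmetry and passing to the limit, the weak limit defining $\phi^J$ would be forced to vanish---because $w_n^k$ is weakly null after the $k$-th rescaling, the composition of two bounded-parameter symmetries converges strongly while any symmetry with an unbounded parameter sends compactly-supported test functions weakly to $0$---contradicting $\phi^J\neq0$. The normalizations ``$t_n^j\equiv0$ or $t_n^j\to\pm\infty$'', ``$x_n^j\equiv0$ or $|x_n^j|/\lambda_n^j\to\infty$'', and ``$\xi_n^j\equiv0$ or $\lambda_n^j\xi_n^j\to\infty$'' are then enforced by a diagonal subsequence: whenever one of $t_n^j$, $x_n^j/\lambda_n^j$, $\lambda_n^j\xi_n^j$ stays bounded we pass to a convergent subsequence and absorb the limit into the profile (replacing $\phi^j$ by $e^{it^j\Delta}\phi^j$, by a spatial translate, or by a modulation), which alters neither~\eqref{21w3} nor~\eqref{orthogonality}; otherwise we pass to a subsequence along which it diverges. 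Finally, the full mass decoupling~\eqref{mass-decoupling} follows by expanding $M(u_n)=\|\sum_{j\le J}\phi_n^j+w_n^J\|_{L^2}^2$ and using~\eqref{orthogonality} to see $\langle\phi_n^j,\phi_n^k\rangle\to0$ for $j\neq k$ and $\langle\phi_n^j,w_n^J\rangle\to0$ for $j\le J$, then letting $J\to J^*$.

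The main obstacle is the refined Strichartz estimate behind the inverse Strichartz inequality: in the mass-critical case it is not elementary and rests on bilinear restriction and square-function estimates for the paraboloid. Once that is granted, the remainder of the argument is routine concentration-compactness bookkeeping; and since $e^{it\Delta}$ here is literally the free Schr\"odinger group, Proposition~\ref{P:LPD} coincides with the profile decomposition of~\cite[Theorem 5.4]{BV2007}, to which one may alternatively appeal directly.
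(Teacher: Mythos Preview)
Your proposal is correct and is precisely the standard argument underlying \cite[Theorem~5.4]{BV2007}. The paper itself does not give a proof of Proposition~\ref{P:LPD} at all; it simply cites B\'egout--Vargas directly, which you yourself note as an alternative at the end of your sketch.
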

\begin{rem}
	In the situation of inhomogeneous NLS,
	we will use  the following vanishing condition
	\begin{equation}\label{vanishing}
		\limsup_{J\to J^*}\limsup_{n\to\infty} \|e^{it\Delta}w_n^J\|_{L_t^\gamma L_x^{\rho,2} (\mathbb{R}\times\mathbb{R}^d)} = 0,
	\end{equation}
	instead of (\ref{vanishing0}).  Indeed, by Strichartz estimate and (\ref{mass-decoupling}), 
	\begin{equation}
		 \|e^{it\Delta }w_n^J\|_{L_t^\infty L_x^2\cap L_t^2L_x^{\frac{2d}{d-2}}}\lesssim   \|w^{J}_n\|_{L^2_x}\lesssim \limsup_{n\rightarrow \infty }  \|u_n\|_{L_x^2}\lesssim  1.\label{6155}
	\end{equation}
	Using Lemma \ref{CZ} to interpolate between  (\ref{vanishing0}) and (\ref{6155}), we obtain  (\ref{vanishing}).  
\end{rem}

\begin{proof}[\textbf{Proof of  Theorem \ref{T:reduction}}.]
We will only prove the  existence of the almost periodic solution,  since the three enemies reduction follows from the same argument of \cite{KTV}, which relies only on the structure of group and the information on the movement of $N(t)$ in Subsection \ref{S:3.2}. 

Let 
\begin{align}
		M_c=:\sup \{m:\text{If } \|u_0\|_{L^2}<m,\ \text{then  (INLS)}  (u_0)\ \text{is global and scatters} \},\notag
\end{align}
 where  INLS($u_0$) denotes to the solution of the equation (\ref{nls}) with initial data  $u_0$.  
In light of the small-data scattering theory in  Proposition  \ref{T:CP}, the failure of Theorem~\ref{T:1} implies  $M_c\in (0,+\infty )$ in the defocusing case; and   $M_c\in(0,M(Q))$ in the focusing case.    In particular, by the definition of  $M_c$, we can find a sequence of initial data $u_{0,n}$ with corresponding global solutions $u_n$ to \eqref{nls} satisfying the following:
\begin{align}
M(u_{0,n})\nearrow M_c\quad\text{and}\quad 	\|u_n\|_{L_t^\gamma L^{\rho,2}_x(\mathbb{R}_\pm  \times \mathbb{R}^d )}\to\infty. \label{616z1}
\end{align}		
Applying  Proposition \ref{P:LPD} to the  bounded sequence  $u_{0,n}$ (passing to a subsequence if necessary), we obtain the  linear profile decomposition decomposition 
\begin{equation}
		u_n = \sum_{j=1}^J (\lambda ^j_n)^{-\frac{d}{2}}e^{ix\cdot \xi_n}[e^{it_n^j\Delta}\phi^j](\frac{x-x_n^j}{\lambda _n^j}) + w_n^J,\label{716w1}
\end{equation}
 with properties  (\ref{orthogonality})--(\ref{vanishing}) hold.   We will establish the following facts: \\
(i) there exists a single profile $\phi$ in the decomposition  (\ref{716w1}):
\begin{equation}
	u_{0,n}(x)=\lambda_n^{-\frac{d}{2}}e^{ix\cdot \xi_n}[e^{it_n\Delta }\phi](\frac{x-x_n}{\lambda_n})+w_n(x).\label{716w2}
\end{equation}
(ii) the    parameters in (\ref{716w2}) obey $(t_n,x_n,\xi_n)\equiv (0,0,0)$; \\
(iii) the remainder in (\ref{716w2}) obeys $w_n\to 0$ strongly in $L^2$.

In particular, items (i)--(iii) imply that
\begin{equation}
	u_{0,n}(x)=\lambda _n^{-\frac{d}{2}}\phi (\frac{x}{\lambda _n})+w_n(x),\qquad w_n(x)\stackrel{L^2}{\longrightarrow}0.\notag
\end{equation}
The solution  $u_c$ to (\ref{nls}) with initial data  $\phi$ will then obey all of 	the conditions appearing in Theorem \ref{T:reduction}.
Therefore, 	to complete the proof of  Theorem \ref{T:reduction}, it sufficies to establish items (i)--(iii).  	

\emph{Proof of (i).}   Suppose by  contradiction that   $J^*\ge2$.   It then follows from   (\ref{mass-decoupling}) that for all  $1\le  j\le J^*$,
	\begin{equation}
		M(\phi^j)< \limsup _{n\rightarrow \infty } M(u_n)\le M_c.\label{213w5}
	\end{equation}
	We now construct scattering solutions to (\ref{nls}) corresponding to each profile. 
	First, if  $t_n^j \equiv 0$, then we take $v^j$ to be the solution to \eqref{nls} with initial data $\phi^j$. This solution scatters due to    (\ref{213w5}) and the definition of  $M_c$.   If instead   $t_n^j\to\pm\infty$, we let $v^j$ be the solution that scatters to $e^{it\Delta}\phi^j$ as $t\to\pm\infty$ (see   Proposition  \ref{T:CP}).
	In either of these cases, we then define
	\begin{equation}
		v_n^j(t,x) = :(\lambda_n^j)^{-\frac{d}2} e^{ix\cdot \xi_n}v^j\left(\tfrac{t}{(\lambda_n^j)^2}+t_n^j,\tfrac{x}{\lambda_n^j}\right).\notag
	\end{equation}

Let 
	\begin{equation} 
		u_n^J(t,x) = :\sum_{j=1}^J v_n^j(t,x) + e^{it\Delta} w_n^J(x).\notag
	\end{equation}	
	By the  construction of  $u_n^J$ and  the decomposition  (\ref{716w1}), we  have 
	\begin{equation}
		\lim_{n\rightarrow \infty }\|u_n^J(0)-u_n(0)\|_{L^2}=0.\notag
	\end{equation}
	Assume for a while that we have proved 
\begin{lem}
	\label{C:615}
	The following limits hold:  
	\begin{eqnarray}
		&& \limsup_{J\to J^*}\limsup_{n\to\infty}\bigl\{ \|u_n^J\|_{L_t^\infty L^2(\mathbb{R} \times \mathbb{R} ^d)} + \| u_n^J\| _{L^\gamma _tL_x^{\rho,2}(\mathbb{R} \times \mathbb{R} ^d)}\bigr\}\lesssim 1\label{615x1}\\
		&&  \limsup_{J\to J^*}\limsup_{n\to\infty} \|(i\partial_t + \Delta)u_n^J -\mu |x|^{-b}|u_n^J|^\frac{4-2b}{d}  u_n^J\|_{L^2_tL_x^{\frac{2d}{d+2},2}(\mathbb{R} \times \mathbb{R} ^d)} = 0. \label{615x2}
	\end{eqnarray}
\end{lem}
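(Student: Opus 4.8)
The plan is to establish Lemma \ref{C:615} via the standard two-pronged argument from concentration–compactness theory, adapted to the Lorentz-space setting. For \eqref{615x1}, the key is an asymptotic decoupling estimate: I would show that for each finite $J$,
\begin{equation}
	\limsup_{n\to\infty}\Bigl\|\sum_{j=1}^J v_n^j\Bigr\|_{L_t^\gamma L_x^{\rho,2}}^2 \lesssim \sum_{j=1}^J \limsup_{n\to\infty}\|v_n^j\|_{L_t^\gamma L_x^{\rho,2}}^2 + (\text{cross terms}),\notag
\end{equation}
where the cross terms vanish as $n\to\infty$ because of the orthogonality relations \eqref{orthogonality}. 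Each individual piece $\|v_n^j\|_{L_t^\gamma L_x^{\rho,2}}$ equals $\|v^j\|_{L_t^\gamma L_x^{\rho,2}}$ by the scaling and Galilean/modulation invariance of the free flow together with the fact that the $L_x^{\rho,2}$ norm is invariant under $x$-translation and multiplication by $e^{ix\cdot\xi}$; these are controlled by $\|\phi^j\|_{L^2}$ via Proposition \ref{T:CP}(e) for all but finitely many $j$ (those with small mass), while the finitely many large-mass profiles are handled by the definition of $M_c$. Summing over $j$ and using the mass decoupling \eqref{mass-decoupling} to get $\sum_j M(\phi^j)\le M_c<\infty$, together with the square-function-type bound from \eqref{6155} for the free evolution of the remainder, gives \eqref{615x1}.

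For \eqref{615x2}, I would write the error $(i\partial_t+\Delta)u_n^J - F(u_n^J)$, using that each $v_n^j$ and $e^{it\Delta}w_n^J$ solves the relevant (in)homogeneous equation, as
\begin{equation}
	F\Bigl(\sum_{j=1}^J v_n^j\Bigr) - \sum_{j=1}^J F(v_n^j) \;+\; \Bigl[F\Bigl(\sum_{j=1}^J v_n^j + e^{it\Delta}w_n^J\Bigr) - F\Bigl(\sum_{j=1}^J v_n^j\Bigr)\Bigr],\notag
\end{equation}
with $F(u)=\mu|x|^{-b}|u|^{\frac{4-2b}{d}}u$. The first bracket is a sum of genuine cross terms, each containing at least two distinct profiles; I would estimate these in $L_t^2 L_x^{\frac{2d}{d+2},2}$ by pulling out $\||x|^{-b}\|_{L^{d/b,\infty}}$ via the Lorentz Hölder inequality (Lemma on Hölder's inequality) and then bounding a product $|v_n^j|^{a}|v_n^k|^{c}$ with mixed exponents, showing it tends to $0$ using \eqref{orthogonality} — the different scales/frequencies/time-translations force the supports (in a suitable approximate sense, after truncating profiles to compactly supported bump approximations in the spirit of Proposition \ref{P:embed}) to separate. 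The second bracket is controlled by the vanishing Strichartz norm \eqref{vanishing} of $e^{it\Delta}w_n^J$: Hölder in Lorentz spaces bounds it by $\|e^{it\Delta}w_n^J\|_{L_t^\gamma L_x^{\rho,2}}$ times $L_t^\gamma L_x^{\rho,2}$ norms of the $v_n^j$'s (already bounded from \eqref{615x1}), so it vanishes as $J\to J^*$.

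The main obstacle I anticipate is the cross-term estimate in \eqref{615x2}, precisely because the inhomogeneity $|x|^{-b}$ breaks both translation and Galilean invariance: one cannot simply change variables to normalize a profile to scale $1$ and frequency $0$ while keeping the weight fixed. I would handle this by first reducing to profiles that are smooth, compactly supported, and supported away from the origin (density argument, controlling the error by small Lorentz Strichartz norms as in the proof of Proposition \ref{P:embed}), after which the weight $|x|^{-b}$ restricted to the relevant spatial region is a bounded smooth factor on each profile's support and the orthogonality \eqref{orthogonality} gives the needed decay exactly as in the homogeneous case; a subtlety is that when $\xi_n^j\to\infty$ or $|x_n^j|/\lambda_n^j\to\infty$, Proposition \ref{P:embed} already tells us the corresponding profile generates a scattering solution with small contribution near the origin, which is the only place the weight is large, so those profiles contribute negligibly to any term involving $|x|^{-b}$. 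The remaining bookkeeping — controlling the tail $\sum_{j>J_0}$ of profiles uniformly via \eqref{mass-decoupling} and small-data theory — is routine and follows \cite{KTV} line by line.
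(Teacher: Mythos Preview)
Your approach is essentially the paper's: isolate the weight-free bilinear decoupling $\lim_{n\to\infty}\|v_n^j v_n^k\|_{L_t^{\gamma/2}L_x^{\rho/2,1}\cap L_t^\infty L_x^1}=0$ for $j\neq k$ (proved by $C_c^\infty$ approximation and \eqref{orthogonality}), pull out $\||x|^{-b}\|_{L^{d/b,\infty}}$ via the Lorentz H\"older inequality, and combine with \eqref{vanishing} and \eqref{mass-decoupling} exactly as you outline. The obstacle you anticipate for the cross terms largely evaporates once $|x|^{-b}$ is extracted, because the remaining product estimate lives in scaling-, translation-, and modulation-invariant Lorentz norms and so the standard change-of-variables and density argument applies without any special treatment near the origin.
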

\noindent 	Applying the stability result (Proposition~\ref{P:stab}), we derive bounds for the solutions $u_n$ that contradicts    (\ref{616z1}).  
	The proof of Lemma \ref{C:615} is postponed to the end of this Subsection. 
	
	\emph{Proof of (ii).} Having established  $J^*=1$,   (\ref{716w1}) simplifies to (\ref{716w2}),
	where  either $t_n\equiv0$ or  $t_n\rightarrow \pm \infty $, either  $x_n\equiv0$ or  $\frac{|x_n|}{\lambda _n}\rightarrow \infty $ and  either  $\xi_n\equiv 0$ or  $\lambda_n\xi_n\rightarrow\infty $.

	To see that the time shifts must obey $t_n\equiv 0$, we note that if $t_n\to\infty$, then 
	\begin{equation}
		e^{it\Delta }u_{0,n}(x)=\lambda_n^{-\frac{d}{2}}e^{-it|\xi_n|^2}e^{ix\cdot\xi_n}\left(e^{i\frac{t}{\lambda_n^2}\Delta }e^{it_n\Delta }\phi\right)(\frac{x-2\xi_n t-x_n}{\lambda_n}),\notag
	\end{equation}
with  asymptotically vanishing space-time norm
	\begin{equation}
		 \|e^{it\Delta }u_{0,n}\|_{L^\gamma _tL_x^{\rho,2}([0,\infty )\times \mathbb{R} ^d)}\lesssim  \|e^{it\Delta }\phi\|_{L^\gamma _tL_x^{ \rho,2}([t_n,\infty )\times \mathbb{R} ^d)}\rightarrow0\quad\text{as}\quad n\rightarrow0,\notag
	\end{equation}
	 defines good approximate solutions with  uniofrm global space-time bounds for $n$ large.  An application of the stability result would  yield uniform space-time bounds that contradicts    (\ref{616z1}).   The case  $t_n\rightarrow-\infty $ is similar.   Hence (\ref{716w2}) simplifies to 
	 \begin{equation}
	 		u_n=\lambda_n^{-\frac{d}{2}}e^{ix\cdot \xi_n} \phi(\frac{x-x_n}{\lambda_n})+w_n(x), \label{716w4}
	 \end{equation}
	 where   either  $x_n\equiv0$ or  $\frac{|x_n|}{\lambda _n}\rightarrow \infty $ and  either  $\xi_n\equiv 0$ or  $\lambda_n|\xi_n|\rightarrow\infty $.

	 To see that the space  and frequency translation parameters must obey $x_n\equiv\xi_n\equiv 0$, we note that if $\frac{|x_n|}{\lambda _n}\to\infty$ or  $\lambda_n|\xi_n| \rightarrow\infty $, then Proposition~\ref{P:embed} yields global scattering solutions $v_n$ to \eqref{nls} with 
	 $$v_n(0)= e^{i\lambda_n x\cdot\xi_n}\phi(x-\frac{x_n}{\lambda_n}).$$ 
	 By rescalling,   $\lambda_n^{-\frac{d}{2}}v_n(\lambda_n^{-2}t,\lambda_n^{-1}x)$ are also  global scattering solutions to (\ref{nls}) with initial data   $\lambda_n^{-\frac{d}{2}}e^{ix\cdot \xi_n} \phi(\frac{x-x_n}{\lambda_n})$. 
	 Applying the stability result, we derive bounds for the solutions $u_n$ that contradict    (\ref{616z1}).   Hence (\ref{716w4}) simplifies to 
	 \begin{equation}
	 	u_n=\lambda_n^{-\frac{d}{2}}  \phi(\frac{x}{\lambda_n})+w_n(x).\notag
	 \end{equation}

	\emph{Proof of (iii).} 
	Finally, we prove that 
	\begin{equation}
		\|w_n(x)\|_{L^2}\rightarrow0\quad\text{as}\quad  n\rightarrow+\infty .\label{2141}
	\end{equation}
	Indeed, otherwise, by  (\ref{mass-decoupling}), we see that  $M(\phi)<M_c$.  
	Then by the definition of  $M_c$, the solution $ v $ to (\ref{nls}) with initial data  $\phi $ scatters, and 
	\begin{equation}
		v_n(t,x)=:\lambda_n^{-\frac{d}{2}}v\left(\frac{t}{\lambda_n^2},+\frac{x}{\lambda_n}\right)\notag
	\end{equation}
	are good approximate solutions with uniofrm global space-time bounds for $n$ large.
	Applying the stability result, we derive bounds for the solutions $u_n$ that contradicts    (\ref{616z1}).  
\end{proof}
It therefore remains to prove Lemma \ref{C:615}. 
\begin{proof}[\textbf{Proof of Lemma \ref{C:615}}.]
 We give only a description of the proof of the lemma, as the standard  argument  can be found in  \cite[Proposition 5.3]{KV}. 
	The key ingredient is the following decoupling statement for the nonlinear profiles:
	\begin{equation} 
		\lim_{n\to\infty} \|v_n^j v_n^k \|_{L_t^{\frac{\gamma}{2}}L_x^{\frac{\rho}{2},1}\cap L^\infty _tL^1_x(\mathbb{R}\times\mathbb{R}^d)} = 0\quad\text{for}\quad j\neq k,\label{615x3}
	\end{equation}
	which follows from approximation by functions in $C_c^\infty(\mathbb{R}\times\mathbb{R}^d)$ and the use of the orthogonality conditions (\ref{orthogonality}).   
	
	We now prove (\ref{615x1}).   	By  the construction of  $v_n^j$,  
 \begin{align}
 			& \|\sum _{j=1}^{J}v_n^j\|_{L^\gamma _tL_x^{\rho,2}\cap L^\infty _tL^2_x}^2= \|(\sum _{j=1}^{J}v_n^j)^2\|_{L_t^{\frac{\gamma }{2}}L_x^{\frac{\rho}{2},1}\cap L^\infty _tL^1_x},\notag  \\
 			&\lesssim \sum_{j=1}^{J} \|v_n^j\|^{2}_{L_t^\gamma L_x^{\rho,2}\cap L^\infty _tL^2_x}+\sum _{j\neq k} \|v_n^jv_n^k\|_{L_t^{\frac{\gamma }{2}}L_x^{\frac{\rho}{2},1}\cap L^\infty _t L^1_x}\notag\\
 			&\lesssim \sum _{j=1}^J \|\phi^j\|_{L_x^2}^2+   \sum _{j\neq k} \|v_n^jv_n^k\|_{L_t^{\frac{\gamma }{2}}L_x^{\frac{\rho}{2},1}\cap L^\infty _t L^1_x}.\notag
 \end{align}
 The above estimate combined with the vanish (\ref{vanishing}) and the decoupling (\ref{615x3}) imply 
 \begin{equation}
 	 \limsup_{J\to J^*}\limsup_{n\to\infty}  \|u_n^J\|_{L_t^\gamma  L_x^{\rho,2}\cap L_t^\infty L_x^2 }  \lesssim   \limsup_{J\to J^*}\limsup_{n\to\infty}  \left(\sum _{j=1}^J M(\phi^j)+M(w_n^J)\right)\lesssim M_c.\notag
 \end{equation}

	We now focus on the proof of (\ref{615x2}).  To this end, we write $f(z)=:-\mu |z|^\frac{4-2b}{d}  z$  and  $e=:	(i\partial_t + \Delta)u_n^J + |x|^{-b} f(u_n^J)$.  Observe that
\begin{equation}
			e = |x|^{-b}[f(u_n^J) - f(u_n^J - e^{it\Delta} w_n^J)]+|x|^{-b}\biggl[f\bigl(\sum_{j=1}^J v_n^j\bigr) - \sum_{j=1}^J f(v_n^j)\biggr].\notag
\end{equation}
Using  the elementary inequality  $|f(z_1)-f(z_2)|\lesssim  (|z_1|^{\frac{4-2b}{d} }+|z_2|^{\frac{4-2b}{d} })|z_1-z_2|$,  we get 
\begin{equation}
		|e|\lesssim  |x|^{-b}(|u_n^J|^{\frac{4-2b}{d} }+|e^{it\Delta }w_n^J|^{\frac{4-2b}{d} })|e^{it\Delta }w_n^J|+|x|^{-b}\sum _{j\neq k}|v_n^k||v_n^j|^{\frac{4-2b}{d} }.\notag
\end{equation}
	By Lemma \ref{L:nonlinear estimate}, 
\begin{align}
	 &\|  |x|^{-b}(|u_n^J|^{\frac{4-2b}{d} }+|e^{it\Delta }w_n^J|^{\frac{4-2b}{d} })|e^{it\Delta }w_n^J|\| _{L^2_tL_x^{\frac{2d}{d+2},2}}\notag\\
	 &\lesssim   (\|u_n^J\|^\frac{4-2b}{d} _{L^\gamma _tL_x^{\rho,2}} + \|e^{it\Delta }w_n^J\|_{L^\gamma _tL_x^{\rho,2}} ^\frac{4-2b}{d} ) \|e^{it\Delta }w_n^J\|_{L^\gamma _tL_x^{\rho,2}} \notag
\end{align}
	and 
	\begin{equation}
		 \||x|^{-b}\sum _{j\neq k}|v_n^k||v_n^j|^{\frac{4-2b}{d} }\|  _{L^2_tL_x^{\frac{2d}{d+2},2}}\lesssim  \begin{cases}
		 	\sum _{j\neq k} \|v_n^j\|^{\frac{4-2b}{d} -1}_{L_t^\gamma L_x^{\rho,2}}  \|v_n^kv_n^j\|_{L_t^{\frac{\gamma }{2}}L_x^{\frac{\rho}{2},1}},\ &\text{when }\frac{4-2b}{d} \ge1,   \\
		 		\sum _{j\neq k} \|v_n^k\|_{L_t^\gamma L_x^{\rho,2}} ^{1-\frac{4-2b}{d} } \|v_n^kv_n^j\|_{L_t^{\frac{\gamma }{2}}L_x^{\frac{\rho}{2},1}}^\frac{4-2b}{d} ,\ &\text{when }\frac{4-2b}{d} <1.
		 \end{cases}\notag
	\end{equation}
The above estimates,  combined with the vanish  (\ref{vanishing}), the boundedness (\ref{615x1}) and the decoupling  (\ref{615x3}) imply (\ref{615x2}).   
\end{proof}

\subsection{The properties of the almost periodic solution.}\label{S:3.2}
Recalling definition \ref{D:1}, if  $u$ is an almost periodic solution to  (\ref{nls}) on maximal interval  $I$, then there exist  functions $N : I \to \mathbb{R}^+$  and a function $C : \mathbb{R}^+ \to \mathbb{R}^+$ such that (\ref{E:compact1}) and (\ref{E:compact2}) hold.   We now  record some    properties  of  $N(t)$. 

\begin{lem}\label{L:Ndj}
	If  $u$ is an almost periodic  solution to (\ref{nls}) and  $J$ is an inteval with 
	\begin{equation}
		\|u\|_{L^\gamma _tL^{\rho,2}_x(J\times \mathbb{R} ^d)}\le C,\notag 
	\end{equation}
	then for  $t_1,t_2\in J$, 
	\begin{equation}
		N(t_1)\approx _CN(t_2).\notag
	\end{equation}
\end{lem}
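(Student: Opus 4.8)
The plan is to prove Lemma \ref{L:Ndj} by a compactness-and-continuity argument, exploiting the fact that the frequency scale function $N(t)$ is determined, up to a constant factor, by the "spread" of the solution at time $t$ in physical and frequency space. The key observation is that almost periodicity modulo $G$ forces the orbit $\{g_{\theta(t),N(t)}^{-1}u(t):t\in I\}$ to be precompact in $L^2$ once we have localized the scale; conversely, on any interval where the Strichartz norm stays bounded, the local theory plus the stability/continuity statement in Proposition \ref{T:CP} shows that the solution cannot change appreciably between nearby times, which pins down $N(t)$.

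First I would reduce to a contradiction argument: suppose the conclusion fails, so there are sequences of intervals $J_n$ with $\|u\|_{L^\gamma_tL^{\rho,2}_x(J_n\times\mathbb{R}^d)}\le C$ and times $s_n, t_n\in J_n$ with $N(s_n)/N(t_n)\to\infty$ (or $\to 0$). By time-translating and using the scaling symmetry of \eqref{nls} together with the corresponding invariance of $N(t)$ under the group $G$, I may normalize so that $N(t_n)=1$. Then $u_n(t):=u(t+t_n)$ solves \eqref{nls}, has frequency scale function $N_n$ with $N_n(0)=1$, and $\|u_n\|_{L^\gamma_tL^{\rho,2}_x([0,\sigma_n]\times\mathbb{R}^d)}\le C$ where $\sigma_n=s_n-t_n$ (after possibly reflecting time). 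Since $N_n(0)=1$, the compactness bounds \eqref{E:compact1}--\eqref{E:compact2} together with the mass bound give that $\{u_n(0)\}$ is precompact in $L^2$; passing to a subsequence, $u_n(0)\to \psi$ strongly in $L^2$ for some $\psi\in L^2\setminus\{0\}$.

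Next I would invoke the local well-posedness and stability theory (Proposition \ref{T:CP}(c) and Proposition \ref{P:stab}): the solution $v$ to \eqref{nls} with data $\psi$ exists on some interval $[0,\delta]$ with $\delta=\delta(\|\psi\|_{L^2})>0$ and $\|v\|_{L^\gamma_tL^{\rho,2}_x([0,\delta]\times\mathbb{R}^d)}\le 2C$, say, and by continuous dependence $u_n\to v$ in $L^\gamma_tL^{\rho,2}_x([0,\delta]\times\mathbb{R}^d)$ and locally uniformly. The crucial point is now that, by the bounded-Strichartz implies small-Strichartz-on-subintervals principle (a standard consequence of Proposition \ref{P:SZ} and Lemma \ref{L:nonlinear estimate}), the interval $[0,\sigma_n]$ can be subdivided into $O_C(1)$ subintervals on each of which the Strichartz norm of $u_n$ is below the small-data threshold. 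On each such subinterval the solution is a small perturbation of the free evolution, hence the compactness modulus function $C(\eta)$ forces $\int_{|\xi|\ge R}|\widehat{u_n(t,\xi)}|^2d\xi$ and $\int_{|x|\ge R}|u_n(t,x)|^2dx$ to be comparable for all $t$ in that subinterval at the same radius $R$, up to constants depending only on $C$. Concatenating the $O_C(1)$ subintervals, this shows $N_n(t)\approx_C N_n(0)=1$ for all $t\in[0,\sigma_n]$, in particular $N_n(\sigma_n)=N(s_n)\approx_C 1$, contradicting $N(s_n)/N(t_n)=N(s_n)\to\infty$.

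The main obstacle I anticipate is making rigorous the step that "the Strichartz norm being small on a subinterval forces $N(t)$ to stay comparable there." The cleanest route is to argue that on such a subinterval $u(t)$ stays within $\eta_0$ (in $L^2$, uniformly) of the orbit $G\cdot u(t_0)$ of a single endpoint — indeed $u(t)=e^{i(t-t_0)\Delta}u(t_0)+O_{L^2}(\text{small})$, and the free evolution $e^{is\Delta}$ commutes with the Fourier-side localization — so \eqref{E:compact1}--\eqref{E:compact2} at $t_0$ with parameter $\eta$ transfer to the same bounds at $t$ with parameter $2\eta$ at a comparable radius, which by the definition of $N(t)$ as (essentially) the inverse of the compactness radius gives $N(t)\approx N(t_0)$; one must check that the free evolution over a time of length $|t-t_0|$ controlled by the subinterval does not itself spread frequencies — but it does not, since $e^{is\Delta}$ is an exact Fourier multiplier of modulus one, so \eqref{E:compact2} is literally preserved, and \eqref{E:compact1} is handled by the dispersive decay together with the smallness. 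Chaining over the finitely many subintervals then yields the claim with a constant depending only on $C$. Alternatively, and perhaps more in the spirit of the rest of the paper, one can run the contradiction argument above and extract a limiting solution $v$; the details of that version are essentially those of \cite[Corollary~3.6 or similar]{KTV} transplanted to the Lorentz-space setting, which is why I expect the authors to simply cite that argument.
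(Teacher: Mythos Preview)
Your proposal is correct and reconstructs exactly the argument the paper invokes: the authors give no proof of their own but cite \cite[Corollary~3.6]{KTV}, noting that the statement is a consequence of the Cauchy theory and the compactness \eqref{E:compact1}--\eqref{E:compact2}, and your final paragraph anticipates this precisely. The one imprecision in your sketch is that the transfer of the physical-space localization \eqref{E:compact1} across a small-Strichartz subinterval is more cleanly handled by the uncertainty principle (Bernstein) than by dispersive decay---if $\widehat{u}(t)$ is concentrated at $|\xi|\lesssim N(t_0)$ then $u(t)$ cannot concentrate at $|x|\ll N(t_0)^{-1}$, which is what rules out $N(t)\gg N(t_0)$---but this does not affect the validity of your outline.
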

\begin{defn}
	Assume  $u$ satisfies the assumption of Lemma \ref{L:Ndj}. 	Divide $J$ into consecutive intervals $J_k$ such that
	\begin{equation}
		\|u\|_{L^\gamma _tL_x^{\rho,2}(J_k \times \mathbb{R}^d)} = 1.\notag
	\end{equation}
	We call these intervals \emph{the intervals of local constancy}.
\end{defn}

\begin{lem}\label{L:6131}
	Suppose  $u$ is an almost periodic  solution to (\ref{nls})  with  $N(t)\le1$.    Let   $J_k$ be the  local constant interval and 
	\begin{equation}
		N(J_k)=:\sup _{J_k}N(t).\notag
	\end{equation}
	Then 
	\begin{equation}
		\sum _{J_k}N(J_k) \approx \int _JN(t)^3dt.\notag
	\end{equation}    
\end{lem}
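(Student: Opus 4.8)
The goal is to relate the sum $\sum_{J_k} N(J_k)$ over intervals of local constancy to the integral $\int_J N(t)^3\,dt$. The key observation is that on each interval of local constancy $J_k$ the frequency scale function is essentially constant (Lemma~\ref{L:Ndj}), so $N(t) \approx N(J_k)$ for all $t \in J_k$, and it therefore suffices to control the \emph{length} $|J_k|$ of each such interval by a constant multiple of $N(J_k)^{-2}$ (in both directions). Granting this, one has
\begin{equation}
\int_{J_k} N(t)^3\,dt \approx N(J_k)^3 |J_k| \approx N(J_k)^3 \cdot N(J_k)^{-2} = N(J_k),\notag
\end{equation}
and summing over $k$ gives the claim.

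So the heart of the matter is the two-sided estimate $|J_k| \approx N(J_k)^{-2}$. For the lower bound $|J_k| \gtrsim N(J_k)^{-2}$, I would argue by scaling: rescale $u$ on $J_k$ so that $N(J_k) = 1$; then by almost periodicity the rescaled solution lies in a compact subset of $L^2$ away from zero, and by the local well-posedness theory (Proposition~\ref{T:CP}, part (c) and the small-data/continuity statements) there is a uniform time $\delta_0 > 0$ of local existence with $\|u\|_{L^\gamma_t L^{\rho,2}_x} \le 1$ on any interval of length $\delta_0$ on which $N \approx 1$; undoing the scaling yields $|J_k| \gtrsim N(J_k)^{-2}$. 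For the upper bound $|J_k| \lesssim N(J_k)^{-2}$, I would argue by contradiction: if $|J_k| N(J_k)^2$ were large, then rescaling to $N(J_k) \approx 1$ produces a solution which on a long time interval has $\|u\|_{L^\gamma_t L^{\rho,2}_x} = 1$ but also lies in a fixed compact set of $L^2$; using the reverse of the local theory (a spacetime norm of size $1$ on a long interval for a compactly-supported-in-$L^2$ family forces, via Strichartz and a pigeonholing/subinterval argument, that $N$ cannot stay comparable to $1$), one contradicts Lemma~\ref{L:Ndj} applied on $J_k$. In other words, the definition of $J_k$ (spacetime norm exactly $1$) together with the local theory pins down $|J_k|$ at the scale $N(J_k)^{-2}$.

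With $|J_k| \approx N(J_k)^{-2}$ and $N(t) \approx N(J_k)$ on $J_k$ in hand, the rest is bookkeeping: sum $\int_{J_k} N(t)^3 dt \approx N(J_k)$ over the partition $J = \bigcup_k J_k$, noting that the partition is by construction a disjoint cover of $J$ up to endpoints, so $\int_J N(t)^3 dt = \sum_k \int_{J_k} N(t)^3 dt \approx \sum_k N(J_k)$. One should be slightly careful that $J$ need not split into an \emph{integer} number of full intervals of local constancy — there may be one incomplete piece at the end — but since $N \le 1$ that boundary piece only contributes $O(1)$ and does not affect the stated equivalence (or one absorbs it by allowing the last interval to have spacetime norm $\le 1$).

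The main obstacle is the rigorous upper bound $|J_k| \lesssim N(J_k)^{-2}$: unlike the lower bound, which is a clean consequence of uniform local well-posedness, the upper bound genuinely uses that $N(t)$ is the frequency scale function of an \emph{almost periodic} solution (the compactness (\ref{E:compact1})--(\ref{E:compact2})), because a priori a solution could have small spacetime norm over a very long interval without $N$ changing. The cleanest route is probably to invoke Lemma~\ref{L:Ndj} in the contrapositive direction — i.e., establish first that on any interval where the spacetime norm is $\le C$ one has not only $N(t_1) \approx_C N(t_2)$ but also a quantitative bound on the length in terms of $\inf N$ — or to cite the analogous computation from \cite{KTV} (which, as the authors note in the proof of Theorem~\ref{T:reduction}, carries over verbatim since it depends only on the group structure and the behaviour of $N(t)$). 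Once that length estimate is available the proof is immediate.
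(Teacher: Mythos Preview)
Your proposal is correct and matches the paper's approach (which simply cites \cite[Lemma~2.5]{Dodson4}): establish $|J_k|\approx N(J_k)^{-2}$, combine with $N(t)\approx N(J_k)$ on $J_k$ from Lemma~\ref{L:Ndj}, and sum. One small clarification on the upper bound $|J_k|\lesssim N(J_k)^{-2}$: the clean route is not a contradiction with Lemma~\ref{L:Ndj} via Strichartz, but rather the pointwise lower bound $\|u(t)\|_{L_x^{\rho,2}}\gtrsim N(t)^{2/\gamma}$ coming from compactness (\ref{E:compact1}) and H\"older --- this is exactly Step~2 of the proof of Lemma~\ref{L:6132}, which does not rely on the present lemma --- yielding $1=\|u\|_{L^\gamma_tL^{\rho,2}_x(J_k)}^\gamma\gtrsim\int_{J_k}N(t)^2\,dt\approx N(J_k)^2|J_k|$.
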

The proof of Lemma \ref{L:Ndj} and Lemma \ref{L:6131} follows from \cite[Corollary 3.6]{KTV} and \cite[Lemma 2.5]{Dodson4}, respectively, which  are both the consequences of the Cauchy theory and the  compactness  (\ref{E:compact1})--(\ref{E:compact2}).  

Therefore, possibly after modifying the  $C(\eta)$ in  (\ref{E:compact1}), (\ref{E:compact2}) by a fixed constant, we can choose  $N(t):I\rightarrow (0,\infty )$  such that   
\begin{equation}
	|\frac{d}{dt}N(t)|\lesssim  N(t)^3.\label{E:N}
\end{equation}
\begin{lem}[Spacetime bound]\label{L:6132}
	If  $u(t,x)$  is an  almost periodic  solution to (\ref{nls})  on an interval  $J$. Then for any Schr\"odinger admissible pair  $(q,r)$ with  $q<\infty $
	\begin{equation}
		\int _JN(t)^2dt\lesssim   \|u\|_{L_{t} ^ q  L_x^{r,2}(J\times \mathbb{R} ^d)}^q \lesssim 1+\int _JN(t)^2dt. \label{531w1}
	\end{equation}
\end{lem}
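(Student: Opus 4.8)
The plan is to prove the two inequalities separately: the lower bound will follow almost directly from the physical-space concentration (\ref{E:compact1}) and conservation of mass, while the upper bound will come from cutting $J$ into intervals of local constancy and counting them. Throughout I write $M=M(u)>0$ and fix $\eta=M/2$ in (\ref{E:compact1})--(\ref{E:compact2}), so that $C(\eta)$ becomes a fixed constant; I will use repeatedly that admissibility $\tfrac2q+\tfrac dr=\tfrac d2$ is equivalent to $d(\tfrac12-\tfrac1r)=\tfrac2q$, and that the pair $(\gamma,\rho)$ of (\ref{E:gamma}) is itself admissible with $\gamma,\rho<\infty$.

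For the lower bound, the first step is the pointwise-in-time estimate $\|u(t)\|_{L^{r,2}_x}\gtrsim N(t)^{2/q}$ for $t\in J$. By mass conservation and (\ref{E:compact1}) with $\eta=M/2$, at least half the mass of $u(t)$ lies in $B_t:=B(0,C(\eta)/N(t))$, so $M^{1/2}\lesssim\|\chi_{B_t}u(t)\|_{L^2_x}$. Applying H\"older's inequality in Lorentz spaces with $\tfrac12=\tfrac1s+\tfrac1r$ (where $1<s<\infty$, since $q<\infty$ forces $2<r<\infty$) gives $\|\chi_{B_t}u(t)\|_{L^2_x}\lesssim\|\chi_{B_t}\|_{L^{s,\infty}_x}\|u(t)\|_{L^{r,2}_x}=|B_t|^{1/s}\|u(t)\|_{L^{r,2}_x}$, and $|B_t|^{1/s}\approx(C(\eta)/N(t))^{d/s}\approx N(t)^{-2/q}$ by the identity above. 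Taking $q$-th powers and integrating over $J$ yields $\int_J N(t)^2\,dt\lesssim\|u\|_{L^q_tL^{r,2}_x(J\times\mathbb R^d)}^q$.

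For the upper bound, I would split $J$ into intervals of local constancy $J_k$, so that $\|u\|_{L^\gamma_tL^{\rho,2}_x(J_k\times\mathbb R^d)}=1$ on each $J_k$ apart from at most two incomplete pieces at the endpoints of $J$. On any $J_k$, Duhamel's formula together with the Strichartz estimate (Proposition \ref{P:SZ}) and the nonlinear bound $\||x|^{-b}|u|^{\frac{4-2b}d}u\|_{L^2_tL^{\frac{2d}{d+2},2}_x}\lesssim\||x|^{-b}\|_{L^{\frac db,\infty}_x}\|u\|_{L^\gamma_tL^{\rho,2}_x}^{\frac{4-2b}d+1}$ (cf. the proof of (\ref{614ww1})) give, for any $t_0\in J_k$, $\|u\|_{L^q_tL^{r,2}_x(J_k\times\mathbb R^d)}\lesssim\|u(t_0)\|_{L^2}+\|u\|_{L^\gamma_tL^{\rho,2}_x(J_k\times\mathbb R^d)}^{\frac{4-2b}d+1}\lesssim 1$. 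Summing over $k$, $\|u\|_{L^q_tL^{r,2}_x(J\times\mathbb R^d)}^q\lesssim\#\{J_k\}$, so it remains to show $\#\{J_k\}\lesssim1+\int_JN(t)^2\,dt$. By Lemma \ref{L:Ndj}, $N(t)\approx N(J_k):=\sup_{J_k}N$ on $J_k$, hence $\int_{J_k}N(t)^2\,dt\approx|J_k|N(J_k)^2$, and the point is that $|J_k|\gtrsim N(J_k)^{-2}$ on complete $J_k$. I expect this last estimate to be the main obstacle; to establish it, write $J_k=[a,b]$ and split $u(a)=P_{\le CN(J_k)}u(a)+P_{>CN(J_k)}u(a)$ with $C$ chosen via (\ref{E:compact2}) so that the high-frequency part has $L^2_x$-norm $\lesssim\eta^{1/2}$, hence (by Strichartz) $L^\gamma_tL^{\rho,2}_x([a,b])$-norm $\lesssim\eta^{1/2}$, while Bernstein (Lemma \ref{L:Bernstein}) together with $d(\tfrac12-\tfrac1\rho)=\tfrac2\gamma$ bounds the low-frequency linear evolution by $\lesssim N(J_k)^{2/\gamma}M^{1/2}$ uniformly in $t$, so that its $L^\gamma_tL^{\rho,2}_x([a,b])$-norm is $\lesssim(|b-a|N(J_k)^2)^{1/\gamma}M^{1/2}$. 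If $|b-a|N(J_k)^2$ and $\eta$ are both taken small, the full linear evolution falls below the small-data threshold of the local theory (Proposition \ref{T:CP}, respectively \ref{P:stab}), forcing $\|u\|_{L^\gamma_tL^{\rho,2}_x([a,b])}<1$ and contradicting the choice of $J_k$; hence $|J_k|\gtrsim N(J_k)^{-2}$. Consequently $\int_{J_k}N(t)^2\,dt\gtrsim1$ on each complete $J_k$, so the number of complete $J_k$ is $\lesssim\int_JN(t)^2\,dt$, and adding the at most two incomplete pieces gives $\#\{J_k\}\lesssim1+\int_JN(t)^2\,dt$, which completes the proof.
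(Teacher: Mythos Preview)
Your proof is correct. The lower bound argument is identical to the paper's. For the upper bound the paper organizes things slightly differently: rather than cutting $J$ into intervals of local constancy $J_k$ and then proving $|J_k|\gtrsim N(J_k)^{-2}$, it partitions $J$ into intervals $I_j$ satisfying $\int_{I_j}N(t)^2\,dt\le\eta$ (so there are at most $\eta^{-1}(1+\int_JN(t)^2\,dt)$ of them), picks $t_j\in I_j$ with $N(t_j)^2|I_j|\le2\eta$, and runs the same frequency-splitting/Bernstein argument followed by a bootstrap on each $I_j$ to obtain $\|u\|_{L^\gamma_tL^{\rho,2}_x(I_j)}\lesssim1$ directly. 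The two arrangements are essentially contrapositives of one another and use the identical analytic input; your route has the minor advantage of making explicit the relation $|J_k|\approx N(J_k)^{-2}$, at the cost of invoking Lemma~\ref{L:Ndj} to pass between $N(a)$ and $N(J_k)$. One cosmetic point: you fix $\eta=M/2$ at the outset for the lower bound but then need a separate small $\eta$ for the high-frequency smallness in the upper-bound step, so relabeling one of them would avoid the notational clash.
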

\begin{proof}
	\noindent\textbf{Step 1:} 	We first prove the upper bound
	\begin{equation}
		\|u\|_{L_{t} ^ q  L_x^{r,2}(J\times \mathbb{R} ^d)}^q \lesssim 1+\int _JN(t)^2dt.\label{6151}
	\end{equation}
	Let \( 0 < \eta < 1 \)  to be  determined later and partition \( J \) into subintervals \( I_j \)   such that 
	\begin{equation}
		\int_{I_j} N(t)^2 \, dt \leq \eta,\qquad 1\le j\le \eta ^{-1} \left(1+\int _JN(t)^2dt\right)  .\notag
	\end{equation}
For each \( j \), we may choose \( t_j \in I_j \)  such that 
	\begin{equation}
		N(t_j)^2 |I_j| \leq 2\eta. \notag
	\end{equation}
	Let  $(\gamma ,\rho)$ be defined by (\ref{E:gamma}), satisfying (\ref{zb}).  By Strichartz' inequality, we have the following estimates on the spacetime slab \( I_j \times \mathbb{R}^d \):
	\begin{align}
		\|u\|_{L_{t} ^ \gamma   L_x^{\rho,2}}  &\lesssim  \|e^{i(t-t_{j})\Delta }u(t_j)\| _{L_{t} ^ \gamma   L_x^{\rho,2}} + \||x|^{-b}|u|^{\frac{4-2b}{d} }u\|_{L_t^2L_x^{\frac{2d}{d+2},2}}\notag\\
		&\lesssim   \|u_{\ge C(\eta)N(t_j)}\|_{L^2_x}+ \|e^{i(t-t_j)\Delta }u_{\le C(\eta)N(t_j)}\|  _{L^\gamma _tL_x^{\rho,2}}+ \||x|^{-b}\|_{L_x^{\frac{d}{b},\infty }}  \|u\|^{\frac{4-2b}{d} +1}_{L_t^{\gamma }L_x^{\rho,2}}\notag\\
		&\lesssim    \|u_{\ge C(\eta)N(t_j)}\|_{L^2_x}+|I_j|^{\frac{1}{\gamma  }}N(t_j)^{d(\frac{1}{2}-\frac{1}{\rho})} \|u(t_j)\|_{L_x^2}+  \|u\|^{\frac{4-2b}{d} +1}_{L_t^{\gamma }L_x^{\rho,2}}, \label{6152}
	\end{align}
	where we  used Holder’s and Bernstein’s inequalities  in the last step.   By (\ref{E:compact2}) and 
	\begin{equation}
		|I_j|^{\frac{1}{\gamma }}N(t_j)^{d(\frac{1}{2}-\frac{1}{\rho})}=(|I_j|N(t_j)^2)^{\frac{1}{\gamma }}\le (2\eta )^{\frac{1}{\gamma }},\notag
	\end{equation}
	the  first two terms in (\ref{6152}) can be made arbitrarily small,  as long as  $\eta $ is  sufficiently small. Thus,  by the usual bootstrap argument we obtain
	\begin{equation}
		\|u\|_{L^\gamma _tL_x^{\rho,2}(I_j\times \mathbb{R} ^d)}\lesssim 1.\notag 
	\end{equation}
	Again by Strichartz estimate,
	\begin{equation}
		\|u\|_{L^q_tL_x^{r,2}(I_j\times \mathbb{R} ^d)}\lesssim   \|e^{i(t-t_{j})\Delta }u(t_j)\| _{L_{t} ^ q   L_x^{r,2}(I_j\times \mathbb{R} ^d)} +\||x|^{-b}\|_{L_x^{\frac{d}{b},\infty }}  \|u\|^{\frac{4-2b}{d} +1}_{L_t^{\gamma }L_x^{\rho,2}(I_j\times \mathbb{R} ^d)}\lesssim 1.\notag 
	\end{equation}
Summing over all intervals  $I_j$, we obtain (\ref{6151}).
	
	 \noindent\textbf{Step 2:} Now we prove  the lower bound
	\begin{equation}
		\|u\|_{L^q_tL_x^{r,2}(J\times \mathbb{R} ^d)}\gtrsim_u\left(\int _JN(t)^2dt\right) ^{\frac{1}{q}}.\label{6153}
	\end{equation}
	Using (\ref{E:compact1}) and  choosing  $\eta$ sufficiently small depending on  $M(u)$, we can guarantee that
	\begin{equation}
		\int _{|x|\le C(\eta)N(t)^{-1} }|u(t,x)|^2dx\gtrsim_u1\quad\text{for all}\quad  t\in J.\label{6154}
	\end{equation}
	On the other hand, a simple application of  H\"older's inequality yields 
	\begin{equation}
		 \|u\|_{L_x^{r,2}}\gtrsim \|u\|_{L_x^r}\gtrsim_u N(t)^{d(\frac{1}{2}-\frac{1}{r})} \left(\int _{|x|\le C(\eta)N(t)^{-1}}|u(t,x)|^2dx\right)^{\frac{1}{2}}.\notag
	\end{equation}
	Thus, using (\ref{6154}) and integrating over  $J$ we derive (\ref{6153}).   
\end{proof}

Finally, we prove the following estimates, which will be used in Section \ref{S:6} to exclude the existence of a quasi-soliton.
\begin{lem}\label{L:693}
	Let  $u(t,x)$ be an almost periodic solution to (\ref{nls}) on the interval  $[0,T]$,  $K=\int_0^TN(t)^3dt$,  $I=P_{\le K}$ and   $\phi $ be a nongative function  such that 
	\begin{equation}
		\phi(x)=1, \ |x|\le 1 \qquad \text{supp } \phi \subset \{x:|x|\le2\}.\notag
	\end{equation}
Then there exists a constant  $R_0>0$	 such that the following estimates hold for all  $R,K>R_0$:\\
	(i) 
	\begin{equation}
		\int_0^TN(t)\int_{\mathbb{R} ^d} \phi^2 (\frac{rN(t)}{R}) |\nabla Iu|^2dxdt\gtrsim K,\label{613x2}
	\end{equation}
	(ii) 
	\begin{equation}
		\int _0^T N(t)\int_{\mathbb{R} ^d}(1-\phi (\frac{rN(t)}{R}))   |x|^{-b}|Iu|^{\frac{4-2b}{d} +2}dxdt=o_R(1)K.\label{525ww1}
	\end{equation}
\end{lem}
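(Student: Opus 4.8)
\emph{Strategy.} The common ingredient is that, by almost periodicity, for every small $\eta>0$ the mass of $u(t)$ is trapped in $\{|x|\le C(\eta)N(t)^{-1}\}$ by (\ref{E:compact1}), while (\ref{E:compact2}) together with the Kolmogorov--Riesz compactness criterion yields the two–sided frequency localization $\int_{|\xi|\le c(\eta)N(t)}|\hat u(t,\xi)|^2\,d\xi\le\eta$ and $\int_{|\xi|\ge C(\eta)N(t)}|\hat u(t,\xi)|^2\,d\xi\le\eta$; moreover, since $N(t)\le1$ and $K>R_0$, we have $\|u-Iu\|_{L^\infty_tL^2_x}=\|P_{>K}u\|_{L^\infty_tL^2_x}\le\eta^{1/2}$ once $K>R_0(\eta)$, so $Iu$ inherits the same concentration properties as $u$.

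\emph{Proof of (i).} Since $\phi\ge0$ and $\phi(rN(t)/R)\equiv1$ on $\{|x|\le R/N(t)\}$, it suffices to prove the pointwise bound $\int_{\mathbb R^d}\phi^2(rN(t)/R)|\nabla Iu|^2\,dx\gtrsim_u N(t)^2$ for $t\in[0,T]$ and then integrate against $N(t)\,dt$ to get $\gtrsim_u\int_0^TN(t)^3\,dt=K$. Writing $\chi_t(x):=\phi(rN(t)/R)$ and $\chi_t\nabla Iu=\nabla(\chi_t Iu)-(\nabla\chi_t)Iu$, the last term has $L^2$-norm $\lesssim N(t)R^{-1}\|Iu\|_{L^2}\lesssim_u N(t)R^{-1}$, negligible for $R$ large. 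For the main term fix $\eta=\eta(M(u))$ small; then $\|\chi_t Iu\|_{L^2}\ge\|Iu\|_{L^2(|x|\le R/N(t))}\gtrsim_u 1$, while decomposing $Iu$ into its frequency pieces $\le c(\eta)N(t)$ and $>c(\eta)N(t)$ and using that $\chi_t$ has Fourier transform essentially supported in $\{|\xi|\lesssim N(t)/R\}$ shows $\|P_{\le c(\eta)N(t)/4}(\chi_t Iu)\|_{L^2}\le\eta^{1/2}+C_{M,u}(c(\eta)R)^{-M}\le\tfrac12\|\chi_t Iu\|_{L^2}$ for $R$ large. Hence $\|\nabla(\chi_t Iu)\|_{L^2}^2\ge(c(\eta)N(t)/4)^2\bigl(\|\chi_t Iu\|_{L^2}^2-\|P_{\le c(\eta)N(t)/4}(\chi_t Iu)\|_{L^2}^2\bigr)\gtrsim_u N(t)^2$, which gives (i).

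\emph{Proof of (ii).} Since $1-\phi(rN(t)/R)\le\mathbf{1}_{\{|x|\ge R/N(t)\}}$, we must bound $\int_0^TN(t)\int_{|x|\ge R/N(t)}|x|^{-b}|Iu|^{\frac{4-2b}{d}+2}\,dx\,dt$. Fix $\widetilde R:=R^{1/2}$, choose $\eta'=\eta'(\widetilde R)\to0$ so that $\varepsilon_1:=\|u_{\mathrm{out}}\|_{L^\infty_tL^2_x}\le\eta'^{1/2}$ by (\ref{E:compact1}), where $u_{\mathrm{out}}:=(1-\phi(rN(t)/\widetilde R))u$, and split $u=u_{\mathrm{in}}+u_{\mathrm{out}}$, so $|Iu|^{\frac{4-2b}{d}+2}\lesssim|P_{\le K}u_{\mathrm{in}}|^{\frac{4-2b}{d}+2}+|P_{\le K}u_{\mathrm{out}}|^{\frac{4-2b}{d}+2}$. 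Since $u_{\mathrm{in}}$ is supported in $\{|x|\le 2\widetilde R/N(t)\}$, which for $R\ge4\widetilde R$ lies at distance $\gtrsim|x|$ from the region $\{|x|\ge R/N(t)\}$, the rapid decay of the kernel of $P_{\le K}$ gives $|P_{\le K}u_{\mathrm{in}}(t,x)|\lesssim_{M,u}K^{d-M}|x|^{-M}(\widetilde R/N(t))^{d/2}$ for every $M$; inserting this, carrying out the $x$-integral, and using $N(t)\le1$ to dominate the resulting power of $N(t)$ by $N(t)^3$ (for $M=M(d,b)$ large), the $u_{\mathrm{in}}$-contribution is $\lesssim_{M,u}R^{-c(M)}K$ with $c(M)\to\infty$, hence $o_R(1)K$. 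For the $u_{\mathrm{out}}$-contribution, H\"older with $|x|^{-b}\in L^{d/b,\infty}$ and interpolation in Lorentz spaces between $L^2$ and $L^{\frac{2d}{d-2},2}$ — with $q:=\frac{2d+4-2b}{d-b}$, $\theta:=\frac{2-b}{d+2-b}$ and the identities $(\frac{4-2b}{d}+2)(1-\theta)=2$, $(\frac{4-2b}{d}+2)\theta=\frac{4-2b}{d}$ — give $\int_{\mathbb R^d}|x|^{-b}|P_{\le K}u_{\mathrm{out}}|^{\frac{4-2b}{d}+2}\,dx\lesssim\|P_{\le K}u_{\mathrm{out}}\|_{L^{q,2}}^{\frac{4-2b}{d}+2}\lesssim\varepsilon_1^{\frac{4-2b}{d}}\|P_{\le K}u_{\mathrm{out}}(t)\|_{L^{2d/(d-2),2}_x}^2$. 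It remains to show $\int_0^TN(t)\|P_{\le K}u_{\mathrm{out}}(t)\|_{L^{2d/(d-2),2}_x}^2\,dt\lesssim_{M(u)}K$: splitting $P_{\le K}u_{\mathrm{out}}$ into frequencies $\le C(\eta)N(t)$ (handled by Bernstein and $\|u_{\mathrm{out}}\|_{L^\infty_tL^2}\lesssim1$, which produces $\int_0^TN^3$) and frequencies in $(C(\eta)N(t),K]$, the latter is controlled on the intervals of local constancy $J_k$ by the long time Strichartz estimate (\ref{long time 1}): on each $J_k$ one has $\int_{J_k}N^3\approx N(J_k)$, whence $\|P_{>cN(J_k)}u\|_{L^2_tL^{2d/(d-2),2}_x(J_k)}\lesssim1$, and summing over $k$ via Lemma~\ref{L:6131} returns $\lesssim K$. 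Since $\varepsilon_1\to0$ as $R\to\infty$ while this last constant depends only on $M(u)$, the $u_{\mathrm{out}}$-contribution is $\lesssim_{M(u)}\varepsilon_1^{\frac{4-2b}{d}}K=o_R(1)K$, completing (ii).

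\emph{Main obstacle.} The delicate point is (ii), and inside it the control of the frequencies $C(\eta)N(t)<|\xi|\le K$ of $P_{\le K}u_{\mathrm{out}}$ in the far region $\{|x|\ge R/N(t)\}$: there is no gain pointwise in $t$, so one must route through the long time Strichartz estimate and reconcile its $L^2_t$-character with the weight $N(t)^3\,dt$ in the target bound — precisely the frequency-truncation error analysis deferred to the appendix. Equally essential is the bookkeeping that keeps $\eta=\eta(M(u))$ (governing the concentration of the core) fixed while letting $\eta'\to0$ (governing the tail mass $\varepsilon_1$); this is what upgrades the estimate from $O(K)$ to $o_R(1)K$.
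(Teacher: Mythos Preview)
Your argument is correct, but both parts take a genuinely different route from the paper's.

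For (i), the paper works on intervals of local constancy $J_k$: since $\|u\|_{L^\gamma_tL^{\rho,2}_x(J_k)}=1$ by definition while compactness forces $\|(1-\phi(\tfrac{rN(t)}{R})I)u\|_{L^\gamma_tL^{\rho,2}_x(J_k)}\ll1$ via interpolation against (\ref{613x1}), one gets $\|\phi(\tfrac{rN(t)}{R})Iu\|_{L^2_tL^{2d/(d-2),2}_x(J_k)}\gtrsim1$; Sobolev then yields $\int_{J_k}\int\phi^2|\nabla Iu|^2\,dx\,dt\gtrsim1$, and summing with weights $N(J_k)$ via Lemma~\ref{L:6131} gives $K$. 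Your pointwise-in-$t$ bound $\int\phi^2|\nabla Iu|^2\,dx\gtrsim N(t)^2$ is slightly sharper but hinges on the \emph{low-frequency} tightness $\|P_{\le c(\eta)N(t)}u\|_{L^2}\le\eta^{1/2}$, which is not part of Definition~\ref{D:1}; you correctly note that it follows from precompactness of the rescaled orbit (Kolmogorov--Riesz produces precompactness from (\ref{E:compact1})--(\ref{E:compact2}) and mass conservation, and any precompact $L^2$-family has uniformly small low-frequency mass). This is a legitimate extra ingredient, but the paper's route avoids it entirely.

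For (ii), the paper's proof is much shorter: a single H\"older on each $J_k$,
\[
\int_{J_k}\!\!\int(1-\phi)|x|^{-b}|Iu|^{\frac{4-2b}{d}+2}\,dx\,dt\lesssim\|(1-\phi(\tfrac{rN(t)}{R}))Iu\|_{L^\gamma_tL^{\rho,2}_x(J_k)}^{\frac{4-2b}{d}+1}\|u\|_{L^2_tL^{\frac{2d}{d-2},2}_x(J_k)},
\]
and the first factor is $o_R(1)$ by interpolating the small $L^\infty_tL^2_x$ norm from (\ref{732}) against the bounded $L^2_tL^{\frac{2d}{d-2},2}_x$ norm (\ref{613x1}); summing $N(J_k)$ gives $o_R(1)K$. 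Your in/out decomposition with kernel decay for the inner piece and Lorentz interpolation plus a Bernstein/Strichartz frequency split for the outer piece reaches the same conclusion but is considerably more elaborate. In particular, what you flag as the ``main obstacle'' --- routing the high-frequency part of $P_{\le K}u_{\mathrm{out}}$ through a spacetime estimate --- is an artifact of your decomposition; the paper's single H\"older step sidesteps it, and your citation of (\ref{long time 1}) is unnecessary since the ordinary Strichartz bound (\ref{613x1}) on each $J_k$ already gives $\|P_{>cN(J_k)}u\|_{L^2_tL^{\frac{2d}{d-2},2}_x(J_k)}\lesssim1$.
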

\begin{proof}
	Let  $J_k$ be local constant interval.   By Strichartz estimate, 
	\begin{equation}
		\|u\|_{L^2_tL_x^{\frac{2d}{d-2},2}(J_k\times \mathbb{R} ^d)}\lesssim 1. \label{613x1}
	\end{equation}    
	On the other hand, by  the compactness  (\ref{E:compact1}) and (\ref{E:compact2}), when  $R$  and  $K$ sufficiently  large, 
	\begin{equation}
		\|(1-\phi (\frac{rN(t)}{R}))u\|_{L^\infty _tL^2_x(J_k\times \mathbb{R} ^d)}+ \|(1-I)u\|_{L^\infty _tL^2_x(J_k\times \mathbb{R} ^d)} \ll1.\label{732}
	\end{equation}
	Interpolating the above inequality with (\ref{613x1}),  we obtain 
	\begin{equation}
		 \|\left(1-\phi (\frac{rN(t)}{R})I\right)u\| _{L^\gamma _tL_x^{\rho,2}(J_k\times \mathbb{R} ^d)}\ll 1.\notag
	\end{equation}
	Furthermore, since   $ \|u\|_{L^\gamma _tL_x^{\rho,2}(J_k\times \mathbb{R} ^d)}=1 $ and   $\|u\|_{L^\infty  _tL_x^{ 2}(J_k\times \mathbb{R} ^d)}\lesssim  \|u_0\|_{L^2}  $, we have 
	\begin{equation}
		\|\phi (\frac{rN(t)}{R})Iu\|_{L^2_tL_x^{\frac{2d}{d-2},2}(J_k\times \mathbb{R} ^d)}\gtrsim1.\notag
	\end{equation}
	Therefore,  by  Sobolev's embedding (Lemma \ref{L:sobolev}), (\ref{531w1}) and (\ref{613x1}),  
	\begin{align}
		1&\lesssim  \|\phi (\frac{rN(t)}{R})Iu\|_{L^2_tL_x^{\frac{2d}{d-2},2}(J_k\times \mathbb{R} ^d)}^2\lesssim  \|\nabla \left(\phi(\frac{rN(t)}{R})Iu\right)\|  _{L_t^2L_x^2(J_k\times \mathbb{R} ^d)}^2\notag\\
		&\lesssim  \int _{J_k\times \mathbb{R} ^d}\phi^2(\frac{rN(t)}{R})|\nabla Iu|^2dxdt+\int _{J_k\times \mathbb{R} ^d}\frac{N(t)^2}{R^2}|Iu|^2dxdt\notag\\
		&\lesssim \int _{J_k\times \mathbb{R} ^d}\phi^2(\frac{rN(t)}{R})|\nabla Iu|^2dxdt+\frac{1}{R^2}.\notag
	\end{align}
	Hence, for  $R$ and  $K$  sufficiently large, we can apply 	 Lemma \ref{L:6131}  to deduce  (\ref{613x2}):
	\begin{align}
		&\int_0^TN(t)\int\phi (\frac{rN(t)}{R})|\nabla Iu|^2dxdt\gtrsim \sum_k N(J_k)\int _{J_k\times \mathbb{R} ^d}\phi^2(\frac{rN(t)}{R})|\nabla Iu|^2dxdt\notag\\
		&\gtrsim \sum _kN(J_k)\approx \int_0^TN(t)^3dt=K.\notag
	\end{align}

	We now prove (\ref{525ww1}).   Let  $(\gamma ,\rho)$ be defined by (\ref{E:gamma}), satisfying (\ref{zb}).  By   H\"older's inequality,      
	\begin{align}
		&\int _{J_k}\int_{\mathbb{R} ^d} (1-\phi (\frac{rN(t)}{R}))|x|^{-b}|Iu|^{\frac{4-2b}{d} +2}dxdt \label{731}\\
		&\lesssim   \| (1-\phi (\frac{rN(t)}{R})) Iu\|_{L^\gamma _tL^{\rho,2}_x(J_k\times \mathbb{R} ^d)}^{\frac{4-2b}{d} +1} \|u\|_{L^2_tL_x^{\frac{2d}{d-2},2}(J_k\times \mathbb{R} ^d)}.\notag  
	\end{align}
	Interpolating between (\ref{613x1}) and (\ref{732}), we see that  $(\ref{731})=o_R(1)$.  
	Hence,  
	\begin{align}
		&\int _0^TN(t)\int_{\mathbb{R} ^d}(1-\phi (\frac{rN(t)}{R}))  |x|^{-b}|Iu|^{\frac{4-2b}{d} +2}dxdt \notag\\
		&\lesssim  \sum _kN(J_k) \int _{J_k} \int_{\mathbb{R} ^d}(1-\phi (\frac{rN(t)}{R})) |x|^{-b}|Iu|^{\frac{4-2b}{d} +2}dxdt 
		\notag\\
		&	\lesssim \sum _k N(J_k)o_R(1) \lesssim \int_0^TN(t)^3dto_R(1) \lesssim K\cdot o_R(1).\notag
	\end{align}
	Therefore, we complete the proof of Lemma \ref{L:693}.   
\end{proof}

\section{Long time Strichartz estimate}\label{S:4}
In this section we prove the long time Strichartz estimates in  Proposition  \ref{T:long time SZ}.  These estimates will  be
used in Section \ref{S:5} and Section \ref{S:6} to preclude the rapid cascade   $\int_0^\infty N(t)^3dt<\infty $  and the quasi-soliton, respectively. 

We start with the following Lemma. 
\begin{lem}\label{L:long time SZ}
	Let  $d\ge 3,0<b<\min \left\{ 2,\frac{d}{2} \right\}$ and  $\mu=\pm1$.
	Suppose  $J\subset [0,\infty )$ is compact,     $u$ is the almost periodic  solution given by  Theorem  \ref{T:reduction}, and  $\int _JN(t)^3dt=K$.  Let  $J_k$ be the local constant interval. 
	Then for any  $0<\alpha <\min \left\{ \frac{d-2b}{2}+1,\frac{4-2b}{d}+1 \right\}$, the following   estimate holds:\\
(i) When  $\frac{4-2b}{d}\le1$: 
	\begin{align}
		 \|u_{>N}\|_{L^2_tL_x^{\frac{2d}{d-2},2}(J\times \mathbb{R} ^d)}\lesssim _\alpha& \sigma _{J}\big(\tfrac{N}{2}\big)+\sum_{M\le \eta N} \big(\tfrac{M}{N}\big)^\alpha \|u_{>M}\|_{L^2_tL_x^{\frac{2d}{d-2},2}(J\times \mathbb{R} ^d)}\notag\\
		 &+\big(\tfrac{K}{\eta N}\big)^{\frac{2-b}{d}}\sup _{J_k} \|u_{>\eta N}\|_{S^0_*(J\times \mathbb{R} ^d)}^{\frac{4-2b}{d}}\|u_{>\eta N}\|_{L^2_tL_x^{\frac{2d}{d-2},2}(J\times \mathbb{R} ^d)}^{1-\frac{4-2b}{d}}.   \label{5171}
	\end{align}
\noindent (ii)When  $\frac{4-2b}{d}>1$: 
	\begin{align}
			\|u_{>N}\|_{L^2_tL_x^{\frac{2d}{d-2},2}(J\times \mathbb{R} ^d)}\lesssim _\alpha &\sigma _{J}\big(\tfrac{N}{2}\big)+\sum_{M\le \eta N} \big(\tfrac{M}{N}\big)^\alpha \|u_{>M}\|_{L^2_tL_x^{\frac{2d}{d-2},2}(J\times \mathbb{R} ^d)}\notag\\
			&+\big(\tfrac{K}{\eta N}\big)^{\frac{1}{2}}\sup _{J_k} \|u_{>\eta N}\|_{S^0_*(J\times \mathbb{R} ^d)},  \label{5172}
	\end{align}
	where  $\sigma_J(N)$,   $S_*^0(J\times \mathbb{R} ^d)$ were defined in (\ref{691}) and (\ref{523x1}), respectively.  
 \end{lem}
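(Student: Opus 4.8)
The plan is to run a Duhamel / induction‑on‑frequency argument inside the Lorentz–Strichartz framework, with the bilinear Strichartz estimate of Lemma~\ref{L:bilinear SZ} supplying the decay in $N$. Fix a dyadic $N=2^k$ and choose $t_1\in J$ minimizing $t\mapsto\|u_{>N/2}(t)\|_{L^2_x}$ (possible since $J$ is compact and $u\in C^0_tL^2_x$). Duhamel's formula based at $t_1$ together with the retarded Strichartz estimate in Lorentz spaces (Proposition~\ref{P:SZ}), applied separately on $J\cap\{t\ge t_1\}$ and $J\cap\{t\le t_1\}$, gives
\[
\|u_{>N}\|_{L^2_tL^{2d/(d-2),2}_x(J\times\mathbb R^d)}\lesssim\|u_{>N}(t_1)\|_{L^2_x}+\bigl\|P_{>N}\bigl(|x|^{-b}|u|^{\frac{4-2b}{d}}u\bigr)\bigr\|_{L^2_tL^{2d/(d+2),2}_x(J\times\mathbb R^d)}.
\]
Since $\|u_{>N}(t_1)\|_{L^2_x}\le\|u_{>N/2}(t_1)\|_{L^2_x}=\inf_{t\in J}\|u_{>N/2}(t)\|_{L^2_x}$, the first term is controlled by $\sigma_J(N/2)$ via the $j=k-1$ term in \eqref{691} (the slowly‑varying weights $2^{-|j-k|/3}$ are chosen precisely so that this envelope also absorbs the frequency‑mismatch errors produced below), and it remains to estimate the nonlinear term on each interval of local constancy $J_k$.

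For the nonlinearity I would write $u=u_{\le\eta N}+u_{>\eta N}$ and split $F(u)=F(u_{\le\eta N})+\bigl(F(u)-F(u_{\le\eta N})\bigr)$. For the low‑frequency piece $P_{>N}F(u_{\le\eta N})$ the output sits at frequency $\gtrsim N$ while every input sits at frequency $\le\eta N$, so Bernstein's inequality (Lemma~\ref{L:Bernstein}) yields a gain $N^{-\alpha}$ and reduces matters to bounding $\||\nabla|^{\alpha}F(u_{\le\eta N})\|_{L^2_tL^{2d/(d+2),2}_x}$; the Leibniz rule in Lorentz spaces (Lemma~\ref{L:leibnitz}) then distributes the $\alpha$ derivatives between $|x|^{-b}$ — producing the weight $|x|^{-b-\alpha}\in L^{d/(b+\alpha),\infty}_x$, which is an admissible Lorentz factor exactly because $\alpha<\tfrac{d-2b}{2}+1$, the same bookkeeping as in Lemma~\ref{L:nonlinear estimate} — and the power nonlinearity, handled by the chain rule of Lemma~\ref{L:6141} (or Lemma~\ref{L:Visan} when $\tfrac{4-2b}{d}<1$) together with Sobolev embedding (Lemma~\ref{L:sobolev}) and Gagliardo–Nirenberg (Lemma~\ref{L:GN}). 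Expanding $u_{\le\eta N}=\sum_{M\le\eta N}u_M$ and summing the dyadic pieces converts the $N^{-\alpha}$ into the middle term $\sum_{M\le\eta N}(M/N)^{\alpha}\|u_{>M}\|_{L^2_tL^{2d/(d-2),2}_x}$, which is the Gronwall‑type term that later drives the induction in Proposition~\ref{T:long time SZ}.

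The remaining piece $F(u)-F(u_{\le\eta N})$ always carries at least one factor of $u_{>\eta N}$. Here I would peel off $|x|^{-b}\in L^{d/b,\infty}_x$ by Hölder in Lorentz spaces and pair the high‑frequency factor with a lower‑frequency one through the bilinear estimate \eqref{E:bilinear SZ}, which supplies a square‑root gain in the frequency ratio. Working on a single interval of local constancy $J_k$, where $\int_{J_k}N(t)^2\,dt\approx1$ (Lemma~\ref{L:6132}) and $\|u\|_{S^0_*(J_k\times\mathbb R^d)}\lesssim1$ (Lemma~\ref{L:S_0*}), summing the dyadic bilinear gains and then summing over $J_k$ via $\sum_kN(J_k)\approx\int_JN(t)^3\,dt=K$ (Lemma~\ref{L:6131}) produces the factor $(K/(\eta N))^{1/2}$ when $\tfrac{4-2b}{d}>1$. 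When $\tfrac{4-2b}{d}\le1$ there are not enough powers of $u$ to put two full factors into $L^2_{t,x}$, so a genuine residual factor of $u_{>\eta N}$ survives; estimating it by interpolating $\|u_{>\eta N}\|_{S^0_*}$ against $\|u_{>\eta N}\|_{L^2_tL^{2d/(d-2),2}_x}$ produces the exponents $\tfrac{4-2b}{d}$ and $1-\tfrac{4-2b}{d}$, and the bilinear gain then only amounts to $(K/(\eta N))^{(2-b)/d}$ — consistent with the elementary equivalence $(2-b)/d<\tfrac12\iff\tfrac{4-2b}{d}<1$.

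I expect the main obstacle to be technical rather than conceptual: carrying the whole argument inside Lorentz spaces, so that every Hölder split, the fractional Leibniz and chain rules, the Sobolev and Gagliardo–Nirenberg embeddings, and the bilinear Strichartz estimate are used in their Lorentz‑space forms with exponents that close up in the presence of the singular weights $|x|^{-b}$ and $|x|^{-b-\alpha}$. Tracking which Hölder triples remain admissible is exactly where the hypothesis $\alpha<\min\{\tfrac{d-2b}{2}+1,\tfrac{4-2b}{d}+1\}$ enters — the same constraint as in Lemma~\ref{L:nonlinear estimate} — and this is the point requiring the most care; once those Lorentz‑space estimates are in hand, the frequency‑localization bookkeeping parallels Dodson's arguments in \cite{Dodson1,Dodson4}.
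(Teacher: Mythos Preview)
Your overall architecture is right — Duhamel at a mass-minimizing time, Bernstein plus the Lorentz Leibniz/chain rules for $P_{>N}F(u_{\le\eta N})$, and bilinear Strichartz for the piece carrying $u_{>\eta N}$ — but there is a genuine gap in the high-frequency step: you never invoke the almost periodicity of $u$, and without it the bilinear argument does not close.

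Concretely, after peeling off $|x|^{-b}$ you must estimate $\|u_{>\eta N}|u_{\text{low}}|^{\frac{4-2b}{d}}\|_{L^2_tL^{2d/(d+2-2b),2}_x}$ on each $J_k$. If you pair $u_{>\eta N}$ with a factor of $u_{\le\eta N}$ (or even $u_{\le C_0N(J_k)}$) via \eqref{E:bilinear SZ} and place the bilinear output in $L^2_{t,x}$, the remaining H\"older exponents force the leftover low-frequency factors into an $L^p_x$ space with $p<2$ (for $d\ge3$), which is unavailable. Equivalently, the bilinear gain on $J_k$ is $N(J_k)^{(d-1)/2}/(\eta N)^{1/2}$, not $N(J_k)^{1/2}/(\eta N)^{1/2}$; summing the former over $J_k$ does not produce $(K/(\eta N))^{1/2}$. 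A factor $N(J_k)^{-(d-2)/2}$ is missing, and no combination of Bernstein or Strichartz on the $u$ factors can supply a \emph{negative} power of $N(J_k)$.

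The paper closes this by introducing a physical-space cutoff $\chi(t,x)$ supported on $\{|x|\lesssim C_0/N(t)\}$, chosen via the spatial compactness \eqref{E:compact1}; then $\|\chi(t)\|_{L^{2d/(d-2),2}_x}\sim N(J_k)^{-(d-2)/2}$ is exactly the missing factor. Simultaneously, the frequency compactness \eqref{E:compact2} is used to replace $u_{\le\eta N}$ by $u_{\le C_0N(t)}$ (so the bilinear low factor really sits at scale $N(t)$) and to absorb the intermediate pieces $(1-\chi)u_{\le C_0N(t)}$ and $u_{>C_0N(t)}$ as $\eta^2\|u_{>\eta N}\|_{L^2_tL^{2d/(d-2),2}_x}$, which feeds back into the Gronwall sum. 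Your proposal mentions neither the spatial cutoff nor this frequency refinement; both are essential, and the first is the idea you are missing.
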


\begin{proof}
	Define a cutoff  $\chi (t,\cdot)\in C_0^\infty (\mathbb{R} ^d)$ in physical spcae, 
	\begin{equation}
		\chi(t,x)=\begin{cases}
			1,\qquad |x|\le \frac{C_0}{N(t)};\\
			0,\qquad |x|>\frac{2C_0}{N(t)},
		\end{cases}\notag
	\end{equation} 
	where we choose  $C_0=C_0(\eta)$ such that by (\ref{E:compact1}) and (\ref{E:compact2}), 
	\begin{equation}
		 \|u_{>C_0(\eta)N(t)} \|_{L^\infty _tL^2_x}^{\frac{4-2b}{d}}+ \|(1-\chi (t)) u_{\le C_0(\eta)N(t)} \|_{L^\infty _tL^2_x}  ^{\frac{4-2b}{d}} \le \eta^2.\label{6121}
	\end{equation}
	Let    $F(u)=:\mu |x|^{-b}|u|^{\frac{4-2b}{d}}u$.  Then   
	\begin{align}
	    \|P_{>N}&F(u)\|_{L^2_tL_x^{\frac{2d}{d+2},2}(J\times \mathbb{R} ^d)}\lesssim   \|P_{>N}F(u_{\le \eta N})\|_{L^2_tL_x^{\frac{2d}{d+2},2}(J\times \mathbb{R} ^d)}\notag\\
	     &+ \||x|^{-b}u_{>\eta N}(1-\chi(t))|u_{\le C_0N(t)}|^{\frac{4-2b}{d}}\|_{L^2_tL_x^{\frac{2d}{d+2},2}(J\times \mathbb{R} ^d)} \label{7141}\\
	    & + \||x|^{-b}u_{>\eta N}|u_{>C_0 N(t)}|^{\frac{4-2b}{d}} \|_{L_t^2L_x^{\frac{2d}{d+2},2}(J\times \mathbb{R} ^d)} \label{7142}\\
	    &+ \||x|^{-b}\chi (t)u_{>\eta N}|u_{\le C_0 N(t)}|^{\frac{4-2b}{d}} \|_{L_t^2L_x^{\frac{2d}{d+2},2}(J\times \mathbb{R} ^d) }.\label{7143}
	\end{align}
Fix   $\varepsilon _0>0$ sufficiently small  such that  $\alpha+\varepsilon _0<\min \left\{ \frac{d-2b}{2}+1,\frac{4-2b}{d}+1 \right\}$.  By Bernstein and Lemma \ref{L:nonlinear estimate}, we have the following estimate on   $J\times \mathbb{R} ^d$,
	\begin{align}
		 &\|P_{>N}F(u_{\le \eta N})\|_{L^2_tL_x^{\frac{2d}{d+2},2}}\lesssim _\alpha\frac{1}{N^{\alpha+\varepsilon _0}}  \||\nabla |^{s+\varepsilon _0}(|x|^{-b}|u_{\le\eta N}|^{\frac{4-2b}{d}} u_{\le\eta N})\|_{L^2_tL_x^{\frac{2d}{d+2},2}}\notag\\
		 &\lesssim _\alpha \frac{1}{N^{\alpha+\varepsilon _0}} \|u_{\le\eta N}\|_{L^\infty _tL^2_x}^{\frac{4-2b}{d}} \||\nabla |^{\alpha+\varepsilon _0}u_{\le \eta N}\|_{L^2_tL_x^{\frac{2d}{d-2},2}}  +\frac{1}{N^{\alpha+\varepsilon _0}} \||\nabla |^{\frac{2(\alpha+\varepsilon _0)}{\gamma } }u_{\le \eta N}\|^{\frac{\gamma }{2}}_{L^\gamma _tL_x^{ \rho,2}}\notag\\
		 &=:I_{1}+I_{2}.\notag   
	\end{align}
	By Bernstein's inequality (Lemma \ref{L:Bernstein}),  
	\begin{equation}
		I_{1}\lesssim  \frac{1}{N^{\alpha+\varepsilon _0}}\sum_{M\le \eta N} M^{\alpha+\varepsilon _0} \|u_{>M}\|_{L^2_tL_x^{\frac{2d}{d-2},2}}\lesssim  \sum _{M\le \eta N}(\frac{M}{N})^\alpha   \|u_{>M}\|_{L^2_tL_x^{\frac{2d}{d-2},2}(J\times \mathbb{R} ^d)}.\notag
	\end{equation}
 	Again by Bernstein and Young's  inequality, 
 	\begin{align}
 		I_{2}&\lesssim  \frac{1}{N^{\alpha+\varepsilon _0}} \left(\sum _{M\le \eta N} \left(M^\alpha \|P_{>M }u\|_{L_t^\gamma L_x^{\rho,2}}^{\frac{\gamma }{2}}\right) ^{\frac{2 }{\gamma }} M^{\frac{2}{\gamma }\varepsilon _0}\right)^{\frac{\gamma }{2}}\notag\\
 		&\lesssim  \frac{1}{N^{\alpha+\varepsilon _0}}\sum_{M\le \eta N} M^\alpha \|P_{>M }u\|_{L_t^\gamma L_x^{\rho,2}}  ^{\frac{\gamma }{2}} (\eta N)^{\varepsilon _0} 
 		\lesssim  \sum_{M\le \eta N} (\frac{M}{N})^\alpha \|P_{>M}u\|_{L^2_tL_x^{\frac{2d}{d-2},2}(J\times \mathbb{R} ^d)}. \notag
 	\end{align} 
Hence, 
	\begin{equation}
		 \|P_{>N}F(u_{\le\eta N})\|_{L_t^2L_x^{\frac{2d}{d+2},2}(J\times \mathbb{R} ^d)}\lesssim _s\sum _{M\le \eta N}(\frac{M}{N})^\alpha \|u_{>M}\|_{L_t^2L_x^{\frac{2d}{d-2},2}(J\times \mathbb{R} ^d)}.\label{517w1}
	\end{equation}
	
	Next, we estimate (\ref{7141}) and (\ref{7142}).  By  (\ref{6121}) and H\"older's inequality,   on     $J\times \mathbb{R} ^d$,
	\begin{align}
& (\ref{7141})+(\ref{7142}) \notag\\
& \lesssim   \|u_{>\eta N}\|_{L_t^2L_x^{\frac{2d}{d-2},2}}[  \|u_{>C_0(\eta)N(t)} \|_{L^\infty _tL^2_x}^{\frac{4-2b}{d}}+ \|(1-\chi (t)) u_{\le C_0(\eta)N(t)} \|_{L^\infty _tL^2_x}  ^{\frac{4-2b}{d}}  ]\notag\\
&\lesssim  \eta ^2  \|u_{>\eta N}\|_{L^2_xL_x^{\frac{2d}{d-2},2}},\notag  
	\end{align} 
which  is  controlled by   (\ref{517w1}).

	Finally, by H\"older's inequality, the last term can be estimated by  
	\begin{equation}
	(\ref{7143})\lesssim 	\| \chi (t)u_{>\eta N} |u_{\le C_0N(t)}|^{\frac{4-2b}{d}}\|_{L^2_tL_x^{\frac{2d}{d+2-2b},2}(J\times \mathbb{R} ^d)}.\notag 
	\end{equation}

	\noindent \textbf{When  $\frac{4-2b}{d}>1$:}  By the bilinear estimates (\ref{E:bilinear SZ}) and  (\ref{614ww1}), 
	\begin{align}
			&\| \chi (t) u_{>\eta N}  |u_{\le C_0N(t)}|^{\frac{4-2b}{d}}\|_{L^2_tL_x^{\frac{2d}{d+2-2b},2}(J_k\times \mathbb{R} ^d)}\notag\\
			&\lesssim   \|u_{>\eta N}u_{\le C_0 N(t)}\|_{L^2_{t,x}(J_k\times \mathbb{R} ^d)} \|\chi (t)\|_{L^\infty _tL^{\frac{2d}{d-2},2}_x(J_k\times \mathbb{R} ^d)} \|u_{\le C_0N(t) }\|_{L^\infty _tL_x^2 (J_k\times \mathbb{R} ^d)}^{\frac{4-2b}{d}-1}   \notag\\
		& \lesssim  \frac{N(J_k)^{\frac{d-1}{2}}}{(\eta N)^{\frac{1}{2}}} \|u_{>\eta N}\|_{S^0_*(J_k\times \mathbb{R} ^d)} N(J_k)^{-\frac{d-2}{2}} 
		\lesssim  \left(\frac{N(J_k)}{\eta N}\right)  ^{\frac{1}{2}} \|u_{>\eta N}\|_{S^0_*(J_k\times \mathbb{R} ^d)} .\notag 
	\end{align}
	Summing over all subintervals  $J_k$, and using Lemma \ref{L:6131},
	\begin{equation}
\| \chi (t)u_{>\eta N} |u_{\le C_0N(t)}|^{\frac{4-2b}{d}}\|_{L^2_tL_x^{\frac{2d}{d+2-2b},2}(J\times \mathbb{R} ^d)}\lesssim  (\frac{K}{\eta N})^{\frac{1}{2}}(\sup _{J_k}  \|u_{>\eta N}\|_{S^0_*(J_k\times \mathbb{R} ^d)} ).\notag
	\end{equation}

	\noindent \textbf{When  $\frac{4-2b}{d}\le1$:}  By H\"older's inequality,
	\begin{align}
			&\|u_{>\eta N} \chi (t) |u_{\le C_0N(t)}|^{\frac{4-2b}{d}}\|_{L^2_tL_x^{\frac{2d}{d+2-2b},2}(J\times \mathbb{R} ^d)}\notag\\
			& \lesssim  \|\chi(t)|u_{>\eta N}u_{\le C_0N(t)}|^{\frac{4-2b}{d}}\|_{L_t^{\frac{d}{2-b}}L_x^{p,2}(J\times \mathbb{R} ^d)} \|u_{>\eta N}\|_{L^2_xL_x^{\frac{2d}{d-2},2}(J\times \mathbb{R} ^d)}^{1-\frac{4-2b}{d}},\notag  
	\end{align}
	where 
	\begin{equation}
		\frac{1}{p}=:\frac{d+2-2b}{2d}-(1-\frac{4-2b}{d})\frac{d-2}{2d}=\frac{4-2b}{d}\frac{1}{2}+\frac{(2-b)(d-2)}{d^2}.\notag
	\end{equation}
Applying H\"older's inequality and bilinear estimate (\ref{E:bilinear SZ}), we have  
\begin{align}
				&\|u_{>\eta N} \chi (t) |u_{\le C_0N(t)}|^{\frac{4-2b}{d}}\|_{L^{\frac{d}{2-b}}_tL_x^{p,2}(J_k\times \mathbb{R} ^d)}\notag\\
				&\lesssim    \|u_{>\eta N}u_{\le C_0 N(t)}\|_{L^2_{t,x}(J_k\times \mathbb{R} ^d)}^{\frac{4-2b}{d}} \|\chi (t)\|_{L^\infty _tL^{\frac{d^2}{(2-b)(d-2)},2}_x(J_k\times \mathbb{R} ^d)}\notag\\
					& \lesssim \left(\frac{N(J_k)^{\frac{d-1}{2}}}{(\eta N)^{\frac{1}{2}}}\right)^{\frac{4-2b}{d}}  \|u_{>\eta N}\|_{S^0_*(J_k\times \mathbb{R} ^d)}^{\frac{4-2b}{d}}  \|u_{\le C_0N(t)}\|_{S^0_*(J_k\times \mathbb{R} ^d)}^{\frac{4-2b}{d}} N(J_k)^{-\frac{(2-b)(d-2)}{d}} \notag\\
				&\lesssim  \left(\frac{N(J_k)}{\eta N}\right) ^{\frac{2-b}{d}}  \|u_{>\eta N}\|_{S^0_*(J_k\times \mathbb{R} ^d)}^{\frac{4-2b}{d}}.\notag 
\end{align}
Again summing over all subintervals, and using Lemma \ref{L:6131},
\begin{align}
		&\| \chi (t)u_{>\eta N} |u_{\le C_0N(t)}|^{\frac{4-2b}{d}}\|_{L^2_tL_x^{\frac{2d}{d+2-2b},2}(J\times \mathbb{R} ^d)}\notag\\
		&\lesssim  \left(\frac{K}{\eta N}\right) ^{\frac{2-b}{d}}  (\sup _{J_k}  \|u_{>\eta N}\|_{S^0_*(J_k\times \mathbb{R} ^d)} )^{\frac{4-2b}{d}} \|u_{>\eta N}\|_{L^2_tL_x^{\frac{2d}{d-2},2}(J\times \mathbb{R} ^d)}^{1-\frac{4-2b}{d}}.\notag 
\end{align}
This completes the proof of Lemma \ref{L:long time SZ}.    
\end{proof}
Notice that in the proof of Lemma \ref{L:long time SZ}, we have also proved the following estimate, which will be used in Section \ref{S:5} to establish the additional regularity of the rapid cascade.  
\begin{cor}
	For $\frac{1}{2}\le s<\min \left\{ \frac{d-2b}{2}+1,\frac{4-2b}{d}+1 \right\}$, 
	\begin{equation}
		\|P_{>N}F(u)\|_{L^2_tL_x^{\frac{2d}{d+2},2}(J\times \mathbb{R} ^d)}\lesssim_s  \frac{K^{1/2}}{N^{s}} \|u\|_{L^\infty _t \dot H^{s-1/2}_x}+\sigma _J (\frac{N}{2}). \label{5181}
	\end{equation} 
\end{cor}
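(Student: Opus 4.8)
The plan is to re-run the argument of Lemma~\ref{L:long time SZ}, together with the induction on dyadic frequency that promotes it to the Strichartz bounds of Proposition~\ref{T:long time SZ}, but carrying the homogeneous norm $\|u\|_{L^\infty_t\dot H^{s-1/2}_x}$ throughout in place of the mass $\|u\|_{L^\infty_tL^2_x}$. One keeps the same splitting used there, $\|P_{>N}F(u)\|_{L^2_tL_x^{\frac{2d}{d+2},2}(J)}\lesssim \|P_{>N}F(u_{\le\eta N})\|_{L^2_tL_x^{\frac{2d}{d+2},2}(J)}+(\ref{7141})+(\ref{7142})+(\ref{7143})$, with the physical cutoff $\chi$ and the truncation level $C_0(\eta)$ fixed by (\ref{6121}); the term $\sigma_J(N/2)$ in (\ref{5181}) arises, exactly as for (\ref{long time 1}), from the homogeneous part of the Duhamel formula via Strichartz over all of $J$.

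For $P_{>N}F(u_{\le\eta N})$ fix a small $\varepsilon_0>0$ with $s+\varepsilon_0<\min\{\tfrac{d-2b}2+1,\tfrac{4-2b}d+1\}$, use Bernstein (Lemma~\ref{L:Bernstein}) to extract $N^{-(s+\varepsilon_0)}$, and distribute $|\nabla|^{s+\varepsilon_0}$ across the nonlinearity via the fractional product and chain rules (Lemma~\ref{L:nonlinear estimate}); this is precisely the computation already carried out in the proof of Lemma~\ref{L:long time SZ}, and it produces the recursive quantity $\sum_{M\le\eta N}(M/N)^\alpha\|u_{>M}\|_{L^2_tL_x^{\frac{2d}{d-2},2}(J)}$ for any $\alpha$ with $s<\alpha<\min\{\tfrac{d-2b}2+1,\tfrac{4-2b}d+1\}$. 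The terms (\ref{7141}) and (\ref{7142}) are, as in Lemma~\ref{L:long time SZ}, bounded by $\eta^2\|u_{>\eta N}\|_{L^2_tL_x^{\frac{2d}{d-2},2}(J)}$ using the compactness (\ref{6121}), hence absorbed by the same recursive term. For the main term (\ref{7143}) one argues exactly as in Lemma~\ref{L:long time SZ} --- H\"older, then the bilinear Strichartz estimate (Lemma~\ref{L:bilinear SZ}) on each interval of local constancy $J_k$ giving the gain $(N(J_k)/\eta N)^{1/2}$ for the product $u_{>\eta N}u_{\le C_0N(t)}$, with the cases $\tfrac{4-2b}d\le1$ and $\tfrac{4-2b}d>1$ treated separately --- except that, writing $u_{>\eta N}=|\nabla|^{-(s-1/2)}P_{>\eta N}|\nabla|^{s-1/2}u$, the factor $\|u_{>\eta N}\|_{S^0_*(J_k)}$ is now estimated by $(\eta N)^{-(s-1/2)}\||\nabla|^{s-1/2}u_{>\eta N}\|_{S^0_*(J_k)}\lesssim (\eta N)^{-(s-1/2)}\|u\|_{L^\infty_t\dot H^{s-1/2}_x}$ using Bernstein on the $S^0_*$ norm and the regularity-persistence bounds (\ref{614ww1})--(\ref{614ww2}) of Lemma~\ref{L:S_0*}, in place of $\|u_{>\eta N}\|_{S^0_*(J_k)}\lesssim1$. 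Summing over $J_k$ with $\sum_{J_k}N(J_k)\approx K$ (Lemma~\ref{L:6131}) converts the bilinear gains into a factor $(K/\eta N)^{1/2}$, so that (\ref{7143}) contributes $\lesssim_s \tfrac{K^{1/2}}{N^s}\|u\|_{L^\infty_t\dot H^{s-1/2}_x}$. Feeding the resulting recursive inequality into the induction on dyadic frequency --- with inductive hypothesis $\|u_{>M}\|_{L^2_tL_x^{\frac{2d}{d-2},2}(J)}\lesssim_s \tfrac{K^{1/2}}{M^s}\|u\|_{L^\infty_t\dot H^{s-1/2}_x}+\sigma_J(M/2)$ for $M\le\eta N$ and the choice $\alpha>s$, so that $\sum_{M\le\eta N}(M/N)^\alpha M^{-s}\lesssim_s N^{-s}$ and $\sum_{M\le\eta N}(M/N)^\alpha\sigma_J(M/2)\lesssim\sigma_J(N/2)$ --- closes the bound and yields (\ref{5181}).

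The main obstacle is the same one that already drives Lemma~\ref{L:long time SZ}: distributing the fractional derivative $|\nabla|^{s+\varepsilon_0}$ across the singular nonlinearity $|x|^{-b}|u|^{(4-2b)/d}u$ within the Lorentz-space framework. When the derivative falls on $|x|^{-b}$ one must absorb $|x|^{-(b+s+\varepsilon_0)}\in L^{d/(b+s+\varepsilon_0),\infty}(\mathbb{R}^d)$ by H\"older, and when it falls on $|u|^{(4-2b)/d}u$ one needs the fractional chain rule (Lemma~\ref{L:6141}) together with Gagliardo--Nirenberg (Lemma~\ref{L:GN}) and Sobolev embedding (Lemma~\ref{L:sobolev}) in Lorentz spaces; the hypothesis $\tfrac12\le s<\min\{\tfrac{d-2b}2+1,\tfrac{4-2b}d+1\}$ is exactly what keeps all the resulting Lorentz exponents admissible and, via $s\ge\tfrac12$, makes the splitting $|\nabla|^{s+\varepsilon_0}=|\nabla|^{1/2+\varepsilon_0}|\nabla|^{s-1/2}$ compatible with the regularity-persistence estimates of Lemma~\ref{L:S_0*}. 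As before, the borderline case $\tfrac{4-2b}d=1$ has to be carried along separately in the bilinear bookkeeping for (\ref{7143}).
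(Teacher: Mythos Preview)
Your proposal is correct and essentially reproduces the paper's argument (the paper itself just points back to the proof of Lemma~\ref{L:long time SZ}). One correction: since (\ref{5181}) bounds $P_{>N}F(u)$ rather than $u_{>N}$, there is no Duhamel step here, so $\sigma_J(N/2)$ cannot come from a ``homogeneous part of the Duhamel formula''; it arises only through the recursive sum $\sum_{M\le\eta N}(M/N)^\alpha\|u_{>M}\|_{L^2_tL_x^{\frac{2d}{d-2},2}(J)}$ after substituting the already-established bound (\ref{long time 2}) --- which you in fact carry out correctly at the end of your proposal, so the argument still closes.
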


Now we utilize Lemma \ref{L:long time SZ} to prove   Proposition  \ref{T:long time SZ}. 
\begin{proof}[\textbf{Proof of   Proposition  \ref{T:long time SZ}}.]
	Fix  $J$. Let  
	\begin{equation}
		f(N)=: \|u_{> N}\|_{L^2_tL_x^{\frac{2d}{d-2},2}(J\times \mathbb{R} ^d)}.\notag 
	\end{equation}
	\noindent \textbf{(i)The case  $\frac{4-2b}{d}>1$:}  By Lemma \ref{L:long time SZ} ($\alpha =1$) and (\ref{614ww1}), 
	 \begin{equation}
	 	f(N)\lesssim  \frac{1}{\eta ^{1/2}}[(\frac{K}{N})^{1/2}+\sigma_J(\frac{N}{2})]+\sum_{M\le \eta N}\frac{M}{N}f(M).\notag
	 \end{equation}
	 Let   $\varepsilon >0$ and 
	 \begin{equation}
	 	c_\varepsilon (J)=:\sup _{N}\frac{f(N)}{(\frac{K}{N})^{1/2}+\sigma _J(\frac{N}{2})+\varepsilon },\notag
	 \end{equation}
	 taking the supremum over all dyadic integers   $N$.    
	 By (\ref{531w1}) and  $N(t)\le1$, 
	 \begin{equation}
	 	c_\varepsilon (J)\lesssim  \frac{(1+\int _JN(t)^2dt)^{\frac{1}{2}}}{\varepsilon }\lesssim \frac{(1+|J|)^{\frac{1}{2}}}{\varepsilon }<+\infty .\notag
	 \end{equation}
	 Hence
	 \begin{align}
	 		f(N)&\lesssim   \frac{1}{\eta ^{1/2}}[(\frac{K}{N})^{1/2}+\sigma_J(\frac{N}{2})] +c_\varepsilon (J)\sum _{M\le \eta N}\frac{M}{N}[(\frac{K}{M})^{1/2}+\sigma _J(\frac{M}{2})+\varepsilon ]\notag\\
	 		 &\lesssim  \frac{1}{\eta ^{1/2}}[(\frac{K}{N})^{1/2}+\sigma_J(\frac{N}{2})] +c_\varepsilon (J) [\eta ^{1/2} (\frac{K}{N})^{1/2}+\eta ^{2/3}\sigma_J(\frac{N}{2})+\eta \varepsilon ],\notag 
	 \end{align}
	 where we used (see \cite[Definition 3.3]{Dodson1})
	 \begin{equation}
	 	\sigma _{J}(N)\le  \sigma _J(M)\le (\frac{N}{M})^{1/3}\sigma _J(N).\notag
	 \end{equation}
	  Therefore 
	  \begin{equation}
	  	c_\varepsilon (J)\lesssim  \frac{1}{\eta ^{1/2}}+\eta^{1/2}c_\varepsilon (J).\notag
	  \end{equation}
	  Fixing  $\eta>0$  sufficiently small, 
	  \begin{equation}
	  	c_\varepsilon (J)\lesssim  \frac{1}{\eta ^{1/2}},\notag
	  \end{equation} 
which implies (\ref{long time 1})	as the  bound is independent of  $\varepsilon >0$.  
	  
	  Next we prove (\ref{long time 2}).  Let  $\frac{1}{2}\le s<\min \left\{ \frac{d-2b}{2}+1,\frac{4-2b}{d}+1 \right\}$ and fix  $s_0<s<\min  \{ \frac{d-2b}{2}+1,\frac{4-2b}{d}+1 \}$.    By  Lemma \ref{L:long time SZ} ($\alpha =s_0$), Bernstein and (\ref{614ww2}),  
	  \begin{align}
	  	f(N)&\lesssim  \frac{1}{\eta ^{1/2}}(\frac{K}{N})^{1/2}\sup _{J_k}  \|u_{>\eta N}\|_{S_*^0(J_k\times \mathbb{R} ^d)}+\sigma _J(\frac{N}{2})+\sum _{M\le \eta N} (\frac{M}{N})^{s_0}f(M)\notag\\
	  	&\lesssim _s \frac{1}{\eta ^{s}}\frac{K^{1/2}}{N^{s}} \|u\|_{L^\infty _t \dot H^{s-1/2}_x(J\times \mathbb{R} ^d)}+\sigma _J(\frac{N}{2})+\sum _{M\le \eta N} (\frac{M}{N})^{s_0}f(M).\notag   
	  \end{align}
	  Let   $\varepsilon >0$ and 
	  \begin{equation}
	  	c_{\varepsilon }(J,s)=:\sup _N \frac{f(N)}{\frac{K^{1/2}}{N^{s}}   \|u\|_{L^\infty _t \dot H^{s-1/2}_x(J\times \mathbb{R} ^d)}+\sigma _J(\frac{N}{2}) +\varepsilon }<+\infty .\notag
	  \end{equation}
	  Then 
	  \begin{align}
	  	f(N)&\lesssim_s  \frac{1}{\eta^{s}}\left[ \frac{K^{1/2}}{N^{s}} \|u\|_{L^\infty _t \dot H^{s-1/2}_x}+\sigma _J(\frac{N}{2}) \right]\notag\\
	  	&+c_{\varepsilon }(J,s)\sum _{M\le \eta N}(\frac{M}{N})^{s_0} [\frac{K^{1/2}}{M^{s}} \|u\|_{L^\infty _t \dot H^{s-1/2}_x(J\times \mathbb{R} ^d)}+\sigma _J(\frac{M}{2}) +\varepsilon ]\notag\\
	  	&\lesssim_s \left(\frac{1}{\eta ^{s}}+c_{\varepsilon }(J,s)(\eta ^{s_0-s}+\eta ^{s_0-1/3})\right)\left[ \frac{K^{1/2}}{N^{s}} \|u\|_{L^\infty _t \dot H^{s-1/2}_x}+\sigma _J(\frac{N}{2}) + \varepsilon \right].\notag
	  \end{align}
	  Therefore, 
	  \begin{equation}
	  	c_{\varepsilon }(J,s)\lesssim _{s}\frac{1}{\eta ^{s}}+c_{\varepsilon }(J,s)(\eta ^{s_0-s}+\eta ^{s_0-1/3}).\notag
	  \end{equation}
	  Again fixing  $\eta>0$  sufficiently small, 
	  \begin{equation}
	  	c_{\varepsilon }(J,s)\lesssim_s  \frac{1}{\eta ^{s}},\notag
	  \end{equation} 
	 which  proves (\ref{long time 2}) as the  bound is independent of  $\varepsilon >0$.   
	  
	  \noindent \textbf{(ii)The case  $\frac{4-2b}{d}\le1:$} 
	  Using Young's inequality to  transform the last term in  (\ref{5171}) into a linear term, we obtain
	  \begin{align}
	  	f(N)&\lesssim \sigma _J(\frac{N}{2})+\sum _{M\le \eta N}\frac{M}{N} f(M)+(\frac{K}{\eta N})^{\frac{2-b}{d}}f(\eta N)^{1-\frac{4-2b}{d}}\notag\\
	  	& \lesssim  (\frac{K}{\eta N})^{1/2}+\sigma _J(\frac{N}{2})+\sum _{M\le \eta N}\frac{M}{N}f(M).\notag
	  \end{align} 
	  Proceeding as in the case when  $\frac{4-2b}{d}>1$, we obtain  (\ref{long time 1}). Likewise, for  $\frac{1}{2}\le s<\min \left\{ \frac{d-2b}{2}+1,\frac{4-2b}{d}+1 \right\}$, fix  $s_0\in (s,\min \left\{ \frac{d-2b}{2}+1,\frac{4-2b}{d}+1 \right\})$. By Bernstein and Young's inequality,
	  \begin{align}
	  	f(N)&\lesssim _s \sigma _J(\frac{N}{2})+\sum _{M\le \eta N}(\frac{M}{N})^{s_0}f(M)+\left(\frac{K}{(\eta N)^{2s}}\right)^{\frac{2-b}{d}}   \|u\|^{\frac{4-2b}{d}}_{L_t^\infty \dot H^{s-\frac{1}{2}}_x}f(\eta N)^{1-\frac{4-2b}{d}}\notag\\ 
	  	 &\lesssim  _s \frac{1}{\eta ^{s}}\frac{K^{\frac{1}{2}}}{N^s} \|u\|_{L^\infty _t\dot H^{s-\frac{1}{2}}_x(J\times \mathbb{R} ^d)}+\sigma  _J(\frac{N}{2})+\sum _{M\le \eta N}(\frac{M}{N})^{s_0}f(M).\notag
	  \end{align}
	  Proceeding as in the case $\frac{4-2b}{d}>1$, we obtain  (\ref{long time 2}).
\end{proof}
The following estimate is an immediately consequence of  Proposition  \ref{T:long time SZ}, and will be used in Section \ref{S:6} to control the Morawetz action.  
\begin{cor}
		Under the hypotheses of   Proposition   \ref{T:long time SZ},   for any   $0<L\lesssim 1$, any admissible pair  $(q,r)$ with  $q>2$ and  $\frac{1}{q}<s\le1$,
	\begin{equation}
		\||\nabla |^sP_{\le LK}u\|_{L^q_tL_x^{r,2}(J\times \mathbb{R} ^d)}\lesssim _s  L^{s-\frac{1}{q}}K^s.\label{6105}
	\end{equation}
	\end{cor}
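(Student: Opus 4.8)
The plan is to decompose $P_{\le LK}u$ into Littlewood--Paley pieces, estimate each piece by interpolating the admissible Strichartz norm against the double endpoint norm $L^2_tL^{\frac{2d}{d-2},2}_x$, feed in the long time Strichartz bound \eqref{long time 1}, and sum. Concretely, I would first write $|\nabla|^sP_{\le LK}u=\sum_{N\le LK}|\nabla|^sP_Nu$ over dyadic $N$, and note that since on the Fourier support of $P_N$ one has $|\xi|\sim N$, standard Littlewood--Paley theory in Lorentz spaces (the symbol of $|\nabla|^sP_N$ is $N^s$ times a multiplier with $L^1$-bounded kernel uniformly in $N$) gives
\[
\||\nabla|^sP_{\le LK}u\|_{L^q_tL^{r,2}_x(J\times\mathbb{R}^d)}\lesssim\sum_{N\le LK}N^s\,\|P_Nu\|_{L^q_tL^{r,2}_x(J\times\mathbb{R}^d)},
\]
so the task reduces to bounding $\|P_Nu\|_{L^q_tL^{r,2}_x}$.

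Next I would interpolate. For an admissible pair $(q,r)$ with $q>2$, setting $\theta:=2/q\in(0,1)$ one checks $\tfrac1r=\tfrac{1-\theta}{2}+\tfrac{\theta(d-2)}{2d}$, so the pointwise (in $t$) Lorentz inequality $\|g\|_{L^{r,2}_x}\lesssim\|g\|_{L^2_x}^{1-\theta}\|g\|_{L^{\frac{2d}{d-2},2}_x}^{\theta}$ (applied with $g=P_Nu(t)$) combined with Hölder in time yields
\[
\|P_Nu\|_{L^q_tL^{r,2}_x}\lesssim\|P_Nu\|_{L^\infty_tL^2_x}^{1-\theta}\|P_Nu\|_{L^2_tL^{\frac{2d}{d-2},2}_x}^{\theta}.
\]
By mass conservation $\|P_Nu\|_{L^\infty_tL^2_x}\lesssim M(u)^{1/2}\lesssim1$; and since $P_N=P_NP_{>N/2}$ with $P_N$ bounded on $L^2_tL^{\frac{2d}{d-2},2}_x$, estimate \eqref{long time 1} (with $N$ replaced by $N/2$) together with the crude bound $\sigma_J(N/4)\le M(u)^{1/2}\sum_{m\in\mathbb{Z}}2^{-|m|/3}\lesssim1$ gives
\[
\|P_Nu\|_{L^q_tL^{r,2}_x}\lesssim\Big(\big(\tfrac KN\big)^{1/2}+1\Big)^{\theta}\lesssim\big(\tfrac KN\big)^{1/q}+1.
\]

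Finally I would sum over dyadic $N\le LK$. Because $s>\tfrac1q$ and $s>0$, both geometric-type sums are dominated by their largest terms, $\sum_{N\le LK}N^{s-1/q}\lesssim_s(LK)^{s-1/q}$ and $\sum_{N\le LK}N^{s}\lesssim_s(LK)^{s}$, whence
\[
\||\nabla|^sP_{\le LK}u\|_{L^q_tL^{r,2}_x}\lesssim_s K^{1/q}(LK)^{s-1/q}+(LK)^s=L^{s-1/q}K^s+L^sK^s\lesssim_s L^{s-1/q}K^s,
\]
the last step using $L\lesssim1$ and $s\ge s-\tfrac1q$. The argument is essentially bookkeeping once \eqref{long time 1} is available; the only point requiring a little care is the pointwise Lorentz interpolation inequality above, which, however, is immediate from Hölder's inequality applied to the integral defining the $L^{r,2}$ norm after writing $s^{1/r}g^*(s)=(s^{1/2}g^*(s))^{1-\theta}(s^{(d-2)/(2d)}g^*(s))^{\theta}$.
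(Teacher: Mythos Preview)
Your proof is correct and follows essentially the same approach as the paper's: Littlewood--Paley decomposition, interpolation between the $(\infty,2)$ and $(2,\tfrac{2d}{d-2})$ admissible endpoints to reach $(q,r)$, application of the long time Strichartz estimate \eqref{long time 1} together with the trivial bound $\sigma_J\lesssim1$, and a dyadic sum using $s>\tfrac1q$. The paper compresses the interpolation step into a single word, whereas you spell it out (including the Lorentz version), which is fine; one tiny cosmetic point is that with the paper's conventions $P_N=P_NP_{>N/4}$ rather than $P_NP_{>N/2}$, but this does not affect the bound.
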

	\begin{proof}
	By long time Strichartz estimate (\ref{long time 1}) and interpolation
	\begin{align}
		&\||\nabla |^sP_{\le LK}u\|_{L^q_tL_x^{r,2}(J\times \mathbb{R} ^d)}\notag\\
		&\lesssim _s\sum _{N\le LK}N^s  \|P_NP_{\le LK}u\| _{L^q_tL_x^{r,2}(J\times \mathbb{R} ^d)}\lesssim _s\sum _{N\le LK}N^s  \|P_Nu\|_{L^q_tL_x^{r,2}(J\times \mathbb{R} ^d)}\notag\\
		&\lesssim _s\sum_{N\le LK} N^s \left((\frac{K}{N})^{\frac{1}{q}}+1\right)\lesssim L^{s-\frac{1}{q}}K^s.\notag
	\end{align}
\end{proof}

\section{Rapid cascade}\label{S:5}
In this section we exclude the first of two blow-up solutions, the rapid frequency cascade.
\begin{thm}\label{T:cascade}
	There does not exist  an almost periodic  solution to (\ref{nls}) when 
	\begin{equation}
		\int_0^\infty  N(t)^3dt<\infty. \label{6161}
	\end{equation} 
\end{thm}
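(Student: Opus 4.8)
The plan is to show that such a solution must in fact enjoy additional regularity, lie in $L^\infty_tH^1_x([0,\infty)\times\mathbb{R}^d)$ with vanishing energy, and therefore be identically zero, contradicting that it blows up forward and backward in time. Throughout, $u$ denotes the minimal-mass almost periodic solution furnished by Theorem~\ref{T:reduction} (so $M(u)=M_c$, with $M_c<M(Q)$ in the focusing case), normalized as in Section~\ref{S:3} so that $N(0)=1$ and $N(t)\le1$ for $t>0$. The first observation is that the hypothesis \eqref{6161} has two qualitative consequences. Since $|N'(t)|\lesssim N(t)^3$ by \eqref{E:N}, $\int_0^\infty N(t)^3\,dt<\infty$ forces $\liminf_{t\to+\infty}N(t)=0$ (otherwise $N\gtrsim1$ near $+\infty$ and the integral diverges). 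Together with the frequency compactness \eqref{E:compact2} this gives $\inf_{\tau\ge t}\|P_{>M}u(\tau)\|_{L^2}=0$ for every dyadic $M$ and every $t\ge0$, so the frequency envelope obeys $\sigma_{[t,T]}(M)\to0$ as $T\to\infty$; moreover there is a sequence $T_k\to+\infty$ along which $\|P_{\ge N}u(T_k)\|_{L^2}\to0$ for each fixed dyadic $N$.

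The regularity bootstrap is the heart of the proof. Writing Duhamel's formula for $u$ with base point $T$, projecting onto frequencies $\ge N$, applying the dual Strichartz estimate of Proposition~\ref{P:SZ} for the admissible pairs $(\infty,2)$ and $(2,\tfrac{2d}{d-2})$, and invoking the long time Strichartz bound \eqref{5181} on $J=[t,T]$ (with $K=\int_t^TN(s)^3\,ds\le\int_0^\infty N(s)^3\,ds<\infty$), I would send $T=T_k\to+\infty$ and use the previous paragraph to annihilate both the free term $\|P_{\ge N}u(T_k)\|_{L^2}$ and the envelope $\sigma_{[t,T_k]}(N/2)$, obtaining, for all $t\ge0$, all dyadic $N$, and all $\tfrac12\le s<\min\{\tfrac{d-2b}{2}+1,\tfrac{4-2b}{d}+1\}$,
\[
\|P_{\ge N}u(t)\|_{L^2_x}\lesssim_s N^{-s}\,\|u\|_{L^\infty_t\dot H^{s-\frac12}_x([0,\infty)\times\mathbb{R}^d)}.
\]
Summing this Littlewood--Paley bound against $N^{2\sigma}$ shows that $u\in L^\infty_t\dot H^\sigma_x([0,\infty)\times\mathbb{R}^d)$ for every $0<\sigma<s$ as soon as $\|u\|_{L^\infty_t\dot H^{s-\frac12}_x}<\infty$. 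Starting from $u\in L^\infty_tL^2_x$ (mass conservation) and iterating this gain of essentially $\tfrac12$ a bounded number of times --- the iteration is nontrivial and stops because $\min\{\tfrac{d-2b}{2}+1,\tfrac{4-2b}{d}+1\}>1$ under the standing assumptions $0<b<\min\{2,\tfrac d2\}$ --- yields $u\in L^\infty_t\dot H^{s'}_x([0,\infty)\times\mathbb{R}^d)$ for some fixed $s'\in(1,\min\{\tfrac{d-2b}{2}+1,\tfrac{4-2b}{d}+1\})$; in particular $u(t)\in H^1_x$ uniformly for $t\ge0$.

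It remains to exploit the energy. Because $u(0)\in H^1_x$, persistence of regularity from the $H^1$ well-posedness theory (see the discussion following \eqref{E:energy}) identifies $u|_{[0,\infty)}$ with the $H^1$ solution, so the energy $E(u)$ defined in \eqref{E:energy} is conserved on $[0,\infty)$. Picking $t_n\to+\infty$ with $N(t_n)\to0$ and splitting $u(t_n)=P_{\le C(\eta)N(t_n)}u(t_n)+P_{>C(\eta)N(t_n)}u(t_n)$, Bernstein controls the low part, $\|\nabla P_{\le C(\eta)N(t_n)}u(t_n)\|_{L^2}\lesssim N(t_n)\|u\|_{L^\infty_tL^2_x}\to0$, while \eqref{E:compact2} together with interpolation against $u\in L^\infty_t\dot H^{s'}_x$ (recall $s'>1$) controls the high part, $\|\nabla P_{>C(\eta)N(t_n)}u(t_n)\|_{L^2}\lesssim\eta^{(1-1/s')/2}\|u\|_{L^\infty_t\dot H^{s'}_x}^{1/s'}$; letting $n\to\infty$ then $\eta\to0$ gives $\|\nabla u(t_n)\|_{L^2}\to0$. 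The sharp Gagliardo--Nirenberg inequality \eqref{E:GN1} then forces $\int|x|^{-b}|u(t_n)|^{\frac{4-2b}{d}+2}\,dx\to0$ as well, so $E(u(t_n))\to0$ and hence $E(u)=0$. But $E(u)\ge\tfrac12\|\nabla u(t)\|_{L^2}^2$ in the defocusing case, and $E(u)\ge\tfrac12\big(1-(M(u)/M(Q))^{\frac{4-2b}{d}}\big)\|\nabla u(t)\|_{L^2}^2$ with a strictly positive constant in the focusing case (again by \eqref{E:GN1}, using $M(u)=M_c<M(Q)$); therefore $\nabla u(t)\equiv0$, so $u(t)\equiv0$ for $t\ge0$ (as $u(t)\in L^2_x$), and then $u\equiv0$ by uniqueness --- contradicting that $u$ blows up.

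The step I expect to be the main obstacle is the bootstrap: one must arrange the Duhamel/Strichartz/long time Strichartz loop so that the frequency-envelope contributions genuinely vanish (this is precisely where $\liminf_{t\to+\infty}N(t)=0$ is used) and keep careful track of the admissible exponent window so that finitely many iterations push the regularity strictly past $1$. All the serious harmonic analysis is already packaged in Proposition~\ref{T:long time SZ} and estimate \eqref{5181}, so once those are granted the remaining steps are routine.
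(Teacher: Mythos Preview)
Your proposal is correct and follows essentially the same strategy as the paper: a long-time-Strichartz bootstrap pushing $u$ into $L^\infty_t\dot H^{s'}_x$ for some $s'>1$, followed by energy conservation along a sequence $t_n$ with $N(t_n)\to 0$ to force $E(u)=0$ and hence $u\equiv 0$. The only minor technical variation is that you extract $\|P_{>N}u(t)\|_{L^2}\lesssim N^{-s}\|u\|_{L^\infty_t\dot H^{s-1/2}_x}$ via Duhamel from base time $T_k\to\infty$ together with dual Strichartz and \eqref{5181}, whereas the paper uses the equivalent pairing $\|P_{>N}u(t_0)\|_{L^2}^2\lesssim\int_{t_0}^\infty|\langle P_{>N}F(u),P_{>N}u\rangle|\,dt$ bounded by the product of \eqref{5181} and \eqref{long time 2}; your route is marginally more direct since it avoids invoking \eqref{long time 2}.
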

The key to proving Theorem \ref{T:cascade} lies in using   Proposition  \ref{T:long time SZ} and (\ref{5181}) to demonstrate that in the case (\ref{6161}) the  almost periodic solution has additional regularity.
\begin{lem}\label{L:gain regularity}
	Suppose  $u$ is an almost periodic solution to (\ref{nls}), and  $\int_0^\infty N(t)^3dt=K<\infty $. Then for  $0\le s<\min \left\{ \frac{d-2b}{2}+1,\frac{4-2b}{d}+1 \right\}$, 
	\begin{equation}
		\|u(t,x)\|_{L^\infty _t\dot H^s_x((0,\infty )\times \mathbb{R} ^d)}\lesssim K^s.\notag 
	\end{equation}   
\end{lem}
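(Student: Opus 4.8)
The plan is to run a frequency-truncated bootstrap argument built on the long time Strichartz estimate of Proposition~\ref{T:long time SZ} and the nonlinear estimate~(\ref{5181}), in the spirit of \cite{Dodson1,KVZ}. Write $s_{\max}:=\min\{\frac{d-2b}{2}+1,\frac{4-2b}{d}+1\}$. The case $s=0$ is conservation of mass, and for $0<s<\frac12$ one interpolates with the $s=\frac12$ bound, so the heart of the matter is to prove the claim for $\frac12\le s<s_{\max}$, which I would do by induction that raises the regularity in steps of size strictly less than $\frac12$.

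The first step is to note that $K=\int_0^\infty N(t)^3\,dt<\infty$ together with $|\frac{d}{dt}N(t)|\lesssim N(t)^3$ from~(\ref{E:N}) forces $N(t)\to 0$ as $t\to\infty$: otherwise $N$ would remain $\gtrsim\delta$ on a time interval of length $\gtrsim\delta^{-2}$ around each of infinitely many well-separated times, making $\int_0^\infty N^3$ diverge. Feeding $N(t)\to 0$ into the frequency-space compactness~(\ref{E:compact2}) gives $\inf_{t\in(0,\infty)}\|P_{>M}u(t)\|_{L^2}=0$ for every $M>0$, hence $\sigma_{(0,\infty)}(N)\equiv 0$. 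Since the constants in Proposition~\ref{T:long time SZ} and~(\ref{5181}) are independent of $|J|$, a monotone-limit argument over compact $J\nearrow(0,\infty)$ — in which the frequency-envelope contributions vanish in the limit by dominated convergence — upgrades~(\ref{5181}) to
\[
\|P_{>N}F(u)\|_{L^2_tL_x^{\frac{2d}{d+2},2}((0,\infty)\times\mathbb{R}^d)}\lesssim_s \frac{K^{1/2}}{N^{s}}\,\|u\|_{L^\infty_t\dot H^{s-\frac12}_x((0,\infty)\times\mathbb{R}^d)},\qquad \tfrac12\le s<s_{\max}.
\]

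Next, since $u$ blows up forward in time (Theorem~\ref{T:reduction}) it does not scatter forward, so the Duhamel formula $u(t)=i\int_t^\infty e^{i(t-\tau)\Delta}F(u(\tau))\,d\tau$ holds in the weak $L^2$ sense; projecting to frequency $N$ and applying the inhomogeneous Strichartz estimate of Proposition~\ref{P:SZ} with exponents $(\infty,2)$ and $(2,\frac{2d}{d-2})$ gives, for every dyadic $N$ and $\frac12\le s<s_{\max}$,
\[
\|P_{N}u\|_{L^\infty_tL^2_x}\lesssim \|P_{N}F(u)\|_{L^2_tL_x^{\frac{2d}{d+2},2}}\lesssim_{s}\frac{K^{1/2}}{N^{s}}\|u\|_{L^\infty_t\dot H^{s-1/2}_x}.
\]
Now fix $0<s_1<s_{\max}$, choose $s_2$ with $\max(s_1,\tfrac12)<s_2<s_{\max}$, and split $\|u\|_{L^\infty_t\dot H^{s_1}_x}^2\lesssim \sum_{N\le K}N^{2s_1}\|P_Nu\|_{L^\infty_tL^2}^2 + \sum_{N>K}N^{2s_1}\|P_Nu\|_{L^\infty_tL^2}^2$. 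By Bernstein and conservation of mass the first sum is $\lesssim K^{2s_1}$; in the second sum I would use the displayed bound with $s=s_2$ together with the geometric series $\sum_{N>K}N^{2(s_1-s_2)}\lesssim_{s_1,s_2}K^{2(s_1-s_2)}$ — this is where $s_1<s_2$ is essential — to obtain $\lesssim_{s_1,s_2}K^{2s_1-2s_2+1}\|u\|_{L^\infty_t\dot H^{s_2-1/2}_x}^2$. Since $s_2-\tfrac12<s_1$, the inductive hypothesis $\|u\|_{L^\infty_t\dot H^{s_2-1/2}_x}\lesssim K^{s_2-1/2}$ applies and collapses all powers of $K$ to $K^{2s_1}$, closing the induction; one reaches every $s<s_{\max}$ after finitely many steps, starting from the trivial case $s=0$ with $\|u\|_{L^2}\lesssim 1$.

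The step I expect to demand the most care is the passage to the infinite interval: making the envelopes $\sigma_J$ genuinely disappear requires combining $N(t)\to 0$, the compactness~(\ref{E:compact2}), and the $|J|$-independence of every constant, and on a finite truncation $[0,T]$ the Duhamel representation carries an extra term $e^{i(t-T)\Delta}P_{N}u(T)$ whose $L^2$ norm must be shown to tend to $0$ as $T\to\infty$ (again from $N(T)\to 0$ and~(\ref{E:compact2})). Equally delicate is the bookkeeping that produces the \emph{sharp} power $K^{s}$ rather than a cruder one: this is precisely why one truncates at frequency $\sim K$ and works with a strictly larger auxiliary regularity $s_2$. The remaining ingredients — Bernstein, the geometric summations, and the inductive scheme — are routine.
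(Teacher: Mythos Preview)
Your proof is correct and follows essentially the same route as the paper's: show $N(t)\to 0$ so the envelopes $\sigma_J$ vanish on $(0,\infty)$, convert~(\ref{5181}) into an $L^\infty_tL^2_x$ bound on high-frequency pieces, then iterate in regularity steps of size $<\tfrac12$ with the low/high split at frequency $K$. The one minor variant is that you pass from $\|P_{>N}F(u)\|$ to $\|P_{N}u\|_{L^\infty_tL^2_x}$ via the forward Duhamel formula plus Strichartz (using only~(\ref{5181})), whereas the paper uses the pairing identity $\|P_{>N}u(t_0)\|_{L^2}^2=-\int_{t_0}^\infty\partial_t\|P_{>N}u\|_{L^2}^2\,dt$ together with H\"older, which consumes both~(\ref{long time 2}) and~(\ref{5181}); note also that your choice of $s_2$ tacitly requires $s_2<s_1+\tfrac12$ to ensure $s_2-\tfrac12<s_1$.
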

\begin{proof}
		First by   (\ref{6161}) and (\ref{E:N}),   
	\begin{equation} 
		|N(T_1)-N(T_2)|\lesssim  \int _{T_2}^{T_1} |N'(t)|dt\lesssim \int _{T_2}^{T_1}N(t)^3dt\rightarrow0,\quad\text{as}\quad T_2\rightarrow\infty .\notag
	\end{equation} 
		 This together with  $\int_0^\infty N(t)^3dt<\infty $ implies 
	\begin{equation}
		\lim_{t\rightarrow+\infty }N(t)=0.\label{5183}
	\end{equation}
 Further utilizing	  (\ref{E:compact2}) and  (\ref{5183}), we have,   for any  $N>0$ and  any  $\eta>0$, 
	\begin{equation}
	\limsup_{T\rightarrow+\infty } \sigma _{[0,T]}\big(\tfrac{N}{2}\big)	\le \limsup _{T\rightarrow+\infty } \|P_{>N}u(T)\|_{L_x^2}\le \limsup _{t\rightarrow+\infty }  \|P_{>C(\eta) N(t)} u(t)\|_{L_x^2}  \le \eta.\notag
	\end{equation}
	Hence, for any  $N>0$
	\begin{equation}
			\limsup_{T\rightarrow+\infty } \sigma _{[0,T]}\big(\tfrac{N}{2}\big)	= \limsup _{T\rightarrow+\infty } \|P_{>N}u(T)\|_{L_x^2}=0.\notag
	\end{equation}   
Then by (\ref{long time 2}) and (\ref{5181}),   for  $\frac{1}{2}\le s<\min \left\{ \frac{d-2b}{2}+1,\frac{4-2b}{d}+1 \right\}$, 
	\begin{equation}
		 \|P_{>N}u\|_{L^2_tL_x^{\frac{2d}{d-2},2}((0,\infty )\times \mathbb{R} ^d)} + \|P_{>N}F(u)\|_{L^2_tL_x^{\frac{2d}{d+2},2}((0,\infty )\times \mathbb{R} ^d)}\lesssim  \frac{K^{\frac{1}{2}}}{N^{s}} \|u\|_{L^\infty _t \dot H^{s-\frac{1}{2}}_x((0,\infty )\times \mathbb{R} ^d)}. \notag
	\end{equation}
	Therefore, for any     $0\le t_0<\infty $ and  $ \frac{1}{2}\le s< \min \left\{ \frac{d-2b}{2}+1,\frac{4-2b}{d}+1 \right\}$,
\begin{align}
	 \|P_{>N}u(t_0)\|_{L^2_x}^2&\lesssim  |\int _{t_0}^\infty  \langle \frac{d}{dt}P_{>N}u,P_{>N}u\rangle dt|\lesssim  |\int _{t_0}^\infty  \langle P_{>N}F(u),P_{>N}u\rangle dt| \notag\\
	 &  \lesssim \|P_{>N}F(u)\|_{L^2_tL_x^{\frac{2d}{d+2},2}}  \|P_{>N}u\| _{L^2_tL_x^{\frac{2d}{d-2},2}}    \lesssim \frac{K}{N^{2s}} \|u\|^2_{L^\infty _t \dot H_x^{s-\frac{1}{2}}}.\notag 
\end{align}
Hence for any   $\gamma \le s<\min \left\{ \frac{d-2b}{2}+1,\frac{4-2b}{d}+1 \right\}$ with  $0\le \gamma <\frac{1}{2}$,  
	\begin{align}
		 \|P_{>K}u\|_{\dot H^s}&\lesssim  \sum _{M>K}M^s \|P_Mu\|_{L^2_x}\lesssim \sum_{M>K}M^{\gamma -\frac{1}{2}}K^{\frac{1}{2}} \|u\|_{L_t^\infty \dot H_x^{s-\gamma }} 
		 \lesssim   K^\gamma   \|u\|_{L^\infty _t\dot H^{s-\gamma }_x}.\label{5182}   
	\end{align}  
	Using Bernstein's inequality and  (\ref{5182}) to separately control the high and low frequencies, we get
	\begin{equation}
		 \|u\|_{L^\infty _t\dot H^s_x((0,\infty )\times \mathbb{R} ^d)}\lesssim _{s,\gamma }K^s+K^\gamma  \|u\|_{L^\infty _t\dot H^{s-\gamma }_x((0,\infty )\times \mathbb{R} ^d)}. \label{53w2}
	\end{equation}
	Iterating (\ref{53w2}) starting with  $s=\gamma $ and  using the  conservation of mass, we  obtain the desired estimate in   Lemma \ref{L:gain regularity}.   
\end{proof}
Now we utilize the additional regularity in Lema \ref{L:gain regularity} to establish Theorem \ref{T:cascade}.    
\begin{proof}[\textbf{Proof of Theorem \ref{T:cascade}.}]
By Lemma \ref{L:gain regularity}, for   $t>0,1<s< \min \left\{ \frac{d-2b}{2}+1,\frac{4-2b}{d}+1 \right\}$, 
	\begin{align}
		 \|u(t)\|_{\dot H^1}&\lesssim   \|u_{>C(\eta)N(t)}\|_{\dot H^1}+ \|u_{\le C(\eta)N(t)}\|_{\dot H^1}\notag\\
		 &\lesssim      \|u_{>C(\eta)N(t)}\|_{ L^2} ^{1-\frac{1}{s}} \|u\|_{\dot H^s}^{\frac{1}{s}}+C(\eta)N(t)\notag\\
		 &\lesssim  \eta ^{1-\frac{1}{s}} K+C(\eta)N(t).\notag
	\end{align} 
	Therefore, by (\ref{5183}),   
	\begin{equation}
		\limsup _{t\rightarrow\infty } \|u(t)\|_{\dot H^1}=0,
	\end{equation}
	which together with  the Gagliardo-Nirenberg's inequality (\ref{E:GN1}) implies that   $E(u(t))\rightarrow0$. Hence $u\equiv0$, which is a contradiction.   
\end{proof}

\section{Quasi-soliton}\label{S:6}
In this section we defeat the second almost periodic solution, the quasi-soliton. Exclusion of this scenario will complete the proof of Theorem \ref{T:1}.  

Let   $\phi$ be a radial nonegative decreasing function such that  
\begin{equation}
	\phi(x)=1, \ |x|\le 1 \qquad \text{supp } \phi \subset \{x:|x|\le2\}.\label{E:phi}
\end{equation}
Let 
\begin{equation}
	\psi_t (r)=:\frac{1}{r}\int _0^r \phi (\frac{sN(t)}{R})ds.\notag
\end{equation}
Then 
\begin{equation}
	\psi_t(r)\ge\phi (\frac{rN(t)}{R})\quad\text{and}\quad \frac{d}{dt}\left(N(t)\psi_t(r)\right)=N'(t)\phi (\frac{rN(t)}{R}).\label{5311}
\end{equation}
Moreover, since  $\psi_t(r)=1$ when  $r\le \frac{R}{N(t)}$, we have 
\begin{equation}
	r\psi_t'(r)=\phi (\frac{rN(t)}{R})-\psi_t(r)=0\quad\text{when}\quad r\le \frac{R}{N(t)}.\label{5184}
\end{equation}  
Let  $I=P_{\le K}$ and  $F(u)=\mu |x|^{-b}|u|^{\frac{4-2b}{d}}u,\mu=\pm1$.     Note that 
\begin{equation}
	i\partial_{t}Iu+\Delta Iu=F(Iu)+ IF(u)-F(Iu). \label{614w1}
\end{equation}
Define the Morawetz action 
\begin{equation}
	M(t)=:2\int \psi_t(|x|)x_jN(t)\text{Im}[\overline{Iu}(t,x)\partial_{j}Iu(t,x)]dx,\notag
\end{equation} 
where we sum over repeated indices by convention.
By direct computation, we have 
\begin{align}
	&\ \quad	\frac{d}{dt}M(t)\notag\\
		&= 4N(t) \int \psi_t (|x|)|\nabla Iu|^2dx +4N(t)\int \psi_t '(|x|)\frac{x_j x_k}{|x|}\text{Re} \overline{\partial_{j}Iu}\partial_{k}Iudx\label{531}\\
	&\quad +\frac{4d\mu}{d+2-b}\int \psi_t (|x|)N(t)|x|^{-b}|Iu|^{\frac{4-2b}{d} +2}dx\label{532} \\
	&\quad   +\frac{(4-2b)\mu }{d+2-b}\int N(t)\psi_t'(|x|)|x|^{1-b}|Iu|^{\frac{4-2b}{d} +2}dx  \label{5321}  \\
	&\quad  +2N'(t)\int  \phi (\frac{rN(t)}{R})x_j\text{Im}[\overline{Iu}(t,x)\partial_{j}Iu(t,x)]dx    \label{534}\\
	&\quad  +N(t)\int \psi_t (|x|)x_j\partial_{j}\partial_{k}^2|Iu(t,x)|^2dx\label{614w2}\\
	&\quad  +2N(t)\int \psi_t (|x|)x_j\{ IF(u)-F(Iu),Iu\}_p^j dx,\label{693}
\end{align}
where the momentum bracket  $\{f,g\}=:\text{Re}\{f\overline{\nabla g}-\overline{\nabla f}g\}$.    

By (\ref{5311}) and (\ref{5184}), 
\begin{eqnarray}
	(\ref{531})&=& 4N(t)\int \psi_t (|x|)|\nabla Iu|^2dx 
	+4N(t)\int \left [\phi (\frac{rN(t)}{R})-\psi_t(r)\right] |\frac{x}{|x|} \cdot \nabla Iu |^2dx\notag\\
	&=&4N(t)\int \phi (\frac{rN(t)}{R}) |\nabla Iu|^2dx 
	+4N(t)\int (\psi_t (r)-\phi (\frac{rN(t)}{R})) |\not\negmedspace\nabla Iu|dx \notag\\
	&\ge& 4N(t)\int \phi (\frac{rN(t)}{R})|\nabla Iu|^2dx,\notag
\end{eqnarray}
where the angular derivative    $\not\negmedspace\nabla u=:\nabla u-(\frac{x}{|x|}\cdot \nabla u)\nabla u$.  

Furthermore, by $\phi-\phi^2\ge0$ and (\ref{5184}),   
\begin{align}
	&(\ref{531})+(\ref{532})+(\ref{5321})\notag\\
	&\ge 4N(t)\left[\int \phi^2 (\frac{rN(t)}{R})|\nabla Iu|^2dx+\frac{d\mu}{d+2-b}\int \phi (\frac{rN(t)}{R})|x|^{-b}|Iu|^{\frac{4-2b}{d} +2}dx\right]\notag\\
	&\qquad+O\left(\int _{|x|>\frac{R}{N(t)}}N(t)|x|^{-b}|Iu|^{\frac{4-2b}{d} +2}dx\right)\notag\\
	&=8N(t)E(\phi (\frac{rN(t)}{R})Iu)++O\left(\int _{|x|>\frac{R}{N(t)}}N(t)|x|^{-b}|Iu|^{\frac{4-2b}{d} +2}dx\right)+O(\frac{N(t)^3}{R^2}).\notag
\end{align}
In the defocusing case, the kinetic energy is directly controlled by the energy; while in the focusing case with  $ \|u_0\|_{L^2}< \|Q\|_{L^2} $,  this follows from  the  Gagliardo-Nirenberg's inequality (\ref{E:GN1}).  In summary, we can find  $\eta>0$ such that
\begin{align}
		(\ref{531})+(\ref{532}) +(\ref{5321})
		\ge&2\eta N(t)\int \phi^2 (\frac{rN(t)}{R})|\nabla Iu|^{2}dx \notag\\
	&+O\left(\int _{|x|>\frac{R}{N(t)}} N(t)|x|^{-b}|Iu|^{\frac{4-2b}{d} +2}dx\right)+O(\frac{N(t)^3}{R^2}).\label{525w2}
\end{align}
Next, by Young's inequality and integrating by parts, 
\begin{align}
	|(\ref{534})+(\ref{614w2})|&\lesssim  |N'(t)|\int \phi(\frac{rN(t)}{R}) \frac{R}{N(t)}|Iu||\nabla Iu|dx+O\left(\frac{N(t)^3}{R^2}\right)\notag\\
&\le \eta 	N(t)\int \phi^2(\frac{rN(t)}{R})|\nabla Iu|^2dx+C(\eta )R^2\frac{(N'(t))^2}{N(t)^3}+O\left(\frac{N(t)^3}{R^2}\right).\notag
\end{align}

Combining the above estimates with  (\ref{E:N}), we get
 \begin{align}
 		\frac{d}{dt}M(t)\ge &\eta N(t)\int \phi^2 (\frac{rN(t)}{R})|\nabla Iu|^2dx+(\ref{693})\notag\\
 	&+O\left(\int _{|x|>\frac{R}{N(t)}} N(t)|x|^{-b}|Iu|^{\frac{4-2b}{d} +2}dx\right)+O(\frac{N(t)^3}{R^2})+C(\eta)R^2|N'(t)|.\notag
 \end{align}
Integrating the above inequality and using Lemma \ref{L:693}, we obtain, 
\begin{align}
	&\int_0^T\frac{d}{dt}M(t)dt+ o_R(1)\cdot K+o_K(1)\cdot RK+CR^2\int_0^T |N'(t)|dt+\int_0^T|(\ref{693})|dt\gtrsim K,\notag
\end{align}
for  $R,K>R_0$.  
Assuming that we have proved the following Claims: 
\begin{claim}\label{C:691}
	$|M(t)|=o(K)\cdot R$, where  $\frac{o(K)}{K}\rightarrow0$ as  $K\rightarrow\infty $. 
\end{claim}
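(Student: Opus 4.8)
The plan is to bound $M(t)$ by a simple Cauchy–Schwarz argument, exploiting that the weight in $M(t)$ localizes the $x$-variable to the region $|x|\lesssim R/N(t)$ (since $\operatorname{supp}\phi(\tfrac{\cdot N(t)}{R})\subset\{|x|\le 2R/N(t)\}$, and $\psi_t$ differs from $1$ only outside $\{|x|\le R/N(t)\}$ and is bounded by $1$ everywhere). So $|x|\,\psi_t(|x|)\lesssim R/N(t)$ on the support of the integrand, whence
\begin{equation}
|M(t)|\lesssim N(t)\cdot\frac{R}{N(t)}\int_{|x|\lesssim R/N(t)}|\overline{Iu}\,\nabla Iu|\,dx
\lesssim R\,\|Iu(t)\|_{L^2_x}\,\|\mathbf 1_{|x|\lesssim R/N(t)}\nabla Iu(t)\|_{L^2_x}.\notag
\end{equation}
The first factor is controlled by the conserved mass, $\|Iu(t)\|_{L^2_x}\le\|u(t)\|_{L^2_x}\lesssim 1$, so the whole thing is $\lesssim R\,\|\mathbf 1_{|x|\lesssim R/N(t)}\nabla Iu(t)\|_{L^2_x}$. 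It then remains to show the remaining factor is $o(K)$ uniformly in $t\in[0,T]$.

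**Next I would** bound $\|\mathbf 1_{|x|\lesssim R/N(t)}\nabla Iu(t)\|_{L^2_x}$ by interpolating a frequency-localized Bernstein estimate against the almost-periodicity compactness. Split $Iu=P_{\le K}u$ dyadically: $\|\nabla Iu(t)\|_{L^2_x}\le\sum_{N\le K}N\|P_N u(t)\|_{L^2_x}$. For the high pieces $N(t)\lesssim N\le K$ we use $\|P_Nu(t)\|_{L^2_x}$, and for the genuinely high frequencies $N\ge C(\eta)N(t)$ the compactness bound \eqref{E:compact2} forces $\|P_{\ge C(\eta)N(t)}u(t)\|_{L^2_x}\le\eta^{1/2}$; summing the geometric series one gets a contribution $\lesssim K\,\eta^{1/2}$. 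For the low pieces $N\le C(\eta)N(t)$, Bernstein gives $N\|P_Nu(t)\|_{L^2_x}\lesssim N\lesssim C(\eta)N(t)\lesssim C(\eta)$, which after summing is $O_\eta(N(t))=O_\eta(1)$ since $N(t)\le 1$. Hence
\begin{equation}
\|\nabla Iu(t)\|_{L^2_x}\lesssim K\,\eta^{1/2}+C(\eta),\notag
\end{equation}
and therefore $|M(t)|\lesssim R\big(K\eta^{1/2}+C(\eta)\big)$. Since $\eta>0$ is arbitrary, for each fixed $\eta$ we have $\limsup_{K\to\infty}|M(t)|/(RK)\lesssim\eta^{1/2}$, and letting $\eta\to0$ gives $|M(t)|=o(K)\cdot R$ with the bound uniform in $t\in[0,T]$, as claimed.

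**The only mildly delicate point** is handling the long-time Strichartz regularity gain versus simply using the crude Bernstein/compactness split above — the crude argument already suffices for Claim~\ref{C:691} because we only need $o(K)$, not a sharp power, and the constant $C(\eta)$ is harmless since it does not grow with $K$. (One could alternatively invoke the additional regularity from \eqref{6105} with $L\sim 1$, $s=1$, $q$ slightly above $2$, to bound $\|\nabla P_{\le K}u\|$ directly by $K^{1-1/q}$, which is $o(K)$; either route works, but the compactness split is cleanest and avoids time-integration.) I expect the main obstacle, such as it is, to be bookkeeping the uniformity in $t$: one must make sure the split into low and high frequency pieces and the use of \eqref{E:compact2} are done at the fixed time $t$ with constants depending only on $\eta$ (and $M(u)$), not on $t$ or $K$ — which is immediate from the statement of almost periodicity and the conservation of mass.
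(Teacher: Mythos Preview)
Your argument is correct and matches the paper's: bound $|x|\psi_t(|x|)N(t)\lesssim R$, apply Cauchy--Schwarz, then control $\|\nabla Iu(t)\|_{L^2_x}$ by a frequency split using the compactness \eqref{E:compact2} (the paper splits at the fixed threshold $K^{9/10}$ rather than your $\eta$-dependent $C(\eta)N(t)$, a purely cosmetic difference). One small correction: $\psi_t$ is \emph{not} compactly supported, so the indicator $\mathbf 1_{|x|\lesssim R/N(t)}$ in your first display is unjustified---but since $r\psi_t(r)=\int_0^r\phi(sN(t)/R)\,ds\le 2R/N(t)$ for \emph{all} $r$, and since you then bound the full $\|\nabla Iu\|_{L^2_x}$ anyway, the argument goes through unchanged once the indicator is dropped.
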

\begin{claim}
	\label{C:692}
	$\int_0^T|(\ref{693})|dt=o_K(1)\cdot RK$, where  $o_K(1)\rightarrow0$    as  $K\rightarrow\infty $.  
\end{claim}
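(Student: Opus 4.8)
The plan is to bound $(\ref{693})$ pointwise in terms of the frequency–truncation error $G=:IF(u)-F(Iu)=P_{\le K}F(u)-F(P_{\le K}u)$ and then to show this error is negligible at the scale $RK$, using the long time Strichartz estimate of Proposition~\ref{T:long time SZ} together with the fractional estimates in Lorentz spaces. First I record the weight bounds
\[
\big|N(t)\,\psi_t(|x|)x\big|\lesssim R,\qquad \big|N(t)\,\nabla_x\big(\psi_t(|x|)x\big)\big|\lesssim N(t)\lesssim 1,
\]
which follow from $0\le\psi_t\le1$ and the identity $r\psi_t'(r)=\phi(\tfrac{rN(t)}{R})-\psi_t(r)$ used in \eqref{5311}--\eqref{5184}. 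Integrating by parts in $x$ so as to move the derivative in the momentum bracket $\{G,Iu\}_p^j$ off $G$ and onto $Iu$, and then applying H\"older in time followed by the dual Strichartz estimate, gives
\[
\int_0^T\big|(\ref{693})\big|\,dt\ \lesssim\ R\,\|G\|_{L^2_tL_x^{\frac{2d}{d+2},2}([0,T]\times\mathbb R^d)}\Big(\|\nabla Iu\|_{L^2_tL_x^{\frac{2d}{d-2},2}}+\|Iu\|_{L^2_tL_x^{\frac{2d}{d-2},2}}\Big).
\]
By the corollary \eqref{6105} to Proposition~\ref{T:long time SZ} (with $s=1$, $L=1$) and Lemma~\ref{L:6132}, the parenthesised factor is $O_u(K)$, so the claim reduces to the \emph{frequency error estimate} $\|IF(u)-F(Iu)\|_{L^2_tL_x^{\frac{2d}{d+2},2}([0,T]\times\mathbb R^d)}=o_K(1)$.

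To prove this I would split $IF(u)-F(Iu)=-P_{>K}F(u_{\le K})+P_{\le K}\big(F(u)-F(u_{\le K})\big)$ and estimate each piece on an interval $J_k$ of local constancy, where $\|u\|_{L^\gamma_tL_x^{\rho,2}(J_k)}=1$ and, since $K\gg C(\eta)N(J_k)\ge C(\eta)N(t)$ on $J_k$, the compactness bounds \eqref{E:compact1}--\eqref{E:compact2} give $\|u_{>K}\|_{L^\gamma_tL_x^{\rho,2}(J_k)}\ll_\eta 1$ and $\||\nabla|^{s}u_{\le K}\|_{L^\gamma_tL_x^{\rho,2}(J_k)}\lesssim\big(C(\eta)N(J_k)\big)^{s}$ for the relevant $s$. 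Fixing $\tfrac12<s<\min\{\tfrac{d-2b}{2}+1,\tfrac{4-2b}{d}+1\}$, Bernstein together with the Lorentz–space product and chain rules (Lemmas~\ref{L:leibnitz}--\ref{L:6141}, \ref{L:Visan}) and Lemma~\ref{L:nonlinear estimate} bound the first piece by $K^{-s}\||\nabla|^{s}F(u_{\le K})\|_{L^2_tL_x^{\frac{2d}{d+2},2}(J_k)}\lesssim\big(C(\eta)N(J_k)/K\big)^{s}$, while the pointwise bound $|F(u)-F(u_{\le K})|\lesssim|x|^{-b}(|u|^{\frac{4-2b}{d}}+|u_{\le K}|^{\frac{4-2b}{d}})|u_{>K}|$, Lemma~\ref{L:nonlinear estimate} and the long time Strichartz estimate \eqref{long time 1} for $u_{>K}$ bound the second piece by a quantity that is $\ll_\eta 1$ on $J_k$. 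Squaring, summing over the $J_k$ and invoking Lemma~\ref{L:6131} ($\sum_kN(J_k)\approx\int_0^TN(t)^3\,dt=K$, whence $\sum_kN(J_k)^{2s}\lesssim K$ for $s\ge\tfrac12$) yields, for each fixed $\eta$, a bound $\lesssim C(\eta)^{s}K^{\frac12-s}+\varepsilon(\eta)$; letting $K\to\infty$ and then $\eta\to0$ gives the required $o_K(1)$. The lower–order contribution coming from $\nabla_x(\psi_t(|x|)x)$ is controlled by the same scheme after pairing $G$ with $Iu\in L^2_tL_x^{\frac{2d}{d-2},2}$.

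The main obstacle is that $K=\int_0^TN(t)^3\,dt$ and the length of $[0,T]$ grow together, so $F(u)$ itself is \emph{not} small on $[0,T]$, and $\nabla Iu$ sits at $\dot H^1$–regularity while $u$ has only $L^2$–regularity; both must be controlled through the long time Strichartz estimates, and the genuine smallness of the truncation error can be extracted only interval–by–interval on the intervals of local constancy — a negative power of $K$ from the smoothing of $P_{>K}F(u_{\le K})$ and an $\eta$–gain from \eqref{E:compact2} on the high–frequency excess $u_{>K}$ — before being summed against the budget $\sum_kN(J_k)\approx K$. Making this bookkeeping close, in particular handling the $\varepsilon(\eta)$–term uniformly in the (possibly very large) number of intervals $J_k$ and keeping track of the interplay with the regularized frequency scale function $N_m(t)$, is exactly the content of the error estimate proved in detail in the Appendix (Lemma~\ref{L:error}).
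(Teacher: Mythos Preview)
Your treatment of the principal term --- the pairing of $G=IF(u)-F(Iu)$ against $\nabla Iu$ with weight $N(t)\psi_t(|x|)x = O(R)$ --- is correct and is exactly what the paper does: H\"older gives $R\,\|G\|_{L^2_tL_x^{\frac{2d}{d+2},2}}\|\nabla Iu\|_{L^2_tL_x^{\frac{2d}{d-2},2}}$, and then \eqref{6105} together with Lemma~\ref{L:error} yield $R\cdot K\cdot o_K(1)$.

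The gap is in the lower-order term. After integration by parts, the contribution carrying $Iu$ (no derivative) comes with weight $N(t)\,\partial_j(\psi_t(|x|)x_j)=O(N(t))$, not $O(R)$; more importantly, your claim that $\|Iu\|_{L^2_tL_x^{\frac{2d}{d-2},2}([0,T])}=O_u(K)$ is false. Lemma~\ref{L:6132} only gives $\|Iu\|^2_{L^2_tL_x^{\frac{2d}{d-2},2}}\lesssim 1+\int_0^T N(t)^2\,dt$, and under the standing hypothesis $N(t)\le 1$ one has $\int_0^T N(t)^2\,dt \ge \int_0^T N(t)^3\,dt = K$, but the reverse inequality need not hold. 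For instance $N(t)=t^{-1/3}$ on $[1,T]$ gives $K\sim\log T$ while $\int N^2\sim T^{1/3}=e^{K/3}$. So a global-in-time H\"older on this term produces only $o_K(1)\cdot(\int_0^T N(t)^2\,dt)^{1/2}$, which can be much larger than $RK$.

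The paper avoids this by keeping the $N(t)$ weight on the lower-order term and decomposing over the intervals $J_k$ of local constancy:
\[
\int_0^T N(t)\int|G|\,|Iu|\,dx\,dt\ \lesssim\ \sum_k N(J_k)\,\|G\|_{L^2_tL_x^{\frac{2d}{d+2},2}(J_k)}\,\|Iu\|_{L^2_tL_x^{\frac{2d}{d-2},2}(J_k)}.
\]
On each $J_k$ the Strichartz norm of $Iu$ is $O(1)$ (since $\|u\|_{L^\gamma_tL_x^{\rho,2}(J_k)}=1$ and Strichartz), while $\|G\|_{J_k}\le\|G\|_{[0,T]}=o_K(1)$ by Lemma~\ref{L:error}; then $\sum_k N(J_k)\approx K$ by Lemma~\ref{L:6131}. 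This interval-by-interval pairing, rather than the global one you wrote, is the missing ingredient.
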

Then we have, for  $R,K>R_0$,  
\begin{equation}
	o(K)\cdot R +o_R(1)\cdot K + o_K(1)\cdot RK +CR^2\int_0^T |N'(t)|dt\gtrsim K.\label{525w3}
\end{equation}
\noindent \textbf{Smoothing algorithm:}Using the smoothing algorithm from \cite[Section 6.1]{Dodson4},    we can replace 
  $N(t)$  with a much more slowly varying  $\widetilde{N}(t)$, thereby better controlling the last term on the left side of  (\ref{525w3}). 
  More specifically,  we  can construct  a sequence  $\{N_m(t)\}_{m=1}^\infty $, which satisfies the following properties:\\
  (i)  $N_m(t)\approx _m N(t)$.\\
  (ii)   $|\frac{N_m'(t)}{N_m(t)^3}|\le \frac{N'(t)}{N(t)^3}$.\\
  (iii)  $\int_0^T|N_m'(t)|dt\lesssim  \frac{K}{m}$, for  $K>m$. \\
  For the convenience of readers, we provide the construction  of  $N_m(t)$ in the appendix.

  Therefore, letting 
  \begin{equation}
  		M_m(t)=:2\int \psi_t(|x|)x_jN_m(t)\text{Im}[\overline{Iu}(t,x)\partial_{j}Iu(t,x)]dx,\notag
  \end{equation}
  and using the same argument used to derive (\ref{525w3}), we obtain, 
  \begin{equation}
  		o(K)\cdot R +o_R(1)\cdot K + o_K(1)\cdot RK + R^2\frac{K}{m}\gtrsim  K,\notag
  \end{equation}
  for  $R$ and  $K$  sufficiently large.   Fixing  $R$ and  $m$  sufficiently large, and then letting  $K\rightarrow \infty $, we obtain the contradiction  $u\equiv0$.

It therefore remains to prove Claim \ref{C:691} and Claim \ref{C:692}. 
\begin{proof}[\textbf{Proof of Claim \ref{C:691}:}]
	By Bernstein and (\ref{E:compact2}),
	\begin{align}
		\|\nabla Iu\|_{L^\infty _tL_x^2}&\lesssim\|\nabla P_{\le  K^{\frac{9}{10}}}Iu\|_{L^\infty _tL_x^2}+ \|\nabla P_{>K^{\frac{9}{10}}}Iu\|_{L^\infty _tL_x^2}\notag\\
		&\lesssim K^{\frac{9}{10}} \|u_0\|_{L^2}+K \|P_{>K^{\frac{9}{10}}}u\|_{L^\infty _tL_x^2}=o(K). \label{69w1}
	\end{align}
(\ref{69w1}) combined with   $|\psi_t(r)|\lesssim \frac{R}{rN(t)}$ implies
	\begin{equation}
		|M(t)|\lesssim  R \|Iu\| _{L^\infty _tL_x^2} \|\nabla Iu\|_{L^\infty _tL_x^2}\lesssim R \|\nabla Iu\|_{L^\infty _tL_x^2}=o(K)R.\notag  
	\end{equation}
\end{proof}
\begin{proof}[\textbf{Proof of Claim \ref{C:692}.}]
	Intergrating by parts and using the fact
	\begin{equation}
		\psi_t(|x|)|x|N(t)\lesssim R \quad\text{and}\quad |\partial_{j}(\psi_t(|x|)x_j)|=|\psi_t'(|x|)|x|+d\psi_t(|x|)|\lesssim 1,\notag
	\end{equation}
	we have
	\begin{align}
	\int_0^T	|(\ref{693})|dt\lesssim  &R\int_0^TN(t)dt\int | F(Iu)-IF(u)||\nabla Iu|dx\label{694} \\
		&+\int _0^TN(t)dt\int | F(Iu)-IF(u)||Iu|dxdt.\label{695}
	\end{align}
	By  H\"older,  Lemma \ref{L:error} in the Appendix and (\ref{6105}),
	\begin{equation}
		|(\ref{694})|\lesssim  R \| F(Iu)-IF(u)\|_{L^2_tL_x^{\frac{2d}{d+2},2}}  \|\nabla Iu\|_{L_t^2L_x^{\frac{2d}{d-2},2}}\lesssim RK o_K(1).\notag 
	\end{equation}
	Let  $J_k$ be local constant interval.  By Lemma \ref{L:error}  in the Appendix, Lemma \ref{L:6131} and Lemma \ref{L:6132} 
	\begin{align}
		|(\ref{695})|&\lesssim \sum_kN(J_k)\int_{J_k\times \mathbb{R} ^d}|  F(Iu)-IF(u)| | Iu|dxdt\notag\\
		&\lesssim  \sum _kN(J_k) \| F(Iu)-IF(u)\|_{L^2_tL_x^{\frac{2d}{d+2},2}(J_k\times \mathbb{R} ^d)} \|Iu\|_{L^2_tL_x^{\frac{2d}{d-2},2}(J_k\times \mathbb{R} ^d)}\notag\\
		&\lesssim \sum_kN(J_k)o_K(1)\lesssim \int_0^TN(t)^3dt o_K(1)\lesssim Ko_K(1).\notag  
	\end{align}
	This completes the proof of Claim \ref{C:692}.  
\end{proof}

\appendix

\section{Smoothing algorithm and control of error term} \label{s:app}
In this appendix, we  introduce the smoothing algorithm and use  the long  time Strichartz estimate to control the errors that arise from frequency truncation in Section \ref{S:6}. 
\subsection{Smoothing algorithm}
Divide  $[0,\infty ) $ into consecutive intervals  $J_n=:[a_n,a_{n+1})$ (called \emph{small interval}) such that 
\begin{equation}
	 \|u\|_{L_t^{\gamma }L_x^{\rho,2}([a_n,a_{n+1})\times \mathbb{R} ^d)}=1,\notag 
\end{equation}
where  $(\gamma ,\rho)$ was defined by (\ref{E:gamma}).   
By Lemma \ref{L:Ndj}, there exists  $J_0<\infty $  such that for all  $t\in [a_n,a_{n+1})$, 
\begin{equation}
	\frac{N(a_n)}{J_0}\le N(t)\le J_0N(a_n),\notag
\end{equation}
Possibly after modifying the  $ C(\eta)$ in  (\ref{E:compact1}), (\ref{E:compact2}) by a constant,  we can choose   \( N(a_n) = J_0^{i_n} \),   \( i_n \in \mathbb{Z}_{\leq 0} \), i.e. 
\begin{equation}
	\frac{N(a_n)}{N(a_{n+1})} = 1, \quad J_0, \quad \text{or} \quad J_0^{-1}; \label{6291}
\end{equation}
and  the starting and ending points of the peaks and valleys, as defined below, coincide with the starting or ending points of the  intervals  $[a_n,a_{n+1})$.  
 \begin{defn}[Peaks and valleys]\

 	(1) (\emph{peak}): An interval  $[a,b)$ is called as a peak of length  $n$, if it satisfies:\\
 	(i)  $N(t)\equiv C, \forall t\in [a,b)$, and  $ \|u\|^\gamma _{L^\gamma _tL_x^{\rho,2}([a,b)\times \mathbb{R} ^d) }=n $;\\
 	(ii) If \([a_-, a]\), \([b, b_+]\) are the small intervals adjacent to \([a, b]\),  then \( N(a_-) < N(a) \), \( N(b_+) < N(b) \),  i.e. \( N(a_-) = \frac{N(a)}{J_0} \), \( N(b_+) = \frac{N(b)}{J_0} \). 
 	
 	(2)  (\emph{valley}): An interval  $[a,b)$ is called as a  valley of length  $n$, if it satisfies:\\
 	(i)  $N(t)\equiv C, \forall t\in [a,b)$, and  $ \|u\|^\gamma _{L^\gamma _tL_x^{\rho,2}([a,b)\times \mathbb{R} ^d) }=n $;\\
 	(ii) If \([a_-, a]\), \([b, b_+]\) are the small intervals adjacent to \([a, b]\),  then \( N(a_-) >N(a) \), \( N(b_+) > N(b) \). 
 	
 	(3) In particular,  If \([a_-, a]\) and \([a, a_+]\) are adjacent small intervals, and \( N(a) > N(a_-), N(a_+) \), then we call \(\{a\}\) a peak of length 0. Similarly, if \( N(a_-) > N(a), N(a_+) \), then we call \(\{a\}\) a valley of length zero.
 \end{defn}
 \begin{rem}\label{R:6291}
We label the peaks \( p_k \) and the valleys \( v_k \). Because \( N(0) = 1 \) and \( N(t) \leq 1 \) we start with a peak.  Moreover, by (\ref{6291}),    the peaks and valleys are alternate:  \( p_0, v_0, p_1, v_1, \ldots \).
\end{rem}
\begin{lem}\label{L:6291}
	\begin{equation}
		\int _0^T|N'(t)dt|\le 2\sum _{0<p_k<T}N(p_k)+2. \label{629x1}
	\end{equation}
\end{lem}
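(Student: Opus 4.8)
The plan is to use the alternating peak/valley structure recorded in Remark \ref{R:6291}. Listing the peaks and valleys in chronological order as $p_0,v_0,p_1,v_1,\dots$ (the sequence begins with the peak $p_0$ that contains $0$, since $N(0)=1$ and $N(t)\le 1$), these intervals are by definition exactly the local extrema of the $C^1$, slowly varying function $N$ from \eqref{E:N}: $N$ is constant on each peak and each valley, and between two consecutive ones it has no interior extremum, hence is monotone there — strictly decreasing on the run joining $p_k$ to $v_k$, and strictly increasing on the run joining $v_k$ to $p_{k+1}$.

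First I would split $[0,T]$ at all peaks and valleys meeting $[0,T]$ into finitely many pieces of three types: (a) a sub-interval lying inside a peak or a valley, where $N'\equiv 0$ so $\int|N'|=0$; (b) a completed monotone run of the form $[p_k,v_k]$ or $[v_k,p_{k+1}]$ contained in $[0,T]$; (c) at most one incomplete run containing the endpoint $T$. On a completed run of type (b) the fundamental theorem of calculus turns $\int|N'|$ into the telescoping difference $|N(\text{upper end})-N(\text{lower end})|\le N(p_k)$, where $p_k$ is the peak that is an endpoint of that run. On the incomplete run (c): if it is part of a decreasing run starting at some $p_k$ then $\int_{p_k}^{T}|N'|\le N(p_k)-N(T)\le N(p_k)$ with $p_k<T$; if instead it is part of an increasing run $[v_k,p_{k+1})$ with $p_{k+1}\ge T$, then $\int_{v_k}^{T}|N'|\le N(T)\le 1$, using $N\le 1$.

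Summing these contributions, each peak $p_k$ is charged only by its (at most two) adjacent monotone runs, for a total of at most $2N(p_k)$; a peak contributes only when it lies strictly inside $(0,T)$, the single exception being the initial peak $p_0$, which sits at $0$ and is therefore not counted in $\sum_{0<p_k<T}$ but whose adjacent decreasing run contributes at most $N(p_0)=N(0)=1$. Together with the at most $1$ coming from a possible incomplete increasing run abutting $T$, this gives the additive constant: $\int_0^T |N'(t)|\,dt\le 2\sum_{0<p_k<T}N(p_k)+2$.

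I expect the only delicate point to be the bookkeeping of the two boundary pieces — the initial peak at the origin and the possibly partial run touching $T$ — which is precisely where the additive $2$ is produced; the monotonicity of $N$ on the runs between consecutive extrema, which is what makes $\int|N'|$ on each run collapse to a difference of endpoint values, is immediate from the characterization of the peaks and valleys as the alternating local extrema of $N$, together with \eqref{6291}, which forbids $N$ from skipping levels at the interval endpoints $a_n$.
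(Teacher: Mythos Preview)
Your proposal is correct and follows essentially the same approach as the paper: both arguments exploit the monotonicity of $N$ on the runs between consecutive peaks and valleys so that $\int|N'|$ on each run collapses to an endpoint difference bounded by the adjacent peak value, and both then account for the boundary pieces near $0$ and $T$ to produce the additive $+2$. The paper's version is simply terser about the endpoint bookkeeping (it just says ``combined with the endpoint''), whereas you spell out explicitly that the initial peak $p_0$ at the origin and a possible incomplete increasing run at $T$ each contribute at most $1$.
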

\begin{proof}
 Since  $N(t)$ is monotonic between each peak and valley, 
 \begin{equation*}
 	\begin{aligned}
 		&\int_{v_k}^{p_{k+1}} |N'(t)| \, dt = N(p_{k+1}) - N(v_k) \leq N(p_{k+1}), \\
 		&\int_{p_k}^{v_k} |N'(t)| \, dt = N(p_k) - N(v_k) \leq N(p_k),
 	\end{aligned}
 \end{equation*}
 The above estimates combined with the endpoint, yield  (\ref{629x1}).   
\end{proof}
  Now we describe an iterative algorithm to construct progressively less oscillatory  $N_m(t)$.  
  
  \noindent\textbf{Step 1:}  $N_0(t)=N(t)$.\\
  \noindent\textbf{Step 2:} If  $[a,b]$ is peak for  $N_0(t)$ and  $[a_-,a],[b,b_+]$ are the adjacent small intervals, let  $N_1(t)=N_0(a_-)=\frac{N(a)}{J_0}$ for  $t\in [a_-,b_+]$.  \\
  $\cdots \cdots\cdots\cdots\cdots\cdots $\\
  \noindent\textbf{Step m+2:} If  $[a,b]$ is peak for  $N_m(t)$ and  $[a_-,a],[b,b_+]$ are the adjacent small intervals, let  $N_{m+1}(t)=N_m(a_-)=\frac{N_m(a)}{J_0}$ for  $t\in [a_-,b_+]$.  
  \begin{defn}
  	If \([a_m, b_m)\) and \([a_{m+1}, b_{m+1})\) are peaks of \( N_m(t) \) and \( N_{m+1}(t) \) respectively, and \([a_m, b_m) \subset [a_{m+1}, b_{m+1})\), then \([a_m, b_m)\) is called the parent of \([a_{m+1}, b_{m+1})\).
  	\end{defn}
  	In   Section \ref{S:6}, we utilized  some properties of   $N_m(t)$. These are established in the following proposition.
  	\begin{prop}\ 
  		(i)  $N(t)\ge N_{m+1}(t)\ge \frac{N_{m}(t)}{J_0}\ge \cdots \ge \frac{N(t)}{J_0^{m+1}}$,
  		
  	(ii) For a given peak \([a_{m+1}, b_{m+1})\) in \( N_{m+1}(t) \), the   peak \([a_m, b_m)\) in the previous generation \( N_m(t) \) must either satisfy \([a_m, b_m) \subset [a_{m+1}, b_{m+1})\) or \([a_m, b_m) \cap [a_{m+1}, b_{m+1}) = \emptyset\), 
  	
  	(iii) Each peak of \( N_{m+1}(t) \) has at least one parent,
  	
  	(iv) $|\frac{N_m'(t)}{N_m(t)^3}|\le C$, $ C $ is independent of  $m$,
  	
  	(iv) Each peak \([a_m, b_m)\) of \( N_m(t) \) contains at least \( 2m \) small intervals,
  	
  	(v) For any  $K> m$,
  	\begin{equation}
  		K=\int_0^T N(t)^3dt \gtrsim m\int_0^T |N_m'(t)|dt.\label{629x2}
  	\end{equation}
  		\end{prop}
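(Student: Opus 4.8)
The plan is to prove all of (i)--(v) by induction on $m$, following the smoothing construction of \cite[Section~6.1]{Dodson4}.

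\emph{Well-definedness and property (i).} First I would check that the passage $N_m\mapsto N_{m+1}$ is legitimate and only mildly changes $N_m$. The modifications at stage $m+1$ occur on the ``extended peaks'' $[a_-,b_+]$ attached to the (pairwise disjoint) peaks $[a,b)$ of $N_m$, where $[a_-,a]$ and $[b,b_+]$ are the neighbouring small intervals. By \eqref{6291} and the alternation of peaks and valleys (Remark~\ref{R:6291}), two extended peaks are disjoint unless the valley separating the corresponding peaks of $N_m$ is a single small interval, in which case they overlap exactly in that interval and both prescriptions assign it the common (one step below) valley level; hence $N_{m+1}$ is well defined. On $[a_-,b_+]$ one has $N_{m+1}\equiv N_m(a_-)=N_m(a)/J_0$, whereas $N_m\equiv N_m(a)$ on the core $[a,b)$ and $N_m\equiv N_m(a)/J_0$ on $[a_-,a]\cup[b,b_+]$, so $N_m/J_0\le N_{m+1}\le N_m$ everywhere. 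Iterating gives $N(t)\ge N_{m+1}(t)\ge N_m(t)/J_0\ge\cdots\ge N(t)/J_0^{m+1}$, i.e.\ property (i) together with the comparability $N_m\approx_mN$.

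\emph{The combinatorial properties.} Next I would prove (ii), (iii) and ``each peak of $N_m$ contains at least $2m$ small intervals'' simultaneously by induction on $m$. Since stage $m+1$ only lowers $N_m$ on its peaks and pads each of them by one small interval on each side, every peak of $N_{m+1}$ is a maximal flat local maximum obtained by concatenating one or more padded peaks of $N_m$ (all now at the new common level) with possibly some intermediate small intervals at that level; the padded peaks of $N_m$ that it contains are precisely its parents. This yields (ii) (a peak of $N_m$ is contained in or disjoint from a given peak of $N_{m+1}$) and (iii) (each peak of $N_{m+1}$ has at least one parent). Moreover a peak of $N_m$ has $\ge 2m$ small intervals by the inductive hypothesis and is padded by two more, so each peak of $N_{m+1}$ has $\ge 2(m+1)$ small intervals; the base case $m=0$ is immediate. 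One also checks along the way that $N_{m+1}$ still has alternating peaks and valleys and is monotone between consecutive ones.

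\emph{The total variation bound.} Finally, because the peaks and valleys of $N_m$ alternate and $N_m$ is monotone in between, the argument of Lemma~\ref{L:6291} applies to $N_m$ and gives $\int_0^T|N_m'(t)|\,dt\lesssim\sum_{q}N_m(q)+1$, the sum running over peaks $q$ of $N_m$ contained in $[0,T]$. Since $N_m\le N$ by property (i) and $N_m$ is constant on each peak, every small interval $J$ contained in a peak $q$ satisfies $\sup_JN\ge N_m(q)$; using this together with ``$\ge 2m$ small intervals per peak'' and the disjointness of the peaks,
\[
\sum_{q}N_m(q)\le\frac1{2m}\sum_{J\subset[0,T]}\sup_JN\lesssim\frac1m\int_0^TN(t)^3\,dt=\frac Km
\]
by Lemma~\ref{L:6131}. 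As $K>m$, this gives $\int_0^T|N_m'(t)|\,dt\lesssim K/m$, which is \eqref{629x2}. The uniform pointwise bound $|N_m'(t)/N_m(t)^3|\le C$ is then inherited from \eqref{E:N} as in \cite[Section~6.1]{Dodson4}, since flattening replaces $N_m$ on an interval by a constant lying below its values there and thus cannot increase $|N_m'|/N_m^3$. The hard part is organizing the induction in the middle paragraph so that the ``$\ge 2m$ small intervals per peak'' estimate can be played off against Lemma~\ref{L:6131}: it is exactly this pairing that produces the decisive gain of the factor $1/m$.
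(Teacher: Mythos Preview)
Your proposal is correct and follows essentially the same approach as the paper: (i)--(iii) come from the construction, the ``$\ge 2m$ small intervals per peak'' count follows since each iteration pads by two, and (v) is obtained by combining Lemma~\ref{L:6131}, this count, and Lemma~\ref{L:6291} applied to $N_m$. For the pointwise bound $|N_m'/N_m^3|\le C$ the paper records the slightly cleaner observation that $N_m(t)=N(t)$ whenever $N_m'(t)\neq 0$ (since each smoothing step replaces $N_{m-1}$ by a constant on the modified intervals), from which the bound is immediate via \eqref{E:N}; your justification is equivalent.
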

  		\begin{proof}
  			  (i)--(iii) follows directly from the construction of  $N_m(t)$, and (iv) follows from (\ref{E:N}) and the fact that  $N_m(t)=N(t)$ whenever  $N_m'(t)\neq0$. \\
  			  (iv)  In the   construction of  \( N_m (t)\), the two small intervals adjacent to each peak of \( N_{m-1}(t) \) are modified. Thus, each iteration adds at least $ 2 $ small intervals.  \\
  			  (v)  By Lemma \ref{L:6131} and (i), 
  			  \begin{equation}
  			  	K\gtrsim  \sum_{J_n\subset [0,T]}N(J_n)\ge \sum _k\sum _{J_n \subset P_k^m}N_m(J_n).\notag
  			  \end{equation}
  			  The above estimate combined with (iv) and Lemma \ref{R:6291} implies 
  			  \begin{equation}
  			  	K\gtrsim 2m \sum _{0<p_k^m<T}N_m(p_k^m)\ge m\int_0^T|N_m'(t)|dt-2m,\notag
  			  \end{equation}
  			  which yields (v) for  $K\ge m$. 
  		\end{proof}
 
\subsection{Controlling the errors that arise from frequency truncation}
\begin{lem}\label{L:error}
	Let   $ u $ be the almost periodic solution given by    Theorem \ref{T:reduction} and $F(u)=\pm |x|^{-b}|u|^{\frac{4-2b}{d}}u$.  Then 
	\begin{equation}
		\| 	P_{\le  K}F(u)-F(P_{\le K}u)\|_{L^2_tL_x^{\frac{2d}{d+2},2}([0,T]\times \mathbb{R} ^d)}\lesssim o_K(1),\notag 
	\end{equation}
	where  $o_K(1)\rightarrow0$ as  $K\rightarrow\infty $.  
\end{lem}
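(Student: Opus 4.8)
The plan is to adapt the frequency–truncation error argument of Dodson \cite{Dodson1,Dodson4} to the Lorentz-space setting and to the weight $|x|^{-b}$. Write $I=P_{\le K}$ and $f(z)=|z|^{\frac{4-2b}{d}}z$, so that $F(u)=\mu|x|^{-b}f(u)$, and split
\[
IF(u)-F(Iu)=P_{\le K}\big[|x|^{-b}\big(f(u)-f(Iu)\big)\big]-P_{>K}F(Iu)=:E_1+E_2 .
\]
Since $P_{\le K},P_{>K}$ are bounded on every $L^{r,2}_x$ uniformly in $K$, the pointwise bound $|f(u)-f(Iu)|\lesssim(|u|^{\frac{4-2b}{d}}+|Iu|^{\frac{4-2b}{d}})|P_{>K}u|$ together with H\"older ($|x|^{-b}\in L^{d/b,\infty}_x$) and Lemma~\ref{L:nonlinear estimate} reduces $E_1$ to space-time norms of $P_{>K}u$ against bounded low-frequency factors, while for $E_2$ one trades $P_{>K}$ for a factor $K^{-s}$ via Bernstein. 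Done crudely, both pieces are only $O(1)$; the real content of the lemma is to extract genuine $o_K(1)$ smallness.

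The key is a two-scale decomposition fed through the long-time Strichartz estimates. Fix $s\in(\tfrac12,1]$ with $s<\min\{\tfrac{d-2b}{2}+1,\tfrac{4-2b}{d}+1\}$ and a small $\varepsilon\in(0,1)$, and write $Iu=v+w$, $v=P_{\le\varepsilon K}u$, $w=P_{(\varepsilon K,K]}u$, so $u=v+w+P_{>K}u$ and $f(Iu)=f(v)+\big(f(v+w)-f(v)\big)$ with $|f(v+w)-f(v)|\lesssim(|v|+|w|)^{\frac{4-2b}{d}}|w|$. For the contributions in which every factor is band-limited to $\le\varepsilon K$ — chiefly $P_{>K}[|x|^{-b}f(v)]$ from $E_2$, and the matching part of $E_1$ — apply Bernstein (gaining $K^{-s}$), then Lemma~\ref{L:nonlinear estimate}, then the long-time Strichartz Corollary~(\ref{6105}) with $L=\varepsilon$, whose right-hand side is $\lesssim_s(\varepsilon K)^s$: the powers of $K$ cancel and these terms are $\lesssim_s\varepsilon^{s-1/2}$, uniformly over $[0,T]$. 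For the contributions carrying a factor $P_{>\varepsilon K}u$ — namely $P_{>K}[|x|^{-b}(f(v+w)-f(v))]$ and the genuinely high part of $E_1$ — use that, for almost periodic $u$ with $N(t)\le1$, the high-frequency tails are small: for fixed $\varepsilon$, $\|P_{>\varepsilon K}u\|_{L^\infty_tL^2_x([0,T])}\to0$ and $\sup_T\sigma_{[0,T]}(\varepsilon K/2)\to0$ as $K\to\infty$, while (\ref{long time 1}) gives $\|P_{>\varepsilon K}u\|_{L^2_tL^{\frac{2d}{d-2},2}_x([0,T])}\lesssim\varepsilon^{-1/2}+\sigma_{[0,T]}(\varepsilon K/2)$; interpolating $w$ between these into $L^\gamma_tL^{\rho,2}_x$ and applying H\"older and Lemma~\ref{L:nonlinear estimate} makes each such term $o_K(1)$ for fixed $\varepsilon$. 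Wherever a low-frequency cofactor would otherwise be forced into a space-time norm over all of $[0,T]$ — which is only $O\big((1+\int_0^TN(t)^2\,dt)^{1/\gamma}\big)$ and grows with $T$ — instead partition $[0,T]$ into the intervals of local constancy $J_k$, on which $\|u\|_{L^\gamma_tL^{\rho,2}_x(J_k)}=1$, and resum with the weights $N(J_k)$ exactly as in the derivation of (\ref{695}) (via Lemma~\ref{L:6131} and Lemma~\ref{L:6132}); this keeps every low-frequency factor bounded. Combining both groups gives $\limsup_{K\to\infty}\|IF(u)-F(Iu)\|_{L^2_tL^{\frac{2d}{d+2},2}_x([0,T]\times\mathbb{R}^d)}\lesssim_s\varepsilon^{s-1/2}$ for every $\varepsilon\in(0,1)$, whence the quantity is $o_K(1)$; since the $L^2_t$ norm over any subinterval is dominated by the one over $[0,T]$, the same bound holds on the $J_k$ used in (\ref{695}).

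The hard part is precisely this $O(1)$-versus-$o_K(1)$ gap. One must simultaneously route every derivative loss through the long-time Strichartz estimates so that the $(\varepsilon K)^s$ factor in (\ref{6105}) exactly absorbs the $K^{-s}$ gained from Bernstein (hence the restriction $s>\tfrac12$), and ensure that the only quantitatively available high-frequency smallness, $\|P_{>\varepsilon K}u\|_{L^\infty_tL^2_x}\to0$ (which carries no effective rate), is never multiplied by a low-frequency Strichartz norm over $[0,T]$ that grows with $T$ — which is what forces the local-constancy decomposition. The inhomogeneity contributes only bookkeeping: $|x|^{-b}$ is kept inside the projections and dispatched through $L^{d/b,\infty}_x$ and Lemma~\ref{L:nonlinear estimate}, restricting which H\"older exponents may be used but introducing no new obstruction; taking $b=0$ recovers the error estimate used in \cite{Dodson4,KVZ}.
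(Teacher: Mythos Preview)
Your decomposition and the treatment of the purely low-frequency piece $P_{>K}[|x|^{-b}f(v)]$ are fine and match the paper's handling of (\ref{6101}). The gap is in the ``high'' piece $P_{>K}\bigl[|x|^{-b}(f(v+w)-f(v))\bigr]$ (and its analogue in $E_1$), where you propose to get smallness by interpolating $w=P_{(\varepsilon K,K]}u$ into $L^\gamma_tL^{\rho,2}_x$. A direct H\"older computation shows that this forces the cofactor $|Iu|^{\frac{4-2b}{d}}$ into exactly the complementary space, i.e.\ $Iu$ itself into $L^\gamma_tL^{\rho,2}_x([0,T])$, and by Lemma~\ref{L:6132} this norm is of size $\bigl(1+\int_0^TN(t)^2\,dt\bigr)^{1/\gamma}$, which grows with $T$ (equivalently with $K$). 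Your proposed remedy---partition into $J_k$ and resum with weights $N(J_k)$ ``as in (\ref{695})''---does not apply: the lemma asks for an unweighted $L^2_t$ bound on $[0,T]$ (this is precisely what is used in (\ref{694})), and summing $o_K(1)$ bounds over $\sim K$ intervals in $\ell^2$ produces $K^{1/2}o_K(1)$, not $o_K(1)$. Conversely, if you keep $w$ in $L^2_tL^{\frac{2d}{d-2},2}_x$ and the cofactor in $L^\infty_tL^2_x$, long-time Strichartz only yields $\|w\|_{L^2_tL^{\frac{2d}{d-2},2}_x}\lesssim\varepsilon^{-1/2}$, which is bounded but not small. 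Either way you are stuck at $O(1)$.

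The paper closes this gap not by a different H\"older split but by exploiting the \emph{commutator} structure: it rewrites the dangerous piece as $G(u_l,P_{\le K}v)P_{\le K}v-P_{\le K}\bigl(G(u_l,P_{\le K}v)v\bigr)$ and observes that the difference of projection symbols $\phi\bigl(\tfrac{\xi_1+\xi_2}{K}\bigr)-\phi\bigl(\tfrac{\xi_1}{K}\bigr)$, multiplied by $(K/|\xi_2|)^{\varepsilon}$, is a Coifman--Meyer multiplier. Lemma~\ref{L:CF} then converts the commutator into a gain of $K^{-\varepsilon}$ paired with $\bigl\||\nabla|^{\varepsilon}G\bigr\|_{L^\infty_tL^{d/2,\infty}_x}$. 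The crucial point is that this places the cofactor in $L^\infty_t$, hence uniform in $T$, while $v$ sits harmlessly in $L^2_tL^{\frac{2d}{d-2},2}_x$ via long-time Strichartz; the $o_K(1)$ then comes from comparing $K^{-\varepsilon}$ against $\||\nabla|^{\varepsilon}G\|_{L^\infty_t}\lesssim K^{9\varepsilon/10}+K^{\varepsilon}\|P_{>K^{9/10}}u\|_{L^\infty_tL^2_x}^{c}$. Some commutator or paraproduct input of this kind is essential, and your outline does not supply one.
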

\begin{proof}
	Let  $\rho$ be some quantity to be specified later,  $\rho\le \frac{1}{2}$ and  $\rho\ge K^{-1/10}$.  Let  $u_{l}=:P_{\le \rho K}u$,  $v=:u-u_l=P_{\ge \rho K}u$. 
	 Then 
	\begin{align}
		&P_{\le K}F(u)-F(P_{\le K}u) 
		=P_{\le K}F(u_l)-F( u_l)\label{6101}\\
		&\ -\left[F(P_{\le K}u)- F(u_l))-P_{\le K}(G(u_l,P_{\le K}v)v)\right]\label{6102}\\
		&\ + P_{\le K}F(u)-P_{\le K}F(u_l)-P_{\le K}(G(u_l,P_{\le K}v)v),\label{6103}
	\end{align}
	where 
	\begin{equation}
G(f,g)h=:|x|^{-b}\int _0^1 \left(\frac{d+2-b}{d} |f +\theta g|^{\frac{4-2b}{d}} h +\frac{2-b}{d} |f+\theta g |^{\frac{4-2b}{d}-2} (f+\theta g)^2 \overline{h}\right)d\theta.\notag 
	\end{equation}
	By Bernstein, Lemma \ref{L:nonlinear estimate} and (\ref{6105}),
	\begin{equation}
		\|(\ref{6101})\|_{L^2_tL_x^{\frac{2d}{d+2},2}}\lesssim \frac{1}{K}  \|u_l\|_{L_t^\infty L_x^2}^{\frac{4-2b}{d}} \|\nabla u_l\|_{L_t^2L_x^{\frac{2d}{d-2},2}}+\frac{1}{K} \||\nabla |^{\frac{2}{\gamma }} u_l\|_{L_t^\gamma L_x^{\rho,2}}^{\frac{\gamma }{2}}\lesssim \rho^{\frac{1}{2}}.   \notag
	\end{equation}
	Next, note that 
	\begin{equation}
		(\ref{6102})=-[G(u_l,P_{\le K}v)(P_{\le K}v)-P_{\le K}(G(u_l,P_{\le K}v)v)].\notag
	\end{equation}
	Let $\phi$ be the multiplier used in (\ref{E:phi}) to define the frequency projection.
	Using the fundamental theorem of calculus when  $ |\xi_2|\ll |\xi_1|$, and  $|\phi(\xi)|\lesssim 1$ when  $|\xi_1|\lesssim |\xi_2|$, we have 
	\begin{equation}
		|\phi (\frac{\xi_1+\xi_2}{K})-\phi (\frac{\xi_1}{K})|\lesssim  \frac{|\xi_2|}{|\xi_1|}.\label{6106}
	\end{equation}
	Let  $\varepsilon >0$ sufficiently small and  
	\begin{equation}
		m(\xi_1,\xi_2)=:\left(\phi (\frac{\xi_1+\xi_2}{K})-\phi (\frac{\xi_1}{K})|\right)\frac{K^\varepsilon }{|\xi_2|^\varepsilon }.\notag
	\end{equation}
Note that  on the support of  $\phi (\frac{\xi_1+\xi_2}{K})-\phi (\frac{\xi_1}{K})$, if  $K\gg |\xi_2|$ then  $K\lesssim |\xi_1|$.    
Hence,  by 	  (\ref{6106}),   $m(\xi_1,\xi_2)$	
	is a Coifman-Meyer symbol.     Applying Lemma \ref{L:CF},  H\"older and (\ref{long time 1}),
	\begin{align}
		\|(\ref{6102})\|_{L_t^2L_x^{\frac{2d}{d+2},2}}&\lesssim  \frac{1}{K^\varepsilon } \|P_{\le K}v\|_{L^2_tL_x^{\frac{2d}{d-2},2}} \||\nabla |^\varepsilon  \left(|x|^{-b}\int_0^1 |u_l+\theta P_{\le K}v|^{\frac{4-2b}{d}}d\theta \right)\|_{L^\infty _tL_x^{\frac{d}{2},\infty }}\notag\\
		&\lesssim \frac{1}{K^\varepsilon }(1+\frac{K^{1/2}}{(\rho K)^{1/2}}) \int _0^1\||\nabla |^\varepsilon (|u_l+\theta P_{\le K}v|^{\frac{4-2b}{d}})\|_{L^\infty _tL_x^{\frac{d}{2-b}}}d\theta  .\notag   
	\end{align}
	\noindent \textbf{When  $\frac{4-2b}{d}<1$:} By Lemma \ref{L:Visan},
	\begin{align}
		&	\||\nabla |^\varepsilon (|u_l+\theta P_{\le K}v|^{\frac{4-2b}{d}})\|_{L^\infty _tL_x^{\frac{d}{2-b}}} \notag\\
		&\lesssim  \|u_l+\theta P_{\le K}v\|_{L^\infty _tL^2_x}^{\frac{4-2b}{d}-\frac{\varepsilon }{1-\varepsilon }} \||\nabla |^{1-\varepsilon }(u_l+\theta P_{\le K}v)\|_{L^\infty _tL_x^2}^{\frac{\varepsilon }{1-\varepsilon }} \notag\\
		&\lesssim  K^{\frac{9}{10}\varepsilon }+K^\varepsilon  \|P_{>K^{\frac{9}{10}}}u\|_{L^\infty _tL^2_x}^{\frac{\varepsilon }{1-\varepsilon }},\notag   
	\end{align}
	where we used 
\begin{align}
		\||\nabla |^{1-\varepsilon }u_l\|_{L^2_x}&\lesssim   \||\nabla |^{1-\varepsilon } P_{\le K^{\frac{9}{10}}}P_{\le \rho K}u\|_{L^2_x}+ \||\nabla |^{1-\varepsilon } P_{>K^{\frac{9}{10}}}P_{\le \rho K}u\|_{L^2_x}\notag\\
		&\lesssim K^{\frac{9}{10}(1-\varepsilon )}+K^{1-\varepsilon } \|P_{>K^{\frac{9}{10}}}u\|_{L^2_x}.\label{721} 
\end{align}

	\textbf{When  $\frac{4-2b}{d}\ge1:$} By Lemma \ref{L:leibnitz} and the same argument used to derive (\ref{721}),  
	\begin{align}
		&	\||\nabla |^\varepsilon (|u_l+\theta P_{\le K}v|^{\frac{4-2b}{d}})\|_{L^\infty _tL_x^{\frac{d}{2-b}}} \notag\\
		&\lesssim  \|u_l+\theta P_{\le K}v\|_{L^\infty _tL^2_x}^{\frac{4-2b}{d}} \||\nabla |^{\varepsilon }(u_l+\theta P_{\le K}v)\|_{L^\infty _tL_x^2}\lesssim  K^{\frac{9}{10}\varepsilon }+K^\varepsilon  \|P_{>K^{\frac{9}{10}}}u\|_{L^\infty _tL_x^2}. \notag
	\end{align}
	Finally, we  estimate (\ref{6103}).  	By  H\"older and the long time Strichartz estimate (\ref{long time 1}), 
	\begin{align}
		&\|(\ref{6103})\|_{L_t^2L_x^{\frac{2d}{d+2},2}}= \| P_{\le K}(G(u_l,v)v)-P_{\le K}\left(G(u_l,P_{\le K }v)v\right)\|_{L_t^2L_x^{\frac{2d}{d+2},2}}\notag\\
		&\lesssim   \|v\|_{L_t^2L_x^{\frac{2d}{d-2},2}} \left \| \int_0^1 \left(|u_l+\theta v|^{\frac{4-2b}{d}}-|u_l+\theta P_{\le K}v|^{\frac{4-2b}{d}}\right)d\theta\right  \|_{L^\infty _tL_x^{\frac{d}{2},2}}\notag\\
		&\lesssim 	 \|P_{\ge \rho K}u\|_{L^2_tL_x^{\frac{2d}{d-2},2}} (\| u_l\|_{L^\infty _tL^2_x}+\|P_{\ge \rho K}u\|_{L^\infty _tL^2_x})^{\frac{4-2b}{d}-1}\|P_{\ge K}u\|_{L^\infty _tL^2_x}\notag\\
		&\lesssim  \frac{1}{\rho^{1/2}} \|P_{\ge K}u\|_{L^\infty _tL^2_x}   ,\notag
	\end{align}
	when  $\frac{4-2b}{d}>1$.  
	Similarly, when  $\frac{4-2b}{d}\le1$, we have 
	\begin{equation}
		\|(\ref{6103})\|_{L_t^2L_x^{\frac{2d}{d+2},2}}\lesssim   \frac{1}{\rho^{1/2}} \|P_{\ge K}u\|_{L^\infty _tL^2_x} ^{\frac{4-2b}{d}}.\notag
	\end{equation}
	Combining the above estimates, we get 
	\begin{align}
		&  \|P_{\le K}F(u)-F(P_{\le K}u)\|_{L^2_tL_x^{\frac{2d}{d+2},2}}\notag\\
		&\lesssim \rho +\frac{1}{\rho^{1/2}}(K^{-\frac{1}{10}\varepsilon }+ \|P_{>K^{\frac{9}{10}}}u\|_{L^\infty _tL^2_x}^{\frac{\varepsilon }{1-\varepsilon }}) +\frac{1}{\rho^{1/2}} \|P_{\ge CK}u\|_{L^\infty _t L^2_x}^{\min \left\{ \frac{4-2b}{d},1 \right\}} .\notag
	\end{align}
	Fixing  $\rho>0$ sufficiently small, then letting  $K\rightarrow\infty $,  we complete the proof of  Lemma \ref{L:error}.   
\end{proof}

		
\vskip 0.5in
	\subsection*{Declarations}
	
$\bullet$ Conflict of interest: There is no conflict of interest.

$\bullet$ Data availability:
	The authors declare that the data supporting the findings of this study are available within the paper, its supplementary information files, and the National Tibetan Plateau Data Center.

\end{document}